\documentclass[10 pt]{amsart}
\usepackage[utf8]{inputenc}

\usepackage[normalem]{ulem}

\usepackage{mystyle}
\hypersetup{
  pdftitle={Counting hypergraphs without linear cycles of fixed length},
  pdfauthor={Jozsef Balogh, Ramon I. Garcia, and Abhishek Methuku}
}
\calclayout

\usepackage[backend=biber, style=trad-abbrv, sorting=nyt, maxnames=100,backref=true, giveninits=true, sortcites=true, url=false, doi=false]{biblatex}
\usepackage{amsmath}
\usepackage{amsthm}
\usepackage{amssymb}
\usepackage{mathtools}
\usepackage[dvipsnames]{xcolor}
\usepackage{enumitem}
\usepackage{cleveref}
\usepackage{tikz}
\usetikzlibrary {graphs} 
\usepackage{caption}

\def\HH{\mathcal{H}}
\def\KK{\mathcal{K}}

\title{Counting hypergraphs without linear cycles of fixed length}

\author{
J\'ozsef Balogh}
\email{jobal@illinois.edu}

\author{
Ramon I. Garcia
}
\email{rgarcia2@iastate.edu}

\author{Abhishek Methuku}
\email{abhishekmethuku@gmail.com}

\thanks{J\'ozsef Balogh received funding from NSF grants RTG DMS-1937241,  FRG DMS-2152488, 
and DMS-2554179, the UIUC Campus Research Board Award RB26026, a Simons Collaboration grant, and the Institute for Basic Science (IBS-R029-C4). Ramon I. Garcia received funding from NSF grants RTG DMS-1937241, FRG DMS-2152488, and the UIUC R.H. Schark Fellowship. Abhishek Methuku received funding from Simons award SFI-MPS-TSM-00025583 and the UIUC Campus Research Board Award RB25050.}
\date{\today}

\begin{document}

\begin{abstract} Let $C_{k}^{(r)}$ be the $r$-uniform linear cycle on $k$ hyperedges. An $r$-graph is $C_{k}^{(r)}$-\emph{free} if it contains no copy of $C_{k}^{(r)}$. Let $\operatorname{ex}_{r}(n,C_{k}^{(r)})$ denote the maximum number of hyperedges in an $n$-vertex $C_{k}^{(r)}$-free $r$-graph. Balogh, Narayanan and Skokan asked whether, for every pair of integers $r,k\ge 3$, the number of $C_k^{(r)}$-free $r$-graphs on $n$ labelled vertices is
\[
2^{(1+o(1))\operatorname{ex}_{r}(n,C_{k}^{(r)})}.
\]
While the analogous statement is known to fail for graphs ($r=2$), by
a construction of Morris and Saxton, the general question remained
open for hypergraphs.  Very recently, Jiang and Longbrake answered the
question affirmatively when $r\geq5$.

In this paper, we completely resolve the problem by proving that the
answer is affirmative for every pair of integers $r,k\geq3$.  When
$(r,k)\neq(3,3)$, we establish balanced
supersaturation results for linear cycles and combine them with the hypergraph container
method.  For $(r,k)=(3,3)$, we instead decompose $3$-graphs according
to their pair-codegrees, encode the subhypergraph formed by the hyperedges
that contain a large codegree pair as a directed graph, and apply a
multicolour entropy theorem.
\end{abstract}
\maketitle

\section{Introduction}

\subsection{Background} For a positive integer $n$, we denote by $[n]$ the set $\{1,2,\ldots, n\}$. An $r$-\emph{graph} is a collection of sets of size $r$.
Given $r$-graphs $F$ and $H$, we say $H$ is $F$-\emph{free} if it does not contain a copy of $F$, regardless of whether the copy is induced or not. For an $r$-graph $F$ and an integer $n$, we denote by $\textup{ex}_{r}(n,F)$ the maximum number of hyperedges in an $n$-vertex $F$-free $r$-graph. 

For integers $r\geq2$ and $k\geq3$, the \emph{linear cycle}
$C_k^{(r)}$ is an $r$-graph consisting of hyperedges
$e_1,\ldots,e_k$ and distinct vertices $v_1,\ldots,v_k$ such that,
with indices taken modulo $k$,
\[
 e_i\cap e_{i+1}=\{v_{i+1}\}
 \quad\text{and}\quad
 e_i\cap e_j=\emptyset
 \quad\text{whenever }j\notin\{i-1,i,i+1\}.
\]
The requirement that $v_1,\ldots,v_k$ are distinct is essential when
$k=3$, since it excludes three hyperedges having a common vertex.  The
\emph{linear path} $P_k^{(r)}$ is defined similarly: it consists of
hyperedges $e_1,\ldots,e_k$ such that
$|e_i\cap e_{i+1}|=1$ for every $i\in[k-1]$, while
$e_i\cap e_j=\emptyset$ whenever $|i-j|>1$.

Let $L\subseteq[n]$ have size $\lfloor(k-1)/2\rfloor$, and let
$S_L^{(r)}(n)$ consist of all
$e\in\binom{[n]}r$ satisfying $e\cap L\neq\emptyset$.  This $r$-graph
is $C_k^{(r)}$-free.  Indeed, if it contained a linear cycle, then
each of the $k$ hyperedges of the cycle would intersect $L$, whereas
every vertex of $L$ could belong to at most two hyperedges of the cycle.
This would give
$k\leq2|L|=2\lfloor(k-1)/2\rfloor<k$, a contradiction.  Moreover,
\[
 |S_L^{(r)}(n)|
 =\binom nr-\binom{n-|L|}r
 =\bigl(|L|+o(1)\bigr)\binom{n}{r-1}.
\]
Frankl and F\"uredi~\cite{FranklFuredi1987} proved that, for $k=3$ and
every $r\geq3$, the construction $S_L^{(r)}(n)$ is extremal for every
sufficiently large $n$.  F\"uredi and
Jiang~\cite{Furedi2014LinearCycles} subsequently determined the exact
Tur\'an numbers for every $k\geq3$ and $r\geq5$, and Kostochka, Mubayi
and Verstra\"ete~\cite{Kostochka2015} determined them for every
$k\geq4$ and $r\geq3$.  For odd
$k\geq5$, the construction $S_L^{(r)}(n)$ is extremal.  For even
$k\geq4$, the extremal constructions are obtained by adjoining to
$S_L^{(r)}(n)$ an explicitly characterized $r$-graph $F$ supported on
$[n]\setminus L$, where $|F|=\Theta(n^{r-2})$; the case
$(r,k)=(3,4)$ requires a different choice of $F$.
Consequently, for every fixed $r,k\geq3$,
\[
 \textup{ex}_r(n,C_k^{(r)})
 =
 \left(\left\lfloor\frac{k-1}{2}\right\rfloor+o(1)\right)
 \binom{n}{r-1}.
\]

More generally, once an asymptotic estimate for
$\textup{ex}_r(n,F)$ has been determined, it is natural to ask how many
copies of $F$ must occur in an $r$-graph with at least
$(1+\varepsilon)\textup{ex}_r(n,F)$ hyperedges.  Results of this kind
are known as \emph{supersaturation} results and form a classical theme
in extremal combinatorics.  For instance, Erd\H{o}s and
Simonovits~\cite{Erdos1983Supersaturated} established general
supersaturation results for graphs and hypergraphs, while
Mubayi~\cite{Mubayi2013Counting} obtained sharp estimates for several
specific hypergraphs.  Supersaturation has also been studied for
linear cycles.  Jiang and Yepremyan~\cite{Jiang2020Supersaturation}
considered linear cycles of even length when the host hypergraph is
itself linear, Mubayi and Solymosi~\cite{Mubayi2026Pentagons} counted
pentagons in linear triple systems, and Deng, Han, Nie and
Spiro~\cite{deng2025supersaturation} obtained supersaturation results
for odd linear cycles in $r$-graphs with $n^{r-\delta}$ hyperedges,
where $0<\delta<1$ is a fixed constant.

For applications of the hypergraph container method, one often needs
a \emph{balanced supersaturation} result in which the copies are
distributed so that no prescribed collection of hyperedges belongs to
too many of them.  Such results have been established for several
degenerate graphs and hypergraphs; see, for example, the work of
Corsten and Tran~\cite{Corsten2021Balanced}.  When combined with the
container method, balanced supersaturation can be used both to count
$F$-free hypergraphs and to prove random analogues of extremal
theorems.  The container method has also been used to count
hypergraphs of large girth.  Balogh and
Li~\cite{Balogh2020Linear} studied the number of linear uniform
hypergraphs of large girth, while Spiro and
Verstra\"ete~\cite{Spiro2022} considered general uniform hypergraphs
of large Berge girth.  In the setting of linear cycles, Mubayi and
Yepremyan~\cite{mubayi2023random}, and independently
Nie~\cite{Nie2024Cycles}, established random variants of the extremal
result for $C_k^{(r)}$ when $k$ is even.  Nie and
Spiro~\cite{nie2024random} proved random versions of extremal results
for expansions of hypergraphs.

We now turn to the counting problem studied in this paper.  For an
$r$-graph $F$, let $\textup{forb}_r(n,F)$ denote the number of
$F$-free $r$-graphs with vertex set $[n]$.  Every subhypergraph of an
extremal $C_k^{(r)}$-free $r$-graph is also $C_k^{(r)}$-free, and
hence
\[
 \textup{forb}_r(n,C_k^{(r)})
 \geq 2^{\textup{ex}_r(n,C_k^{(r)})}.
\]
Conversely, every $C_k^{(r)}$-free $r$-graph has at most
$\textup{ex}_r(n,C_k^{(r)})$ hyperedges.  Therefore,
\[
 \textup{forb}_r(n,C_k^{(r)})
 \leq \sum_{i=0}^{\textup{ex}_r(n,C_k^{(r)})}
          \binom{\binom nr}{i}
 =2^{O(n^{r-1}\log n)}.
\]

In the graph case ($r=2$), Erd\H{o}s, Frankl and
R\"odl~\cite{ErdosFranklRodl1986} proved that, for every fixed
non-bipartite graph $F$,
$
 \textup{forb}_2(n,F)
 =2^{(1+o(1))\textup{ex}_2(n,F)}
$
as $n\to\infty$. They also suggested that the same formula
should hold for every graph $F$ containing a cycle; this conjecture is
often attributed to Erd\H{o}s (see, for example, Balogh, Bollob\'as
and Simonovits~\cite{BaloghBollobasSimonovits2011}).  Morris and
Saxton~\cite{MorrisCycles2016} proved that
$\textup{forb}_2(n,C_{2\ell})\leq2^{O(n^{1+1/\ell})}$ for every
$\ell\geq2$, but disproved Erd\H{o}s's conjecture by showing that there
exists a constant $c>0$ such that, for infinitely many $n$,
\begin{equation}
\label{eq:Erdosdisproof}
 \textup{forb}_2(n,C_6)
 \geq 2^{(1+c)\textup{ex}_2(n,C_6)}.
\end{equation}
This shows that the leading
constant in the exponent need not be $1$ in the graph case.

For hypergraphs, Nagle, R\"odl and
Schacht~\cite{NagleRodlSchacht2006} proved that, for every fixed
$r$-graph $F$,
\begin{equation}
\label{Nagle-Rodl-Schacht}
 \textup{forb}_r(n,F)
 \leq 2^{\textup{ex}_r(n,F)+o(n^r)}.
\end{equation}
Recall that an $r$-graph is $r$-partite if its vertex set can be
partitioned into $r$ classes so that every hyperedge contains exactly
one vertex from each class.  If $F$ is not $r$-partite, then
$\textup{ex}_r(n,F)=\Theta(n^r)$, and the upper bound
in~\eqref{Nagle-Rodl-Schacht}, together with the general lower bound
$2^{\textup{ex}_r(n,F)}$, determines $\textup{forb}_r(n,F)$ up to a
factor $1+o(1)$ in the exponent.  When $F$ is $r$-partite, however,
the error term $o(n^r)$ may be larger than $\textup{ex}_r(n,F)$, so
\eqref{Nagle-Rodl-Schacht} need not determine $\textup{forb}_r(n,F)$
up to a factor $1+o(1)$ in the exponent.

In particular, recall that $C_k^{(r)}$ is
$r$-partite and, by the asymptotic formula above,
$\textup{ex}_r(n,C_k^{(r)})=\Theta(n^{r-1})$.  Consequently,
\eqref{Nagle-Rodl-Schacht} alone does not imply an upper bound of the
form $\textup{forb}_r(n,C_k^{(r)})\leq2^{O(n^{r-1})}$. Mubayi and
Wang~\cite{Mubayi2019Cycles} proved that
$\textup{forb}_3(n,C_k^{(3)})\leq
2^{C\textup{ex}_3(n,C_k^{(3)})}$ when $k$ is even.  Balogh,
Narayanan and Skokan~\cite{Balogh2019NumberCyclefree} established the
corresponding bound for every $r,k\geq3$, and Ferber, McKinley and
Samotij~\cite[Corollary~8]{Ferber2020Supersaturated} recovered this
result from their general counting theorem.  Together with the general
lower bound, these results show that
$\textup{forb}_r(n,C_k^{(r)})=2^{\Theta(n^{r-1})}$, but they do not
determine whether
$\textup{forb}_r(n,C_k^{(r)})=
2^{(1+o(1))\textup{ex}_r(n,C_k^{(r)})}$.  

Balogh, Narayanan and Skokan
therefore asked whether the following hypergraph analogue of
Erd\H{o}s's conjecture holds.

\begin{quest}[Balogh, Narayanan and Skokan~\cite{Balogh2019NumberCyclefree}]
\label{conj:enumeration result}
For every pair of integers $r,k\geq 3$, does the following hold as $n\to\infty$?
\[
\textup{forb}_r(n,C_k^{(r)})
=2^{(1+o(1))\textup{ex}_r(n,C_k^{(r)})}.
\]
\end{quest}

Recently, Jiang and Longbrake used the delta-system method to answer
Question~\ref{conj:enumeration result} for $r\geq5$ and $k\geq3$.

\begin{theorem}[Jiang and Longbrake~\cite{JiangLongbrake2026Hfree}]
\label{thm:Jiang-Longbrake}
    Let $r\geq5$ and $k\geq3$ be integers. Then, as $n\to\infty$,
    \[
    \mathrm{forb}_r(n,C_k^{(r)})
    =2^{(1+o(1))\mathrm{ex}_r(n,C_k^{(r)})}.
    \]
\end{theorem}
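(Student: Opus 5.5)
The lower bound $\mathrm{forb}_r(n,C_k^{(r)})\ge 2^{\mathrm{ex}_r(n,C_k^{(r)})}$ was noted above, so only the upper bound $2^{(1+o(1))\mathrm{ex}_r(n,C_k^{(r)})}$ needs proof. I would follow the delta-system approach of Jiang and Longbrake, packaged as a container-type counting argument. Fix $\varepsilon>0$. The aim is a family $\mathcal{C}$ of ``containers'' $G\subseteq\binom{[n]}{r}$ with three properties: (i) every $C_k^{(r)}$-free $r$-graph on $[n]$ lies in some $G\in\mathcal{C}$; (ii) $|G|\le(\lfloor (k-1)/2\rfloor+\varepsilon)\binom{n}{r-1}$ for every $G\in\mathcal{C}$; (iii) $|\mathcal{C}|=2^{o(n^{r-1})}$. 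Summing $2^{|G|}$ over $\mathcal{C}$ then gives the result.

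The first step is a Füredi-type delta-system reduction. Every $r$-graph $H$ contains a subgraph $H^*$ with $|H^*|\ge c_r|H|$ that is $r$-partite and \emph{homogeneous}: every edge has the same intersection pattern $\mathcal{J}$ (the family of subsets of $e$ that are kernels of large sunflowers in $H^*$), and $\mathcal{J}$ is closed under intersection. For $r\ge5$ and a $C_k^{(r)}$-free $H$, the Füredi--Jiang analysis shows that $\mathcal{J}$ cannot contain two disjoint sets, nor too many $(r-1)$-sets. This forces $\mathcal{J}$ to have rank at most $r-2$ unless some edge has exactly one ``central'' vertex. Rather than work with the $c_r$-fraction directly, I would iterate this to partition $H$ into three parts. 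The first, $H_1$, consists of edges containing a vertex of high degree ($\ge\delta n^{r-1}$); these form a set $L$ of at most $\lfloor (k-1)/2\rfloor$ such vertices up to $O(1)$ exceptions, since otherwise a $C_k^{(r)}$ can be built greedily. The second, $H_2$, consists of edges that have a $(r-1)$-subset with large codegree but no high-degree vertex; for $r\ge5$ the delta-system argument gives $|H_2|=o(n^{r-1})$. The third, $H_3$, is the remainder, whose homogeneous pattern has rank at most $r-2$, so $|H_3|=O(n^{r-2})$ by the standard delta-system counting bound.

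The second step is the counting itself. We choose $L$ in $n^{O(1)}$ ways. Edges meeting $L$ number at most $(|L|+o(1))\binom{n}{r-1}$ and give the main term $2^{(\lfloor (k-1)/2\rfloor+o(1))\binom{n}{r-1}}$. Edges in $H_2\cup H_3$ form a set of size $o(n^{r-1})$ chosen from $\binom{n}{r}$, which costs $\binom{\binom nr}{o(n^{r-1})}=2^{o(n^{r-1})\log n}$. This is the catch: a log factor appears and must be removed. Here $o(n^{r-1})$ should in fact be $O(n^{r-1-\eta})$ or $O(n^{r-2}\,\mathrm{polylog})$. I would therefore prove the quantitative bound $|H_2\cup H_3|=O(n^{r-2+\gamma})$ for some $\gamma<1$, which makes the entropy $O(n^{r-2+\gamma}\log n)=o(n^{r-1})$.

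The main obstacle is exactly this quantitative control of the ``non-crossing'' part $H_2$, together with showing that $H_1$ really lives on at most $\lfloor (k-1)/2\rfloor$ centers without a lossy $\varepsilon n^{r-1}$ error. I would attack it using the extra room that $r\ge5$ provides. Each edge of $H_2$ has a kernel of size $r-1$ whose sunflower has petals of size $1$ only. The $(r-1)$-shadow of $H_2$ is then an $(r-1)$-graph which, after embedding the linear cycle through kernels using the $\ge 2$ spare vertices guaranteed by $r\ge5$, must avoid $C_k^{(r-1)}$-like structures. Inducting on $r$, this shadow has $O(n^{r-2})$ edges, and each of its members extends in at most $O(1)$ ways before forming a cycle, unless its codegree is large. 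Large-codegree members are charged to $L$. This should yield $|H_2|=O(n^{r-2})$ and complete the count.
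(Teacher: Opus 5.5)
There is a genuine gap: your structural decomposition is false for general $C_k^{(r)}$-free $r$-graphs.

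\textbf{A counterexample.} Put $t=\lfloor (k-1)/2\rfloor$. Split $[n]$ into $m$ sets $V_1,\dots,V_m$ of size $n/m$, choose $L_j\subseteq V_j$ with $|L_j|=t$, and let $H$ be the vertex-disjoint union of the $r$-graphs $S^{(r)}_{L_j}$ built on the sets $V_j$. A linear cycle is connected and each component is $C_k^{(r)}$-free, so $H$ is $C_k^{(r)}$-free. It has $(t+o(1))m^{2-r}\binom{n}{r-1}$ hyperedges and maximum degree $(1+o(1))m^{1-r}\binom{n}{r-1}$. Every hyperedge contains an $(r-1)$-set of codegree $n/m-r+1$.

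\textbf{Consequences for your steps.} For your fixed threshold $\delta$ and a large constant $m$, there is no high-degree vertex. So $H_2=H$ and $|H_2|=\Theta(n^{r-1})$, contradicting the claimed $|H_2|=o(n^{r-1})$, let alone $O(n^{r-2+\gamma})$. Recording $H_2$ as an arbitrary edge set then costs $2^{\Theta(n^{r-1}\log n)}$.

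With $m=2$ you get $2t$ vertices of degree $\Omega(n^{r-1})$, so more than $t$ high-degree vertices do not force a cycle. Once $|L|>t$ is allowed (as your ``$O(1)$ exceptions'', or a threshold $\delta\to0$, would require), the main term $2^{(|L|+o(1))\binom{n}{r-1}}$ is no longer $2^{(t+o(1))\binom{n}{r-1}}$.

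Your last paragraph does not rescue this. Edges of $H_2$ contain no high-degree vertex by definition, so charging large-codegree members to $L$ is circular, and the inductive statement about the shadow is never formulated. Stability of this kind holds only near $\mathrm{ex}_r(n,C_k^{(r)})$. A direct structure-then-count argument needs structure for \emph{every} $C_k^{(r)}$-free hypergraph, including those with $(t/2)\binom{n}{r-1}$ edges that look nothing like a small star.

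\textbf{How the paper avoids this.} The paper never needs such a structure theorem. Your first paragraph states the right container target, but your later steps produce a per-hypergraph decomposition, not containers. In the paper, the case $r\geq5$ is covered by Theorem~\ref{thm:MainContainer}. It starts from the Balogh--Narayanan--Skokan containers of Proposition~\ref{prop:Dense regime} and refines only containers with at least $(t+\varepsilon/2)\binom{n}{r-1}$ hyperedges, where supersaturation is available. Proposition~\ref{prop:kEven} gives a large $C_1$-full subhypergraph or a full subhypergraph with large shadow. Lemmas~\ref{lem: Supersat Large codegrees} and~\ref{lem: Supersat Large shadow even} then give balanced collections of copies of $C_k^{(r)}$ with $\Delta_\ell\leq D^{k-\ell}$ and $K\log D/D$ small. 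Corollary~\ref{cor:containers-balanced} shrinks the container at small cost. Sparse, structureless $C_k^{(r)}$-free hypergraphs are handled automatically as subgraphs of the final containers.

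To repair your route you would need one of two things. Either balanced supersaturation of this kind, or a delta-system encoding that bounds the number of possible centre-and-link structures of all $C_k^{(r)}$-free hypergraphs (with possibly many low-degree centres) by $2^{(t+o(1))\binom{n}{r-1}}$. The proposal supplies neither.
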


\subsection{Main result} In this paper, we completely answer Question~\ref{conj:enumeration result}
as follows.
\begin{theorem}\label{thm:numberOfCycleFree}
    Let $r,k\geq3$ be integers. Then
    \[
    \mathrm{forb}_r(n, C_{k}^{(r)})=2^{(1+o(1))\textup{ex}_{r}(n,C_{k}^{(r)})}.
    \]
\end{theorem}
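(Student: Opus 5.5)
The lower bound $\mathrm{forb}_r(n,C_k^{(r)})\ge 2^{\mathrm{ex}_r(n,C_k^{(r)})}$ is immediate, so only the upper bound needs work. Following the abstract, I split into two cases: $(r,k)\neq(3,3)$, handled by the hypergraph container method, and $(r,k)=(3,3)$, handled by a codegree decomposition and an entropy argument. Since $\mathrm{ex}_r(n,C_k^{(r)})=(\lfloor (k-1)/2\rfloor+o(1))\binom{n}{r-1}$, it suffices to show that for every $\varepsilon>0$ the number of $C_k^{(r)}$-free $r$-graphs on $[n]$ is at most $2^{(\lfloor(k-1)/2\rfloor+\varepsilon)\binom{n}{r-1}}$ for large $n$.

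For $(r,k)\neq(3,3)$, the key ingredient is a balanced supersaturation lemma. Let $H$ be an $n$-vertex $r$-graph with $|H|\ge(\lfloor(k-1)/2\rfloor+\varepsilon)\binom{n}{r-1}$. The lemma should produce a collection $\mathcal C$ of copies of $C_k^{(r)}$ in $H$ with $|\mathcal C|\ge c\,|H|\,n^{\alpha}$ such that, for every $1\le j\le k$ and every set $\sigma$ of $j$ hyperedges, the number of copies in $\mathcal C$ containing $\sigma$ is at most $\tau^{j-1}|\mathcal C|/|H|$. Here $\tau=n^{-\delta}$ for some small $\delta>0$, so the codegree condition of the container theorem holds. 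I would build $\mathcal C$ greedily, adding one cycle at a time while skipping any cycle that would make some $\sigma$ saturated. This needs a robust form of the extremal bound: after deleting the boundedly many saturated "forbidden" subsets, $H$ still contains a cycle avoiding them. That robust form should come from the Kostochka--Mubayi--Verstra\"ete and F\"uredi--Jiang structure. Using delta-systems or shadows, $H$ has either many vertices of huge degree (at least $\lfloor (k-1)/2\rfloor+1$ of them, linked robustly) or a large subhypergraph with bounded $(r-1)$-codegree, and in each case one can route a linear cycle through many choices. Applying the container theorem iteratively then yields $2^{o(n^{r-1})}$ containers, each with at most $(\lfloor(k-1)/2\rfloor+\varepsilon)\binom n{r-1}$ hyperedges. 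Summing $2^{|C|}$ over the containers gives the bound.

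The case $(3,3)$ has to be treated separately. For the triangle $C_3^{(3)}$ the supersaturation count is too weak: there are near-extremal configurations, such as sunflower-like structures around high-codegree pairs, that contain few triangles, so containers lose a constant factor. Instead, fix a large threshold $D$ and split a $C_3^{(3)}$-free $H$ as $H_1\cup H_2$. Here $H_1$ consists of the edges containing a pair of codegree at least $D$, and $H_2$ is the rest. In $H_2$ all codegrees are bounded, and balanced supersaturation with containers should show that the number of choices for $H_2$ is only $2^{o(n^2)}$ unless $|H_2|$ is small. More precisely, a $C_3^{(3)}$-free graph of bounded codegree has $o(n^2)$ edges by a removal-lemma argument, so $H_2$ contributes $2^{o(n^2)}$. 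For $H_1$, each high-codegree pair $uv$ has a common neighbourhood $N(uv)$ of size at least $D$. Freeness then forces strong restrictions, for instance that two high-codegree pairs sharing a vertex essentially cannot both be extended disjointly. I would encode $H_1$ by orienting each edge towards its "apex" from a chosen high-codegree pair, obtaining a directed (multicoloured) graph on $[n]$. The $C_3^{(3)}$-free condition translates into forbidden local colour patterns. A multicolour entropy theorem then bounds the number of such encodings by $2^{(1+o(1))\binom n2}$, matching $\mathrm{ex}_3(n,C_3^{(3)})\sim\binom n2$.

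I expect the main obstacle to be this last step: proving that the entropy of the directed encoding is at most $(1+o(1))\binom n2$ bits. A naive count gives $2^{c\binom n2}$ with $c>1$, as in the Morris--Saxton phenomenon for graphs. First I would try Shearer's lemma on a uniformly random $H_1$, conditioning on the out-neighbourhood of each vertex. The aim is to show that the local information per pair is at most one bit plus $o(1)$, using that for each vertex $v$ the link graph $L(v)$ restricted to high-codegree pairs must be a star forest or close to it. For $(r,k)\ne(3,3)$, the hard part is the case $k=4$, $r\in\{3,4\}$, where the extremal structure is not just $S_L$. There the supersaturation must account for the extra $\Theta(n^{r-2})$ edges, but these are negligible at the $o(n^{r-1})$ scale.
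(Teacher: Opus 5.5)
Your outline has the same two-part architecture as the paper: containers with balanced supersaturation for $(r,k)\neq(3,3)$, and a codegree split, a digraph encoding and the Falgas-Ravry--O'Connell--Uzzell theorem for $(3,3)$. But in both parts the step that carries the proof is only asserted or explicitly left open, and several of your concrete claims are wrong.

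\textbf{The case $(r,k)\neq(3,3)$.} The balanced supersaturation lemma is the whole difficulty, and you give no mechanism for it. The route you gesture at (delta-systems; many high-degree vertices versus bounded codegree) is essentially the Jiang--Longbrake method, which was only carried through for $r\geq5$. Nothing in your plan handles $r\in\{3,4\}$.

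Your quantitative target is also stronger than what the paper establishes. Taking $j=k$, $\tau=n^{-\delta}$ forces $|\mathcal C|\geq|H|\,n^{\delta(k-1)}$. In two of the paper's regimes, a large $C_1$-full subhypergraph and the odd $3$-uniform large-shadow case, the paper only gets a constant $D$, namely $(C_1)^{r-1}$ and $C_2$ respectively. That means boundedly many balanced cycles per hyperedge, and nothing in your sketch shows more is available there. The paper closes the argument with different bookkeeping. It makes $K\log D/D$ small by taking $C_1$, then $C_2$, large. Each refinement via Corollary~\ref{cor:containers-balanced} removes $\Omega(s/K)$ hyperedges, so each parameter set is used only $O(K)$ times.

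The constructions themselves are also missing:
\begin{itemize}
\item the split from Lemma~\ref{lem:full} into a $C_1$-full part versus a $\lfloor(k+1)/2\rfloor$-full part with large shadow, plus a $(2,C_2)$-sparse alternative when $r=3$ and $k$ is odd;
\item switching along the triangulation tree with almost-regular graphs $\Gamma_f$;
\item Kostochka--Mubayi--Verstra\"ete extensions of complete $(r-1)$-partite subgraphs of the shadow, with a new triangle-extension lemma for $k=3$, $r\geq4$;
\item for $r=3$ with $k$ odd, a regularity argument and the $C_{k-1}^{(3)}$-plus-replacement construction.
\end{itemize}
Your remark that $k=4$ is the hard case is misplaced. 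Only the asymptotics of $\mathrm{ex}_r$ matter, and the delicate ranges are $r=3$ with $k$ odd and $r\geq4$ with $k=3$.

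\textbf{The case $(3,3)$.} The step ``$H_2$ has $o(n^2)$ edges, so it contributes $2^{o(n^2)}$'' is invalid as stated. The naive count $\sum_{m\leq\eta n^2}\binom{\binom n3}{m}$ is of order $2^{\eta n^2\log n}$, and the removal-lemma bound on $\eta(n)$ decays far more slowly than $1/\log n$. The paper instead uses that a linear $T$-free $3$-graph is determined by its $2$-shadow, since every shadow triangle is a hyperedge. So there are only $2^{o(n^2)}$ such graphs, and bounded codegree splits $H_0$ into at most $3q$ linear classes.

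Your sample restriction on high-codegree pairs is false: in the full star, any two pairs through the centre have codegree $n-2$ and extend disjointly. Orienting hyperedges towards an apex also does not recover $H_1$ unless you know which pairs of in-neighbours of $z$ form hyperedges with $z$. That is Lemma~\ref{lem:C33-induced-link-solution}: $P$ is triangle-free and $F_z=P[V_z]$. It makes $H$ determined by $H_0$, $P$ and the digraph $D$ with $x\to z$ iff $x\in V_z$.

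Your star-forest observation is correct, but it does not bound the count. Specifying each $V_z$ costs up to $n$ bits, about $2\binom n2$ in total, and halving this is precisely the step you leave open. The missing ideas are these.
\begin{itemize}
\item Every directed triangle of $D$ avoiding $P$ has all three pairs bidirected (Lemma~\ref{lem:C33-mixed-triangle-high-pair}).
\item For any valid template $\Phi$, take the loose arcs plus one further arc on each pair that is bidirected in $J(\Phi)$ and carries at most one loose arc. These form a digraph $K$ with no directed triangle and $a(K)\geq\operatorname{Ent}_2(\Phi)$.
\end{itemize}
Brown--Harary then gives $\operatorname{Ent}_2(\Phi)\leq\lfloor n^2/2\rfloor$. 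Hence $\pi(\Gamma)\leq1/2$, and the multicolour theorem gives $|\Gamma_n|\leq2^{\binom n2+o(n^2)}$. A Shearer-type bound conditioned on out-neighbourhoods, as you propose, does not by itself supply this directed-triangle constraint.
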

Our proof of Theorem~\ref{thm:numberOfCycleFree} gives a new proof of the result of Jiang and
Longbrake stated in Theorem~\ref{thm:Jiang-Longbrake}, without using
the delta-system method.  It also stands in sharp contrast to the graph
case: as discussed above, Morris and Saxton's
result~\eqref{eq:Erdosdisproof} shows that the asymptotic formula fails
for $C_6$ when $r=2$, whereas Theorem~\ref{thm:numberOfCycleFree}
shows that it holds for every cycle $C_k^{(r)}$ with
$r,k\geq3$.

The proof of Theorem~\ref{thm:numberOfCycleFree} has two main
components.  The first, which covers every $(r,k)\neq(3,3)$, is based on the four balanced supersaturation lemmas for linear cycles $C_k^{(r)}$ established in Section~\ref{sec:Supersaturation}.  Together with the hypergraph container method, these lemmas prove Theorem~\ref{thm:numberOfCycleFree} in all these cases.
The second component resolves the case $(r,k)=(3,3)$.
In Section~\ref{sec:concluding-remarks}, we describe a construction
that demonstrates an obstruction to obtaining the balanced supersaturation
required for our container argument.  We therefore use a different
counting method in this case.  Given a
$C_3^{(3)}$-free $3$-graph, we partition its hyperedges according to
whether they contain a pair of large codegree.  The subhypergraph
consisting of the hyperedges that contain no such pair and the set of
large-codegree pairs each have only $2^{o(n^2)}$ possibilities.  We
encode the remaining subhypergraph by a digraph and apply a multicolour
entropy theorem to count the possible encodings.  This proves
Theorem~\ref{thm:numberOfCycleFree} in the only case not covered by
balanced supersaturation.

We next describe our balanced supersaturation arguments in more
detail. The proof of the supersaturation lemmas depends on the structure of the
hypergraph.  In particular, we consider three main cases, according to
whether the hypergraph contains a large $3$-uniform subhypergraph with
bounded pair-codegrees, a large subhypergraph with large codegrees, or
a subhypergraph whose shadow is large and whose shadow hyperedges have
sufficiently many extensions.  In each case, the balanced
supersaturation argument must meet two competing requirements: it must
construct sufficiently many cycles while ensuring that no prescribed
set of hyperedges is contained in too many of the constructed cycles.

In the bounded pair-codegree case, we first show that the hypergraph must contain a large linear subhypergraph and use the graph regularity method
to find many cycles in the shadow, discard those whose extensions do
not form linear cycles, and select a balanced subcollection. In the large-codegree case, we build on the skeleton construction of
Balogh, Narayanan and Skokan~\cite{Balogh2019NumberCyclefree}, which
constructs many copies of $C_k^{(r)}$ in hypergraphs with large
codegrees.
Our main tool is a collection of auxiliary graphs that
guides the choices made during the construction of cycles.  Writing
$H$ for the large-codegree subhypergraph, for every
$f\in\partial H$ we define such a graph on the set
$
 N_H(f)\coloneqq\{z:f\cup\{z\}\in H\}
$
of extension vertices of $f$.  An adjacency in this graph permits one
extension vertex of $f$ to be replaced by another while $f$ remains
fixed.  The auxiliary graphs are chosen to be almost regular: the lower
bound on their vertex degrees gives many choices at each step of the
construction, while the upper bound controls the number of constructions
compatible with any prescribed collection of cycle hyperedges.
The key distinction from the supersaturation result of
Balogh, Narayanan and Skokan~\cite{Balogh2019NumberCyclefree} is that our supersaturation result
(Lemma~\ref{lem: Supersat Large codegrees}) applies even to host
hypergraphs with only $\varepsilon n^{r-1}$ hyperedges, for an
arbitrarily small fixed $\varepsilon>0$, provided that every member
of the shadow is contained in at least $C_1$ hyperedges, where $C_1$
is sufficiently large in terms of $\varepsilon,r,k$.

When the shadow is large, we build on the method of Kostochka, Mubayi
and Verstra\"ete~\cite{Kostochka2015}, which constructs linear cycles
in a hypergraph from complete multipartite subhypergraphs of its shadow
whose hyperedges have sufficiently many extensions.  Here, an
extension of $f\in\partial H$ is a hyperedge of the form
$f\cup\{z\}$.  For balanced supersaturation, we find many such complete
multipartite subhypergraphs and select the resulting cycles so that no
prescribed collection of hyperedges occurs in too many of them.  The
extension lemma of Kostochka, Mubayi and Verstra\"ete applies to all
the large-shadow cases with $k\geq4$, except when $r=3$ and $k$ is
odd.  Our main contribution in these remaining cases is to develop new
ways to extend configurations in the shadow to linear cycles while
controlling how often any prescribed hyperedge occurs.  For $k=3$ and
$r\geq4$, we establish a new extension lemma for complete multipartite
configurations.
For $r=3$ and odd $k\geq5$,
the proof of our extension lemma (Lemma~\ref{lem:a copy of Ck from Ktt})
splits into two cases.  In the first, there is a vertex that is an
available extension vertex for many pairwise disjoint edges of the
shadow.  We extend a suitable path in the shadow to a copy of
$P_{k-2}^{(3)}$ using only extension vertices different from this
vertex, and then complete the path using two hyperedges containing the
vertex.  In the second case, we extend an even cycle in the shadow to
a copy of $C_{k-1}^{(3)}$ and then replace one of its hyperedges by two
hyperedges.  A pair of auxiliary
bipartite graphs provides the choices
needed for this replacement and controls how often a prescribed
hyperedge occurs in the resulting cycles.  A detailed discussion is
given in Section~\ref{proofoverview}.

\subsection{Organization of the paper}
In the next subsection we introduce notation used throughout the paper,
and in Section~\ref{proofoverview} we give an overview of the proof.
Section~\ref{sec:tools} collects the external and preliminary results,
organized into subsections on structural and elementary tools, graph
regularity, containers, hypergraphs with large shadow, and linear
triangles in uniformity $3$.  In Section~\ref{sec:Cycle from shadow}
we prove the lemmas used to construct linear cycles from complete
multipartite subhypergraphs of the shadow.  In
Section~\ref{sec:Supersaturation} we prove the four balanced
supersaturation lemmas for all pairs $(r,k)\neq(3,3)$, and in
Section~\ref{sec:EnumerationProof} we combine them
with the hypergraph container lemma to prove our container theorem for
cycle-free hypergraphs, Theorem~\ref{thm:MainContainer}.
Section~\ref{sec:C33-enumeration} gives the separate entropy argument
for linear triangles in uniformity three and proves
Theorem~\ref{thm:C33-enumeration}.  In
Section~\ref{sec:proof-main-theorem}, we combine this result with
Theorem~\ref{thm:MainContainer} to complete the proof of
Theorem~\ref{thm:numberOfCycleFree}.  We conclude with some remarks in
Section~\ref{sec:concluding-remarks}.

\subsection{Notation}
We call the members of a hypergraph \emph{hyperedges}, the members of
an ordinary graph \emph{edges}, and the ordered pairs belonging to a
digraph \emph{arcs}.  We write $H$ for the set $E(H)$ of hyperedges of
an $r$-graph $H$ and $v(H)$ for $|V(H)|$.
For a graph $G$ and a set $U\subseteq V(G)$, we write $G[U]$ for the
subgraph of $G$ induced by $U$; thus $V(G[U])=U$ and
$E(G[U])=E(G)\cap\binom{U}{2}$.

In an $r$-graph $H$, a set $f \subseteq V(H)$ is called a \emph{subedge} if there exists $e \in H$ with $f \subseteq e$. The \emph{shadow} of $H$, denoted by $\partial H$, is defined as the collection of all subedges $f$ of $H$ with $|f|=r-1$.  For an $r$-graph $H$, an $(r-1)$-graph $G\subseteq \partial H$ and $f\in G$, let 
\[
L_{G}(f)\coloneqq \{v\in V(H)\setminus V(G): (\{v\}\cup f)\in H\}.
\]
We also let $\widehat G\coloneqq\{f\cup\{x\}:f\in G,\ x\in L_G(f)\}$.

For $f\in\partial H$, a hyperedge $e\in H$ containing $f$ is called an
\emph{extension} of $f$ in $H$, and the unique vertex in $e\setminus f$
is its \emph{extension vertex}.  If $Q\subseteq\partial H$, an
\emph{extension of $Q$ in $H$} is a collection $\{e_f:f\in Q\}$,
where $e_f$ is one chosen extension of $f$ for every $f\in Q$.

For an $r$-graph $H$ and a vertex $v \in V(H)$, the number of hyperedges of $H$ containing $v$ is called the \emph{degree} of $v$, and is denoted by $d_H(v)$. Moreover, for a set $f\subseteq V(H)$ of size at least $2$, the \emph{codegree} of $f$ is the number of hyperedges of $H$ that contain $f$, and is denoted by $d_H(f)$. For convenience, we sometimes suppress the set notation and write, for example, $d_H(u,v)$ instead of $d_H(\{u,v\})$. For an $(r-1)$-set $f$, we define the \emph{neighborhood}
$N_{H}(f)\coloneqq \{v \in V(H) : f \cup \{v\} \in H\}$. For an $r$-graph $H$ and $\ell\in [r]$, the \emph{maximum $\ell$-codegree} of $H$ is defined by
\[
\Delta_\ell(H)\coloneqq \max\{d_H(f): f\subseteq V(H),\ |f|=\ell\}.
\]
For a collection $\KK$ of copies of $C_k^{(r)}$ in an $r$-graph $H$, sometimes we regard $\KK$ as a $k$-graph with vertex set $V(\KK)\coloneqq E(H)$, where each copy of $C_k^{(r)}$ gives rise to a hyperedge consisting of its hyperedges. Thus, for $j\in [k]$, $\Delta_j(\KK)$ denotes the maximum number of copies of $C_k^{(r)}$ in $\KK$ containing any given set of $j$ hyperedges of $H$. 

An $r$-graph $H$ is called \emph{$d$-full} if every $(r-1)$-subedge of $H$ has codegree at least $d$. An $r$-graph $H$ is \emph{$(t,C)$-sparse} if every set of $t$ vertices is contained in at most $C$ hyperedges, implying that $\Delta_t(H) \le C$. Note that a $(2,1)$-sparse $r$-graph is simply a linear $r$-graph.

We use the notation $f\ll g$ in two standard ways. When $f$ and $g$ are functions of $n$, it means that $f=o(g)$ as $n\to\infty$. When it is used between fixed parameters, it denotes a hierarchy of constants: for example, $a\ll b,c$ means that $a$ is chosen sufficiently small in terms of $b$ and $c$. The intended meaning will always be clear from context.
We write $\log$ for the natural logarithm and $\log_2$ for the
base-two logarithm.
\section{Proof overview}
\label{proofoverview}

Our proof of Theorem~\ref{thm:numberOfCycleFree} separates the case
$(r,k)=(3,3)$ from all other pairs $(r,k)$.  For $(r,k)=(3,3)$ we count
$C_3^{(3)}$-free $3$-graphs directly, using a digraph encoding and a
multicolour counting theorem.  For every other pair we construct a
balanced family of copies of $C_k^{(r)}$ and apply the hypergraph
container method. We describe these two arguments in that order.

\subsection{Linear triangles in uniformity three}
Our argument here combines the Ruzsa--Szemer\'edi theorem~\cite{RuzsaSzemeredi1978}, the Brown--Harary theorem~\cite{BrownHarary1970} for
digraphs, and the multicolour counting theorem of Falgas--Ravry,
O'Connell and Uzzell~\cite{FalgasRavryOConnellUzzell2019}.
Write $T\coloneqq C_3^{(3)}$, and let $\mathcal L_n$ be the family of
linear $T$-free $3$-graphs on $[n]$.  The Ruzsa--Szemer\'edi theorem implies that
$|\mathcal L_n|=2^{o(n^2)}$, so we may choose an integer $q=q(n)$ such
that $q\to\infty$ and $q\log_2|\mathcal L_n|=o(n^2)$.
Fix a $T$-free $3$-graph $H$ on $[n]$, let
$P\coloneqq\{xy\in\tbinom{[n]}2:d_H(xy)\geq q\}$, and let $H_0$
consist of the hyperedges of $H$ containing no pair from $P$.  The graph
whose vertices are the hyperedges of $H_0$, with two vertices adjacent when
the corresponding hyperedges share a pair of vertices, has maximum degree less than $3q$.
A proper vertex $3q$-coloring of the graph therefore partitions $H_0$ into at most  $3q$ linear
$T$-free $3$-graphs, giving $|\mathcal L_n|^{3q}=2^{o(n^2)}$
possibilities for $H_0$.  Moreover, a theorem of Frankl and F\"uredi (Theorem~\ref{thm:C33-FF-extremal}) gives $q|P|\leq3|H|=O(n^2)$.  Hence $|P|=o(n^2)$, and there
are $2^{o(n^2)}$ possibilities for $P$.

Next we count the choices for $H\setminus H_0$, which consists precisely of the hyperedges of $H$ that contain a pair from $P$.  We encode these hyperedges using a digraph.  To define this encoding, for each $z\in[n]$, let $F_z$ be the graph whose edges are the pairs $xy\in P$ for which $xyz\in H$, and let $V_z$ be the set of vertices incident to an edge of $F_z$.
Lemma~\ref{lem:C33-induced-link-solution}
shows that $P$ is triangle-free and that $F_z=P[V_z]$ for every
$z\in[n]$.  Define a digraph $D$ on $[n]$ by putting $x\to z$ in $D$
exactly when $x\in V_z$.  If $xy\in P$, then
$xyz\in H$ if and only if both $x\to z$ and $y\to z$ belong to $D$.
Thus $H_0$, $P$, and $D$ determine every hyperedge of $H$.

An unordered pair of vertices is a \emph{digon} if both possible arcs
on that pair are present.  For each $xy\in P$, delete from $D$ whichever of the arcs $x\to y$ and
$y\to x$ are present, and denote the resulting digraph by $D_0$.
Lemma~\ref{lem:C33-mixed-triangle-high-pair} shows that every directed
triangle in $D_0$ has all three underlying pairs as digons.  Let $\Gamma_n$ be the family of digraphs on $[n]$ with this property. To count the possibilities for $\Gamma_n$, regard the four possible states on
an unordered pair---no arc, either one of the two single arcs, or a
digon---as four colours.  A four-colour template $\Phi$ assigns to
each pair $xy$ a nonempty palette $\Phi_{xy}$ of permitted states; it
is $\Gamma_n$-valid if every choice of one state from each palette
produces a member of $\Gamma_n$.  For distinct vertices $x,y\in[n]$, call the orientation $x\to y$
\emph{loose in $\Phi$} if $\Phi_{xy}$ contains the state consisting
of the single arc $x\to y$.  Define a digraph $J=J(\Phi)$ on $[n]$ by putting $x\to y$ in $J$
if some state in $\Phi_{xy}$ contains the arc $x\to y$.  For each
$xy\in\binom{[n]}2$, let $\ell_{xy}\in\{0,1,2\}$ be the number
of loose arcs on $xy$, and let $B$ consist of the pairs $xy$ on which
$J$ contains both arcs and $\ell_{xy}\leq1$.  In the proof of
Lemma~\ref{lem:C33-mixed-digraph-count} we construct a new digraph $K$ by
including all loose arcs, the non-loose arc on each pair in $B$ with
$\ell_{xy}=1$, and the arc directed from the smaller endpoint to the
larger endpoint on each pair in $B$ with $\ell_{xy}=0$.  In particular, this construction ensures that $K$ has no directed triangles and that
\[
 \sum_{xy\in\binom{[n]}2}\log_2|\Phi_{xy}|
 \leq\sum_{xy}\ell_{xy}+|B|=|A(K)|.
\]
Now, the Brown--Harary
theorem gives $|A(K)|\leq\lfloor n^2/2\rfloor$.  The multicolour counting theorem
(Theorem~\ref{thm:C33-multicolour-counting}) then yields
$|\Gamma_n|\leq2^{\binom n2+o(n^2)}$.  As shown above, there are $2^{o(n^2)}$ possibilities
for each of $H_0$ and $P$, and $|P|=o(n^2)$.  Once these have been fixed, $D$ is determined by $D_0\in\Gamma_n$
and, for each $xy\in P$, by which of the arcs $x\to y$ and $y\to x$
are present in $D$. The latter
have at most $4^{|P|}=2^{o(n^2)}$ possibilities.  Since $H_0$, $P$,
and $D$ determine $H$, we obtain
$\mathrm{forb}_3(n,T)\leq2^{\binom n2+o(n^2)}$.  Conversely, a full
$3$-uniform star is $T$-free and has $\binom{n-1}{2}$ hyperedges, so
its subhypergraphs give
$\mathrm{forb}_3(n,T)\geq2^{\binom{n-1}{2}}$.

\subsection{Balanced supersaturation for the remaining cases}
The main task for the remaining pairs $(r,k)$ is not merely to find
many linear cycles, but to find them in a sufficiently even
distribution: no small collection of hyperedges should occur in too
many of the chosen cycles.  The construction used to obtain this
balance depends on properties of a suitable subhypergraph of the dense
host hypergraph.
We state these properties precisely below.  First, we describe the
quantitative bounds meant by balanced supersaturation.
Given a collection $\KK$ of copies of $C_k^{(r)}$ in an $r$-graph
$H$, we regard $\KK$ as a $k$-graph with vertex set $E(H)$.  For
$\ell\in[k]$, the quantity $\Delta_\ell(\KK)$ is the maximum number
of copies in $\KK$ containing a fixed collection of $\ell$ hyperedges of
$H$.  Each of the four supersaturation lemmas specifies parameters
$D,K,s$, where $s$ is $n^2$, $n^{r-1}$, or $|H_2|$, as stated in the
relevant lemma, and produces a collection $\KK$ satisfying
$|\KK|\geq D^{k-1}s/K$ and
$\Delta_\ell(\KK)\leq D^{k-\ell}$ for every $\ell\in[k]$.  These
bounds allow us to apply
the hypergraph container lemma.

We next describe the subhypergraphs to which the four supersaturation
lemmas are applied.  Recall that the shadow $\partial H$ of an $r$-graph $H$ is the $(r-1)$-graph
consisting of the $(r-1)$-sets contained in hyperedges of $H$.  An
$r$-graph is $d$-full if every member of its shadow is contained in at
least $d$ of its hyperedges.  A $3$-graph is $(2,C_2)$-sparse if every
pair of vertices is contained in at most $C_2$ hyperedges.

Propositions~\ref{prop:kOdd} and~\ref{prop:kEven} give the following
structural alternatives for sufficiently dense hypergraphs.  If $r=3$ and $k\geq5$
is odd, then every $3$-graph $H$ with at
least $\big((k-1)/2+\varepsilon/2\big)\binom{n}{2}$
hyperedges satisfies at least one of:
\smallskip
\begin{enumerate}[start=0,label=(A\arabic*),font=\upshape,itemsep=4pt]
\item
$H$ contains a $(2,C_2)$-sparse $3$-graph $H_0$ with
$\Omega(\varepsilon n^2)$ hyperedges;
\item
$H$ contains a $C_1$-full $3$-graph $H_2$ with
$\Omega(\varepsilon n^2)$ hyperedges;
\item
$H$ contains a $\frac{k+1}{2}$-full
$3$-graph $H_1$ with $\Omega(\varepsilon n^2)$ hyperedges and
$|\partial H_1|=\Omega(\varepsilon n^2/C_1)$ such that every hyperedge of
$H_1$ contains a pair $xy$ satisfying
$d_H(xy)\geq C_2$ in the original $3$-graph $H$.
\end{enumerate}
\smallskip
If $r\geq4$ and $k\geq3$, or if $r=3$ and $k\geq4$ is even, then every
$r$-graph $H$ with at least
$\big(\lfloor(k-1)/2\rfloor+\varepsilon/2\big)\binom{n}{r-1}$ hyperedges
satisfies at least one of:

\smallskip
\begin{enumerate}[label=(B\arabic*),font=\upshape,itemsep=4pt]
\item
$H$ contains a $C_1$-full $r$-graph $H_2$ with
$\Omega(\varepsilon n^{r-1})$ hyperedges;
\item
$H$ contains a $\left\lfloor\frac{k+1}{2}\right\rfloor$-full
$r$-graph $H_1$ with $\Omega(\varepsilon n^{r-1})$ hyperedges and
$|\partial H_1|=\Omega(\varepsilon n^{r-1}/C_1)$.
\end{enumerate}
\smallskip

In cases~\ref{itm: Odd Case linear},
\ref{itm: Odd Case Shadow}, and~\ref{itm: Even Case Shadow}, we first
construct a collection $\KK^*$ and then select a subcollection
$\KK\subseteq\KK^*$ by starting with the empty collection and adding
cycles one by one.
During this selection, for $1\leq\ell\leq k-1$, an unordered $\ell$-tuple
$\sigma=\{e_1,\ldots,e_\ell\}$ of distinct hyperedges is called
\emph{saturated} when it is already
contained in $\lfloor D^{k-\ell}\rfloor$ members of $\KK$.  We add a
cycle from $\KK^*\setminus\KK$ only when it contains no saturated
tuple.  Counting the pairs $(\sigma,C)$ with $\sigma$ saturated,
$C\in\KK$, and $\sigma\subseteq C$ shows that there are at most
$\binom{k}{\ell}|\KK|/\lfloor D^{k-\ell}\rfloor$ saturated
$\ell$-tuples: each such tuple belongs to
$\lfloor D^{k-\ell}\rfloor$ members of $\KK$, and each member contains
only $\binom{k}{\ell}$ sets of $\ell$ hyperedges.  For every
$\ell\in[k-1]$, the proof of the applicable lemma bounds the number of
cycles in $\KK^*$ containing a saturated $\ell$-tuple.  The sum of these
bounds is smaller than the number of
unused cycles in $\KK^*$ until the asserted lower bound for $|\KK|$
is reached.

In cases~\ref{itm: Odd Case Codegree} and
\ref{itm: Even Case Codegrees}, no selection of a subcollection is
needed.  We count directly both the cycles produced by the
construction and those that contain a fixed set of hyperedges.
These two counts give, respectively, the lower bound for $|\KK|$
and the upper bounds for $\Delta_\ell(\KK)$.

\subsubsection{Case~\ref{itm: Odd Case linear}: a large
$(2,C_2)$-sparse $3$-graph}
Although the $3$-graph $H_0$ in
condition~\ref{itm: Odd Case linear} need not be linear, its bounded
pair-codegrees allow us to extract a large linear subhypergraph.  We define
an auxiliary graph with vertex set $E(H_0)$, joining two vertices
whenever the corresponding hyperedges
intersect in two vertices.  Its maximum degree is $O(C_2)$, so a
greedy proper coloring gives a linear $3$-graph
$H_0^*\subseteq H_0$ with $\Omega(|H_0|/C_2)=\Omega(\varepsilon n^2/C_2)$ hyperedges.

For every $e\in H_0^*$, the three pairs contained in $e$ form a
triangle in the shadow $\partial H_0^*$; call this the triangle
associated with $e$.  Because $H_0^*$ is linear, every shadow edge
belongs to exactly one associated triangle.  We apply Szemer\'edi's
regularity lemma~\cite{Szemeredi1978Regular} with parameters depending only on $\varepsilon,k$
and $C_2$.  After deleting the edges inside clusters, incident to the
exceptional set, between irregular pairs, or between sufficiently
sparse regular pairs, many associated triangles remain.  The reduced
graph, whose vertices are the regularity clusters and whose edges
represent dense regular pairs, therefore contains a triangle.  Since
$k$ is odd, the graph counting lemma then gives
$\Omega_{\varepsilon,k,C_2}(n^k)$ copies of $C_k$ in
$\partial H_0^*$.

Every edge of a graph cycle in $\partial H_0^*$ has a unique extension
in $H_0^*$.  The collection of these extensions can fail to be a copy
of $C_k^{(3)}$ only if two shadow edges have the same extension vertex,
or if the extension vertex of one shadow edge lies on another edge of
the graph cycle.  Claim~\ref{claim2} shows that after discarding the
graph cycles with either defect,
$\Omega_{\varepsilon,k,C_2}(n^k)$ copies of $C_k^{(3)}$ remain.
Let $\KK^*$ denote this collection of copies.

For a copy with hyperedges $e_1,\ldots,e_k$ in cyclic order, let $v_i$
be the unique vertex in $e_{i-1}\cap e_i$, where indices are taken
modulo $k$.
Thus $v_1,\ldots,v_k$, in this order, form a copy of $C_k$ in
$\partial H_0^*$, and the third vertex of $e_i$ belongs to no other
hyperedge of this copy.  Fix a saturated $\ell$-tuple $\sigma$, where
$1\leq\ell\leq k-1$.  There
are $O_k(1)$ ways to choose the positions occupied by its hyperedges in
the cyclic order and, for each of these hyperedges, to decide which two
of its vertices play the roles of $v_i$ and $v_{i+1}$.  After these
choices are fixed, the hyperedges in $\sigma$ determine at least
$\ell+1$ of $v_1,\ldots,v_k$, because a proper set of $\ell$ edges of
the graph cycle is incident with at least $\ell+1$ vertices.  There are
therefore at most $n^{k-\ell-1}$ choices for the remaining vertices
$v_i$.  Since $H_0^*$ is linear, each pair $v_iv_{i+1}$ is contained
in at most one hyperedge of $H_0^*$.  Consequently, $\sigma$ is
contained in $O_k(n^{k-\ell-1})$ members of $\KK^*$.

Here $D=n^{(k-2)/(k-1)}$ and $K=\log n$.  For sufficiently large $n$, while
$|\KK|<D^{k-1}n^2/K$, the preceding double count gives at most
$O_k(D^{\ell-1}n^2/K)$ saturated $\ell$-tuples.  Combining this with
the bound $O_k(n^{k-\ell-1})$ for each tuple and using $D\leq n$
shows that only $O_k(n^k/\log n)$ members of $\KK^*$ contain any
saturated tuple.  Since $|\KK^*|=\Omega_{\varepsilon,k,C_2}(n^k)$
and $|\KK|<n^k/\log n$ until the required size is reached, there is
always an unused cycle containing no saturated tuple.  The selection
can therefore continue until $|\KK|\geq D^{k-1}n^2/K$, while ensuring
$\Delta_\ell(\KK)\leq D^{k-\ell}$ for every $\ell\in[k]$.
Szemer\'edi's regularity lemma and the graph counting lemma provide the
graph cycles from which Claim~\ref{claim2} constructs $\KK^*$, and the
rule of adding only cycles that contain no saturated tuple produces the
required subcollection $\KK$.  This proves our first supersaturation
lemma, Lemma~\ref{lem: Supersat Large Linear subgraph}.

\subsubsection{Cases~\ref{itm: Odd Case Codegree} and
\ref{itm: Even Case Codegrees}: a large $C_1$-full subhypergraph}
For the $C_1$-full subhypergraph $H_2$, we construct the balanced collection
by adapting a cycle construction of Balogh, Narayanan and
Skokan~\cite{Balogh2019NumberCyclefree}.  For each $f\in\partial H_2$,
we fix an auxiliary graph $\Gamma_f$ on $N_{H_2}(f)$, with every
degree equal to $C_1-1$ or $C_1$.
If $f\cup\{x\}$ is a hyperedge obtained at some stage of the
construction and $x'$ is adjacent to $x$ in $\Gamma_f$,
then $f\cup\{x'\}$ is also a hyperedge, so we may replace $x$ by $x'$
while keeping the other $r-1$ vertices fixed.  We call this operation
a \emph{switch over $f$}.

Here is the system that specifies the order of the switches.  With
indices modulo $k$, we seek distinct vertices
$v_1,\ldots,v_k,u_1,\ldots,u_k$ and put
$y_i\coloneqq\{v_i,v_{i+1},u_i\}$ for $i\in[k]$.  We call these sets
the \emph{boundary triples}.  Starting with a hyperedge $e_1$, we use
an ordering fixed in advance for the vertices of every hyperedge to
write $e_1=y_1\cup R$, where
$R\coloneqq(z_1,\ldots,z_{r-3})$ is an ordered list of its remaining
vertices; a union with $R$ means a union with its underlying set.
For $2\leq i\leq k-1$, put
$q_i\coloneqq\{v_1,v_i,v_{i+1}\}$.  Form a graph on the triples
$y_1,\ldots,y_k,q_2,\ldots,q_{k-1}$ by joining two triples when they
share two vertices; see Figure~\ref{fig:triangulation-tree}.
This graph is a tree.  Starting with $y_1\cup R$ and moving away from
$y_1$ in the tree, we obtain $a\cup R$ for each new triple $a$ by
one switch from $b\cup R$, where $b$ is the unique neighbor of $a$
one step closer to $y_1$.  At each switch, the new vertex is chosen
to be distinct from every vertex chosen earlier.  All vertices of
$R$ remain fixed during these switches.  The
$2k-3$ switches construct all the boundary triples.  For each
$i\in\{2,\ldots,k\}$, we then replace the vertices of $R$ in the
hyperedge $y_i\cup R$, one at a time, by vertices outside
$\{v_1,\ldots,v_k,u_1,\ldots,u_k\}\cup R$ and distinct from all earlier
replacement vertices.
Such a new vertex is \emph{private} to the
resulting hyperedge.  The resulting $k$ hyperedges form a copy of
$C_k^{(r)}$.

\begin{figure}[ht]
    \centering
\begin{tikzpicture}[scale=1,
    every node/.style={circle, fill=black, inner sep=1.5pt}]

\foreach \i/\angle in {1/90,2/30,3/-30,4/-90,5/-150,6/150}
    \coordinate (v\i) at (\angle:2);

\foreach \i/\angle in {1/60,2/0,3/-60,4/-120,5/180,6/120}
    \coordinate (u\i) at (\angle:3.3);

\coordinate (y1) at (barycentric cs:v1=1,u1=1,v2=1);
\coordinate (y2) at (barycentric cs:v2=1,u2=1,v3=1);
\coordinate (y3) at (barycentric cs:v3=1,u3=1,v4=1);
\coordinate (y4) at (barycentric cs:v4=1,u4=1,v5=1);
\coordinate (y5) at (barycentric cs:v5=1,u5=1,v6=1);
\coordinate (y6) at (barycentric cs:v6=1,u6=1,v1=1);

\coordinate (q2) at (barycentric cs:v1=1,v2=1,v3=1);
\coordinate (q3) at (barycentric cs:v1=1,v3=1,v4=1);
\coordinate (q4) at (barycentric cs:v1=1,v4=1,v5=1);
\coordinate (q5) at (barycentric cs:v1=1,v5=1,v6=1);

\draw
    (v1)--(v2)--(v3)--(v4)--(v5)--(v6)--cycle;

\draw
    (v1)--(u1)--(v2)
    (v2)--(u2)--(v3)
    (v3)--(u3)--(v4)
    (v4)--(u4)--(v5)
    (v5)--(u5)--(v6)
    (v6)--(u6)--(v1)
    (v1)--(v3)
    (v1)--(v4)
    (v1)--(v5);
\draw[red,dashed]
 (q2)--(y1)
 (q2)--(q3)
 (q3)--(q4)
 (q4)--(q5) 
 (q2)--(y2)
 (q3)--(y3)
 (q4)--(y4)
 (q5)--(y5)
 (q5)--(y6);

\foreach \i in {1,...,6} {
    \node at (v\i) {};
    \node at (u\i) {};
}
\foreach \i in {2,...,5} {
    \node[rectangle, fill=red, inner sep=2pt] at (q\i) {};
}

\node[regular polygon,regular polygon sides=5, fill=red, inner sep=2pt] at (y1) {};

\foreach \i in {2,...,6} {
    \node[rectangle, fill=red, inner sep=2pt] at (y\i) {};
}

\node[fill=none, above]  at (v1) {$v_1$};
\node[fill=none, above right] at (v2) {$v_2$};
\node[fill=none, below right]       at (v3) {$v_3$};
\node[fill=none, below] at (v4) {$v_4$};
\node[fill=none, below left]  at (v5) {$v_5$};
\node[fill=none, above left]        at (v6) {$v_6$};

\node[fill=none, above right]       at (u1) {$u_1$};
\node[fill=none, right] at (u2) {$u_2$};
\node[fill=none, below right] at (u3) {$u_3$};
\node[fill=none, below]       at (u4) {$u_4$};
\node[fill=none, below left]  at (u5) {$u_5$};
\node[fill=none, above left]  at (u6) {$u_6$};

\node[fill=none,  right]       at (y1) {$y_1$};
\node[fill=none, below right] at (y2) {$y_2$};
\node[fill=none, below right] at (y3) {$y_3$};
\node[fill=none, below]       at (y4) {$y_4$};
\node[fill=none, below left]  at (y5) {$y_5$};
\node[fill=none, above left]  at (y6) {$y_6$};

\node[fill=none, above left] at (q2) {$q_2$};
\node[fill=none, above left] at (q3) {$q_3$};
\node[fill=none, above right]  at (q4) {$q_4$};
\node[fill=none, above right ]  at (q5) {$q_5$};

\end{tikzpicture}\quad \raisebox{.65\height}{\begin{tikzpicture}[
  x=1.35cm,
  y=1.05cm,
  every node/.style={font=\small},
  diagonal/.style={circle,draw,minimum size=8mm,inner sep=1pt},
  boundary/.style={rectangle,rounded corners,draw,minimum width=9mm,
                   minimum height=7mm,inner sep=1pt}]
\node[diagonal] (q2) at (-2.25,0) {$q_2$};
\node[diagonal] (q3) at (-0.75,0) {$q_3$};
\node[diagonal] (q4) at (0.75,0) {$q_4$};
\node[diagonal] (q5) at (2.25,0) {$q_5$};
\node[boundary,double] (y1) at (-2.75,1.20) {$y_1$};
\node[boundary] (y2) at (-1.75,1.15) {$y_2$};
\node[boundary] (y3) at (-0.75,-1.05) {$y_3$};
\node[boundary] (y4) at (0.75,-1.05) {$y_4$};
\node[boundary] (y5) at (1.75,1.15) {$y_5$};
\node[boundary] (y6) at (2.75,1.15) {$y_6$};
\draw (q2)--(q3)--(q4)--(q5);
\draw (q2)--(y1) (q2)--(y2) (q3)--(y3) (q4)--(y4)
      (q5)--(y5) (q5)--(y6);
\end{tikzpicture}}
\caption{The triangulation tree for $k=6$, rooted at $y_1$.  The figure on the left shows the vertices $v_i$ and $u_i$, the boundary triples $y_i$,
and the triples $q_i$; the dashed edges join triples that share
two vertices.  The right figure shows the resulting tree: the rounded
boxes are the boundary triples, the circles are the triples $q_i$,
and the double box marks the root $y_1$.}
\label{fig:triangulation-tree}
\end{figure}

A \emph{construction record} is the initial hyperedge $e_1$ together
with the new vertex chosen at each switch.  Each record contains
$M\coloneqq2k-3+(r-3)(k-1)=(r-1)(k-1)-1$ switch choices.
Before each switch, fewer than $k(r-1)$ vertices have been chosen.
Excluding these vertices therefore leaves at least $C_1/2$ choices,
since every degree of $\Gamma_f$ is at least $C_1-1$ and $C_1$ is
sufficiently large.  Conversely, when $f$ and the new vertex $x'$ are
fixed, the upper degree of $\Gamma_f$ gives at most $C_1$ choices
for the vertex $x$ that it replaced.

For each initial hyperedge $e_1$, the construction gives at least
$\Omega_{r,k}((C_1)^{(r-1)(k-1)-1})$ records, and any fixed cycle is
produced by $O_{r,k}(1)$ records.  If $1\leq j\leq k-1$ hyperedges are prescribed, then
there are $O_{r,k}(1)$ ways to place them in the cyclic order and to
identify which of their vertices play the roles of the intersection
vertices, the vertices $u_i$, and the ordered private vertices.  After
these choices, there are at most $(C_1)^{(r-1)(k-j)-1}$ records.  With
$D=(C_1)^{r-1}$ and $K=\Theta_{r,k}(C_1)$, these estimates give
$|\KK|\geq D^{k-1}|H_2|/K$ and
$\Delta_j(\KK)\leq D^{k-j}$ for $1\leq j\leq k-1$, without a
separate selection step.  Also, $\Delta_k(\KK)\leq1$ because a set
of $k$ hyperedges determines at most one copy.  In particular,
$K\log D/D=O_{r,k}(\log C_1/(C_1)^{r-2})$, which can be made arbitrarily
small for every fixed $r\geq3$.  This is the balanced collection in
Lemma~\ref{lem: Supersat Large codegrees}.

\subsubsection{Cases~\ref{itm: Odd Case Shadow} and
\ref{itm: Even Case Shadow}: a full subhypergraph with large shadow}
For the $\lfloor(k+1)/2\rfloor$-full subhypergraph $H_1$ with large shadow,
we combine supersaturation for complete multipartite hypergraphs with
an extension procedure of
Kostochka, Mubayi and Verstra\"ete~\cite{Kostochka2015}.  In
case~\ref{itm: Even Case Shadow}, Lemma~\ref{lem:Large Shadow} is the
starting point: if $|\partial H_1|>\delta n^{r-1}$, it gives a
set $T\subseteq V(H)$ and an $(r-1)$-graph
$F\subseteq\partial H_1$ such that $V(F)\cap T=\varnothing$,
$|F|=\Omega_{r,k}(\delta n^{r-1})$, and every $f\in F$ has at least
$\lfloor(k+1)/2\rfloor$ extension vertices in $T$.  For each
$f\in F$, fix a set $\mathcal L(f)\subseteq N_{H_1}(f)\cap T$ of
size $\lfloor(k+1)/2\rfloor$.

In case~\ref{itm: Odd Case Shadow},
Lemma~\ref{lem:Large Shadow odd} gives the corresponding sets $T$ and
$F$ and, for every $xy\in F$, an extension vertex with the additional
codegree properties needed below.

Put $t\coloneqq(r2^k)^{4r}$.  Proposition~\ref{prop:many-complete-partite} gives
$\Omega_{r,k,\delta}(n^{(r-1)t})$ complete $(r-1)$-partite
$(r-1)$-graphs in $F$ with $t$ vertices in each part.

In case~\ref{itm: Even Case Shadow}, for each complete multipartite
subhypergraph $G$ we choose an extended cycle supplied by
Lemma~\ref{lem:KMV-cycle-from-shadow} when $k\geq4$ and by
Lemma~\ref{lem:triangle-from-shadow} when $k=3$, and let $a(G)$ be the number of vertices of
that cycle that lie in $T$.  The value of $a(G)$ belongs to
$\{\lceil k/2\rceil,\ldots,k\}$, so by the pigeonhole principle there
is a value $a$ that occurs for at least a $1/k$ fraction of the
multipartite subhypergraphs.  We restrict to those subhypergraphs.  If $a=k$, deleting
the vertices in $T$ from a cycle leaves a copy of $C_k^{(r-1)}$ in
$F$.  If $a<k$, it leaves $k-a$ vertex-disjoint paths in $F$, each
with at least two members.  When $a=k$, a fixed set of $\ell$ hyperedges belongs to
$O_{r,k}(n^{(r-2)(k-\ell)-1})$ chosen cycles; when $a<k$, the bound
is $O_{r,k}(n^{(r-2)(k-\ell)+(k-a)-1})$.
In either case, double counting shows that saturated tuples exclude
only a fixed fraction of the chosen cycles before the required lower
bound is reached.  The selection therefore gives
$|\KK|\geq D^{k-1}n^{r-1}/K$ and
$\Delta_\ell(\KK)\leq D^{k-\ell}$ with
$D=n^{1/(k-1)}$ and $K=\log n$.  This is the balanced collection
asserted by Lemma~\ref{lem: Supersat Large shadow even}.

Case~\ref{itm: Odd Case Shadow} requires an additional idea.
For each complete bipartite subgraph of $F$ used in the construction,
Lemma~\ref{lem:a copy of Ck from Ktt} gives one of two alternatives.
In the first alternative, the lemma directly extends edges in the
shadow to a copy of $C_k^{(3)}$.  If this alternative holds for at
least half of the complete bipartite subgraphs, we group the resulting
cycles according to their number of vertices in $T$ and apply the same
multiplicity and saturated-tuple estimates as in the general
large-shadow argument above.

In the second alternative, we find a copy of $C_{k-1}$ in $F$, extend
its edges to a copy of $C_{k-1}^{(3)}$ in $H_1$, and replace one
hyperedge of this cycle by two hyperedges of $H$ to obtain a copy of
$C_k^{(3)}$.  For the shadow edge of the hyperedge that is replaced,
Lemma~\ref{lem:Large Shadow odd} supplies an extension vertex
$\alpha$ with the codegree properties used in
Definition~\ref{defi: Auxiliary graph large shadow}.  The two
auxiliary bipartite graphs defined there provide the two replacement
hyperedges and control the number of resulting cycles containing any
prescribed hyperedge.

If the second alternative holds for more than half of the complete
bipartite subgraphs, we use these cycles in the selection.  Depending
on whether a saturated $\ell$-tuple contains two, one, or no replacement
hyperedges, it belongs to at most $O_k(n^{k-1-\ell})$,
$O_k(C_2n^{\max\{k-2-\ell,0\}})$, or
$O_k(C_2n^{k-2-\ell})$ constructed cycles, respectively.  Summing
over all saturated tuples in either alternative shows that
the selection reaches $|\KK|\geq D^{k-1}n^2/K$ with $D=C_2$ and with
$K$ depending only on $C_1$ and $k$, while maintaining
$\Delta_\ell(\KK)\leq D^{k-\ell}$ for every $\ell\in[k]$.  Together,
these estimates establish Lemma~\ref{lem: Supersat Large shadow odd}.

\subsubsection{From supersaturation to enumeration using containers}
The final step combines these balanced collections with the hypergraph
container method of Balogh--Morris--Samotij and
Saxton--Thomason~\cite{balogh2015independent,saxton2015hypergraph}.
To prove Theorem~\ref{thm:MainContainer},
Proposition~\ref{prop:Dense regime} first gives at most
$2^{\frac{\varepsilon}{4}\binom{n}{r-1}}$ containers, each having
$O(n^{r-1})$ hyperedges, such that every
$C_k^{(r)}$-free $r$-graph is a subhypergraph of one of them.  Whenever a
container has at least
$\big(\lfloor(k-1)/2\rfloor+\varepsilon/2\big)\binom{n}{r-1}$ hyperedges,
one of the cases~\ref{itm: Odd Case linear}--\ref{itm: Odd Case Shadow}
or~\ref{itm: Even Case Codegrees}--\ref{itm: Even Case Shadow}
holds.  The corresponding construction supplies a balanced collection
of cycles $\mathcal K$ and parameters $D,K,s$ satisfying
\[
 |\mathcal K|\geq \frac{D^{k-1}s}{K}
 \qquad\text{and}\qquad
 \Delta_\ell(\mathcal K)\leq D^{k-\ell}
 \quad\text{for every }\ell\in[k].
\]
In every case,
$s\geq(\varepsilon/8)\binom{n}{r-1}$.  We then apply
Corollary~\ref{cor:containers-balanced} to $\mathcal K$.
Corollary~\ref{cor:containers-balanced}
replaces the container $H$ by
$2^{O_{r,k}((\log D/D)|H|)}$ subcontainers, each with at most
$|H|-\Omega_{r,k}(s/K)$ hyperedges; the subcontainers still contain every
$C_k^{(r)}$-free subhypergraph of $H$.  We call this replacement one
\emph{refinement}.

For each of the finitely many choices of $(D,K,s)$ supplied by the four
supersaturation lemmas, every refinement using those parameters
decreases the container size by $\Omega_{r,k}(s/K)$.  Hence those
parameters can be used only $O_{r,k}(Kn^{r-1}/s)$ times in any chain
of successive refinements, regardless of the order.  Multiplying the
bounds on the number of subcontainers introduced by these refinements and using
$K\log D/D\ll1$ gives a final collection of at most
$2^{\frac{\varepsilon}{2}\binom{n}{r-1}}$ containers, each with at
most $\big(\lfloor(k-1)/2\rfloor+\varepsilon/2\big)
\binom{n}{r-1}$ hyperedges.  Counting their subhypergraphs proves the upper bound
in Theorem~\ref{thm:numberOfCycleFree}; the lower bound follows from
the subhypergraphs of an extremal $C_k^{(r)}$-free $r$-graph.

The following table summarizes the proof methods we use for the different values of $r$ and $k$.

\begin{table}[ht]
    \centering
    \small
    \renewcommand{\arraystretch}{1.4}
    \begin{tabular}{|p{0.31\textwidth}| p{0.62\textwidth}|}
        \hline
        \textbf{Values of $r,k$} & \textbf{Proof idea} \\
        \hline

        $r\geq 4$ and $k\geq 4$, or
        $r=3$ and $k$ even
        &
        We combine balanced supersaturation with the hypergraph
        container method.  In the large-shadow case,
        Lemma~\ref{lem:KMV-cycle-from-shadow} supplies the required
        extension.
        \\[3pt]
        \hline

        $r=3$ and odd $k\geq 5$
        &
        We combine balanced supersaturation with the hypergraph
        container method.  The bounded pair-codegree case is handled
        using graph regularity, while the large-shadow case requires
        the extension argument in
        Lemma~\ref{lem:a copy of Ck from Ktt}.
        \\[3pt]
        \hline

        $r\geq 4$ and $k=3$
        &
        We combine balanced supersaturation with the hypergraph
        container method.  In the large-shadow case, we use the
        extension lemma for triangles,
        Lemma~\ref{lem:triangle-from-shadow}.
        \\[3pt]
        \hline

        $r=3$ and $k=3$
        &
        We count the hypergraphs directly using a digraph encoding
        and the multicolour counting theorem, rather than obtaining balanced
        supersaturation and using the hypergraph container method.
        \\

        \hline
    \end{tabular}
    \caption{Summary of proof ideas depending on the values
    of $r$ and $k$.}
    \label{tab:proof-summary}
\end{table}
\section{Tools}\label{sec:tools}

\subsection{Structural and elementary tools}

We first state two elementary propositions.

\begin{proposition}\label{prop:Entropy}
    For every $\alpha\in [0,1/2]$ and $n\in \mathbb{Z}^{+}$,
    \[
    \sum_{i\leq \alpha n}\binom{n}{i}\leq 2^{h(\alpha) n},
    \]
    where
    $h(p)=-p\log_2(p)-(1-p)\log_2(1-p)$ is the binary entropy
    function. 
\end{proposition}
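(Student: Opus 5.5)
The bound follows from the standard exponential-moment (Chernoff-type) argument applied to the binomial expansion of $1 = (p+(1-p))^n$ with the choice $p=\alpha$. First dispose of the degenerate cases. If $\alpha=0$, the left-hand side is $\binom{n}{0}=1$ and $h(0)=0$ (with the convention $0\log_2 0=0$), so the inequality holds with equality. Hence assume $0<\alpha\leq 1/2$.

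The key identity is
\[
1=\bigl(\alpha+(1-\alpha)\bigr)^n\geq\sum_{i\leq\alpha n}\binom{n}{i}\alpha^i(1-\alpha)^{n-i}.
\]
Rewrite each term as $\binom ni(1-\alpha)^n\bigl(\alpha/(1-\alpha)\bigr)^i$. Since $\alpha\leq 1/2$, the ratio $\alpha/(1-\alpha)$ is at most $1$. Therefore for every $i\leq\alpha n$,
\[
\Bigl(\tfrac{\alpha}{1-\alpha}\Bigr)^i\geq\Bigl(\tfrac{\alpha}{1-\alpha}\Bigr)^{\alpha n}.
\]
This monotonicity is the only place where the hypothesis $\alpha\leq1/2$ is used, and it is the one step that needs care. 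Substituting it into each term gives
\[
1\geq\alpha^{\alpha n}(1-\alpha)^{(1-\alpha)n}\sum_{i\leq\alpha n}\binom ni .
\]

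Finally, note that
\[
\alpha^{\alpha n}(1-\alpha)^{(1-\alpha)n}=2^{n(\alpha\log_2\alpha+(1-\alpha)\log_2(1-\alpha))}=2^{-h(\alpha)n}.
\]
Rearranging the previous inequality then yields $\sum_{i\leq\alpha n}\binom ni\leq 2^{h(\alpha)n}$, as claimed.

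There is no genuine obstacle here; beyond the monotonicity step, the argument only requires treating $\alpha=0$ separately so that no logarithm of zero appears.
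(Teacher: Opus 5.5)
Your proof is correct. The case $\alpha=0$ is trivial. For $0<\alpha\le 1/2$, the chain $1\ge\sum_{i\le\alpha n}\binom ni\alpha^i(1-\alpha)^{n-i}\ge 2^{-h(\alpha)n}\sum_{i\le\alpha n}\binom ni$ rests on the monotonicity of $i\mapsto(\alpha/(1-\alpha))^i$, which you identify as the one place where $\alpha\le1/2$ is used.

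The paper states this proposition without proof, as a standard elementary fact. Your argument is the standard one that would be supplied for it.
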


\begin{proposition}\label{prop:Graph with fixed degrees}
    For every pair of positive integers $n$, $k$ with $n\geq k+1$, there exists an $n$-vertex graph $G$ such that $d(v)\in \{k,k+1\}$ for every $v\in V(G)$. 
\end{proposition}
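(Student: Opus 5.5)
We will give an explicit construction according to the parity of $nk$, since the only obstruction to an exactly $k$-regular graph on $n$ vertices is that $nk$ must be even.

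If $nk$ is even, we take a $k$-regular graph on $n$ vertices, built as a circulant graph. Place the vertices at $\mathbb{Z}_n$. If $k=2m$ is even, join $i$ to $i\pm 1,\ldots,i\pm m$. Since $n\ge k+1=2m+1$, these $2m$ neighbours are distinct and nonzero modulo $n$, so every degree equals $k$. If $k=2m+1$ is odd, then $n$ is even and $n\geq k+1=2m+2$. We join $i$ to $i\pm1,\ldots,i\pm m$ and also to the antipodal vertex $i+n/2$. The inequality $m<n/2$ guarantees that the antipodal neighbour differs from the other $2m$ neighbours, and $i+n/2$ and $i-n/2$ coincide, so every degree is exactly $2m+1=k$.

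If $nk$ is odd, then both $n$ and $k$ are odd, and $n\geq k+1$ forces $n\ge k+2$. In this case $k+1$ is even, so $n(k+1)$ is even, and the previous paragraph applies with $k+1$ in place of $k$, since $n\ge (k+1)+1$. This gives a $(k+1)$-regular graph, and all its degrees lie in $\{k,k+1\}$.

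No step is a real obstacle. The only point needing care is that in the circulant construction the listed neighbour offsets are distinct and nonzero modulo $n$, and this is exactly where the hypothesis $n\ge k+1$, upgraded to $n\ge k+2$ in the odd case, is used.
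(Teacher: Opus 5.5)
Your proof is correct and complete. The circulant construction gives a $k$-regular graph whenever $nk$ is even: the offsets $\pm1,\ldots,\pm m$, together with $n/2$ when $k=2m+1$, are distinct and nonzero modulo $n$. Passing to $k+1$ when $n$ and $k$ are both odd is justified by $n\geq k+2$. The boundary cases are covered as well, for instance $n=k+1$ yields $K_{k+1}$.

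The paper states this proposition as an elementary fact without proof, so your argument supplies the omitted verification. It even produces a regular graph, which is more than the application needs. That application builds the graphs $\Gamma_f$ on $N_{H_2}(f)$ with $k=C_1-1$ and $|N_{H_2}(f)|\geq C_1$.
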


We also need a form of Harris's inequality, which is used in
Lemma~\ref{lem:Large Shadow odd}.  Let $X$ be a random subset of a finite
set, obtained by including its elements independently.  An event
$\mathcal A$ determined by $X$ is \emph{increasing} if
$X\in\mathcal A$ and $X\subseteq Y$ imply $Y\in\mathcal A$.

\begin{proposition}[Harris's inequality]
\label{prop:Harris}
If $\mathcal A_1,\ldots,\mathcal A_m$ are increasing events determined
by independent inclusion trials, then
\[
 \mathbb P\left(\bigcap_{i=1}^m\mathcal A_i\right)
 \geq\prod_{i=1}^m\mathbb P(\mathcal A_i).
\]
\end{proposition}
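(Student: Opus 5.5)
This is the standard Harris (FKG) inequality for product measures, and the plan is to prove it by induction on the number of events, $m$. The case $m=1$ is trivial. For the inductive step, note that an intersection of increasing events is again increasing. It therefore suffices to prove the two-event statement: for increasing $\mathcal A,\mathcal B$ we have $\mathbb P(\mathcal A\cap\mathcal B)\ge\mathbb P(\mathcal A)\mathbb P(\mathcal B)$. Applying this with $\mathcal A=\mathcal A_1\cap\cdots\cap\mathcal A_{m-1}$ and $\mathcal B=\mathcal A_m$, and then using the induction hypothesis, gives the product bound.

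For the two-event statement, I would prove the more general inequality $\mathbb E[fg]\ge\mathbb E[f]\,\mathbb E[g]$ for increasing functions $f,g:\{0,1\}^N\to\mathbb R$ under a product measure with marginals $p_1,\dots,p_N$. Taking $f=\mathbf 1_{\mathcal A}$ and $g=\mathbf 1_{\mathcal B}$ then recovers the event version. The proof of the general inequality is by induction on the number $N$ of ground elements.

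In the base case $N=1$, take $x,y\in\{0,1\}$ drawn independently with the same law. Because $f$ and $g$ are both increasing, $(f(x)-f(y))(g(x)-g(y))\ge0$ pointwise. Taking expectations gives $2\mathbb E[fg]-2\mathbb E[f]\mathbb E[g]\ge0$.

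For the inductive step, condition on the last coordinate $x_N$. Define $f_1(x_N)=\mathbb E[f\mid x_N]$ and $g_1(x_N)=\mathbb E[g\mid x_N]$; these are increasing in $x_N$ because $f$ and $g$ are increasing. For each fixed value of $x_N$, the functions $f(\cdot,x_N)$ and $g(\cdot,x_N)$ are increasing functions of the first $N-1$ coordinates, which remain independent after conditioning. The induction hypothesis therefore gives $\mathbb E[fg\mid x_N]\ge f_1(x_N)g_1(x_N)$. Taking expectations over $x_N$ and applying the base case to $f_1,g_1$ yields $\mathbb E[fg]\ge\mathbb E[f_1]\mathbb E[g_1]=\mathbb E[f]\mathbb E[g]$.

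There is no real obstacle in this argument. The only points that need checking are that increasingness is preserved under restriction (fixing a coordinate) and under conditional averaging, and that the conditioning relies on independence of the coordinates. Alternatively, one could simply cite Harris (1960) or Kleitman's lemma, since this is a classical result.
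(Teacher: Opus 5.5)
Your argument is correct and complete. It has two parts: you reduce $m$ events to two using the fact that an intersection of increasing events is increasing, and you prove $\mathbb E[fg]\ge\mathbb E[f]\,\mathbb E[g]$ for increasing $f,g$ on $\{0,1\}^N$ by induction on $N$, with the base case coming from $(f(x)-f(y))(g(x)-g(y))\ge 0$. This is the standard proof.

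The paper itself gives no proof of this proposition. It quotes Harris's inequality as a classical tool and uses it as a black box in the proof of Lemma~\ref{lem:Large Shadow odd}. So your closing remark that one may simply cite Harris (or Kleitman) is exactly what the paper does, and your write-up supplies the self-contained justification the paper omits.

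One detail is worth tightening. Define $f_1(v)$ as the average of $f(\cdot,v)$ over the product law of the first $N-1$ coordinates, rather than as a conditional expectation. Then $f_1$ is well defined even when some inclusion probability equals $0$ or $1$. In the paper's application every vertex is included with probability $1/2$, so this case never arises there.
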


Recall that an $r$-graph $H$ is called $d$\emph{-full} if every $(r-1)$-subedge of $H$ is contained in at least $d$ hyperedges of $H$.
We use the following folklore lemma; see~\cite{Kostochka2015}.

\begin{lemma}\label{lem:full}
    For every pair of integers $r\geq 3$, $d\geq 1$, every $r$-graph $H$ has a $(d+1)$-full subhypergraph $H'$ with
    \[
    |H'|\geq |H|-d|\partial H|.
    \]
\end{lemma}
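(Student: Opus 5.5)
We obtain $H'$ by a greedy deletion process. Set $H_0\coloneqq H$. As long as the current $r$-graph $H_i$ has some $(r-1)$-subedge $f\in\partial H_i$ with $1\leq d_{H_i}(f)\leq d$, we delete all hyperedges of $H_i$ containing $f$ and call the result $H_{i+1}$. The process must stop, since each step removes at least one hyperedge. Let $H'$ be the final $r$-graph. By the stopping rule, every $f\in\partial H'$ has $d_{H'}(f)\geq d+1$. Hence $H'$ is $(d+1)$-full; this also holds vacuously if $H'$ is empty.

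For the bound on $|H'|$ we charge deleted hyperedges to $(r-1)$-sets. At step $i$ we delete at most $d$ hyperedges, and we charge them to the set $f_i$ used at that step. We claim that the sets $f_i$ are distinct and all lie in $\partial H$.

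\begin{itemize}
\item Since $H_i\subseteq H$, we have $\partial H_i\subseteq\partial H$, so every $f_i$ lies in $\partial H$.
\item After step $i$, no hyperedge of $H_{i+1}$ contains $f_i$, and hyperedges are never added back. So $f_i\notin\partial H_j$ for all $j>i$, and $f_i$ cannot be chosen again.
\end{itemize}

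The number of steps is therefore at most $|\partial H|$. Each step deletes at most $d$ hyperedges, so the total number of deleted hyperedges is at most $d|\partial H|$. This gives $|H'|\geq|H|-d|\partial H|$.

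The only point needing care is that monotonicity of the shadow under deletion makes the charging injective. The rest is bookkeeping.
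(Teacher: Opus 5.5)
Your proof is correct: deleting all hyperedges through an $(r-1)$-set of codegree at most $d$, and charging each step to that set (which is never chosen again because the shadow only shrinks), is the standard argument for this folklore lemma. The paper itself gives no proof and simply cites the lemma from Kostochka, Mubayi and Verstra\"ete, so your argument fills that in along the usual lines.
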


Recall that an $r$-graph $H$ is $(t,C)$-\emph{sparse} if every set of $t$ vertices of $H$ is contained in at most $C$ hyperedges. 

Propositions~\ref{prop:kOdd} and~\ref{prop:kEven} provide the
structural alternatives used in the supersaturation proof.

\begin{proposition}\label{prop:kOdd}
    Let $k\geq3$ be an odd integer, $\varepsilon>0$, $C_2, C_1\geq 1$ and let $H$ be a $3$-graph with $|H|\geq \left(\left\lfloor\frac{k-1}{2}\right\rfloor+\frac{\varepsilon}{2}\right)\binom{n}{2}$. Then one of the following three conditions holds:
    \vspace{4pt}
    \begin{enumerate}[start=0,label=(A\arabic*), font=\upshape, itemsep=4 pt]
    \item\label{itm: Odd Case linear} There is a $(2,C_2)$-sparse $3$-graph $H_0\subseteq H$ such that $|H_0|\geq \frac{\varepsilon}{4}\binom{n}{2}$. 
    \item\label{itm: Odd Case Codegree} There is a $3$-graph $H_2\subseteq H$ such that $H_2$ is $C_1$-full and $|H_2|\geq \frac{\varepsilon}{8}\binom{n}{2}$.  
    \item\label{itm: Odd Case Shadow} 
    There is a $\left\lfloor\frac{k+1}{2}\right\rfloor$-full $3$-graph
$H_1\subseteq H$ such that
$|H_1|\geq\frac{\varepsilon}{4}\binom{n}{2}$,
$|\partial H_1|\geq\frac{\varepsilon}{8C_1}\binom{n}{2}$,
and every hyperedge $e\in H_1$ contains a pair
$xy\subseteq e$ satisfying $d_H(xy)\geq C_2$.
\end{enumerate}
\end{proposition}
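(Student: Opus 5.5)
Split $H$ by pair-codegree. Let $P\coloneqq\{xy: d_H(xy)\geq C_2\}$. Let $H_0$ consist of the hyperedges of $H$ containing no pair from $P$, and put $H'\coloneqq H\setminus H_0$, so every hyperedge of $H'$ contains a pair $xy$ with $d_H(xy)\geq C_2$. Every pair lies in fewer than $C_2$ hyperedges of $H_0$, so $H_0$ is $(2,C_2)$-sparse. If $|H_0|\geq\frac{\varepsilon}{4}\binom n2$ we are in case~\ref{itm: Odd Case linear}. Otherwise
\[
|H'|\geq\Bigl(\tfrac{k-1}{2}+\tfrac{\varepsilon}{4}\Bigr)\binom n2 .
\]
Note that $\lfloor(k-1)/2\rfloor=(k-1)/2$ since $k$ is odd.

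Apply Lemma~\ref{lem:full} to $H'$ with $d=\frac{k-1}{2}$, so $d+1=\frac{k+1}{2}=\lfloor\frac{k+1}{2}\rfloor$. Since $|\partial H'|\leq\binom n2$, this gives a $\frac{k+1}{2}$-full subhypergraph $H_1\subseteq H'$ with
\[
|H_1|\geq|H'|-\tfrac{k-1}{2}\binom n2\geq\tfrac{\varepsilon}{4}\binom n2 .
\]
Every hyperedge of $H_1\subseteq H'$ still contains a pair of $H$-codegree at least $C_2$. This property is inherited because it refers to the original $H$, not to $H_1$; this is why $P$ is defined through $d_H$.

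It remains to handle the shadow. If $|\partial H_1|\geq\frac{\varepsilon}{8C_1}\binom n2$, we are in case~\ref{itm: Odd Case Shadow}. Otherwise apply Lemma~\ref{lem:full} again, now to $H_1$ with $d=C_1-1$. This gives a $C_1$-full $H_2\subseteq H_1\subseteq H$ with
\[
|H_2|\geq|H_1|-(C_1-1)|\partial H_1|\geq\tfrac{\varepsilon}{4}\binom n2-\tfrac{\varepsilon}{8}\binom n2=\tfrac{\varepsilon}{8}\binom n2,
\]
which is case~\ref{itm: Odd Case Codegree}. If $C_1=1$, every hypergraph is $1$-full, so take $H_2=H_1$.

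There is no real obstacle here. The only points needing care are the bookkeeping of constants (the thresholds $\frac{\varepsilon}{4}$, $\frac{\varepsilon}{8}$ and $\frac{\varepsilon}{8C_1}$ are chosen exactly so the two applications of Lemma~\ref{lem:full} fit) and using $|\partial H'|\leq\binom n2$ in the first application of the lemma.
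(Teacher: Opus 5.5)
Your proof is correct and follows essentially the same route as the paper. You split off the low-codegree hyperedges as the $(2,C_2)$-sparse $H_0$, apply Lemma~\ref{lem:full} to the remainder with $d=\frac{k-1}{2}$, and then apply it once more to $H_1$ to trade $|H_2|$ off against $|\partial H_1|$. The differences are only cosmetic: the paper puts pairs of codegree exactly $C_2$ into $H_0$, and it applies the second step with $d=C_1$, obtaining a $(C_1+1)$-full $H_2$ and avoiding your separate $C_1=1$ case.
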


We refer to \ref{itm: Odd Case linear} as the case when $H$ has a large linear subhypergraph, to \ref{itm: Odd Case Codegree} as the case when $H$ has a large subhypergraph with large codegrees, and to \ref{itm: Odd Case Shadow} as the case when $H$ has large shadow.

\begin{proof}[Proof of Proposition~\ref{prop:kOdd}]
Let
\[
 H_0\coloneqq
 \bigl\{xyz\in H:
 \max\{d_H(xy),d_H(yz),d_H(xz)\}\leq C_2\bigr\}.
\]
As $H_0$ is $(2,C_2)$-sparse, we may assume that
$|H_0|<\frac{\varepsilon}{4}\binom{n}{2}$, as otherwise
condition~\ref{itm: Odd Case linear} holds.  Thus
\[
 |H\setminus H_0|\geq
 \left(\left\lfloor\frac{k-1}{2}\right\rfloor
       +\frac{\varepsilon}{4}\right)\binom{n}{2}.
\]
Every hyperedge $e\in H\setminus H_0$ contains a pair
$xy\subseteq e$ with $d_H(xy)>C_2$.

We apply Lemma~\ref{lem:full} with $d=\left\lfloor\frac{k-1}{2}\right\rfloor$ to $H\setminus H_0$ to obtain a $\left\lfloor\frac{k+1}{2}\right\rfloor$-full $3$-graph $H_1\subseteq H\setminus H_0$ such that 
\[
|H_1|\geq |H\setminus H_0|-\left\lfloor\frac{k-1}{2}\right\rfloor\cdot|\partial H|
\geq\frac{\varepsilon}{4}\binom{n}{2},
\]
where the second inequality uses $|\partial H|\leq\binom n2$.

By applying Lemma~\ref{lem:full} again with $d=C_1$ to $H_1$, we obtain a $(C_1+1)$-full $3$-graph $H_2\subseteq H_1$ such that
\[
|H_2|\geq |H_1|-C_1\cdot|\partial H_1|.
\]

The inequalities
$|H_2|\geq|H_1|-C_1|\partial H_1|$ and
$|H_1|\geq\frac{\varepsilon}{4}\binom{n}{2}$ imply that
we either have
$|H_2|\geq\frac{\varepsilon}{8}\binom{n}{2}$,
in which case condition~\ref{itm: Odd Case Codegree} holds, or
$|\partial H_1|\geq\frac{\varepsilon}{8C_1}\binom{n}{2}$.
If the shadow bound in the second alternative holds, then, since
$H_1\subseteq H\setminus H_0$, every hyperedge
$e\in H_1$ contains a pair $xy\subseteq e$ satisfying
$d_H(xy)\geq C_2$, and hence condition~\ref{itm: Odd Case Shadow}
holds.
\end{proof}

\begin{proposition}\label{prop:kEven}
    Let $r,k\geq 3$ be integers, $\varepsilon>0$ and $C_1\geq 1$.
    Assume either that $r\geq4$, or that $r=3$ and $k\geq4$ is even.
    Let $H$ be an $r$-graph on $n$ vertices with $|H|\geq \left(\left\lfloor\frac{k-1}{2}\right\rfloor+\frac{\varepsilon}{2}\right)\binom{n}{r-1}$. Then one of the following two conditions holds:
        \vspace{4pt}
    \begin{enumerate}[label=(B\arabic*), font=\upshape,itemsep=4pt]
    \item\label{itm: Even Case Codegrees} There is an $r$-graph $H_2\subseteq H$ such that $H_2$ is $C_1$-full and $|H_2|\geq \frac{\varepsilon}{4}\binom{n}{r-1}$.  
    \item\label{itm: Even Case Shadow} There is a $\left\lfloor\frac{k+1}{2}\right\rfloor$-full $r$-graph $H_1\subseteq H$ such that $|H_1|\geq \frac{\varepsilon}{2} \binom{n}{r-1}$ and $|\partial H_1|\geq \frac{\varepsilon}{4C_1}\binom{n}{r-1}$.  
    \end{enumerate}
\end{proposition}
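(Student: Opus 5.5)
The plan is to follow the proof of Proposition~\ref{prop:kOdd}, with the sparse/linear step removed. First apply Lemma~\ref{lem:full} to $H$ with $d=\lfloor\frac{k-1}{2}\rfloor$. This gives a $\lfloor\frac{k+1}{2}\rfloor$-full $r$-graph $H_1\subseteq H$, using the identity $\lfloor\frac{k-1}{2}\rfloor+1=\lfloor\frac{k+1}{2}\rfloor$. Since $|\partial H|\leq\binom{n}{r-1}$, it satisfies
\[
|H_1|\geq|H|-\left\lfloor\tfrac{k-1}{2}\right\rfloor|\partial H|\geq\frac{\varepsilon}{2}\binom{n}{r-1}.
\]
This is exactly the size bound demanded in~\ref{itm: Even Case Shadow}.

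Next apply Lemma~\ref{lem:full} to $H_1$ with $d=C_1$. This gives a $(C_1+1)$-full, and hence $C_1$-full, subhypergraph $H_2\subseteq H_1\subseteq H$ with $|H_2|\geq|H_1|-C_1|\partial H_1|$. If $|\partial H_1|<\frac{\varepsilon}{4C_1}\binom{n}{r-1}$, then
\[
|H_2|>\frac{\varepsilon}{2}\binom{n}{r-1}-\frac{\varepsilon}{4}\binom{n}{r-1}=\frac{\varepsilon}{4}\binom{n}{r-1},
\]
so~\ref{itm: Even Case Codegrees} holds. Otherwise $|\partial H_1|\geq\frac{\varepsilon}{4C_1}\binom{n}{r-1}$, and $H_1$ witnesses~\ref{itm: Even Case Shadow}.

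There is essentially no obstacle; only two small points need checking. First, $C_1$ is only assumed to be at least $1$, possibly non-integral, while Lemma~\ref{lem:full} takes an integer $d$. I will apply it with $d=\lceil C_1\rceil-1$. The output is then $\lceil C_1\rceil$-full, hence $C_1$-full, and the loss $(\lceil C_1\rceil-1)|\partial H_1|\leq C_1|\partial H_1|$ still gives the bound above (if $d=0$, simply take $H_2=H_1$). Second, the hypothesis that $r\geq4$, or that $r=3$ and $k$ is even, plays no role in this structural statement. It only matters later, where it explains why no analogue of case~\ref{itm: Odd Case linear} is needed.
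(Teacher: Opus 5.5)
Your proposal is correct and is essentially the paper's own proof: two applications of Lemma~\ref{lem:full}, first with $d=\lfloor(k-1)/2\rfloor$ to obtain $H_1$ and then with $d=C_1$ to obtain $H_2$, followed by the same dichotomy on whether $|\partial H_1|\geq\frac{\varepsilon}{4C_1}\binom{n}{r-1}$. Your extra handling of a possibly non-integral $C_1$ (taking $d=\lceil C_1\rceil-1$, and $H_2=H_1$ when $d=0$) is a harmless refinement; the paper simply applies the lemma with $d=C_1$, treating $C_1$ as an integer as in the constant hierarchy~\eqref{defi:constant hierarchy}.
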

We refer to \ref{itm: Even Case Codegrees} as the case when $H$ has a large subhypergraph with large codegrees and to \ref{itm: Even Case Shadow} as the case when $H$ has large shadow.

\begin{proof}[Proof of Proposition~\ref{prop:kEven}]

Applying Lemma~\ref{lem:full} to $H$ with $d=\left\lfloor\frac{k-1}{2}\right\rfloor$, we obtain a $\left\lfloor\frac{k+1}{2}\right\rfloor$-full $r$-graph $H_1\subseteq H$ such that
\[
 |H_1|\geq |H|-\left\lfloor\frac{k-1}{2}\right\rfloor|\partial H|
 \geq\frac{\varepsilon}{2}\binom{n}{r-1},
\]
where the second inequality uses $|\partial H|\leq\binom n{r-1}$.
Now applying Lemma~\ref{lem:full} with $d=C_1$ to $H_1$, we obtain a $(C_1+1)$-full $r$-graph $H_2\subseteq H_1$ such that $|H_2|\geq |H_1|-C_1\cdot|\partial H_1|$. The inequalities $|H_2|\geq |H_1|-C_1|\partial H_1|$ and $|H_1|\geq \frac{\varepsilon}{2}\binom{n}{r-1}$ imply that either $|H_2|\geq \frac{\varepsilon}{4}\binom{n}{r-1}$, which gives~\ref{itm: Even Case Codegrees}, or $|\partial H_1|\geq \frac{\varepsilon
}{4C_1} \binom{n}{r-1}$ and~\ref{itm: Even Case Shadow} holds,  as desired.
\end{proof}

\subsection{Graph regularity tools}

Let $G$ be a graph and let $A,B\subseteq V(G)$ be disjoint non-empty
sets. We write
\[
 d_G(A,B)\coloneqq\frac{e_G(A,B)}{|A||B|},
\]
where $e_G(A,B)$ is the number of edges of $G$ with one endpoint in
$A$ and the other in $B$. For $0<\eta<1$, the pair $(A,B)$ is
\emph{$\eta$-regular} if
\[
 |d_G(A',B')-d_G(A,B)|\leq\eta,
\]
whenever $A'\subseteq A$ and $B'\subseteq B$ satisfy
$|A'|\geq\eta|A|$ and $|B'|\geq\eta|B|$.

We use the following standard form of Szemer\'edi's regularity lemma;
see~\cite{Szemeredi1978Regular} and
\cite[Lemma~40]{kuhn2009embedding}.

\begin{lemma}[Szemer\'edi Graph Regularity Lemma]
\label{lem:graph-regularity}
For every $0<\eta<1$ and every positive integer $m_0$, there are
integers $M=M(\eta,m_0)$ and $n_0=n_0(\eta,m_0)$ such that every graph
$G$ on $n\geq n_0$ vertices has a partition
\[
 V(G)=V_0\cup V_1\cup\cdots\cup V_m
\]
with the following properties:
\begin{enumerate}[label=\textnormal{(R\arabic*)},itemsep=4pt]
\item $m_0\leq m\leq M$;
\item $|V_0|\leq\eta n$ and $|V_1|=\cdots=|V_m|$;
\item all but at most $\eta m^2$ of the pairs $(V_i,V_j)$ with
$1\leq i<j\leq m$ are $\eta$-regular.
\end{enumerate}
\end{lemma}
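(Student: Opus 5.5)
This is the standard Szemer\'edi regularity lemma, and I would prove it by the usual energy-increment argument. For a partition $\mathcal P=\{V_0,V_1,\ldots,V_m\}$ with exceptional class $V_0$, I define the \emph{index}
\[
 q(\mathcal P)\coloneqq\sum_{1\leq i<j\leq m}\frac{|V_i||V_j|}{n^2}\,d_G(V_i,V_j)^2,
\]
treating the vertices of $V_0$ as singleton classes when computing it, so that refinement never decreases it. Clearly $0\leq q(\mathcal P)\leq 1/2$. The key monotonicity fact comes from Cauchy--Schwarz (convexity of $x\mapsto x^2$): if $\mathcal P'$ refines $\mathcal P$, then $q(\mathcal P')\geq q(\mathcal P)$.

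The heart of the proof is the increment lemma. Suppose $\mathcal P$ is an equipartition (apart from $V_0$) into $m$ classes and more than $\eta m^2$ pairs $(V_i,V_j)$ are not $\eta$-regular. For each irregular pair, I fix witnesses $A_{ij}\subseteq V_i$ and $B_{ij}\subseteq V_j$ with $|A_{ij}|\geq\eta|V_i|$, $|B_{ij}|\geq\eta|V_j|$ and $|d(A_{ij},B_{ij})-d(V_i,V_j)|>\eta$. Splitting $V_i$ into $A_{ij}$ and $V_i\setminus A_{ij}$ (and $V_j$ likewise) raises the contribution of that pair by at least $\eta^4|V_i||V_j|/n^2$, by the variance form of Cauchy--Schwarz. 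I then take the common refinement of all witnesses: each $V_i$ is cut into at most $2^{m-1}$ atoms (Venn regions). Summing the gains over more than $\eta m^2$ irregular pairs gives an index increase of order $\eta^5$, provided $|V_0|\leq\eta n$ so that the classes have size about $n/m$.

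Next I restore the equipartition. I cut every atom into pieces of a common size $\lfloor |V_i|/4^m\rfloor$ and throw the leftovers into $V_0$. This increases $|V_0|$ by at most $n/2^m$. The new partition still nearly refines the atom partition, and I would verify that the index loss from the discarded vertices is $o(\eta^5)$ once $m$ is large compared with $1/\eta$. The number of classes becomes at most $m4^m$.

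To finish, I start from an arbitrary equipartition into $m_0'\geq\max(m_0,2/\eta)$ classes with $|V_0|<m_0'$. I iterate the refinement step as long as more than $\eta m^2$ pairs are irregular. Since the index is at most $1/2$ and rises by $c\eta^5$ each time, there are at most $O(\eta^{-5})$ iterations. The number of classes therefore stays below a tower-type bound $M(\eta,m_0)$, and the exceptional set accumulates at most $\sum_t n2^{-m_t}\leq\eta n$ because $m_t$ grows very rapidly. Taking $n_0$ large enough that all class sizes remain positive gives (R1)--(R3).

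I expect the main obstacle to be the bookkeeping in the re-equalization step. There I must ensure that moving leftovers into $V_0$ neither destroys the $\eta^5$ index gain nor lets $|V_0|$ exceed $\eta n$ over all iterations. I would handle this by choosing the initial number of classes large in terms of $\eta$, so that the per-step losses $2^{-m_t}$ are summable and negligible.
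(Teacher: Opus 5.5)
Your proposal is the standard Szemer\'edi index-increment proof, and the paper gives no proof of its own: it simply cites the lemma (Szemer\'edi; K\"uhn--Osthus), and the classical proof behind that citation is this argument, so your route is essentially the same and is correct. One simplification: because you count $V_0$ as singletons, moving the leftovers into $V_0$ is itself a refinement of the atom partition, so the re-equalization step loses no index at all and the only bookkeeping needed is $|V_0|\le\eta n$ (the class count there is at most $2m4^m$ rather than $m4^m$ because of the floors, which is harmless).
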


The next lemma is a graph counting lemma in the form used in
Subsection~\ref{subsec:linear subgraph}.

\begin{lemma}[Graph counting lemma
{\cite[Lemma~2.4]{conlon2013graph}}]
\label{lem:graph-counting}
Let $J$ be a graph with vertex set $[h]$ and $m$ edges, and let
$0<\rho,d<1$ satisfy
\[
 \rho\leq\frac{d^h}{4h}.
\]
Let $G$ be a graph, and let $W_1,\ldots,W_h\subseteq V(G)$ satisfy
$|W_i|\geq\rho^{-1}$ for every $i\in[h]$. Suppose that, for every
$ij\in E(J)$, the sets $W_i$ and $W_j$ are disjoint and the pair
$(W_i,W_j)$ is $\rho$-regular with density greater than $d$.
Then there are at least
\[
 2^{-h}d^m\prod_{i=1}^h|W_i|
\]
injective maps $\phi:[h]\to V(G)$ such that $\phi(i)\in W_i$ for
every $i\in[h]$ and $\phi(i)\phi(j)\in E(G)$ for every
$ij\in E(J)$.
\end{lemma}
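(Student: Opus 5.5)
The plan is the standard greedy embedding argument. We embed the vertices $1,\ldots,h$ of $J$ in this order and keep track of the admissible images of the vertices not yet embedded.

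\textbf{Setup.} Suppose $\phi(1),\ldots,\phi(i-1)$ have been fixed. For each $j\geq i$ define the candidate set
\[
 C_j\coloneqq\{w\in W_j: w\phi(s)\in E(G)\text{ for every } s<i \text{ with } sj\in E(J)\}.
\]
Let $b_j(i)$ be the number of such back-neighbours $s<i$ of $j$. The invariant we maintain is
\[
 |C_j|\geq (d-\rho)^{b_j(i)}|W_j|\geq d^{h}(1-\rho/d)^{h}|W_j|\geq \tfrac12 d^h|W_j|.
\]
The last inequality holds because $\rho/d\leq d^{h-1}/(4h)\leq 1/(4h)$, so $(1-\rho/d)^h\geq 3/4$. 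In particular $|C_j|\geq 2h\rho|W_j|\geq\rho|W_j|$. Hence every current candidate set is a legitimate test set for $\rho$-regularity against $W_i$.

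\textbf{Choosing $\phi(i)$.} Call $v\in C_i$ \emph{bad for $j$} if $j>i$, $ij\in E(J)$, and $|N(v)\cap C_j|<(d-\rho)|C_j|$. Since $(W_i,W_j)$ is $\rho$-regular with density greater than $d$, the set of vertices bad for $j$ has fewer than $\rho|W_i|$ elements. Otherwise that set together with $C_j$ (which has size at least $\rho|W_j|$) would witness a density below $d_G(W_i,W_j)-\rho$. We also forbid the at most $i-1$ previously used vertices; this is needed for injectivity, and only the used vertices lying in $W_i$ matter. Since $|W_i|\geq\rho^{-1}$, these forbidden vertices number at most $(i-1)\rho|W_i|$.

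Let $f_i$ be the number of forward neighbours of $i$. Altogether we exclude at most
\[
 (f_i+i-1)\rho|W_i|\leq 2h\rho|W_i|
\]
vertices. By the invariant, this is at most a small fraction of $|C_i|$. Any non-excluded $v$ keeps the invariant, because each forward neighbour's candidate set shrinks by a factor of at least $(d-\rho)$.

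\textbf{Counting, and the main obstacle.} Multiplying the numbers of choices over all steps gives at least
\[
 \prod_i c_i\,(d-\rho)^{b_i}|W_i|,
\]
where $b_i$ is the number of back-neighbours of $i$ and $c_i$ is the fraction of $C_i$ that survives the exclusions. Since $\sum_i b_i=m$, what remains is to absorb two losses into the factor $2^{-h}$: the loss from replacing $d$ by $d-\rho$, and the fractions $c_i$. I expect this bookkeeping to be the only delicate point.

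The plan is to do it vertex by vertex. First, $(1-\rho/d)^{b_i}\geq 1-b_i/(4h)\geq 3/4$. Second, the excluded fraction satisfies
\[
 \frac{2h\rho|W_i|}{|C_i|}\leq \frac{2h\rho}{(d-\rho)^{h}}\leq \frac{2h\rho}{\tfrac34 d^h}\leq \frac{2}{3},
\]
and I expect that a more careful version of this estimate gives $c_i\geq 3/4$. If so, each step contributes at least
\[
 \tfrac34\cdot\tfrac34\, d^{b_i}|W_i|\geq \tfrac12 d^{b_i}|W_i|.
\]

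If this constant chase proves too tight, the fallback is to tighten the invariant by using $|C_j|\geq(d-\rho)^{b_j(i)}|W_j|$ directly rather than its crude lower bound. This gives
\[
 \frac{\rho|W_i|}{|C_i|}\leq \frac{\rho}{d^{h}(1-1/(4h))^{h}}\leq\frac{1}{3h}.
\]
The total exclusions are then at most a $2/3$ fraction even in the worst case, and one recovers the factor $1/2$ per step by splitting the exclusions more carefully between bad vertices and used vertices.
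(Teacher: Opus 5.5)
Your overall strategy is the standard proof of this lemma: greedy embedding with candidate sets $C_j$, the invariant $|C_j|\ge(d-\rho)^{b_j(i)}|W_j|$, $C_j$ as a legitimate regularity test set, and fewer than $\rho|W_i|$ bad vertices for each forward neighbour. The paper does not prove the lemma; it only cites it. All of those steps are correct.

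\textbf{The gap.} You never actually reach the constant $2^{-h}$. Your exclusion count is $(f_i+i-1)\rho|W_i|\le2h\rho|W_i|\le\frac12d^h|W_i|$. Combined with $|C_i|\ge\frac34d^{b_i}|W_i|$, this leaves only $\frac14d^{b_i}|W_i|$ choices per step, so you get $4^{-h}d^m\prod_i|W_i|$.

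Your hoped-for $c_i\ge 3/4$ does not follow from anything available. Even with optimal accounting, when $b_i=h-1$ and $d$ is close to $1$, the bounds allow an excluded fraction of $C_i$ of about $1/3$. Your fallback still permits an excluded fraction of $2/3$, which again gives only $\frac13\cdot\frac34=\frac14$ per step. The phrase ``splitting the exclusions more carefully'' is left unspecified.

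\textbf{The fix.} The forward neighbours of $i$ lie in $\{i+1,\ldots,h\}$, so $f_i\le h-i$. At most $i-1$ vertices have been used. Hence $f_i+(i-1)\le h-1$, and at most $(h-1)\rho|W_i|\le\frac{h-1}{4h}d^h|W_i|$ vertices are excluded. By Bernoulli, $b_i\rho/d\le(h-1)d^{h-1}/(4h)<1/4$ and $d^h\le d^{b_i}$. Now subtract rather than multiply fractions:
\[
 |C_i|-(h-1)\rho|W_i|
 \geq\Bigl(1-\frac{b_i\rho}{d}\Bigr)d^{b_i}|W_i|-\frac{h-1}{4h}\,d^{h}|W_i|
 >\Bigl(\frac34-\frac14\Bigr)d^{b_i}|W_i|
 =\frac12\,d^{b_i}|W_i|.
\]
Equivalently, $c_i>2/3$ and $(1-\rho/d)^{b_i}>3/4$, whose product is $1/2$. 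Since $\sum_i b_i=m$, multiplying over the $h$ steps gives at least $2^{-h}d^m\prod_i|W_i|$ injective maps, as claimed.
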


\subsection{Container tools}
For a hypergraph
$\mathcal{H}$, a set $I \subseteq V(\mathcal{H})$ is called \emph{independent} if there is no hyperedge $e \in \mathcal{H}$ such that $e \subseteq I$. We write $\mathcal{I}(\mathcal{H})$ for the collection of independent sets of $\mathcal{H}$.

We will use the hypergraph container method developed independently in ~\cite{balogh2015independent} and~\cite{saxton2015hypergraph}. In par\-ticular, we need the following variant from~\cite{Balogh2018HypergraphContainers}. Recall that the maximum $\ell$-codegree of a hypergraph $\HH$ is defined as $\Delta_{\ell}(\HH)\coloneqq \max \{d_{\HH}(f) : f \subset V(\HH), |f|=\ell\}$.

\begin{lemma}\label{lem:containers}
    Let $k\in \mathbb{N}$ and set $\delta\coloneqq 2^{-k(k+1)}$. Let $\HH$ be a $k$-graph and suppose that
    \[
    \Delta_{\ell}(\HH)\leq \left(\frac{b}{\lvert V(\HH)\rvert}\right)^{\ell-1}\frac{|\HH|}{m}
    \]
    for some $b,m\in \mathbb{N}$ and every $\ell\in [k]$. Then there exists a collection $\mathcal{C}(\HH)$ of subsets of $V(\HH)$ and a function $f:\mathcal{P}(V(\HH))\rightarrow \mathcal{C}(\HH)$ such that:
        \vspace{4pt}
    \begin{enumerate}[label=(C\arabic*), font=\upshape,itemsep=4pt]
        \item\label{itm: BMS property i} for every $I\in \mathcal{I}(\HH)$, there exists $S\subseteq I$ with $|S|\leq (k-1)b$ and $I\subseteq f(S)$;
        \item\label{itm: BMS property ii} $|\HH'|\leq v(\HH) -\delta m$ for every $\HH'\in\mathcal{C}(\HH)$.
    \end{enumerate}
\end{lemma}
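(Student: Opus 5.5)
This is the container lemma in the form of~\cite{Balogh2018HypergraphContainers}, and I would prove it with the algorithmic (``scythe'') approach of Balogh, Morris and Samotij. For an independent set $I$, I would build a decreasing sequence of hypergraphs
\[
\HH=\HH^{(k)},\ \HH^{(k-1)},\ \ldots,\ \HH^{(1)},
\]
where $\HH^{(j)}$ is $j$-uniform on the vertex set $V(\HH)$. At the same time I would build a fingerprint $S=S_k\cup\cdots\cup S_2\subseteq I$ with $|S_j|\leq b$. The algorithm has to be deterministic in the following sense: the whole run, and hence the container $f(S)$, must be recoverable from $S$ alone. Fix once and for all a linear order on $V(\HH)$, and put $N\coloneqq v(\HH)$.

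One round, from level $j$ to level $j-1$, goes as follows. Keep a current set $A$ of available vertices, initially $V(\HH)$. Repeatedly take the vertex $v\in A$ of maximum degree in $\HH^{(j)}[A]$, breaking ties by the fixed order.
\begin{itemize}
\item If $v\notin I$, delete $v$ from $A$.
\item If $v\in I$, put $v$ into $S_j$. Then add to $\HH^{(j-1)}$ the link sets $e\setminus\{v\}$ for $e\in\HH^{(j)}[A]$ with $v\in e$. Add such a link only if it does not create a \emph{saturated} set, namely a set $T$ with $|T|=i<j$ whose degree in $\HH^{(j-1)}$ would exceed a threshold $\tau_i\approx (b/N)^{i-1}|\HH^{(j)}|/m'$, where $m'$ is the rescaled parameter.
\end{itemize}
Stop the round when $|S_j|=b$ or when $A$ runs out.

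Both outcomes of a round are useful.
\begin{itemize}
\item If $A$ runs out before $|S_j|=b$, then every vertex of $V(\HH)\setminus I$ that was deleted is certified absent from $I$. The degree-ordering argument then shows that the set of such removed vertices, read as $V(\HH)\setminus f(S)$, has size at least $\delta m$.
\item If $|S_j|=b$, then the links added to $\HH^{(j-1)}$ number at least about $b\cdot|\HH^{(j)}|/N$, up to losses from saturated sets. This is because each chosen vertex had maximum degree, so its degree was at least the current average degree, which is $j|\HH^{(j)}[A]|/|A|$.
\end{itemize}
The independence of $I$ guarantees that $I$ remains independent in every $\HH^{(j)}$, with the fingerprint vertices removed. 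At level $1$, the $1$-graph $\HH^{(1)}$ consists of vertices forced out of $I$, and it has $\Omega(m)$ elements. The container $f(S)$ is defined as $V(\HH)$ minus all vertices certified to lie outside $I$. This gives property~\ref{itm: BMS property i} with $|S|\leq (k-1)b$, and property~\ref{itm: BMS property ii} with a loss factor of $2^{-k(k+1)}$, accumulated as a constant factor per level.

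The step I expect to be the main obstacle is the codegree bookkeeping. The bound on $|\HH^{(j-1)}|$ needs the loss from saturated sets to be at most half of the produced edges. For this I would use the hypothesis
\[
\Delta_\ell(\HH)\leq (b/N)^{\ell-1}|\HH|/m
\]
for each $\ell$. The number of links through a fixed $i$-set $T$ is controlled by $\Delta_{i+1}$ of the previous level. Consequently the number of saturated $i$-sets is at most $\binom{j-1}{i}|\HH^{(j-1)}|/\tau_i$, and each of them kills at most $\Delta_{i+1}(\HH^{(j)})$ potential links. Summing over $i$ and choosing $\tau_i$ as a fixed power-of-two multiple of the hypothesis bound, I would show by induction on the level that $\HH^{(j-1)}$ satisfies the same codegree hypothesis with $m$ replaced by $2^{-O(k)}m$. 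Iterating over the $k-1$ levels produces the constant $\delta=2^{-k(k+1)}$. I would first verify the two-level case $k=2$ in full, and then run the induction.
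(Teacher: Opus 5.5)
\paragraph{The paper imports this lemma.} It takes it from Balogh--Morris--Samotij, whose proof uses the level-by-level algorithm you outline. As written, though, your bookkeeping step fails, for three reasons.

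\emph{Thresholds.} They must be $\tau_i\asymp(b/N)^{i}|\HH^{(j)}|/m$, a constant times the level-$j$ bound on $\Delta_{i+1}$. Since $|\HH^{(j-1)}|$ will only be about $(b/N)|\HH^{(j)}|$, your $\tau_i\approx(b/N)^{i-1}|\HH^{(j)}|/m'$ is too large by a factor $N/b$, and $\HH^{(j-1)}$ would not inherit the codegree hypothesis.

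\emph{Loss from saturated sets.} With the correct $\tau_i$, a saturated $i$-set $T$ blocks up to $\deg(T\cup\{v\})$ links for \emph{every} later fingerprint vertex $v$. Over a round it can therefore kill up to $\deg_{\HH^{(j)}}(T)\leq\Delta_i(\HH^{(j)})$ links, not $\Delta_{i+1}(\HH^{(j)})$. Summing over saturated sets, the killed links are only bounded by about $2^{j}(N/b)|\HH^{(j-1)}|$. This is of order $|\HH^{(j)}|$, far more than the roughly $b|\HH^{(j)}|/N$ links produced, so ``produced minus killed'' yields only $|\HH^{(j-1)}|\gtrsim(b/N)^2|\HH^{(j)}|$. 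For the same reason, taking $v$ of maximum degree in $\HH^{(j)}[A]$ guarantees nothing: all of its edges may contain saturated sets, and that fingerprint vertex is wasted.

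\emph{Case split.} You miss the situation in which $|S_j|=b$ is reached only after deletions outside $I$ have destroyed most of $\HH^{(j)}[A]$. The current average degree is then no longer of order $|\HH^{(j)}|/N$. ``$A$ runs out'' is the wrong alternative, and it is trivial anyway: $V\setminus I$ is a vertex cover, so $|V\setminus I|\geq|\HH|/\Delta_1(\HH)\geq m$.

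\paragraph{The fix.} As in BMS, measure the saturation loss against $|\HH^{(j)}|$ rather than against the produced links.
\begin{itemize}
\item Let $\mathcal A$ be the set of edges of $\HH^{(j)}$ inside $A$ that contain no saturated set.
\item Pick $v$ of maximum degree in $\mathcal A$. If $v\in I$, add all its $\mathcal A$-links at once. Each set involved was unsaturated and gains at most $\Delta_{i+1}(\HH^{(j)})\leq\tau_i$, so all codegrees stay below $2\tau_i$.
\item Stop when $|S_j|=b$ or $|\mathcal A|<|\HH^{(j)}|/2$.
\end{itemize}
In the first case every pick adds at least $j|\HH^{(j)}|/(2N)$ links.

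In the second case, every lost edge falls into one of three classes:
\begin{itemize}
\item it goes through the deleted set $R\subseteq V\setminus I$ (at most $|R|\,|\HH^{(j)}|/m_j$ edges);
\item it was turned into a link (at most $|\HH^{(j-1)}|$ edges);
\item it contains a saturated set (at most $2^{j}(N/b)|\HH^{(j-1)}|$ edges).
\end{itemize}
Hence either $|R|\geq m_j/4$, and you output $V\setminus R$, or $|\HH^{(j-1)}|\geq 2^{-j-3}(b/N)|\HH^{(j)}|$. In the latter case, together with $\Delta_i(\HH^{(j-1)})<2\tau_i$, this is the level-$(j-1)$ hypothesis with $m_{j-1}=2^{-O(j)}m_j$.

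Finally, the run is recoverable from $S$ alone because a queried vertex lies in $I$ if and only if it lies in $S$.
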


We state it in 
 the form in which we apply
Lemma~\ref{lem:containers}.  The definitions using ceilings and floors
ensure that the parameters $b$ and $m$ in Lemma~\ref{lem:containers}
are integers.

\begin{corollary}\label{cor:containers-balanced}
Let $H$ be an $r$-graph and let $\KK$ be a collection of copies of
$C_k^{(r)}$ in $H$.  Suppose that $|H|\geq D\geq4(k-1)$,
$s\geq2K$, and
\[
 |\KK|\geq\frac{D^{k-1}s}{K},
 \qquad
 \Delta_\ell(\KK)\leq D^{k-\ell}
 \quad\quad\text{for every }\quad\ell\in[k].
\]
Then there is a collection $\mathcal C(H)$ of subhypergraphs of $H$ such
that every $C_k^{(r)}$-free subhypergraph of $H$ is contained in a member of
$\mathcal C(H)$,
\[
 |\mathcal C(H)|
 \leq 2^{h(2(k-1)/D)|H|},
\]
and every $H'\in\mathcal C(H)$ satisfies
\[
 |H'|\leq |H|-\frac{\delta s}{2K},
 \qquad \delta=2^{-k(k+1)}.
\]
\end{corollary}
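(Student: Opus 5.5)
The plan is to apply Lemma~\ref{lem:containers} to the $k$-graph $\HH\coloneqq\KK$, whose vertex set is $V(\HH)=E(H)$ and whose hyperedges are the edge sets of the copies in $\KK$. A subhypergraph $H''\subseteq H$ is $C_k^{(r)}$-free exactly when $E(H'')$ contains no member of $\KK$, but we only need one direction: every $C_k^{(r)}$-free subhypergraph is an independent set of $\HH$. We take $\mathcal C(H)\coloneqq\{f(S):S\subseteq E(H),\ |S|\leq(k-1)b\}$, where $f$ is the function supplied by the lemma. Property~\ref{itm: BMS property i} then gives the covering statement, and the bound on $|\mathcal C(H)|$ comes from counting the sets $S$.

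The parameters are
\[
 b\coloneqq\lceil |H|/D\rceil,\qquad m\coloneqq\lfloor K|\KK|/s\rfloor .
\]
We must check the codegree hypothesis
\[
 \Delta_\ell(\KK)\leq\Bigl(\frac{b}{|H|}\Bigr)^{\ell-1}\frac{|\KK|}{m}.
\]
Since $b/|H|\geq 1/D$ and $m\leq K|\KK|/s\leq|\KK|/D^{k-1}$, the right-hand side is at least $D^{-(\ell-1)}D^{k-1}=D^{k-\ell}$. This is at least $\Delta_\ell(\KK)$ by assumption, so the hypothesis holds for every $\ell\in[k]$. We also need $m\geq1$, which follows from $K|\KK|/s\geq D^{k-1}\geq1$.

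Next we verify property~\ref{itm: BMS property ii}. It gives $|H'|\leq|H|-\delta m$. Using $\lfloor x\rfloor\geq x/2$ for $x\geq1$,
\[
 m\geq\frac{K|\KK|}{2s}\geq\frac{D^{k-1}}{2}.
\]
This is not obviously at least $s/(2K)$, so this step needs care and I expect it to be the main technical point. I would try redefining $m\coloneqq\lceil s/K\rceil$, or $\lfloor s/K\rfloor$ if needed. Then
\[
 \frac{|\KK|}{m}\geq\frac{D^{k-1}s/K}{s/K+1}\geq\frac{D^{k-1}}{2},
\]
using $s\geq2K$ so that $s/K\geq2$. The codegree condition then only loses a factor $2$. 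To absorb it, take $b\coloneqq\lceil 2|H|/D\rceil$, which gains a factor $2^{\ell-1}\geq1$ for $\ell\geq2$. For $\ell=1$ there is no gain from $b$, so we instead use $m=\lfloor s/K\rfloor$: then $|\KK|/m\geq D^{k-1}\geq\Delta_1$ directly, and $m\geq s/(2K)$ since $s/K\geq2$. With this choice we also get $|H'|\leq|H|-\delta s/(2K)$. The same $m$ and $b=\lceil|H|/D\rceil$ already satisfy the codegree condition for all $\ell$, because $|\KK|/m\geq D^{k-1}$.

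Finally, we count the containers. Each container is $f(S)$ for a set $S$ of size at most
\[
 (k-1)b\leq(k-1)\Bigl(\frac{|H|}{D}+1\Bigr)\leq\frac{2(k-1)|H|}{D},
\]
where the last step uses $|H|\geq D$. Also $2(k-1)/D\leq1/2$ because $D\geq4(k-1)$. Proposition~\ref{prop:Entropy} then bounds the number of such $S$, and hence $|\mathcal C(H)|$, by $2^{h(2(k-1)/D)|H|}$.
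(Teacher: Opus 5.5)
Your final choice $b=\lceil |H|/D\rceil$, $m=\lfloor s/K\rfloor$ is exactly the paper's. With it, your verification of the codegree hypothesis (via $|\KK|/m\geq K|\KK|/s\geq D^{k-1}$ and $(b/|H|)^{\ell-1}\geq D^{1-\ell}$), of $m\geq s/(2K)$, and of the entropy count is correct and matches the paper's proof. The initial attempt with $m=\lfloor K|\KK|/s\rfloor$ fails, as you noticed, and it should be dropped along with the detour through $\lceil s/K\rceil$ and $b=\lceil 2|H|/D\rceil$; neither is needed.
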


\begin{proof}
Set
\[
 b=\left\lceil\frac{|H|}{D}\right\rceil,
 \qquad
 m=\left\lfloor\frac{s}{K}\right\rfloor.
\]
The hypotheses imply $|H|\geq D$ and hence
$b\leq2|H|/D$, while $m\geq s/(2K)$.  Also $m\leq s/K$, so, for every
$\ell\in[k]$,
\[
 \left(\frac b{|H|}\right)^{\ell-1}\frac{|\KK|}{m}
 \geq D^{1-\ell}\frac{K}{s}|\KK|
 \geq D^{k-\ell}
 \geq\Delta_\ell(\KK).
\]
Thus Lemma~\ref{lem:containers} can be applied. Let $f$ be the function supplied by Lemma~\ref{lem:containers}, and
define
\[
 \mathcal C(H)\coloneqq
 \{f(S):S\subseteq H,\ |S|\leq(k-1)b\}.
\]
By condition~\ref{itm: BMS property i}, every independent set in the
$k$-graph $\KK$ is contained in a member of $\mathcal C(H)$.  Moreover,
$|\mathcal C(H)|$ is at most the number of sets of at most
$(k-1)b\leq2(k-1)|H|/D$ hyperedges of $H$.
Proposition~\ref{prop:Entropy} therefore gives the asserted bound on
$|\mathcal C(H)|$.  Finally, condition~\ref{itm: BMS property ii} and
$m\geq s/(2K)$ imply that every $H'\in\mathcal C(H)$ satisfies
\[
 |H'|\leq |H|-\delta m
 \leq |H|-\frac{\delta s}{2K},
\]
as required.
\end{proof}

We also use the following container result from
\cite[Proposition~4.1]{Balogh2019NumberCyclefree} to obtain the
initial collection of dense containers.

\begin{theorem}\label{thm:BNS}
For every pair of integers $k,r\geq 3$, there exists a constant $L=L(r,k)>2$ such that the following holds for all $n\in \mathbb{N}$. Let $H$ be an $n$-vertex $r$-graph with $|H|=tn^{r-1}$ for some $t\geq L$. Then there exists a collection $\mathcal{C}(H)$ of at most
\begin{equation}
\label{eq:countainerbound2.7}
\exp\left(\frac{Ln^{r-1}}{t^{1/4}}\right)
\end{equation}
subhypergraphs of $H$ such that
    \vspace{4pt}
\begin{enumerate}[label=\textnormal{(D\arabic*)}, itemsep=4pt]
    \item\label{itm: BNS cond i}
    every $C_k^{(r)}$-free subhypergraph $H'\subseteq H$ is contained in some $G\in \mathcal{C}(H)$, and
    \item\label{itm: BNS cond ii}
    $|G|\leq \left(1-\frac{1}{L}\right)|H|$ for every $G\in \mathcal{C}(H)$.
\end{enumerate}
\end{theorem}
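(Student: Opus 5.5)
The plan is to derive the statement from a single application of Corollary~\ref{cor:containers-balanced}, fed by a balanced supersaturation result for \emph{dense} host hypergraphs, meaning those with $|H|=tn^{r-1}$ where $t\geq L$ is large. With $K=O_{r,k}(1)$, Corollary~\ref{cor:containers-balanced} gives every container at most $|H|-\delta s/(2K)$ hyperedges. Taking $s=\Omega(|H|)$, this is $(1-1/L)|H|$ for a suitable constant $L$, which is \ref{itm: BNS cond ii}. The same corollary gives at most $2^{h(2(k-1)/D)|H|}=\exp\bigl(O((\log D/D)\,tn^{r-1})\bigr)$ containers. So \eqref{eq:countainerbound2.7} follows as soon as $D\geq t^{5/4+o(1)}$.

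The first step is to check that the codegree switching construction of Lemma~\ref{lem: Supersat Large codegrees} is not strong enough here, because it only gives $D=C_1^{r-1}$ and $K=\Theta(C_1)$ with $C_1\leq t$. Instead I would aim for the random-like count. At density $p\approx t/n$, one expects about $t^k n^{(r-2)k}$ copies of $C_k^{(r)}$. Matching $|\KK|\approx D^{k-1}tn^{r-1}$ suggests taking
\[
 D=c\,t\,n^{\gamma},\qquad \gamma=\frac{(r-2)k-(r-1)}{k-1}>0 .
\]
This $D$ is polynomial in $n$, so the container count becomes $\exp(O(n^{r-1}t\log n/D))$, which is far below \eqref{eq:countainerbound2.7}. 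It then suffices to produce a family $\KK^*$ of $\Omega_{r,k}(t^kn^{(r-2)k})$ copies of $C_k^{(r)}$ that is well spread. Concretely, for each $\ell$-set $\sigma$ of hyperedges with $1\leq\ell\leq k-1$, the number of members of $\KK^*$ containing $\sigma$ should be at most $O(t^{k-\ell}n^{(r-2)k-(r-1)\ell+\ell-1})$. Given this, the greedy "saturated tuple'' selection described in Section~\ref{proofoverview} extracts $\KK$ with $|\KK|\geq D^{k-1}|H|/K$ and $\Delta_\ell(\KK)\leq D^{k-\ell}$.

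To build $\KK^*$, I would first regularize $H$. By dyadic pigeonholing over codegrees of $(r-1)$-sets, I would keep the hyperedges whose $(r-1)$-subsets have codegree in a range $[d,2d]$ carrying a constant fraction of $H$, up to a $\log$ factor. Then I would apply Lemma~\ref{lem:full} to make the result $\Omega(d)$-full. With comparable codegrees, the cycles can be grown along a linear path: choose $e_1$, then repeatedly pick a vertex of the previous hyperedge and an extension avoiding used vertices, and finally close the cycle. The closing step should be counted via the shadow, by a Cauchy--Schwarz or path-counting argument over $\partial H$. For the upper bound on cycles through $\sigma$, the prescribed hyperedges fix at least $(r-1)\ell+1$ vertices of the cycle, and the codegree cap $2d$ bounds the number of continuations.

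The main obstacle is this balanced count when codegrees are very uneven, for instance when a few $(r-2)$-sets have huge degree, as in $S_L^{(r)}(n)$-like structures blown up. In that situation the closing step may concentrate on few hyperedges. My first attempt would be to handle it by additionally pigeonholing on degrees of $(r-2)$-sets. If that fails, I would fall back to a weaker but sufficient target $D\geq t^{2}$ obtained from counting only within a $d$-full piece, since any $D\geq t^{5/4+o(1)}$ already suffices for \eqref{eq:countainerbound2.7}.
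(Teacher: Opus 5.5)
The paper does not prove this statement; it imports it from Balogh--Narayanan--Skokan. Judged on its own, your proposal has a genuine gap, and its main target is false. For $(r,k)=(3,3)$, your spread bound with $\ell=2$ reads $\Delta_2=O(t)$ (the $n$-exponent is $0$), while you want $\Omega(t^3n^3)$ cycles. Take the construction of Section~\ref{sec:concluding-remarks} with $\alpha=8t$, so that $|H|=tn^2$. Every linear triangle contains one of the at most $(\alpha-1)^2|H|=O(t^3n^2)$ pairs in $\mathcal Q$. By \eqref{eq:C33-balanced-obstruction}, every family therefore satisfies $|\mathcal K|=O(t^3n^2\,\Delta_2(\mathcal K))$, which is $O(t^4n^2)$ for your family. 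In the $(D,K)$-form of Corollary~\ref{cor:containers-balanced} with $s\asymp tn^2$, this forces $D=O(Kt^2)$. So no $D$ growing with $n$ exists in this case.

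The preparatory steps fail as well. Dyadic pigeonholing costs a logarithmic factor in $s/K$, which is fatal because (D2) needs $s/K=\Omega(|H|)$. Moreover, in $S_L^{(r)}(n)$-like hypergraphs almost no hyperedge has all its $(r-1)$-subsets of comparable codegree: a typical hyperedge has subsets of codegree about $n$ and one of codegree about $t$. Your fallback is left unspecified. If it is Lemma~\ref{lem: Supersat Large codegrees} as stated, then $K=\Theta(t)$ only yields $|G|\le(1-\Theta(1/t))|H|$, which is not (D2).

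The missing observation is that Lemma~\ref{lem: Supersat Large codegrees} already suffices; only the normalization of its statement is lossy. Its auxiliary graphs $\Gamma_f$ cap the number of extensions used, so no upper codegree bounds and no regularization are needed.

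Put $C_1=\lfloor t\rfloor$. Lemma~\ref{lem:full} with $d=C_1-1$ gives a $C_1$-full $H_2\subseteq H$ with $|H_2|\ge|H|-t\binom{n}{r-1}\ge|H|/2$. The proof of Lemma~\ref{lem: Supersat Large codegrees} uses only that $C_1$ is large in terms of $r,k$. It shows $|\mathcal K|\ge|H_2|(C_1/2)^M/B_{r,k}$ with $M=(r-1)(k-1)-1$, and $\Delta_j(\mathcal K)\le(k\cdot r!)^jC_1^{(r-1)(k-j)-1}$ for $j<k$.

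Now choose $D\coloneqq cC_1^{\,r-1-1/(k-1)}$ with $c=(k\cdot r!)^k$, instead of $D=C_1^{r-1}$. Since $(k-j)/(k-1)\le1$, we get $\Delta_j(\mathcal K)\le D^{k-j}$. Since $D^{k-1}=c^{k-1}C_1^{M}$, we get $|\mathcal K|\ge D^{k-1}|H_2|/K$ with $K=2^MB_{r,k}c^{k-1}=O_{r,k}(1)$.

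Corollary~\ref{cor:containers-balanced} with $s=|H_2|$ then gives containers with at most $(1-\delta/(4K))|H|$ hyperedges. Their number is at most $2^{h(2(k-1)/D)|H|}=\exp\bigl(O_{r,k}(t^{2-r+1/(k-1)}\log t)\,n^{r-1}\bigr)$. The worst case is $(r,k)=(3,3)$, where this is $\exp\bigl(O(t^{-1/2}\log t)\,n^{2}\bigr)$, which lies within \eqref{eq:countainerbound2.7} once $L$ is large. The remaining hypotheses of the corollary also hold for large $L$.
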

The following proposition is obtained by iteratively applying
Theorem~\ref{thm:BNS}. 

\begin{proposition}\label{prop:Dense regime}
For every pair of integers $k,r\geq 3$ and every $\varepsilon>0$, there exists a constant $C_0=C_0(r,k,\varepsilon)$ such that for every $n\in\mathbb N$ there is a collection $\mathcal{C}$ of $r$-graphs such that
    \vspace{4pt}
\begin{enumerate}[label=\textnormal{(E\arabic*)}, itemsep=4pt]
    \item\label{itm: Dense regime cond 1}
    for every $C_k^{(r)}$-free $r$-graph $H'$ with vertex set $[n]$, there exists an $r$-graph $H\in \mathcal{C}$ such that $H'\subseteq H$,
    \item\label{itm: Dense regime cond 2}
    $|\mathcal{C}|\leq 2^{\frac{\varepsilon}{4}\binom{n}{r-1}}$, and \item\label{itm: Dense regime cond 3}
    $|H|\leq C_0\binom{n}{r-1}$ for every $H\in \mathcal{C}$.
\end{enumerate}
\end{proposition}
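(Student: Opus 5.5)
We build $\mathcal C$ by repeatedly applying Theorem~\ref{thm:BNS}, organised as a rooted tree of containers. The root is the complete $r$-graph $\binom{[n]}{r}$; it trivially satisfies~\ref{itm: Dense regime cond 1}.

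Write $|H|=t\,n^{r-1}$. Fix a threshold $t_0=t_0(r,k,\varepsilon)\geq L$, where $L=L(r,k)$ is the constant of Theorem~\ref{thm:BNS}; we will choose $t_0$ large below. Set $C_0\coloneqq r!\,t_0$, so that every $H$ with $|H|\leq t_0n^{r-1}$ satisfies $|H|\leq C_0\binom n{r-1}$ once $n\geq 2r$. Smaller $n$ are absorbed by taking $C_0\geq r^r$ and putting all of $\binom{[n]}r$ into $\mathcal C$.

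The iteration runs as follows. At each step, take every current container $H$ with $|H|>t_0n^{r-1}$ and replace it by the family $\mathcal C(H)$ from Theorem~\ref{thm:BNS}. Containers with $|H|\leq t_0n^{r-1}$ are frozen. By~\ref{itm: BNS cond i}, every $C_k^{(r)}$-free $H'\subseteq[n]^{(r)}$ contained in the parent lies in some child, so~\ref{itm: Dense regime cond 1} is preserved by induction. The process stops when all containers are frozen, and the final family gives~\ref{itm: Dense regime cond 3}.

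For termination, note that by~\ref{itm: BNS cond ii} each refinement multiplies the size by at most $1-1/L$. Along any root-to-leaf path the sizes therefore decrease geometrically, starting from $\binom nr\leq n^r$.

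The main point is the count~\ref{itm: Dense regime cond 2}. Consider a path, and for $j\geq 0$ look at the nodes whose size parameter satisfies $t\in[(1-1/L)^{-j}t_0,(1-1/L)^{-j-1}t_0)$. Since sizes shrink by a factor of at least $1-1/L$ per step, each such window contains at most one node of the path. That node contributes a branching factor of at most $\exp(Ln^{r-1}t^{-1/4})\leq\exp\bigl(Ln^{r-1}t_0^{-1/4}(1-1/L)^{j/4}\bigr)$. Multiplying over all $j\geq0$, the number of leaves is at most
\[
\exp\Bigl(Ln^{r-1}t_0^{-1/4}\sum_{j\geq0}(1-1/L)^{j/4}\Bigr)\leq\exp\bigl(C_L\, n^{r-1}t_0^{-1/4}\bigr),
\]
where $C_L=L/(1-(1-1/L)^{1/4})$ depends only on $r$ and $k$. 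The product bounds the leaf count because the count is bounded by the product of the maximal branching at each depth class.

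Finally, choose $t_0$ large enough that $C_L t_0^{-1/4}n^{r-1}\leq(\ln 2)\,\tfrac{\varepsilon}{4}\binom n{r-1}$, for instance $t_0\geq\bigl(8\,r!\,C_L/(\varepsilon\ln2)\bigr)^4$ for $n\geq2r$. This yields~\ref{itm: Dense regime cond 2}.

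The only delicate step is this geometric-series bookkeeping: summing the bounds $\exp(Ln^{r-1}t^{-1/4})$ over unboundedly many levels needs the windowing argument, rather than a naive count of levels times the largest factor.
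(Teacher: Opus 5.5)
Your argument is correct and is essentially the paper's. You iterate Theorem~\ref{thm:BNS} until every container has at most a constant times $n^{r-1}$ hyperedges, and you bound the number of containers by the geometric series $\sum_{j\ge 0}L\,(q^{-j}t_0)^{-1/4}$ with $q=1-1/L$.

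The only difference is bookkeeping. The paper refines in synchronized rounds: at round $i$ it splits exactly those containers with at least $q^{i+1}\binom{n}{r}$ hyperedges, so the per-round multiplicative bound is immediate. Your asynchronous tree instead needs the observation that window indices strictly decrease along every root-to-leaf path. An induction over the tree (not over depth, as your phrase ``depth class'' suggests) then gives your product bound.

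There is one harmless slip in the constants. The condition $n\ge 2r$ does not guarantee $n^{r-1}\le r!\binom{n}{r-1}$ (or the factor-$2$ version you need for (E2)) when $r$ is large; both fail, for example, at $r=20$, $n=40$. In (E2) and (E3) you should instead use $\binom{n}{r-1}\ge (n/(r-1))^{r-1}$ and take $C_0\ge (r-1)^{r-1}t_0$, choosing $t_0$ accordingly, as the paper does.
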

\begin{proof}
Let $L=L(k,r)$ be the constant from Theorem~\ref{thm:BNS}, and set
$q\coloneqq1-1/L$.  Choose a constant $A\geq L/q$ so large
that
\begin{equation}\label{eq:dense-choice-A}
 \frac{8L^2(r-1)^{r-1}}{A^{1/4}}
 \leq\frac{\varepsilon\log 2}{4}.
\end{equation}
Since $\binom nr/n^{r-1}\to\infty$ as $n\to\infty$, we may choose an
integer $N\geq r$ such that
\begin{equation}\label{eq:dense-choice-N}
 \frac{\binom nr}{n^{r-1}}>A
 \qquad\text{for every }n\geq N,
\end{equation}
and set
\[
 C_0\coloneqq\max\{A(r-1)^{r-1},N\}.
\]

Fix $n\in\mathbb N$.  If $n<N$, let $G_0$ be the complete $r$-graph
on $[n]$ and set $\mathcal C\coloneqq\{G_0\}$.  This collection
satisfies condition~\ref{itm: Dense regime cond 1}, and it satisfies
condition~\ref{itm: Dense regime cond 2} because
$|\mathcal C|=1\leq2^{(\varepsilon/4)\binom{n}{r-1}}$.  If $n<r$,
then $G_0$ has no
hyperedges.  If $r\leq n<N$, then
\[
 |G_0|=\binom nr
 =\frac{n-r+1}{r}\binom{n}{r-1}
 \leq N\binom{n}{r-1}
 \leq C_0\binom{n}{r-1}.
\]
Thus condition~\ref{itm: Dense regime cond 3} also holds.  We may
therefore assume for the remainder of the proof that $n\geq N$.

Let $G_0$ be the complete $r$-graph on $[n]$, and define
\[
 t_0\coloneqq\frac{|G_0|}{n^{r-1}}
 \qquad\text{and}\qquad
 t_i\coloneqq q^it_0\quad\text{for every }i\geq1.
\]
By~\eqref{eq:dense-choice-N}, we have $t_0>A$.  Let $m$ be the
smallest positive integer for which $t_m\leq A$.  The minimality of
$m$ gives
\begin{equation}\label{eq:dense-last-threshold}
 t_m=qt_{m-1}>qA\geq L.
\end{equation}

Set $\mathcal C_0\coloneqq\{G_0\}$.  For every
$i\in\{0,\ldots,m-1\}$, having defined $\mathcal C_i$, define
\[
 \mathcal C_{i+1}
 \coloneqq
 \{G\in\mathcal C_i:|G|<t_{i+1}n^{r-1}\}
 \cup
 \bigcup_{\substack{G\in\mathcal C_i\\
                      |G|\geq t_{i+1}n^{r-1}}}
 \mathcal C(G),
\]
where $\mathcal C(G)$ is the collection supplied by
Theorem~\ref{thm:BNS}.  We claim that
\begin{equation}\label{eq:dense-level-properties}
 |G|\leq t_i n^{r-1}
 \quad\text{for every }G\in\mathcal C_i.
\end{equation}
This holds for $i=0$.  If it holds for $i$, then every member retained
from $\mathcal C_i$ has fewer than $t_{i+1}n^{r-1}$ hyperedges, while
condition~\ref{itm: BNS cond ii} gives
\[
 |G'|\leq q|G|\leq qt_i n^{r-1}=t_{i+1}n^{r-1}
\]
for every $G'\in\mathcal C(G)$.  This proves
\eqref{eq:dense-level-properties} for $i+1$.  Notice also that
Theorem~\ref{thm:BNS} may be applied whenever $G$ is replaced, since
\eqref{eq:dense-last-threshold} gives
$|G|/n^{r-1}\geq t_{i+1}\geq t_m\geq L$.

We next bound the size of each collection.  Fix
$i\in\{0,\ldots,m-1\}$.  A member $G\in\mathcal C_i$ that is retained
contributes one member to $\mathcal C_{i+1}$.  If $G$ is replaced and
$s\coloneqq |G|/n^{r-1}$, then $s\geq t_{i+1}$, so
Theorem~\ref{thm:BNS} gives
\[
 |\mathcal C(G)|
 \leq\exp\left(\frac{Ln^{r-1}}{s^{1/4}}\right)
 \leq\exp\left(\frac{Ln^{r-1}}{t_{i+1}^{1/4}}\right).
\]
If a member of $\mathcal C_{i+1}$ is contributed by more than one
member of $\mathcal C_i$, counting all these contributions only
increases the upper bound.  Therefore,
\[
 |\mathcal C_{i+1}|
 \leq |\mathcal C_i|
       \exp\left(\frac{Ln^{r-1}}{t_{i+1}^{1/4}}\right).
\]
Multiplying these $m$ inequalities gives
\begin{equation}\label{eq:dense-level-count}
 |\mathcal C_m|
 \leq
 \exp\left(
 n^{r-1}\sum_{i=1}^m\frac{L}{t_i^{1/4}}
 \right).
\end{equation}

For every $i\in[m]$, we have
$t_i=q^{i-m}t_m>q^{i-m+1}A$.  Since $q\geq1/2$ and
$1-q^{1/4}\geq1/(4L)$, it follows that
\[
 \sum_{i=1}^m\frac{L}{t_i^{1/4}}
 \leq \frac{Lq^{-1/4}}{A^{1/4}}
       \sum_{j\geq0}q^{j/4}
 \leq\frac{8L^2}{A^{1/4}}.
\]
Combining this estimate with~\eqref{eq:dense-level-count} shows that
$|\mathcal C_m|$ is at most
\[
 \exp\left(\frac{8L^2}{A^{1/4}}n^{r-1}\right)
 \leq 2^{\frac{\varepsilon}{4}(n/(r-1))^{r-1}}
 \leq2^{\frac{\varepsilon}{4}\binom{n}{r-1}},
\]
where we used~\eqref{eq:dense-choice-A} and
$\binom{n}{r-1}\geq(n/(r-1))^{r-1}$.
At every step, condition~\ref{itm: BNS cond i} ensures that replacing
$G$ by $\mathcal C(G)$ preserves the covering property.  Since
$\mathcal C_0$ covers every $C_k^{(r)}$-free $r$-graph on $[n]$, the
collection $\mathcal C_m$ satisfies condition~\ref{itm: Dense regime cond 1}.

Finally, \eqref{eq:dense-level-properties} and $t_m\leq A$ show that
every member of $\mathcal C_m$ has at most $An^{r-1}$ hyperedges.  Since
$\binom{n}{r-1}\geq(n/(r-1))^{r-1}$, it has at most
$C_0\binom{n}{r-1}$ hyperedges.  Thus $\mathcal C\coloneqq\mathcal C_m$
satisfies all three assertions.
\end{proof}

Throughout the remainder of the paper, we assume that $\varepsilon>0$
and that $k$ and $r$ are fixed. Let $C_0=C_0(\varepsilon,k,r)$ be the
constant given by Proposition~\ref{prop:Dense regime}. We choose
positive integers $C_1$ and $C_2$, and then $n$, according to the
hierarchy
\begin{equation}\label{defi:constant hierarchy}
\frac{1}{n}\ll \frac{1}{C_2}\ll \frac{1}{C_1}
\ll \frac{1}{C_0},\ \varepsilon,\ \frac{1}{k},\ \frac{1}{r}.
\end{equation}
Thus, after $\varepsilon$, $k$, $r$, and $C_0$ have been fixed, we first choose $C_1$ sufficiently large, then choose $C_2$ sufficiently large in terms of all previously fixed parameters, and finally take $n$ sufficiently large in terms of all these parameters.

We prove in Section~\ref{sec:EnumerationProof} the following result
about containers. It directly implies
Theorem~\ref{thm:numberOfCycleFree} when $(r,k)\neq(3,3)$.

\begin{theorem}\label{thm:MainContainer}
Let $\varepsilon>0$ and let $r,k\geq3$ be integers with
$(r,k)\neq(3,3)$. Then there exists an integer
$n_0=n_0(\varepsilon,r,k)$ such that, for every $n>n_0$, there is a
collection $\mathcal{C}$ of $r$-graphs satisfying the following:
\vspace{4pt}
\begin{enumerate}[label=\textnormal{(F\arabic*)},itemsep=4pt]
    \item\label{itm: Main container 1}
    every $C_k^{(r)}$-free $r$-graph with vertex set $[n]$ is contained in some $H\in\mathcal{C}$;
    \item\label{itm: Main container 2}
    $|H|\leq \left(\left\lfloor\frac{k-1}{2}\right\rfloor+\frac{\varepsilon}{2}\right)\binom{n}{r-1}$ for every $H\in\mathcal{C}$; 
    \item\label{itm: Main container 3}
    $|\mathcal{C}|\leq 2^{\frac{\varepsilon}{2}\binom{n}{r-1}}$.
\end{enumerate}
\end{theorem}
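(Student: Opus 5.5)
We start from the collection $\mathcal C$ supplied by Proposition~\ref{prop:Dense regime}. It has at most $2^{\frac{\varepsilon}{4}\binom{n}{r-1}}$ members, each with at most $C_0\binom{n}{r-1}$ hyperedges, and it covers every $C_k^{(r)}$-free $r$-graph on $[n]$. We then refine it iteratively. While some current container $H$ has more than $\bigl(\lfloor(k-1)/2\rfloor+\varepsilon/2\bigr)\binom{n}{r-1}$ hyperedges, we apply Proposition~\ref{prop:kOdd} (if $r=3$ and $k\geq5$ is odd) or Proposition~\ref{prop:kEven} (otherwise) to $H$. This yields one of the subhypergraphs $H_0$, $H_2$ or $H_1$. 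We then invoke the corresponding balanced supersaturation lemma from Section~\ref{sec:Supersaturation}, namely Lemma~\ref{lem: Supersat Large Linear subgraph}, \ref{lem: Supersat Large codegrees}, \ref{lem: Supersat Large shadow even} or \ref{lem: Supersat Large shadow odd}. The lemma gives a collection $\KK$ of copies of $C_k^{(r)}$ in $H$ with $|\KK|\geq D^{k-1}s/K$ and $\Delta_\ell(\KK)\leq D^{k-\ell}$. Here $(D,K,s)$ ranges over a finite list:
\begin{itemize}
\item $(n^{(k-2)/(k-1)},\log n,n^2)$;
\item $((C_1)^{r-1},\Theta(C_1),|H_2|)$;
\item $(n^{1/(k-1)},\log n,n^{r-1})$;
\item $(C_2,K(C_1,k),n^2)$.
\end{itemize}
In every case $s\geq(\varepsilon/8)\binom{n}{r-1}$. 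Since $|H|\geq D$ and $s\geq 2K$ for large $n$, Corollary~\ref{cor:containers-balanced} replaces $H$ by at most $2^{h(2(k-1)/D)|H|}$ subcontainers, each with at most $|H|-\delta s/(2K)$ hyperedges. The covering property \ref{itm: Main container 1} is preserved at each step, and the process stops with \ref{itm: Main container 2} holding for every final container.

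It remains to bound the number of containers, which gives \ref{itm: Main container 3}. Follow any chain of refinements from an initial container to a final one. Each refinement with parameter type $(D,K,s)$ removes at least $\delta s/(2K)\geq \delta\varepsilon\binom{n}{r-1}/(16K)$ hyperedges, and container sizes never exceed $C_0\binom{n}{r-1}$. Hence that type is used at most $16C_0K/(\delta\varepsilon)$ times along the chain. Each use multiplies the count by at most
\[
 2^{h(2(k-1)/D)C_0\binom{n}{r-1}}.
\]
So the total multiplicative factor from that type is $2^{c\,K h(2(k-1)/D)\binom{n}{r-1}}$ with $c=O(C_0^2/(\delta\varepsilon))$. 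Since $h(x)=O(x\log(1/x))$, the exponent is $O\bigl(\binom{n}{r-1}C_0^2K\log D/(\delta\varepsilon D)\bigr)$.

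The main obstacle is to make this exponent at most $\frac{\varepsilon}{16}\binom{n}{r-1}$ for each of the four types simultaneously.
\begin{itemize}
\item For the two $n$-dependent types, $K\log D/D=O(\log^2 n/n^{1/(k-1)})\to0$, so large $n$ suffices.
\item For the codegree type, $K\log D/D=O(\log C_1/C_1^{r-2})$, which is small once $C_1\gg C_0,1/\varepsilon$. This is where the hierarchy \eqref{defi:constant hierarchy} is used.
\item For the odd large-shadow type, $K$ depends only on $C_1,k$, while $D=C_2$. Choosing $C_2\gg C_1$ makes $K\log C_2/C_2$ small.
\end{itemize}
One must check that $\delta$ and the constants in $c$ depend only on $r,k,\varepsilon,C_0$, and not on $C_1,C_2$ in a way that breaks the hierarchy. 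This is the delicate bookkeeping point. Granting it, the product of the four factors is at most $2^{\frac{\varepsilon}{4}\binom{n}{r-1}}$. Together with the initial $2^{\frac{\varepsilon}{4}\binom{n}{r-1}}$ containers, this gives $|\mathcal C|\leq 2^{\frac{\varepsilon}{2}\binom{n}{r-1}}$. The bound holds because the number of final containers is at most the number of initial containers times the maximum, over chains, of the product of the branching factors along the chain.
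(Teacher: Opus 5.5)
Your proposal is correct and follows the paper's proof essentially step by step. It uses the same initial containers from Proposition~\ref{prop:Dense regime}, the same four parameter rows, the same per-type bound $O(C_0K_\tau/(\delta\varepsilon))$ on the number of refinements along a chain, and the same order of choosing $C_1$, then $C_2$, then $n$. The only difference is cosmetic: you bound the number of leaves by the maximum over chains of the product of branching factors, which holds by a one-line induction on the refinement tree and lets you drop the paper's extra factor $(|\mathcal R|+1)^L$ for type sequences. Your ``delicate bookkeeping'' point is immediate, because $\delta=2^{-k(k+1)}$ and $C_0$ are fixed before $C_1$ and $C_2$ are chosen.
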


\subsection{Tools for hypergraphs with large shadow}

In the subsequent sections, we obtain copies of $C_k^{(r)}$ from
complete multipartite hypergraphs contained in the shadow $\partial H$.
The next two propositions give quantitative lower bounds for the
number of such multipartite hypergraphs when the shadow has at least
$\alpha n^q$ hyperedges. Their proofs rely on a
K\H ov\'ari--S\'os--Tur\'an-type argument.

For integers $q,t\geq2$, let $K_{t,\ldots,t}^{(q)}$ denote the
complete $q$-partite $q$-graph with $t$ vertices in each part.

We use the following consequence of Erd\H{o}s's theorem on complete
multipartite hypergraphs; see, for example,
\cite[Proposition~3.6]{Kostochka2015}.

\begin{theorem}[Erd\H{o}s]
\label{thm:Erdos-complete-partite}
For every $q,t\geq2$ and every $\beta>0$, there is an integer
$N_0=N_0(q,t,\beta)$ such that every $q$-graph $J$ on $N\geq N_0$
vertices with at least $\beta\binom Nq$ hyperedges contains a copy of
$K_{t,\ldots,t}^{(q)}$.
\end{theorem}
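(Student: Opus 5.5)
We will prove Theorem~\ref{thm:Erdos-complete-partite} by induction on $q$, using the standard K\H{o}v\'ari--S\'os--Tur\'an double-counting argument. The idea is to find a $t$-set $T$ whose common link is still dense, and then take $T$ as the last part. It is convenient to let the induction start at $q=1$. A $1$-graph on $N$ vertices with at least $\beta N$ hyperedges has at least $t$ vertices once $N\geq t/\beta$. Its hyperedges then form a copy of $K^{(1)}_{t}$, that is, $t$ singletons. The statement for every $q\geq1$ will give in particular the cases $q\geq2$ that are stated.

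For the inductive step, fix $q\geq2$ and let $J$ be a $q$-graph on a set $V$ of $N$ vertices with $|J|\geq\beta\binom Nq$. For every $t$-set $T\subseteq V$, define the $(q-1)$-graph
\[
 J_T\coloneqq\Bigl\{f\in\tbinom{V\setminus T}{q-1}: f\cup\{x\}\in J\text{ for every }x\in T\Bigr\}.
\]
We double count pairs $(f,T)$ with $f\in J_T$. A fixed $(q-1)$-set $f$ lies in $J_T$ exactly when $T\subseteq N_J(f)$, so it is counted $\binom{d_J(f)}{t}$ times. Summing over $f$ gives
\[
 \sum_{T}|J_T|=\sum_{f\in\binom V{q-1}}\binom{d_J(f)}{t}.
\]
The sum $\sum_f d_J(f)$ equals $q|J|$, so the average of $d_J(f)$ is at least $\beta(N-q+1)$. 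Since $x\mapsto\binom xt$ is convex, Jensen's inequality gives
\[
 \sum_T|J_T|\geq\binom N{q-1}\binom{\beta(N-q+1)}{t}\geq\binom N{q-1}\frac{(\beta N/2)^t}{t!}
\]
for $N$ large in terms of $q,t,\beta$. There are only $\binom Nt\leq N^t/t!$ choices of $T$. Hence some $T$ satisfies
\[
 |J_T|\geq(\beta/2)^t\binom N{q-1}\geq\beta'\binom{N-t}{q-1},
 \qquad\beta'\coloneqq(\beta/2)^t.
\]
We do not need to delete sets meeting $T$, because the definition of $J_T$ already excludes them.

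Now apply the induction hypothesis to the $(q-1)$-graph $J_T$ on the $N-t$ vertices of $V\setminus T$, with density $\beta'$. This requires $N-t\geq N_0(q-1,t,\beta')$, which we ensure by defining $N_0(q,t,\beta)$ recursively. We obtain a copy of $K^{(q-1)}_{t,\ldots,t}$ in $J_T$ with parts $X_1,\ldots,X_{q-1}\subseteq V\setminus T$. Every transversal $f$ of these parts lies in $J_T$, so $f\cup\{x\}\in J$ for every $x\in T$. Thus $X_1,\ldots,X_{q-1},T$ span a copy of $K^{(q)}_{t,\ldots,t}$ in $J$.

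The only step needing care is the Jensen estimate. We need the lower bound $\binom{\beta(N-q+1)}{t}\geq(\beta N/2)^t/t!$, which holds once $\beta N\gg t,q$. We also need the induction to be applied with a density $\beta'$ that depends only on $q$, $t$ and $\beta$, not on $N$. Both are routine, and the final threshold $N_0$ is obtained by composing the thresholds from these two requirements.
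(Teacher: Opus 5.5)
Your proof is correct. The paper does not prove this theorem: it quotes it as a classical result of Erd\H{o}s, pointing to Kostochka--Mubayi--Verstra\"ete (Proposition~3.6), and uses only its qualitative form inside Proposition~\ref{prop:many-complete-partite}. Your argument is the standard proof. You induct on $q$ and use the K\H{o}v\'ari--S\'os--Tur\'an double count $\sum_T|J_T|=\sum_f\binom{d_J(f)}{t}$ to find a $t$-set $T$ whose common link has density at least $(\beta/2)^t$. It is the $q$-uniform analogue of the Jensen computation the paper itself carries out for graphs in Proposition~\ref{prop: Many Ktt's}. The quantitative loss $\beta\mapsto(\beta/2)^t$ per step is harmless, since only the existence of $N_0(q,t,\beta)$ is needed.

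The one imprecision is the claim that $x\mapsto\binom xt$ is convex. As a real polynomial it is not convex on $[0,t-1]$ when $t\geq3$; for instance, its second derivative is negative on $[0,1)$ when $t=3$. Apply Jensen instead to the function that equals $\binom xt$ for $x\geq t-1$ and $0$ otherwise. This function is convex and nondecreasing on $[0,\infty)$ and agrees with $\binom xt$ at nonnegative integers. Since $\beta(N-q+1)\geq t-1$ for large $N$, your displayed lower bound on $\sum_T|J_T|$ then follows unchanged.
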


\begin{proposition}\label{prop:many-complete-partite}
For every $q,t\geq2$ and every $\alpha>0$, there are constants
$\gamma=\gamma(q,t,\alpha)>0$ and $n_0$ such that every $n$-vertex
$q$-graph $F$ with $|F|\geq\alpha n^q$ contains at least
$\gamma n^{qt}$ distinct copies of $K_{t,\ldots,t}^{(q)}$ whenever
$n\geq n_0$.
\end{proposition}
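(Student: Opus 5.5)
We prove Proposition~\ref{prop:many-complete-partite} by the standard sampling (averaging) argument that upgrades the existence statement of Theorem~\ref{thm:Erdos-complete-partite} to a counting statement.

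First we fix the sample size. Set $\beta\coloneqq \alpha q!/2$ and let $N\coloneqq N_0(q,t,\beta)$ be the integer supplied by Theorem~\ref{thm:Erdos-complete-partite}; enlarging $N$ if necessary, we may assume $N\geq qt$. Consider all $N$-subsets $S\subseteq V(F)$. Each hyperedge of $F$ lies in exactly $\binom{n-q}{N-q}$ such sets, so the average of $|F[S]|$ over $S$ is
\[
|F|\binom{N}{q}\Big/\binom nq\geq \alpha n^q\binom Nq\Big/\binom nq\geq \alpha q!\binom Nq .
\]
Call $S$ \emph{good} if $|F[S]|\geq\beta\binom Nq$. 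Since $|F[S]|\leq\binom Nq$ always, the averaging (reverse Markov) inequality shows that at least a $\beta$-fraction of the $N$-sets are good; a loose bound such as $\alpha q!/2$ suffices here. By Theorem~\ref{thm:Erdos-complete-partite}, every good $S$ spans a copy of $K^{(q)}_{t,\ldots,t}$ inside $F[S]$.

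Next we double count. A fixed copy of $K^{(q)}_{t,\ldots,t}$ has exactly $qt$ vertices, so it is contained in at most $\binom{n-qt}{N-qt}$ sets $S$ of size $N$. The number of distinct copies is therefore at least
\[
\beta\binom nN\Big/\binom{n-qt}{N-qt}
=\beta\,\frac{(n)_{qt}}{(N)_{qt}}
\geq \gamma n^{qt}
\]
for $n\geq n_0$, taking for example $\gamma\coloneqq\beta/(2N^{qt})$. Here $(x)_j$ denotes the falling factorial, and the final inequality uses $(n)_{qt}\geq n^{qt}/2$ for $n$ large.

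The only point needing care is the averaging step, namely converting $|F|\geq\alpha n^q$ into the relative density $\beta$ required by Theorem~\ref{thm:Erdos-complete-partite}. This is routine because the bound $\binom nq\leq n^q/q!$ gives the relative density $\geq\alpha q!$ directly. Everything else is bookkeeping of constants depending only on $q,t,\alpha$. Copies are counted as subhypergraphs (vertex sets together with their hyperedges), so no labelled-versus-unlabelled issue affects the $\Theta(n^{qt})$ order.
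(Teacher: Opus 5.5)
Your proposal is correct and follows essentially the same argument as the paper: apply Erd\H{o}s's theorem at relative density $q!\alpha/2$ with a fixed sample size $N\geq qt$, use averaging to show that at least a $q!\alpha/2$ fraction of the $N$-subsets are dense enough to contain a copy, and then double count against the $\binom{n-qt}{N-qt}$ samples containing a fixed copy. The only cosmetic difference is that the paper first shrinks $\alpha$ so that $q!\alpha\leq1$. You do not need this step, because when $q!\alpha>1$ the statement is vacuous: $|F|\leq\binom nq\leq n^q/q!<\alpha n^q$.
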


\begin{proof}
Replacing $\alpha$ by a smaller positive constant if necessary, we may
assume that $q!\alpha\leq1$.  By
Theorem~\ref{thm:Erdos-complete-partite}, there is an integer
$N=N(q,t,\alpha)\geq qt$ such that every $N$-vertex $q$-graph with at
least $(q!\alpha/2)\binom Nq$ hyperedges contains a copy of
$K_{t,\ldots,t}^{(q)}$.

Choose an $N$-element subset $U$ of $V(F)$ uniformly at random.  Then
each hyperedge $e\in F$ is contained in $U$ with probability
${\binom{n-q}{N-q}}/{\binom nN}
={\binom Nq}/{\binom nq}$.
Consequently, by linearity of expectation, the assumption
$|F|\geq\alpha n^q$, and the inequality
$\binom nq\leq n^q/q!$, we have
\[
 \mathbb E|F[U]|
 =|F|\frac{\binom Nq}{\binom nq}
 \geq\alpha n^q\frac{\binom Nq}{\binom nq}
 \geq q!\alpha\binom Nq.
\]
Let $p$ be the proportion of $N$-element subsets $U$ of $V(F)$ satisfying
\[
 |F[U]|\geq\frac{q!\alpha}{2}\binom Nq.
\]
Since $|F[U]|\leq\binom Nq$ for every $U$, we have
\[
 \mathbb E|F[U]|
 \leq
 p\binom Nq+(1-p)\frac{q!\alpha}{2}\binom Nq.
\]
Comparing this upper bound with the preceding lower bound for
$\mathbb E|F[U]|$ gives
\[
 q!\alpha
 \leq p+(1-p)\frac{q!\alpha}{2},
\]
and therefore
\begin{equation}\label{eq:many-partite-good-N-element-sets}
 p\geq\frac{q!\alpha}{2-q!\alpha}
 \geq\frac{q!\alpha}{2}.
\end{equation}
Each of these $N$-element subsets contains a copy of
$K_{t,\ldots,t}^{(q)}$, whereas a fixed copy is contained in exactly
$\binom{n-qt}{N-qt}$ choices of $U$.  Double counting the pairs
consisting of such an $N$-element subset and a copy contained in it,
and using \eqref{eq:many-partite-good-N-element-sets}, shows that the number of distinct
copies is at least
\[
 \frac{q!\alpha}{2}
 \frac{\binom nN}{\binom{n-qt}{N-qt}}
 =\frac{q!\alpha}{2\binom N{qt}}\binom n{qt}
 \geq\gamma n^{qt}
\]
for a suitable $\gamma>0$ and all sufficiently large $n$.
\end{proof}

For the proof of Lemma~\ref{lem: Supersat Large shadow odd}, we also
need the following quantitative   version of Proposition~\ref{prop:many-complete-partite} for graphs.

\begin{proposition}\label{prop: Many Ktt's}
For every $\alpha>0$ and integer $t\geq2$, there exists
$n_0=n_0(\alpha,t)$ such that every $n$-vertex graph with
$n\geq n_0$ and at least $\alpha n^2$ edges contains at least
\[
 \frac12\left(\frac{2\alpha}{e}\right)^{t^2}
 \frac{n^{2t}}{t^{2t}}
\]
 copies of $K_{t,t}$.
\end{proposition}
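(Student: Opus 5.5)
The plan is the standard Kővári--Sós--Turán double counting with Jensen's inequality, tracking the constants explicitly. Let $G$ have $m\geq\alpha n^2$ edges. A copy of $K_{t,t}$ is determined by an unordered pair of disjoint $t$-sets $\{A,B\}$ with every vertex of $A$ adjacent to every vertex of $B$. First count pairs $(A,B)$, where $A$ is an ordered $t$-set and $B\subseteq N(A)\coloneqq\bigcap_{a\in A}N(a)$ with $|B|=t$. Since $B\subseteq N(A)$ and $A$ contains no neighbours of itself, $A\cap B=\emptyset$ automatically. Each copy of $K_{t,t}$ arises from at most two such ordered pairs, which accounts for the factor $\tfrac12$.

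The key steps, in order, are as follows.

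First, write $X\coloneqq\sum_{|B|=t}\binom{|N(B)|}{t}$, where $N(B)$ is the common neighbourhood of the $t$-set $B$. Then the number of copies is at least $\tfrac12 X$. By double counting, $\sum_{|B|=t}|N(B)|=\sum_v\binom{d(v)}{t}$. By convexity of $x\mapsto\binom{x}{t}$ (extended as $0$ below $t-1$) and Jensen's inequality, this sum is at least $n\binom{2m/n}{t}\geq n\binom{2\alpha n}{t}$.

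Second, apply Jensen again over the $\binom nt$ sets $B$. This gives
\[
X\geq\binom nt\binom{\mu}{t},\qquad \mu\coloneqq\frac{n\binom{2\alpha n}{t}}{\binom nt}.
\]

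Third, estimate the terms. For $n$ large in terms of $\alpha,t$ we have $\binom{2\alpha n}{t}\approx(2\alpha n)^t/t!$ and $\binom nt\leq n^t/t!$, so $\mu\gtrsim(2\alpha)^t n$, which tends to infinity. Then $\binom{\mu}{t}\gtrsim\mu^t/t!$ and $\binom nt\gtrsim n^t/t!$. Combining these gives
\[
X\gtrsim\frac{(2\alpha)^{t^2}n^{2t}}{(t!)^2}.
\]
Finally, use $t!\leq t^t e^{1-t}\leq e\,t^t\,e^{-(t-1)}$; a crude bound $(t!)^2\leq t^{2t}e^{-2t+2}$ suffices to absorb the lower-order losses $(1-o(1))$ into a factor $e^{-t^2}$. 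Indeed, one needs the product of the correction factors, each of the form $\prod_{i<t}(1-i/x)$ with $x\to\infty$, to be at least $e^{-t^2}$, which is trivial for $n\geq n_0(\alpha,t)$. In fact there is a lot of room.

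The only real care needed is in the first Jensen step, where the case $d(v)<t$ must be handled. I will use the convex extension $f(x)=\binom{x}{t}$ for $x\geq t-1$ and $f(x)=0$ otherwise. I will also need to ensure $2\alpha n\geq t$ and $\mu\geq t$, which holds for large $n$ (note $\alpha\leq 1/2$ necessarily). These constant-tracking inequalities are the main, though routine, obstacle; everything else is the textbook argument.
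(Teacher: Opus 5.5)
Your argument is correct and follows the paper's proof: count copies of $K_{1,t}$ (your $\sum_{|B|=t}|N(B)|=\sum_v\binom{d(v)}{t}$), apply Jensen twice, and then estimate the binomials. The paper does the last step with $\binom xy\ge(x/y)^y$ and $\binom nt\le(en/t)^t$, which produce the factor $e^{-t^2}$ exactly.

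There is one slip in your constant-tracking: $t!\le t^te^{1-t}$ is false for every $t\ge2$, since Stirling gives $t!\ge\sqrt{2\pi t}\,(t/e)^t>e\,(t/e)^t$. You do not need that inequality. The trivial bound $t!\le t^t$ already gives $X\ge(1-o(1))(2\alpha)^{t^2}n^{2t}/t^{2t}$, and $1-o(1)\ge e^{-t^2}$ for large $n$.
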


\begin{proof}
Let $\mathcal S$ be the set of copies of $K_{1,t}$.  Two applications
of Jensen's inequality give
\[
 |\mathcal S|
 =\sum_v\binom{d(v)}t
 \geq n\binom{2\alpha n}t
\]
and then the number of copies of $K_{t,t}$ is at least
\[
 \frac12\binom nt
 \binom{|\mathcal S|/\binom nt}{t}.
\]
Using $\binom xy\geq(x/y)^y$ and
$\binom nt\leq(en/t)^t$ in the last displayed inequality  yields the asserted
bound for all sufficiently large $n$.
\end{proof}

\subsection{Tools for linear triangles in uniformity three}
\label{subsec:C33-tools}

In this subsection, let $T\coloneqq C_3^{(3)}$.  We state the following four results
 used in Section~\ref{sec:C33-enumeration}.  The
first is the exact extremal theorem of Frankl and F\"uredi, stated in
our notation.

\begin{theorem}[Frankl--F\"uredi
{\cite[Theorem~3.3]{FranklFuredi1987}}]
\label{thm:C33-FF-extremal}
For all sufficiently large $n$, every $T$-free $3$-graph $H$ on
$[n]$ satisfies
\[
 |H|\leq\binom{n-1}{2}.
\]
Equality holds for the full star consisting of all triples containing
one fixed vertex.  In particular,
\[
 \mathrm{ex}_3(n,T)=\binom{n-1}{2}
\]
for all sufficiently large $n$.
\end{theorem}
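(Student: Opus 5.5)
We would follow the stability approach, arguing in three stages: an asymptotic bound, a structural stability statement, and an exact cleanup around a single vertex of huge degree.

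\emph{Asymptotic bound.} First we prove $|H|\leq(1+o(1))\binom n2$ for every $T$-free $3$-graph $H$. Apply Lemma~\ref{lem:full} with $d=1$ to get a $2$-full subhypergraph $H'$ with $|H'|\geq|H|-|\partial H|\geq|H|-\binom n2$. It then suffices to show that a $2$-full $T$-free $3$-graph has $o(n^2)$ hyperedges, or at least at most $\varepsilon n^2$ hyperedges in the relevant regime. For this we would look at the pairs of high codegree. Suppose the pairs $xy$ and $x'y$ have many extensions and $xyz\in H'$. Choosing extension vertices distinct from the few vertices already used then produces a linear triangle. So in a $2$-full $T$-free $3$-graph the hyperedges are forced into a very rigid configuration. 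We expect this to give only a small multiple of $n$ hyperedges after removing a removal-lemma error of $o(n^2)$; here the Ruzsa--Szemer\'edi theorem handles the hyperedges all of whose pairs have bounded codegree.

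\emph{Stability.} Next we show that if $|H|\geq(1-\eta)\binom n2$, then some vertex $v$ has $d_H(v)\geq(1-\eta')\binom n2$. The asymptotic argument above shows that almost all hyperedges contain a pair $xy$ of large codegree. Let $P$ be the graph of such pairs. As in Lemma~\ref{lem:C33-induced-link-solution}, $P$ must be triangle-free and the links must be induced subgraphs of $P$. Counting hyperedges through pairs of $P$ then forces these pairs to form essentially a star centred at a single vertex $v$.

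\emph{Exact step.} Let $L$ be the link graph of $v$ and let $E'$ be the set of hyperedges avoiding $v$. For $abc\in E'$ and distinct $x,y\notin\{a,b,c\}$ with $vax,vby\in H$, the three hyperedges $abc$, $vax$, $vby$ form a copy of $T$. Hence, for each pair among $a,b,c$, one of its two vertices has $L$-degree at most $O(1)$ outside $\{a,b,c\}$, so at least two of $a,b,c$ are \emph{poor} in $L$. We then double count. A poor vertex $u$ misses at least $n-O(1)$ pairs $uw$ from $L$. On the other hand, the hyperedges of $E'$ whose two poor vertices include $u$ are at most the number of $T$-free configurations on the link of $u$, which we bound by $n-O(1)$ using the minimum-degree assumption obtained after first deleting low-degree vertices by induction on $n$. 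This gives $|E'|\leq\binom{n-1}2-|L|$, that is, $|H|\leq\binom{n-1}2$.

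The main obstacle is this final exact counting. It requires a careful charging scheme showing that each hyperedge avoiding $v$ is paid for by a distinct missing pair of $L$. We would first try charging $abc$ to the pair $\{a,b\}$ of poor vertices and then argue that distinct hyperedges give distinct pairs: two hyperedges $abc,abc'$ sharing a poor pair together with a rich vertex would create a copy of $T$ through $v$. The degree reduction, deleting a vertex of degree less than $n-2$ and inducting, must be arranged so that the error terms from the stability step are absorbed for $n$ sufficiently large.
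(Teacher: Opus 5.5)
The paper gives no proof of this statement; it quotes Frankl--F\"uredi \cite[Theorem~3.3]{FranklFuredi1987} as a black box. So your sketch has to stand on its own, and as written it does not. The main problem is that the first stage rests on a false lemma.

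\textbf{Stage 1 is false as stated.} A $2$-full $T$-free $3$-graph need not have $o(n^2)$ hyperedges. Partition $[n]$ into $n/2$ pairs. Let $\Pi$ be a triangle-free graph whose vertices are these pairs, say complete bipartite with parts of size $n/4$. Let $H'$ consist of all triples contained in $e\cup f$ for $ef\in\Pi$.
\begin{itemize}
\item Each matching pair $e$ has codegree $2\deg_\Pi(e)$, and every other shadow pair has codegree exactly $2$. Hence $H'$ is $2$-full, and $|H'|=4|\Pi|=n^2/4$.
\item $H'$ is $T$-free. Two triples inside one $4$-set $e\cup f$ share two vertices. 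So a linear triangle would need three distinct, pairwise intersecting edges of $\Pi$.
\item In a triangle-free graph such edges share a common vertex, here a matching pair $g$. Then all three pairwise intersections of the triangle lie in the $2$-set $g$, which is impossible.
\end{itemize}
Thus $|H|\le|H'|+\binom n2$ cannot yield $(1+o(1))\binom n2$. The peeling behind Lemma~\ref{lem:full} actually gives $|H|\le|H'|+|\partial H|-|\partial H'|$. So you would need something like $|H'|\le|\partial H'|+o(n^2)$. The example has $|H'|=|\partial H'|-n/2$, so there is no slack, and ``we expect a rigid configuration'' is not an argument for it. The separate claim that almost all hyperedges contain a high-codegree pair is fine, via Ruzsa--Szemer\'edi exactly as in Lemma~\ref{lem:C33-low-part-count}.

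\textbf{Stages 2 and 3 state what must be proved rather than prove it.} Triangle-freeness of $P$ and $F_z=P[V_z]$ (Lemma~\ref{lem:C33-induced-link-solution}) do not by themselves force $P$ to be essentially a star. That conclusion is the entire content of a stability theorem, and no counting is given.

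In the exact step, your stated reason for injectivity of $abc\mapsto\{a,b\}$ is wrong. Two hyperedges $abc,abc'$ share two vertices, so they never lie in a common copy of $T$. Concretely, take $W=[n]\setminus\{v,a,b\}$ and
\[
\{vxy:x,y\in W\}\cup\{abc:c\in W\}.
\]
This $3$-graph is $T$-free, $a$ and $b$ are poor, every $c\in W$ is rich, and all $n-3$ hyperedges avoiding $v$ are charged to the single pair $ab$. This example is excluded only by the minimum-degree reduction, since $d(a)=n-3$. Any valid charging must therefore use that condition essentially, and you have not worked this out. As it stands, the proposal does not establish the theorem.
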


Recall that a $3$-graph is linear if every pair of two distinct hyperedges intersect
in at most one vertex.  We use the Ruzsa--Szemer\'edi
$(6,3)$-theorem in the following equivalent linear-hypergraph form.

\begin{theorem}[Ruzsa--Szemer\'edi
{\cite{RuzsaSzemeredi1978}}]
\label{thm:C33-RS}
There is a function $\eta:\mathbb N\to[0,1]$, with
$\eta(n)\to0$, such that every linear $T$-free $3$-graph on $n$
vertices has at most $\eta(n)n^2$ hyperedges.
\end{theorem}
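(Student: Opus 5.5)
We would derive this from the triangle removal lemma, which we would obtain from Lemma~\ref{lem:graph-regularity} and Lemma~\ref{lem:graph-counting}. Let $H$ be a linear $T$-free $3$-graph on $n$ vertices, and let $G\coloneqq\partial H$ be its shadow graph. For every $e\in H$, the three pairs inside $e$ form a triangle of $G$, which we call the triangle associated with $e$. Because $H$ is linear, every edge of $G$ lies in exactly one hyperedge. Hence $G$ has exactly $3|H|$ edges, and the associated triangles are pairwise edge-disjoint.

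\textbf{Step 1: the only triangles of $G$ are the associated ones.} Let $xyz$ be a triangle of $G$, and let $e_1\supseteq xy$, $e_2\supseteq yz$, $e_3\supseteq xz$ be the hyperedges containing its three edges. If two of these coincide, say $e_1=e_2$, then $e_1=\{x,y,z\}$, so $e_1$ contains $xz$ and hence $e_1=e_3$ by linearity. Now suppose the $e_i$ are pairwise distinct, and write $e_1=\{x,y,a\}$, $e_2=\{y,z,b\}$, $e_3=\{x,z,c\}$. If $a=z$, then $e_1$ and $e_2$ share the pair $yz$, which contradicts linearity; similarly none of $a,b,c$ lies in $\{x,y,z\}$. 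If $a=b$, then $e_1\cap e_2\supseteq\{y,a\}$, again a contradiction; similarly $a,b,c$ are pairwise distinct. So $e_1,e_2,e_3$ pairwise meet exactly in the distinct vertices $y,z,x$, and they form a copy of $T$, which is impossible. Therefore $G$ has exactly $|H|\le n^2$ triangles, which is $o(n^3)$.

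\textbf{Step 2: removal lemma.} We claim that for every $\beta>0$ there are $\tau>0$ and $n_1$ such that any graph on $n\ge n_1$ vertices with fewer than $\tau n^3$ triangles can be made triangle-free by deleting at most $\beta n^2$ edges. To prove this, apply Lemma~\ref{lem:graph-regularity} with $\eta\ll\beta$ and $m_0\ge 1/\beta$. Then delete four kinds of edges:
\begin{itemize}
\item edges inside clusters, at most $n^2/m$ of them;
\item edges touching $V_0$, at most $\eta n^2$;
\item edges in irregular pairs, at most $\eta n^2$;
\item edges in regular pairs of density at most $d\coloneqq\beta/4$, at most $d n^2/2$.
\end{itemize}
In total this removes at most $\beta n^2$ edges. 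Suppose a triangle survives. Its edges lie in three dense $\eta$-regular pairs among distinct clusters $V_a,V_b,V_c$, each of size at least $(1-\eta)n/M$. Since $\eta\le d^3/12$, Lemma~\ref{lem:graph-counting} with $J=K_3$ gives at least $2^{-3}d^3\bigl(n/(2M)\bigr)^3$ triangles. Taking $\tau$ below this constant (divided by $6$ to pass from labelled maps to triangles), this contradicts the assumption of fewer than $\tau n^3$ triangles.

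\textbf{Step 3: conclusion.} Given $\beta>0$, for $n$ large we have $|H|\le n^2<\tau n^3$, so by Step 2 some set of at most $\beta n^2$ edges meets every triangle of $G$. The $|H|$ associated triangles are edge-disjoint, so each deleted edge destroys at most one of them. Hence $|H|\le\beta n^2$. Since $\beta$ is arbitrary, we can define $\eta(n)$ as the smallest admissible $\beta$ along a sequence $\beta\to0$, which gives $\eta(n)\to0$; for small $n$ the trivial bound $|H|\le n^2$ suffices.

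The main obstacle is Step 2, specifically checking the constant bookkeeping: the cluster sizes must satisfy $|W_i|\ge\rho^{-1}$, and $\rho=\eta$ must satisfy the hypothesis $\rho\le d^3/12$ of Lemma~\ref{lem:graph-counting}. Step 1 is a routine case check.
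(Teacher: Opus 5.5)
Your proof is correct. The paper does not prove this statement; it quotes it from Ruzsa and Szemer\'edi and uses it only as a black box in Lemma~\ref{lem:C33-linear-count}.

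What you wrote is the standard triangle-removal proof, and it is built from ingredients the paper already uses elsewhere. Your Step~1 is the same argument as in the proof of Lemma~\ref{lem:C33-linear-count}, which shows that the triangles of $\partial J$ are exactly the shadows of the hyperedges of $J$. Your Steps~2--3 mirror the cleaning argument in the proof of Lemma~\ref{lem: Supersat Large Linear subgraph}: each deleted shadow edge destroys at most one associated triangle, and a surviving triangle lies in three dense regular pairs, which are fed to Lemma~\ref{lem:graph-counting}.

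What your route buys is that Section~\ref{sec:C33-enumeration} becomes self-contained modulo Lemmas~\ref{lem:graph-regularity} and~\ref{lem:graph-counting}. The citation buys only brevity. Since the paper uses only the qualitative fact $\eta(n)\to0$ (through $\lambda_n\to0$ and the choice of $q$), the tower-type dependence coming from regularity costs nothing.

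There is one small bookkeeping slip in Step~2. With the bound $n^2/m$ for edges inside clusters and only $m_0\ge1/\beta$, that term alone can reach $\beta n^2$, so the stated total ``at most $\beta n^2$'' is not justified. Either use $m\binom{|V_1|}{2}\le n^2/(2m)$ together with $\eta\le\beta/16$, or take $m_0\ge4/\beta$. This is harmless, since any bound of the form $O(\beta)n^2$ suffices in Step~3.
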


A \emph{digraph} $D$ is a pair $(V(D),A(D))$, where
\[
 A(D)\subseteq\{(x,y):x,y\in V(D),\ x\ne y\}.
\]
The elements of $A(D)$ are called \emph{arcs}; we write $x\to y$ for
the arc $(x,y)$. It is possible for both $x\to y$ and $y\to x$ to be present simultaneously.  We write
$a(D)\coloneqq|A(D)|$.  A \emph{directed triangle} is a set of three distinct
vertices $x,y,z$ for which $x\to y$, $y\to z$, and $z\to x$ are arcs.

\begin{theorem}[Brown--Harary
{\cite{BrownHarary1970}}]
\label{thm:C33-Brown-Harary}
If a digraph $D$ on $n$ vertices contains no directed triangle, then
\[
 a(D)\leq\left\lfloor\frac{n^2}{2}\right\rfloor.
\]
\end{theorem}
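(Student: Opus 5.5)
The plan is to prove the bound by strong induction on $n$, removing one digon (a pair of opposite arcs) at a time. The base cases $n\leq2$ are immediate: a digraph on at most two vertices has at most $2\leq\lfloor n^2/2\rfloor$ arcs when $n=2$, and none when $n\leq1$.

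For the inductive step, let $D$ be a digraph on $n\geq3$ vertices with no directed triangle.

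\emph{Case 1: $D$ has no digon.} Then every unordered pair carries at most one arc, so
\[
a(D)\leq\binom n2\leq\lfloor n^2/2\rfloor .
\]

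\emph{Case 2: $D$ has a digon.} Fix a pair $x,y$ with both $x\to y$ and $y\to x$ present. The key local claim is that every other vertex $z$ sends or receives at most two arcs to or from $\{x,y\}$.
\begin{itemize}
\item Among the four possible arcs $x\to z$, $z\to x$, $y\to z$, $z\to y$, the pair $\{y\to z,\ z\to x\}$ cannot both occur. Together with $x\to y$ it would form the directed triangle $x\to y\to z\to x$.
\item Similarly, the pair $\{x\to z,\ z\to y\}$ cannot both occur. Together with $y\to x$ it would form the directed triangle $y\to x\to z\to y$.
\end{itemize}
These two forbidden pairs partition the four candidate arcs into a perfect matching. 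Hence any set of three of the four arcs contains a forbidden pair, so at most two of them are present.

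Now the digraph $D-\{x,y\}$ still has no directed triangle, so the induction hypothesis applies to it. Counting arcs inside $D-\{x,y\}$, arcs between $\{x,y\}$ and the other $n-2$ vertices, and the two arcs of the digon gives
\[
a(D)\leq\frac{(n-2)^2}{2}+2(n-2)+2=\frac{n^2}{2}.
\]
Since $a(D)$ is an integer, $a(D)\leq\lfloor n^2/2\rfloor$.

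The only non-routine point is the local claim that each outside vertex contributes at most two arcs to a digon. This reduces entirely to noticing that the two forbidden configurations form a perfect matching on the four candidate arcs. The rest is arithmetic: removing the digon costs exactly $2(n-2)+2=2n-2$ arcs, which matches the increment $\tfrac{n^2}{2}-\tfrac{(n-2)^2}{2}$.
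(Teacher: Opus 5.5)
Your proof is correct and complete. The two forbidden pairs $\{y\to z,\,z\to x\}$ and $\{x\to z,\,z\to y\}$ do partition the four candidate arcs, so each outside vertex contributes at most two arcs, and $\frac{(n-2)^2}{2}+2(n-2)+2=\frac{n^2}{2}$ closes the induction; Case~1 and the base cases are fine because digraphs in the paper have no loops or parallel arcs.

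There is nothing in the paper to compare your route against. The paper does not prove this statement: it imports it from Brown and Harary and uses it only to bound $a(K)$ in the proof of Lemma~\ref{lem:C33-mixed-digraph-count}. Your digon-deletion induction, the directed analogue of the inductive proof of Mantel's theorem, is therefore a valid self-contained replacement for that citation.
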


Finally, Theorem~\ref{thm:C33-multicolour-counting} is a
special case of the multicolour counting theorem of Falgas--Ravry,
O'Connell and Uzzell~\cite{FalgasRavryOConnellUzzell2019} used here.  

A \emph{four-colouring of the pairs of $[n]$} is a function
$c:\binom{[n]}2\to[4]$.  For every $n\geq1$, let $\mathcal Q_n$ be a
family of four-colourings of the pairs of $[n]$.  The sequence
$\mathcal Q=(\mathcal Q_n)_{n\geq1}$ is \emph{order-hereditary} if the
following holds.  Whenever
$m\leq n$, $c\in\mathcal Q_n$, and
$\phi:[m]\to[n]$ is an increasing injection, meaning that
$\phi(i)<\phi(j)$ whenever $i<j$, the colouring
$c\vert_\phi$ defined by
\[
 (c\vert_\phi)(ij)\coloneqq c(\phi(i)\phi(j))
 \qquad (ij\in\tbinom{[m]}2)
\]
belongs to $\mathcal Q_m$.

A \emph{four-colouring template} $\Phi$ on $[n]$ assigns a nonempty
set $\Phi_{xy}\subseteq[4]$, called the \emph{palette at $xy$}, to
every pair $xy\in\binom{[n]}2$.  A four-colouring $c$ is a
\emph{realization} of $\Phi$ if $c(xy)\in\Phi_{xy}$ for every pair
$xy$.  The template is \emph{$\mathcal Q_n$-valid} if all of its
realizations belong to $\mathcal Q_n$.  The base-$4$ entropy of $\Phi$
is
\[
 \operatorname{Ent}_4(\Phi)
 \coloneqq\sum_{xy\in\binom{[n]}2}\log_4|\Phi_{xy}|.
\]

\begin{theorem}[Multicolour counting theorem
{\cite[Corollary~2.15]
{FalgasRavryOConnellUzzell2019}}]
\label{thm:C33-multicolour-counting}
Let $\mathcal Q=(\mathcal Q_n)_{n\geq1}$ be an order-hereditary
sequence such that, for every $n$, $\mathcal Q_n$ is a nonempty family
of four-colourings of the pairs of $[n]$.  Then the limit
\[
 \pi(\mathcal Q)
 \coloneqq\lim_{n\to\infty}
 \frac{1}{\binom n2}
 \max\bigl\{\operatorname{Ent}_4(\Phi):
       \Phi\text{ is $\mathcal Q_n$-valid}\bigr\}
\]
exists, and
\[
 |\mathcal Q_n|
 =4^{(\pi(\mathcal Q)+o(1))\binom n2}.
\]
\end{theorem}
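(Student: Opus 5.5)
The theorem has two parts: the limit $\pi(\mathcal Q)$ exists, and $|\mathcal Q_n|$ is governed by it. I would prove the existence of the limit and the lower bound on $|\mathcal Q_n|$ directly. For the upper bound I would run a hypergraph container argument on colour-pair hypergraphs, combined with an averaging step that lets one pass from almost-valid templates to valid ones. In the paper this result is quoted from Falgas-Ravry, O'Connell and Uzzell, so what follows is the route I would take to reprove it.

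\emph{Existence of the limit.} Let $E_n$ be the maximum of $\operatorname{Ent}_4(\Phi)$ over $\mathcal Q_n$-valid templates $\Phi$. Take a $\mathcal Q_n$-valid $\Phi$ and an $(n-1)$-subset $S\subseteq[n]$, and restrict $\Phi$ to $S$ via the increasing bijection $[n-1]\to S$. Every realization of the restricted template extends to a realization of $\Phi$, and its restriction lies in $\mathcal Q_{n-1}$ by order-heredity. So the restricted template is $\mathcal Q_{n-1}$-valid. Each pair lies in exactly $n-2$ of the $n$ choices of $S$, so averaging over $S$ gives $E_{n-1}/\binom{n-1}2\geq E_n/\binom n2$. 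The normalised sequence is therefore nonincreasing and bounded below by $0$, hence convergent.

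\emph{Lower bound.} A valid template $\Phi$ has $\prod_{xy}|\Phi_{xy}|=4^{\operatorname{Ent}_4(\Phi)}$ realizations, all of which lie in $\mathcal Q_n$. This gives $|\mathcal Q_n|\geq 4^{E_n}\geq 4^{(\pi(\mathcal Q)+o(1))\binom n2}$.

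\emph{Upper bound.} Fix $\eta>0$ and choose $m$ with $E_m/\binom m2\leq\pi(\mathcal Q)+\eta$. For an $m$-set $W\subseteq[n]$, call a colouring of $\binom W2$ \emph{forbidden} if its pullback under the increasing bijection $[m]\to W$ is not in $\mathcal Q_m$. By order-heredity, every $c\in\mathcal Q_n$ avoids all forbidden patterns.
\begin{enumerate}
\item Build the $\binom m2$-uniform hypergraph $\mathcal H$ on vertex set $\binom{[n]}2\times[4]$ whose hyperedges are the forbidden patterns. A colouring in $\mathcal Q_n$ is then an independent set of $\mathcal H$.
\item Apply the hypergraph container lemma (Lemma~\ref{lem:containers}, iterated). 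Whenever a template carries at least $\eta n^m$ forbidden patterns, the codegree conditions hold with $b=o(n^2)$. The output is $2^{o(n^2)}$ templates $\Phi$, each containing at most $\eta n^m$ forbidden patterns, that together cover $\mathcal Q_n$.
\item Bound the entropy of such an almost-valid template. Choose a uniformly random $m$-set $W$. Its expected entropy is $\operatorname{Ent}_4(\Phi)\binom m2/\binom n2$, while the probability that $\Phi|_W$ is invalid is $O(\eta)$. Since entropy is trivially at most $\binom m2$, this gives $\operatorname{Ent}_4(\Phi)\leq(E_m/\binom m2+O(\eta))\binom n2$.
\item Count: each template has at most $4^{\operatorname{Ent}_4(\Phi)}$ realizations, and there are $2^{o(n^2)}$ templates. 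Letting $\eta\to0$ gives the bound $|\mathcal Q_n|\leq 4^{(\pi(\mathcal Q)+o(1))\binom n2}$.
\end{enumerate}

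\emph{Main obstacle.} The hard step is the container step, because the container lemma requires a balanced supersaturation statement. Concretely: any template whose entropy exceeds $(\pi+\eta')\binom n2$ must contain $\Omega(n^m)$ forbidden patterns, and these must be spread out so that codegrees are controlled. The supersaturation itself follows from the averaging argument above. The codegree bounds hold because a forbidden pattern on $W$ is determined by $W$ and by colours from a set of size $4^{\binom m2}$, so fixing $j$ of its vertices in $\binom{[n]}2\times[4]$ leaves $O(n^{m-\lceil\sqrt{2j}\,\rceil})$ completions. Checking that these bounds match the parameters of Lemma~\ref{lem:containers} is the technical heart of the argument.
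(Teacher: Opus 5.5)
Your outline is correct. It is essentially the container-plus-averaging argument by which Falgas-Ravry, O'Connell and Uzzell prove this result; the paper itself only quotes it. The existence of the limit, the lower bound, and the overall structure of the upper bound are fine, but two estimates in the upper bound do not close as written.

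\textbf{The threshold.} A container with at most $\eta n^m$ forbidden patterns only gives $\Pr[\Phi|_W\text{ invalid}]\leq\eta n^m/\binom nm\approx m!\,\eta$. Since $m$ was chosen in terms of $\eta$, this need not be small. Stop the iteration at $\eta\binom nm$ forbidden patterns instead; the container step only needs a threshold of order $n^m$. Step~3 then gives $\operatorname{Ent}_4(\Phi)\leq(\pi(\mathcal Q)+2\eta)\binom n2$, after discarding containers with an empty palette, which have no realizations.

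\textbf{The codegree bound.} Your bound $O(n^{m-\lceil\sqrt{2j}\,\rceil})$ on $\Delta_j$ is too weak at $j=2$: it gives only $\Delta_2=O(n^{m-2})$. Let $M$ denote the parameter of Lemma~\ref{lem:containers} (called $m$ there). For each round to remove $\Omega(n^2)$ elements, $M$ must be of order $n^2$, and then $|\mathcal H'|/M$ is also of order $n^{m-2}$. The $\ell=2$ condition would therefore force $b$ to be of order $n^2$, and the container count would no longer be $2^{o(n^2)}$.

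The fix is that $j\geq2$ distinct pairs span at least $v_j=\min\{v:\binom v2\geq j\}\geq3$ vertices, so $\Delta_j\leq4^{\binom m2}n^{m-v_j}$. For $2\leq j\leq\binom m2$ we have $(v_j-2)/(j-1)\geq2/(m+1)$, with equality at $j=\binom m2$. Hence the hypotheses of Lemma~\ref{lem:containers} hold with $M=\Theta_{m,\eta}(n^2)$ and $b=\Theta_{m,\eta}(n^{2-2/(m+1)})$. The $O_{m,\eta}(1)$ rounds then produce $2^{O_{m,\eta}(b\log n)}=2^{o(n^2)}$ containers, as you claimed.
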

The theorem applies to order-hereditary sequences of families of
digraphs by encoding the four possible states of each pair as four
colours.

\section{Constructing cycles from shadows}\label{sec:Cycle from shadow}
In this section, we construct copies of $C_k^{(r)}$ from complete
$(r-1)$-partite $(r-1)$-graphs contained in the shadow $\partial H$.
The precise codegree hypotheses and the required part sizes are stated
in Lemmas~\ref{lem:Large Shadow},
\ref{lem:cycle-from-shadow-standard}, \ref{lem:Large Shadow odd} and
\ref{lem:a copy of Ck from Ktt}.
The arguments build on ideas from~\cite{Kostochka2015}.  We now provide
a brief overview of the proof ideas used in this section.

The first step is supplied by Lemmas~\ref{lem:Large Shadow}
and~\ref{lem:Large Shadow odd}.  In their respective parameter
ranges, both lemmas produce a set $T\subseteq V(H)$ and a large
$(r-1)$-graph $F\subseteq\partial H$ with
$V(F)\cap T=\varnothing$, such that every $f\in F$ has several
extension vertices in $T$.  The second lemma also gives the additional
codegree information needed when $r=3$ and $k$ is odd.  Both proofs
are based on placing each vertex independently in $T$ with probability
$1/2$ and taking $F$ to be a collection of $(r-1)$-edges
$f\in\partial H$ such that $f\subseteq V(H)\setminus T$ and there are
sufficiently many vertices $x\in T$ for which $f\cup\{x\}\in H$.
We call such vertices the available extension vertices of $f$.

We now describe how these available extensions are used to construct
copies of $C_k^{(r)}$, distinguishing three ranges of $(r,k)$.
When $k\geq4$ and either $r\geq4$, or $r=3$ and $k$ is even,
Lemma~\ref{lem:KMV-cycle-from-shadow} constructs a copy of
$C_k^{(r)}$ by extending hyperedges of a complete multipartite
subhypergraph of $F$.  The case $r\geq4$ and $k=3$ is not covered by
that lemma.  For this case, we prove the new
Lemma~\ref{lem:triangle-from-shadow}, which shows that the $r$-graph
obtained by assigning two extension vertices outside the multipartite
graph to every hyperedge of a complete $(r-1)$-partite
$(r-1)$-graph, and taking all the corresponding extensions, contains a
copy of $C_3^{(r)}$, provided that every part has at least three
vertices. 

When $r=3$ and $k\geq5$ is odd, the proof splits according to whether
some vertex of $T$ is an available extension vertex for many pairwise
disjoint edges of $F$.  If such a vertex exists, we extend a path in
$F$ to a copy of $P_{k-2}^{(3)}$ and complete it to a copy of
$C_k^{(3)}$ using two hyperedges containing that vertex.  Otherwise,
we first construct a copy of $C_{k-1}^{(3)}$ and use the additional
codegree information in Lemma~\ref{lem:Large Shadow odd} to replace one of
its hyperedges by two hyperedges, thereby obtaining a copy of
$C_k^{(3)}$.

Recall from the notation subsection that $L_G(f)$ is the set of
extension vertices outside $V(G)$ available to $f\in G$, and that
$\widehat G$ is the $r$-graph consisting of all corresponding
extensions $f\cup\{x\}$.

We need the following lemma of Kostochka, Mubayi and
Verstra\"ete~\cite{Kostochka2015}. It gives a linear $r$-uniform path
among the extensions of the hyperedges of a sufficiently long path in the
shadow.

\begin{lemma}[\cite{Kostochka2015}]\label{lem: A P_k^r from a path in the shadow}
Let $k\geq 3$, $H$ be an $r$-graph, and let $P$ be a path in  $\partial H$ consisting of the hyperedges $e_0,\ldots, e_{2^{2\left\lfloor \frac{k-1}{2}\right\rfloor+1}-1}$. If $\lvert L_{P}(e)\rvert\geq \left\lfloor\frac{k+1}{2}\right\rfloor$ for all $e\in P$, then $\hat{P}$ contains a copy of $P_{k}^{(r)}$.
Moreover, the first hyperedge of this copy contains $e_0$.
\end{lemma}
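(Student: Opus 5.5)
The plan is to build the linear path greedily along $P$, choosing at each step either the next edge of $P$ with a fresh extension vertex or a \emph{jump} through a shared extension vertex, and to run an induction that trades list size for path length. Write $t\coloneqq\lfloor (k-1)/2\rfloor$, so every list has size at least $t+1=\lfloor(k+1)/2\rfloor$ and $P$ has $m=2^{2t+1}$ edges. I read "path in $\partial H$" as a linear $(r-1)$-uniform path: consecutive edges share exactly one vertex and non-consecutive edges are disjoint. Since every extension vertex lies outside $V(P)$, two extensions $e_i\cup\{x\}$ and $e_j\cup\{y\}$ intersect as follows:
\begin{itemize}
\item if $|i-j|=1$ and $x\neq y$, they meet in exactly the common vertex of $e_i$ and $e_j$;
\item if $|i-j|\geq2$ and $x=y$, they meet in exactly $\{x\}$;
\item if $|i-j|\geq2$ and $x\ne y$, they are disjoint.
\end{itemize}
So a linear path in $\widehat P$ is an ordered choice of edges $e_{i_1},e_{i_2},\dots$ with extension vertices $x_1,x_2,\dots$ such that each consecutive pair is either a step $i_{s+1}=i_s+1$ with $x_{s+1}\neq x_s$, or a jump $i_{s+1}\geq i_s+2$ with $x_{s+1}=x_s$. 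In addition, no other pair may share anything: non-consecutive chosen edges must be non-adjacent in $P$ and have distinct extension vertices. In particular, two consecutive jumps through the same vertex are forbidden.

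I would prove by induction on $j\ge0$ a statement of the following shape. If $Q$ is a linear path in $\partial H$ with $2^{2j+1}$ edges, each with at least $j+1$ extension vertices outside $V(Q)$ (possibly after earlier deletions), then $\widehat Q$ contains a linear path with $2j+1$ hyperedges, and with $2j+2$ when needed for even $k$. This path starts with $e_0$ and has all its later edges in the second half of $Q$.

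The inductive step fixes $x\in L(e_0)$ and splits into two cases.
\begin{itemize}
\item If $x$ lies in no list $L(e_i)$ with $i\geq 2$, I take $e_0\cup\{x\}$ followed by a step $e_1\cup\{y\}$ with $y\neq x$. Nothing later can conflict with $x$, and $y$ is deleted from the lists of the remaining edges, costing at most one vertex per list.
\item If $x$ lies in some later list $L(e_i)$, I jump to $e_i\cup\{x\}$ and then delete $x$ from all other lists. This again costs at most one per list and gains two hyperedges.
\end{itemize}
In either case I restrict to the subpath after the last used edge. Choosing between the first and second half (or quarter) of $Q$ so that the surviving subpath still has $2^{2j-1}$ edges is what the factor $4=2^2$ per level in $m$ pays for. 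Each level then gains two hyperedges and loses one from each list, which matches $|L|\geq t+1$ against $k\le 2t+2$ hyperedges. The property that the first hyperedge contains $e_0$ holds because we always begin with $e_0$.

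I expect the main obstacle to be the jump case, because the jumped-to index $i$ may lie far along the path and leave too short a remainder. To handle this, I would first try to pick, among all $x\in L(e_0)$ and all occurrences, the one with smallest $i$. If that $i$ is still beyond the first quarter, I would instead use the step option, which is then safe within the first quarter because $x$ does not reappear there. The second delicate point is the junction between a newly added edge and the recursive path: a recursive path that starts with a step would clash with a preceding jump, so the induction hypothesis should be strengthened to forbid its first extension vertex from equal the previously used vertex. This is exactly what the deletions ensure.
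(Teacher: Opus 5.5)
Your recursion proves the statement for odd $k$ but not for even $k$, and that is a genuine gap. Each level adds two hyperedges and deletes one vertex from every list. The recursion bottoms out at lists of size $1$, where the only available conclusion is a single hyperedge through the prescribed first edge. So lists of size $t+1$ yield $2t+1$ hyperedges, which equals $k$ only when $k$ is odd.

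The ``$2j+2$'' variant has a false base. If $L(e_0)=L(e_1)=\{x\}$ and $x$ lies in no other list, then $e_0\cup\{x\}$ meets $e_1\cup\{x\}$ in two vertices and is disjoint from every other member of $\widehat Q$. The problem is also structural: your recursion only ever jumps from the first hyperedge of a recursive call. For $k=4$, take $L(e_0)=\{a,b\}$, $L(e_1)=L(e_2)=L(e_3)=\{c,d\}$, and all other lists disjoint from $\{a,b,c,d\}$. Since $a,b$ never recur, you must step to $e_1\cup\{y\}$ with $y\in\{c,d\}$ and then seek two hyperedges starting at $e_2$ with $y$ deleted. The forced choice $e_2\cup\{z\}$, where $\{y,z\}=\{c,d\}$, can neither step to $e_3$ nor jump. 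Nevertheless $e_0\cup\{a\}$, $e_1\cup\{c\}$, $e_3\cup\{c\}$, $e_4\cup\{p\}$ with $p\in L(e_4)$ is a copy of $P_4^{(r)}$ starting with $e_0$.

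To close the gap you need a separate base case: a $P_4^{(r)}$ through $e_0$ from lists of size at least $2$ on $e_0,\dots,e_7$. This holds, but its proof needs moves your scheme never makes: a jump from the second hyperedge as above, a jump to a later recurrence of $x$ rather than the first, and backward steps such as $e_0\cup\{x\},e_4\cup\{x\},e_3\cup\{w\},e_2\cup\{v\}$. Your inductive step, which needs only $m_j\ge 2m_{j-1}+2$, then lifts this base to $P_{2t+2}^{(r)}$ within $2^{2t+1}$ edges.

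For odd $k$, your argument works after minor repairs; this is the only case invoked in this paper, which applies the lemma with $k-2$ in place of $k$. First, state the induction for prescribed lists $\mathcal L(e)\subseteq L_P(e)$, so that extension vertices avoid all of $V(P)$ and not merely $V(Q)$; otherwise a vertex of $e_0$ could serve as an extension vertex inside the subpath. Second, drop the requirement that the later hyperedges lie in the second half of $Q$: your step branch violates it, and it is not needed. Third, fix the threshold. Let $i^*$ be the least $i\ge2$ with $L(e_i)\cap L(e_0)\neq\emptyset$. Jump if $i^*\le m_{j-1}+1$; otherwise step and recurse on $e_2,\dots,e_{m_{j-1}+1}$, where no element of $L(e_0)$ recurs. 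A cut at exactly a quarter leaves the step window two edges short of $2^{2j-1}$, whereas this rule needs only $m_j\ge 2m_{j-1}+2$, well within $2^{2j+1}$.
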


We also state a lemma of Kostochka, Mubayi and
Verstra\"ete in the quantitative form used here.  We apply it directly
when $r\geq4$ and $k\geq4$, and when $r=3$ and $k$ is even.  Its final
hypothesis is the additional condition needed when $r=3$ and $k$ is
odd.

\begin{lemma}[Kostochka--Mubayi--Verstra\"ete
{\cite[Lemma~4.4]{Kostochka2015}}]
\label{lem:KMV-cycle-from-shadow}
Let $r\geq3$ and $k\geq4$, and let
$t\coloneqq(r2^k)^{4r}$.
Let $H$ be an $r$-graph and let $G\subseteq\partial H$ be a complete
$(r-1)$-partite $(r-1)$-graph with $t$ vertices in each part. Suppose
that
\[
 |L_G(f)|\geq\left\lfloor\frac{k+1}{2}\right\rfloor
 \qquad\text{for every }f\in E(G).
\]
If $r=3$ and $k$ is odd, suppose additionally that, for every
$xy\in E(G)$, there is a vertex $\alpha\in L_G(xy)$ such that
\[
 \min\{d_H(x\alpha),d_H(y\alpha)\}\geq2
 \quad\quad\text{and}\quad\quad
 \max\{d_H(x\alpha),d_H(y\alpha)\}\geq3k+1.
\]
Then $H$ contains a copy of $C_k^{(r)}$.
\end{lemma}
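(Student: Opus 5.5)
The plan is to build the cycle inside $\widehat G$: choose $k$ members $f_1,\ldots,f_k$ of $G$ and an extension vertex $x_i\in L_G(f_i)$ for each, so that the sets $f_i\cup\{x_i\}$ form a copy of $C_k^{(r)}$. Since $x_i\notin V(G)$, the only intersections beyond those among the $f_i$ come from coinciding extension vertices. There are two ways to arrange the intersections.

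\begin{itemize}
\item[(i)] The $f_i$ form a linear cycle in $G$ (or, for $r=3$, an ordinary cycle) and the $x_i$ are pairwise distinct.
\item[(ii)] Some consecutive $f_i,f_{i+1}$ are disjoint and share their extension vertex, $x_i=x_{i+1}$. That common vertex is then the linking vertex of $e_i,e_{i+1}$.
\end{itemize}

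Since $|L_G(f)|$ is only $\lfloor(k+1)/2\rfloor$, route (i) cannot be done greedily. Route (ii) is what makes the bound tight: each vertex outside $G$ can serve two cycle hyperedges, and $2\lfloor(k+1)/2\rfloor\ge k$.

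\emph{Step 1 (regularizing the extension sets).} Using the large part size $t=(r2^k)^{4r}$, pass to a smaller complete $(r-1)$-partite subgraph $G'\subseteq G$ on which the extension sets are canonical. Apply an iterated pigeonhole/sunflower argument to the sets $L_G(f)$ along the product structure. The goal is to reach one of two situations. In the first, for each $f$ the sets $L(f)$ are essentially pairwise disjoint from those of edges meeting $f$ in few vertices, so distinct extensions can be chosen greedily. In the second, there is a vertex $x$ common to $L(f)$ for many pairwise disjoint $f$. The doubly exponential size of $t$ in $k$ is meant to absorb the losses from these successive pigeonholes, in the same way as in Lemma~\ref{lem: A P_k^r from a path in the shadow}.

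\emph{Step 2 (constructing the cycle).}
\begin{itemize}
\item In the ``distinct'' situation, embed a linear cycle of length $k$ in $G'$; this is possible since $G'$ is complete $(r-1)$-partite with large parts. Then assign distinct $x_i$ by a Hall-type greedy argument. Each $x$ lies in $L(f)$ for only few cycle edges, so every $f_i$ still has an unused candidate.
\item In the ``shared'' situation, use Lemma~\ref{lem: A P_k^r from a path in the shadow} to obtain a long linear path from a path in $G'$. Close it by one or two hyperedges through a common extension vertex $x$, attached to two disjoint members of $G'$ whose vertices are the ends of the path. The ``moreover'' clause of that lemma controls where the path starts.
\end{itemize}

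\emph{Step 3 (the case $r=3$, $k$ odd).} Here $G'$ is a complete bipartite graph, so it has no odd cycles, and two disjoint edges sharing $x$ change the cycle length only by an even amount. The extra hypothesis supplies the missing parity. Build $C_{k-1}^{(3)}$ as above and choose an edge $xy$ of it with extension vertex $\alpha$. Replace the hyperedge $xy\alpha$ by two hyperedges $x\alpha a$ and $\alpha y b$. Because $\min\{d_H(x\alpha),d_H(y\alpha)\}\ge2$, at least one new vertex is available on the low-codegree side. Because $\max\{d_H(x\alpha),d_H(y\alpha)\}\ge3k+1$, the other side has a choice avoiding the at most $3k$ vertices already used. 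Together these give a linear $C_k^{(3)}$.

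I expect Step 1 to be the main obstacle: organising the dichotomy ``many distinct extensions versus a popular common extension'' uniformly over a complete multipartite graph, so that mixed configurations can still be resolved. If a clean sunflower dichotomy fails, I would fall back on the argument of Kostochka, Mubayi and Verstra\"ete, inducting on the number of parts while fixing extension vertices one at a time.
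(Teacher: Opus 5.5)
The paper does not reprove this lemma. It imports it from Kostochka--Mubayi--Verstra\"ete and only checks that $t=(r2^k)^{4r}$ satisfies the requirement $\lfloor t/(s+2)\rfloor>(s+2)^{r+1}$, $s=2^{k-2}(r-1)$, of their argument. Read as a standalone proof, your proposal has a genuine gap exactly where you suspect it. Step~1 is never carried out, and the dichotomy it aims at is not exhaustive.

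\textbf{Step 1 and the Hall step.} Failure of your ``shared'' situation only says that no outside vertex lies in the lists of many \emph{pairwise disjoint} members of $G$. It says nothing about lists of intersecting members.
\begin{itemize}
\item For $r=3$ and $L(xy)=\{\phi(x),\psi(y),\chi(xy)\}$, no vertex is popular. Yet in every sub-biclique, the lists of any two edges meeting in a vertex overlap.
\item A vertex lying in the lists of all edges through a fixed set of $k$ vertices of $G$ lies in the lists of at most $k$ pairwise disjoint edges, so it is not popular. It can still lie in the list of every edge of a cycle through that set.
\end{itemize}
So the Hall step in Step~2 (``each $x$ lies in $L(f)$ for only few cycle edges'') is unsupported, and you give no mechanism for choosing a cycle that avoids such collisions.

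In the bipartite setting, what works in the non-popular case is Case~2 of Lemma~\ref{lem:a copy of Ck from Ktt}.
\begin{itemize}
\item Take many vertex-disjoint small complete bipartite pieces; their list unions have bounded size.
\item Use non-popularity to find two pieces $F_1,F_2$ with $\mathcal L(F_1)\cap\mathcal L(F_2)=\varnothing$.
\item Build the cycle from two paths of roughly $k/2$ edges, one inside each piece, plus two cross edges whose extension vertices are fixed first.
\end{itemize}
No local disjointness of lists is then needed: a path with $p$ edges, each of whose lists retains at least $p$ usable vertices, satisfies Hall's condition automatically.

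Your ``shared'' route has a related problem. The path lemma controls only the first hyperedge. To close the path, you must build the shadow path through one endpoint of each of the disjoint edges whose lists contain the common vertex, as in Case~1 of that lemma. Then the far end of the resulting linear path is again such an endpoint.

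\textbf{Step 3.} There are two further problems here.
\begin{itemize}
\item The hypothesis $\min\{d_H(x\alpha),d_H(y\alpha)\}\ge2$ guarantees only a single extra neighbour $b$ on the low-codegree side. Nothing prevents $b$ from lying on the $C_{k-1}^{(3)}$ you built first. So $b$ must be fixed before the rest of the cycle is built and removed from the remaining lists. Together with $\alpha$ this costs two list elements. That is why, in Case~2 of Lemma~\ref{lem:a copy of Ck from Ktt}, $\zeta$ is chosen before $P_1,P_2$ and those paths have exactly $(k-3)/2$ edges. Only the high-codegree side can be chosen last, avoiding fewer than $2k$ used vertices.
\item The $C_{k-1}^{(3)}$ must contain $xy\alpha$ with the \emph{designated} $\alpha$ from the hypothesis as a private vertex and $x,y$ as linking vertices. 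A cycle produced by your route (ii) need not have this form.
\end{itemize}

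Your parity explanation is also wrong. A path of odd length in a bipartite shadow, closed by two hyperedges through a shared extension vertex, gives an odd linear cycle; this is exactly how Case~1 of Lemma~\ref{lem:a copy of Ck from Ktt} works. The real obstruction is the first example above. If $L(xy)=\{\phi(x),\psi(y),\chi(xy)\}$, no extension vertex can be a linking vertex. Hence every linear cycle in $\widehat G$ projects onto a $k$-cycle in the bipartite graph $G$. So $\widehat G$ is $C_k^{(3)}$-free for odd $k$, however large the lists, and the hyperedges of $H$ outside $\widehat G$ supplied by the codegree hypothesis are unavoidable.
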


The result in~\cite[Lemma~4.4]{Kostochka2015} is stated for all
sufficiently large $t$.  The proof there requires
\[
 s=2^{k-2}(r-1),
 \qquad
 \left\lfloor\frac{t}{s+2}\right\rfloor>(s+2)^{r+1}.
\]
This value of $t$ satisfies the requirement because
$s+2\leq r2^k$ and
\[
 \left\lfloor\frac{t}{s+2}\right\rfloor
 \geq (r2^k)^{4r-1}
 >(r2^k)^{r+1}
 \geq(s+2)^{r+1}.
\]

\subsection{The general parameter range}

The following lemma selects a large $(r-1)$-graph
$F\subseteq\partial H$ and a set $T\subseteq V(H)$ disjoint from
$V(F)$ such that every $f\in F$ has at least
$\lfloor(k+1)/2\rfloor$ extension vertices in $T$.

\begin{lemma}\label{lem:Large Shadow}
Let $r,k\geq3$ be integers.  Assume either that $r\geq4$, or that
$r=3$ and $k\geq4$ is even.  Let $\delta>0$, and let $H$ be an
$n$-vertex $\lfloor(k+1)/2\rfloor$-full $r$-graph satisfying
$|\partial H|>\delta n^{r-1}$.  Then there exist a set
$T\subseteq V(H)$ and an $(r-1)$-graph $F\subseteq\partial H$ such
that
\begin{enumerate}[label=\textnormal{(G\arabic*)},itemsep=4pt]
\item\label{itm: lemma Large shadow i}
$V(F)\subseteq V(H)\setminus T$;
\item\label{itm: lemma Large shadow ii}
$|N_H(f)\cap T|\geq\left\lfloor\frac{k+1}{2}\right\rfloor$
for every $f\in F$;
\item\label{itm: lemma Large shadow iii}
$|F|\geq\frac{\delta}{2^{k+r+2}}n^{r-1}$;
\end{enumerate}
\end{lemma}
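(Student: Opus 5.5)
We use a random partition and a first-moment argument, as sketched at the start of this section. Write $m\coloneqq\lfloor(k+1)/2\rfloor$. Place each vertex of $H$ in $T$ independently with probability $1/2$. For every $f\in\partial H$, fix a set $X_f\subseteq N_H(f)$ of exactly $m$ extension vertices. Such a set exists because $H$ is $m$-full, so $|N_H(f)|\geq m$. Call $f$ \emph{good} if $f\cap T=\varnothing$ and $X_f\subseteq T$. Since $X_f\cap f=\varnothing$, the $r-1+m$ vertices of $f\cup X_f$ are distinct. By independence, each $f$ is good with probability exactly $2^{-(r-1+m)}$.

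By linearity of expectation, the expected number of good members of $\partial H$ is
\[
 2^{-(r-1+m)}|\partial H|.
\]
Since $m\leq(k+1)/2\leq k$ and $k\geq3$, we have $r-1+m\leq r+k+1$. Hence this expectation is at least
\[
 2^{-(k+r+1)}\delta n^{r-1}\geq\frac{\delta}{2^{k+r+2}}n^{r-1}.
\]
Fix an outcome $T$ for which the number of good sets is at least its expectation, and let $F$ be the collection of good sets.

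We now check the three properties.
\begin{enumerate}[label=\textnormal{(\roman*)}]
\item For \ref{itm: lemma Large shadow i}, every vertex of $V(F)$ lies in some good $f$, and every good $f$ avoids $T$, so $V(F)\subseteq V(H)\setminus T$.
\item For \ref{itm: lemma Large shadow ii}, every good $f$ satisfies $X_f\subseteq N_H(f)\cap T$, which gives $|N_H(f)\cap T|\geq m$.
\item For \ref{itm: lemma Large shadow iii}, the bound is the expectation computed above.
\end{enumerate}

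There is no real obstacle in this argument. The only point needing care is that the events $f\cap T=\varnothing$ and $X_f\subseteq T$ concern disjoint vertex sets. Their joint probability is therefore an exact product, and no correlation inequality is needed. We do not use the hypotheses on $(r,k)$ here; presumably they matter only for the later application of Lemma~\ref{lem:KMV-cycle-from-shadow} or Lemma~\ref{lem:triangle-from-shadow}.
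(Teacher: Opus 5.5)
Your proposal is correct and follows essentially the same argument as the paper: put each vertex in $T$ independently with probability $1/2$, observe that a fixed $f\in\partial H$ together with $m=\lfloor(k+1)/2\rfloor$ chosen extension vertices behaves as required with probability exactly $2^{-(r-1)-m}$, and take an outcome at least as good as the expectation. The only cosmetic difference is that the paper keeps every $f$ avoiding $T$ with $|N_H(f)\cap T|\geq m$, whereas you keep the smaller family of $f$ whose fixed set $X_f$ lands in $T$. The same lower bound holds for either family, and, as you note, neither proof uses the hypotheses on $(r,k)$.
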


\begin{proof}
Let $X\subseteq V(H)$ be a random vertex subset obtained by selecting each vertex independently with probability $1/2$. Let
\begin{equation*}
F_X\coloneqq\left\{f\in\partial H: f\subseteq V(H)\setminus X,\text{ and }|N_H(f)\cap X|\geq\left\lfloor\frac{k+1}{2}\right\rfloor\right\}.
\end{equation*}
For each $f\in \partial H$, choose
$\left\lfloor\frac{k+1}{2}\right\rfloor$ distinct hyperedges
$e_1,\ldots,e_{\left\lfloor\frac{k+1}{2}\right\rfloor}\in H$
containing $f$.  The probability that
$f\subseteq V(H)\setminus X$ and $(e_i\setminus f)\subseteq X$ for
every $i\in [\left\lfloor\frac{k+1}{2}\right\rfloor]$ is exactly
$2^{-(r-1)-\left\lfloor\frac{k+1}{2}\right\rfloor}$.
It follows that
\begin{equation*}
\mathbb{E}(|F_X|)\geq |\partial H|\cdot 2^{-(r-1)-\left\lfloor\frac{k+1}{2}\right\rfloor}>\frac{\delta n^{r-1}}{2^{k+r}}.
\end{equation*}
Thus, there exists a set $T\subseteq V(H)$ such that
$
|F_T|\geq \delta n^{r-1}/2^{k+r}.
$
Taking $F\coloneqq F_T$, conditions~\ref{itm: lemma Large shadow i} and~\ref{itm: lemma Large shadow ii} follow from the definition of $F_T$, while
$
|F|\geq \delta n^{r-1}/2^{k+r} \geq \delta n^{r-1}/2^{k+r+2},
$
which gives condition~\ref{itm: lemma Large shadow iii}.
\end{proof}

Call three hyperedges of a hypergraph $G$ a \emph{linear triangle} if they intersect
pairwise in three distinct vertices and have no other intersections. We first prove the extension statement for linear triangles, the only
cycle length not covered by Lemma~\ref{lem:KMV-cycle-from-shadow}.

\begin{lemma}\label{lem:triangle-from-shadow}
Let $q\geq3$, let $G$ be a complete $q$-partite $q$-graph with at least
three vertices in each part, and let $T$ be disjoint from $V(G)$.
Suppose that every $f\in E(G)$ has an assigned {\it list}, a two-element set
$\mathcal L(f)\subseteq T$.
Then the $(q+1)$-graph
\[
 \{f\cup\{z\}:f\in E(G),\ z\in\mathcal L(f)\}
\]
contains a copy of $C_3^{(q+1)}$.
\end{lemma}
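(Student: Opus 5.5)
The plan is to build the linear triangle by hand. I take three edges of $G$ in a carefully chosen intersection pattern, then pick one extension vertex from each list, split into cases according to how the lists $\mathcal L(f)$ overlap. Write $V_1,\dots,V_q$ for the parts of $G$, each of size at least $3$. Since $T\cap V(G)=\varnothing$, an extension vertex never lies in any edge of $G$. Hence two extended hyperedges $f\cup\{z\}$ and $f'\cup\{z'\}$ meet in $(f\cap f')\cup(\{z\}\cap\{z'\})$. The whole problem is therefore to control $|f\cap f'|$ together with the coincidences among the chosen extension vertices.

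\emph{Configuration I (intersection points in $T$).} Suppose there are disjoint edges $f_1,f_2\in E(G)$ with a common list element $z\in\mathcal L(f_1)\cap\mathcal L(f_2)$. I then choose $f_3\in E(G)$ meeting $f_1$ in exactly one vertex $a\in V_1$ and $f_2$ in exactly one vertex $b\in V_2$. I use fresh vertices of $V_3,\dots,V_q$ for $f_3$, which is possible because $f_1\cup f_2$ occupies only two vertices of each part. Since $|\mathcal L(f_3)|=2$, I can pick $z_3\in\mathcal L(f_3)\setminus\{z\}$. The hyperedges $f_1\cup\{z\}$, $f_2\cup\{z\}$ and $f_3\cup\{z_3\}$ pairwise meet exactly in $z$, $a$ and $b$. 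These three vertices are distinct, so the three hyperedges form a copy of $C_3^{(q+1)}$.

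\emph{Configuration II (all intersection points in $V(G)$).} Otherwise, any two disjoint edges of $G$ have disjoint lists. I take $f_1,f_2,f_3$ with $f_1\cap f_2=\{a\}$, $a\in V_1$, with $f_2\cap f_3=\{b\}$, $b\in V_2$, and with $f_1\cap f_3=\{c\}$, $c\in V_3$. All other coordinates are chosen distinct; each part needs at most three vertices, which is available. Extending by distinct $z_i\in\mathcal L(f_i)$ gives a linear triangle. Such distinct representatives exist by Hall's condition unless two of the lists coincide as a two-element set, or all three lists are contained in one $2$-set. So the remaining task is to choose the triangle $(f_1,f_2,f_3)$ so that this degeneracy is avoided.

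\emph{Removing the degeneracy, which I expect to be the main obstacle.} I will exploit the freedom in the coordinates of $f_3$ outside $\{b,c\}$ and the spare third vertex in each part. Suppose $\mathcal L(f_1)=\mathcal L(f_2)=\{x,y\}$. I try to replace $f_3$ by an edge $f_3'$ with the same intersection pattern such that $\mathcal L(f_3')\not\subseteq\{x,y\}$. If this is impossible for every admissible $f_3'$, I instead compare with an edge $g$ disjoint from $f_1$. Such a $g$ exists because each part has a vertex outside $f_1$, and by the case hypothesis $\mathcal L(g)\cap\{x,y\}=\varnothing$. I then swap roles, so that $g$ or a relative of $g$ sharing one vertex with $f_2$ and one with $f_3$ replaces $f_1$, until the lists are no longer confined to a $2$-set. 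When $q=3$ there are no coordinates beyond the first three parts, so the swaps must be done within the three-element parts. I would check this case explicitly by a short exhaustive argument on $K^{(3)}_{3,3,3}$, using the dichotomy of Configuration I to force disjoint lists on disjoint edges. This final bookkeeping, showing that some triangle pattern always admits distinct representatives, is the step I expect to require the most care.
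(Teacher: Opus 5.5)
Your Configuration~I is correct; it is exactly the paper's closing step, where the triangle is $f_1\cup\{\alpha\}$, $f_2\cup\{\alpha\}$, $h\cup\{\beta\}$. The gap is Configuration~II, which you give as a plan rather than a proof, and the plan as written does not work.

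First, your Hall criterion is lossy. Three $2$-element lists fail to have distinct representatives only when all three coincide, since any two lists already have a union of size at least $2$. So two equal lists are no obstruction, and all you need is a linear triangle of $G$ whose three lists are not all equal.

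Second, the swap you describe can be impossible. You keep $f_2$ and $f_3$ (or any $f_3'$ with the same intersection pattern) and replace $f_1$ by an edge $g$ disjoint from $f_1$. Suppose every part has exactly three vertices. In each part $V_s$ with $s\geq4$, which contains none of the three intersection vertices, the three edges of a linear triangle use all three vertices. So an edge $g$ avoiding $f_1$ must take the vertex of $f_2$ or of $f_3$ in each of these $q-3$ parts. For $q\geq6$ this forces $|g\cap f_2|\geq2$ or $|g\cap f_3|\geq2$. The unspecified ``exhaustive check'' for $q=3$ and the ``relatives of $g$'' do not fill this in.

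The missing idea is the completion fact the paper proves: any two edges $a,b\in E(G)$ with $a\cap b=\{v\}$ lie in a linear triangle of $G$. If $v\in V_m$, pick distinct $s,t\neq m$; this is where $q\geq3$ is used. Let $c$ consist of the vertex of $a$ in $V_s$, the vertex of $b$ in $V_t$, and vertices outside $a\cup b$ in all other parts, which exist because every part has at least three vertices. Then $a\cap b$, $a\cap c$, $b\cap c$ are single vertices in three distinct parts.

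With this, Configuration~II closes in two lines. Take disjoint $f_1,f_2$ and the bridging edge $h$ (your $f_3$ from Configuration~I). Since $\mathcal L(f_1)\cap\mathcal L(f_2)=\varnothing$, the list $\mathcal L(h)$ differs from at least one of them, say from $\mathcal L(f_1)$. Completing $f_1,h$ to a linear triangle gives three lists that are not all equal, hence distinct representatives. Because $T\cap V(G)=\varnothing$, the extended edges form a copy of $C_3^{(q+1)}$.

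The paper uses the same ingredients but argues by contradiction. If no linear triangle admits distinct representatives, the completion fact makes lists constant along edges meeting in one vertex. Hence $\mathcal L(f_1)=\mathcal L(h)=\mathcal L(f_2)$, and your Configuration~I triangle then gives the contradiction.
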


\begin{proof}
Write $V_1,\ldots,V_q$ for the parts of $G$.  Let $e_1,e_2,e_3$ be the
hyperedges of a linear triangle in $G$.  A system of distinct
representatives for their lists is a choice of pairwise distinct
vertices
\[
 z_i\in\mathcal L(e_i)\qquad\text{for each }i\in[3].
\]
If such a choice exists, then
$e_1\cup\{z_1\},e_2\cup\{z_2\},e_3\cup\{z_3\}$ form a copy of
$C_3^{(q+1)}$.  Indeed, the three representatives are distinct and
belong to $T$, which is disjoint from $V(G)$, so the pairwise
intersections of the three extended hyperedges are exactly the three
pairwise intersections of $e_1,e_2,e_3$.

Suppose, for a contradiction, that the conclusion of the lemma fails.
The three lists belonging to any linear triangle in $G$ then have no
system of distinct representatives.  Hall's theorem implies that the
three lists are identical. To see this, note that Hall's condition
holds when any one list is considered, since each list has two
elements, and when any two lists are considered, since their union has
at least two elements.  It can therefore fail only when all three
lists are considered together.  Their union then has size at most two,
which forces all three two-element lists to be equal.

Choose disjoint hyperedges $f_1,f_2\in E(G)$; such hyperedges exist
because every part contains at least two vertices.  Fix distinct
indices $i,j\in[q]$, let $x$ be the vertex of $f_1$ in $V_i$, and let
$y$ be the vertex of $f_2$ in $V_j$.  For every
$s\in[q]\setminus\{i,j\}$, choose
$w_s\in V_s\setminus(f_1\cup f_2)$, and put
\[
 h\coloneqq\{x,y\}\cup
          \{w_s:s\in[q]\setminus\{i,j\}\}.
\]
The vertices $w_s$ can be chosen because every part has at least three
vertices.  Since $G$ is complete $q$-partite, $h\in E(G)$.  Moreover,
$h\cap f_1=\{x\}$ and $h\cap f_2=\{y\}$, where $x\neq y$.

We next verify that any two hyperedges $a,b\in E(G)$ satisfying
$a\cap b=\{v\}$ belong to a linear triangle in $G$.  Suppose that
$v\in V_m$.  Since $q\geq3$, we may choose distinct indices
$s,t\in[q]\setminus\{m\}$.  Let $a_s$ be the vertex of $a$ in
$V_s$, and let $b_t$ be the vertex of $b$ in $V_t$.  For every
$u\in[q]\setminus\{s,t\}$, choose
$c_u\in V_u\setminus(a\cup b)$, and define
\[
 c\coloneqq\{a_s,b_t\}\cup
          \{c_u:u\in[q]\setminus\{s,t\}\}.
\]
These choices are possible because every part has at least three
vertices.  Since $G$ is complete $q$-partite, $c\in E(G)$, and
\[
 a\cap b=\{v\},\qquad a\cap c=\{a_s\},\qquad
 b\cap c=\{b_t\}.
\]
The vertices $v,a_s,b_t$ lie in three distinct parts, so $a,b,c$
form a linear triangle.

Applying this fact to $f_1,h$ and to $h,f_2$, we obtain hyperedges
$g_1,g_2\in E(G)$ such that $f_1,h,g_1$ and $h,f_2,g_2$ are linear
triangles.  The consequence of Hall's theorem established above
therefore gives
\[
 \mathcal L(f_1)=\mathcal L(h)=\mathcal L(f_2).
\]
Write this common two-element set as $\{\alpha,\beta\}$.  Then
\[
 f_1\cup\{\alpha\},\qquad
 f_2\cup\{\alpha\},\qquad
 h\cup\{\beta\}
\]
have pairwise intersections $\{\alpha\}$, $\{x\}$, and $\{y\}$,
respectively, and have no other intersections.  Since
$\alpha,x,y$ are distinct, these three extended hyperedges form a
copy of $C_3^{(q+1)}$, a contradiction.
\end{proof}

The preceding two extension lemmas give the following consequence for
all parameter ranges except the case where $r=3$ and $k$ is odd.

\begin{lemma}\label{lem:cycle-from-shadow-standard}
Let $r,k\geq3$ be integers.  Assume that either $r\geq4$, or that
$r=3$ and $k\geq4$ is even, and put $t\coloneqq(r2^k)^{4r}$.
Let $H$ be an $r$-graph and let $G\subseteq\partial H$ be a complete
$(r-1)$-partite $(r-1)$-graph with $t$ vertices in each part.  For
every $f\in E(G)$, let
\[
 \mathcal L(f)\subseteq N_H(f)\setminus V(G),
 \qquad
 |\mathcal L(f)|=\left\lfloor\frac{k+1}{2}\right\rfloor.
\]
Then $H$ contains a copy of $C_k^{(r)}$ in which every hyperedge is of
the form $f\cup\{z\}$ for some $f\in E(G)$ and
$z\in\mathcal L(f)$.
\end{lemma}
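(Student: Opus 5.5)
We split into the cases $k=3$ and $k\geq4$, and in both we pass to an auxiliary $r$-graph whose hyperedges are exactly the permitted extensions:
\[
 H^*\coloneqq\{f\cup\{z\}:f\in E(G),\ z\in\mathcal L(f)\}\subseteq H.
\]
Any copy of $C_k^{(r)}$ in $H^*$ is a copy in $H$ whose hyperedges all have the required form $f\cup\{z\}$ with $z\in\mathcal L(f)$. It therefore suffices to find a copy of $C_k^{(r)}$ inside $H^*$. Since $\mathcal L(f)\cap V(G)=\varnothing$, each hyperedge of $H^*$ meets $V(G)$ in exactly the $(r-1)$-set $f$, so that $f$ is determined by the hyperedge.

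\textbf{The case $k\geq4$.} We apply Lemma~\ref{lem:KMV-cycle-from-shadow} to $H^*$ in place of $H$. First, $G\subseteq\partial H^*$, because every $f\in E(G)$ lies in the hyperedges $f\cup\{z\}$ with $z\in\mathcal L(f)$, and $\mathcal L(f)$ is nonempty. Next, by the definition of $L_G$, for every $f\in E(G)$ we have
\[
 L_G(f)=\{v\notin V(G):f\cup\{v\}\in H^*\}\supseteq\mathcal L(f),
\]
so $|L_G(f)|\geq\lfloor(k+1)/2\rfloor$. The part size $t=(r2^k)^{4r}$ is exactly the one in Lemma~\ref{lem:KMV-cycle-from-shadow}. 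The additional codegree hypothesis there is needed only when $r=3$ and $k$ is odd, and our assumptions exclude that case. Hence Lemma~\ref{lem:KMV-cycle-from-shadow} yields a copy of $C_k^{(r)}$ in $H^*$, as required.

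\textbf{The case $k=3$.} Here $r\geq4$, because $r=3$ would force $k\geq4$ to be even. We set $q\coloneqq r-1\geq3$, so that $\lfloor(k+1)/2\rfloor=2$ and each $\mathcal L(f)$ is a two-element subset of $T\coloneqq\bigcup_f\mathcal L(f)$. This set $T$ is disjoint from $V(G)$. Each part of $G$ has $t\geq3$ vertices, so Lemma~\ref{lem:triangle-from-shadow} applies directly and produces a copy of $C_3^{(r)}$ inside $H^*$.

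The main point to check is the bookkeeping in the case $k\geq4$: Lemma~\ref{lem:KMV-cycle-from-shadow} is applied to $H^*$ rather than to $H$, so the cycle it produces is built from extensions lying in $H^*$. This is exactly what gives the stronger conclusion that every hyperedge of the cycle has the form $f\cup\{z\}$ with $z\in\mathcal L(f)$, not merely that $H$ contains some copy of $C_k^{(r)}$.
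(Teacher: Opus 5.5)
Your proposal is correct and matches the paper's proof essentially verbatim. The paper also passes to $H_G=\{f\cup\{z\}:f\in E(G),\ z\in\mathcal L(f)\}$, invokes Lemma~\ref{lem:triangle-from-shadow} with $q=r-1$ and $T=\bigcup_f\mathcal L(f)$ when $r\geq4$ and $k=3$, and applies Lemma~\ref{lem:KMV-cycle-from-shadow} to $H_G$ and $G\subseteq\partial H_G$ otherwise; your extra checks ($G\subseteq\partial H^*$, $L_G(f)\supseteq\mathcal L(f)$, and that $k=3$ forces $r\geq4$) are the routine verifications the paper leaves implicit.
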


\begin{proof}
Let
\[
 H_G\coloneqq\{f\cup\{z\}:f\in E(G),\ z\in\mathcal L(f)\}.
\]
If $r\geq4$ and $k=3$, then
Lemma~\ref{lem:triangle-from-shadow}, applied with $q=r-1$ and
$T=\bigcup_{f\in E(G)}\mathcal L(f)$, gives a copy of $C_3^{(r)}$ in
$H_G$.  In all the remaining parameter ranges,
Lemma~\ref{lem:KMV-cycle-from-shadow}, applied to $H_G$ and
$G\subseteq\partial H_G$, gives a copy of $C_k^{(r)}$ in $H_G$.
The definition of $H_G$ gives the required form of every hyperedge.
\end{proof}

\subsection{Odd cycles in uniformity three}

We now turn to the remaining range, where $r=3$ and $k\geq5$ is odd.
The next lemma again selects a large graph $F\subseteq\partial H'$ and
a set $T\subseteq V(H)$ disjoint from $V(F)$.  In addition, for every
$xy\in F$, it provides an extension vertex in $T$ satisfying the
codegree conditions needed below.

\begin{lemma}\label{lem:Large Shadow odd}
Let $k\geq5$ be odd, let $\delta>0$ and $C_0>0$, and let
$C_2>\max\{6C_0/\delta,6k\}$.  Let $H$ be an $n$-vertex $3$-graph,
and let $H'\subseteq H$ be a $(k+1)/2$-full $3$-graph satisfying
$|\partial H'|>\delta n^2$.  Assume that $|H|\leq C_0n^2$ and that
every hyperedge $xyz\in H'$ satisfies
\[
 \max\{d_H(xy),d_H(yz),d_H(xz)\}\geq C_2.
\]
Then there exist a set $T\subseteq V(H)$ and a graph
$F\subseteq\partial H'$ such that
\begin{enumerate}[label=\textnormal{(H\arabic*)},itemsep=4pt]
\item\label{itm: lemma Large shadow odd i}
$V(F)\subseteq V(H)\setminus T$;
\item\label{itm: lemma Large shadow odd ii}
$|N_{H'}(xy)\cap T|\geq(k+1)/2$ for every $xy\in F$;
\item\label{itm: lemma Large shadow odd iii}
$|F|\geq\delta n^2/2^{k+5}$;
\item\label{itm: lemma Large shadow odd iv}
for every $xy\in F$, there is a vertex
$\alpha\in N_{H'}(xy)\cap T$ such that
\begin{gather*}
 |N_H(x\alpha)\cap T|
 \geq\frac{|N_H(x\alpha)|-1}{2},
 \qquad
 |N_H(y\alpha)\cap T|
 \geq\frac{|N_H(y\alpha)|-1}{2},\\
 \text{and}\qquad
 \max\{d_H(x\alpha),d_H(y\alpha)\}\geq C_2.
\end{gather*}
\end{enumerate}
\end{lemma}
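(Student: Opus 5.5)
The plan is to repeat the random-splitting argument of Lemma~\ref{lem:Large Shadow}, after first discarding the shadow pairs that themselves have large codegree, and to use Harris's inequality (Proposition~\ref{prop:Harris}) for the new condition~\ref{itm: lemma Large shadow odd iv}.

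\textbf{Step 1: discard high-codegree pairs.} Let $P\coloneqq\{xy:d_H(xy)\geq C_2\}$. Each hyperedge of $H$ contains three pairs, so $C_2|P|\leq3|H|\leq3C_0n^2$. Hence $|P|\leq3C_0n^2/C_2<\delta n^2/2$, using $C_2>6C_0/\delta$. Therefore
\[
 \partial^*\coloneqq\partial H'\setminus P
 \qquad\text{satisfies}\qquad
 |\partial^*|\geq\delta n^2/2.
\]
Fix $xy\in\partial^*$. Since $H'$ is $(k+1)/2$-full, we can fix $(k+1)/2$ distinct vertices of $N_{H'}(xy)$, and we designate one of them as $\alpha=\alpha(xy)$. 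The hypothesis on $H'$ says that $xy\alpha$ contains a pair of $H$-codegree at least $C_2$. That pair is not $xy$, so $\max\{d_H(x\alpha),d_H(y\alpha)\}\geq C_2$ holds automatically. This is the only place where the codegree hypothesis enters.

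\textbf{Step 2: a random set.} Let $X$ contain each vertex independently with probability $1/2$. Call $xy\in\partial^*$ \emph{good} if the following four conditions hold:
\begin{itemize}
\item $x,y\notin X$;
\item the $(k+1)/2$ chosen extension vertices (including $\alpha$) lie in $X$;
\item $|N_H(x\alpha)\cap X|\geq(|N_H(x\alpha)|-1)/2$;
\item $|N_H(y\alpha)\cap X|\geq(|N_H(y\alpha)|-1)/2$.
\end{itemize}
Then $F\coloneqq\{xy\in\partial^*:xy\text{ good for }T=X\}$ satisfies~\ref{itm: lemma Large shadow odd i}, \ref{itm: lemma Large shadow odd ii} and~\ref{itm: lemma Large shadow odd iv}, with $N_{H'}(xy)\cap T$ containing the chosen extension vertices.

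\textbf{Step 3: lower bound for the probability.} Condition on the event $E_0$ that $x,y\notin X$ and that the $(k+1)/2$ extension vertices lie in $X$; we have $\mathbb P(E_0)=2^{-2-(k+1)/2}$. The remaining vertices stay independent with probability $1/2$. Note that $y\in N_H(x\alpha)$ and $y\notin X$, and that $x\in N_H(y\alpha)$ and $x\notin X$.
\begin{itemize}
\item Let $S_x\coloneqq N_H(x\alpha)\setminus\{y\}$, so $|S_x|=d_H(x\alpha)-1$.
\item Some vertices of $S_x$ may have been forced into $X$ by $E_0$; this only helps. Also $x\notin S_x$, since $x$ lies in the pair $x\alpha$ and hence not in its neighborhood.
\item So it suffices that at least half of the unconditioned vertices of $S_x$ fall into $X$. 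By the symmetry of $\mathrm{Bin}(m,1/2)$, this has probability at least $1/2$.
\item The event "at least $(d_H(x\alpha)-1)/2$ vertices of $S_x$ lie in $X$" is increasing in the unconditioned coordinates.
\end{itemize}
The same holds for $y$, with $S_y\coloneqq N_H(y\alpha)\setminus\{x\}$. Harris's inequality then gives conditional probability at least $1/4$ for both events together. Hence
\[
 \mathbb P(xy\text{ good})\geq2^{-(k+1)/2-4}.
\]
This yields
\[
 \mathbb E|F|\geq\frac{\delta n^2}{2}\cdot2^{-(k+1)/2-4}\geq\frac{\delta n^2}{2^{k+5}},
\]
where the last inequality uses $(k+1)/2+5\leq k+5$. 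Fixing an outcome $T$ with $|F|\geq\mathbb E|F|$ gives~\ref{itm: lemma Large shadow odd iii}.

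\textbf{Main obstacle.} The delicate point is Step 3. The neighborhood events must be genuinely increasing after the conditioning. This requires that the conditioning forces only the vertices $x,y$ \emph{out} of $X$ (and they are removed from $S_x,S_y$), while every other forced vertex is forced \emph{in}. I need to verify that $x\notin N_H(y\alpha)\setminus\{x\}$ trivially and $\alpha\notin S_x$, so that no forced-out vertex remains in the relevant sets. I need also to ensure the "at least $1/2$" median bound holds when some coordinates are fixed to $1$. The hypothesis $C_2>6k$ is not needed in this plan beyond guaranteeing $C_2\geq 2$; I expect it is used only later.
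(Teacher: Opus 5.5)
Your proposal is correct and follows essentially the same route as the paper. You discard the pairs of $\partial H'$ with $d_H(xy)\geq C_2$, fix $\alpha(xy)$ so that the codegree condition comes for free, take a uniformly random half $X$, and apply Harris's inequality to the two half-neighbourhood events, reaching the same bound $2^{-(k+1)/2-4}$. The only cosmetic difference is that you condition on all $(k+1)/2$ chosen extension vertices lying in $X$, whereas the paper conditions only on $\alpha\in X$ and treats $|N_{H'}(xy)\cap X|\geq(k+1)/2$ as a third increasing event in Harris's inequality.
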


\begin{proof}
Let
$
E\coloneqq \{xy\in\partial H':d_H(xy)<C_2 \}.
$
Since every $xy\in\partial H'\setminus E$ satisfies $d_H(xy)\geq C_2$, we have
\begin{equation*}
C_2|\partial H'\setminus E|\leq\sum_{xy\in\partial H'\setminus E}d_H(xy)\leq 3|H|.
\end{equation*}
It follows that
\begin{equation*}
|\partial H'\setminus E|\leq\frac{3|H|}{C_2}\leq\frac{3C_0n^2}{C_2}\leq\frac{\delta n^2}{2}\leq\frac{|\partial H'|}{2}.
\end{equation*}
Therefore,
\begin{equation}\label{eq:boundE}
|E|\geq\frac{|\partial H'|}{2}\geq\frac{\delta n^2}{2}.
\end{equation}

For every $xy\in E$, fix a vertex $\alpha(xy)\in N_{H'}(xy)$.  In the
remainder of the proof, whenever a pair $xy\in E$ is under
consideration, write $\alpha\coloneqq\alpha(xy)$. Since $xy\alpha\in H'$, the hypothesis that every hyperedge of $H'$ contains a pair whose codegree is at least $C_2$ implies that at least one of $xy$, $x\alpha$, and $y\alpha$ has codegree at least $C_2$ in $H$. As $d_H(xy)<C_2$, we obtain
\begin{equation}\label{eq: defi alpha}
\max\{d_H(x\alpha),d_H(y\alpha)\}\geq C_2.
\end{equation}
Moreover, since $H'$ is $\left\lfloor\frac{k+1}{2}\right\rfloor$-full and $x\alpha,y\alpha\in\partial H'$, we have
\begin{equation*}
d_H(x\alpha)\geq d_{H'}(x\alpha)\geq\left\lfloor\frac{k+1}{2}\right\rfloor\geq2
\end{equation*}
and, similarly, $d_H(y\alpha)\geq2$.

Now let $X\subseteq V(H)$ be a random vertex subset obtained by including each vertex independently with probability $1/2$. Let $F_X$ be the set of pairs $xy\in E$ such that $x,y\notin X$, $\alpha\in X$,
\begin{equation*}
|N_{H'}(xy)\cap X|\geq\left\lfloor\frac{k+1}{2}\right\rfloor,
\end{equation*}
and
\begin{equation}\label{ineq:LowerCodegreesFT}
|N_H(x\alpha)\cap X|\geq\frac{|N_H(x\alpha)|-1}{2}
\quad\quad\text{and}\quad\quad
|N_H(y\alpha)\cap X|\geq\frac{|N_H(y\alpha)|-1}{2}.
\end{equation}

Fix an $xy\in E$. Let $A_1$ denote the event that $x,y\notin X$ and $\alpha\in X$. Then $\mathbb{P}(A_1)=1/8$. Conditional on $A_1$, let $A_2$ denote the event that
\begin{equation*}
|N_{H'}(xy)\cap X|\geq\left\lfloor\frac{k+1}{2}\right\rfloor.
\end{equation*}
Since $\alpha\in N_{H'}(xy)\cap X$ and $d_{H'}(xy)\geq\left\lfloor\frac{k+1}{2}\right\rfloor$, we may choose $\left\lfloor\frac{k+1}{2}\right\rfloor-1$ additional  vertices from $N_{H'}(xy)\setminus\{\alpha\}$. Hence,
\begin{equation*}
\mathbb{P}(A_2\mid A_1)\geq 2^{-\left(\left\lfloor\frac{k+1}{2}\right\rfloor-1\right)}.
\end{equation*}
Conditional on $A_1$, let $A_3$ denote the event that
\begin{equation*}
|(N_H(x\alpha)\cap X)\setminus\{y\}|\geq\frac{|N_H(x\alpha)|-1}{2},
\end{equation*}
and let $A_4$ denote the event
\[
 |(N_H(y\alpha)\cap X)\setminus\{x\}|
 \geq\frac{|N_H(y\alpha)|-1}{2}.
\]
Since
\begin{equation*}
|(N_H(x\alpha)\cap X)\setminus\{y\}|\sim\operatorname{Bin}\left(|N_H(x\alpha)|-1,\frac12\right),
\end{equation*}
we have $\mathbb{P}(A_3\mid A_1)\geq1/2$. Similarly, $\mathbb{P}(A_4\mid A_1)\geq1/2$.

Conditional on $A_1$, the events $A_2$, $A_3$, and $A_4$ are increasing events determined by the remaining independent vertex-selection trials. Therefore, Proposition~\ref{prop:Harris} gives
\begin{equation*}
\mathbb{P}(xy\in F_X)\geq\frac18\cdot 2^{-\left(\left\lfloor\frac{k+1}{2}\right\rfloor-1\right)}\cdot\frac12\cdot\frac12=2^{-\left\lfloor\frac{k+1}{2}\right\rfloor-4}.
\end{equation*}
The bound
$\mathbb P(xy\in F_X)\geq2^{-\lfloor(k+1)/2\rfloor-4}$ and
\eqref{eq:boundE} give
\begin{equation*}
\mathbb{E}(|F_X|)\geq |E|\cdot2^{-\left\lfloor\frac{k+1}{2}\right\rfloor-4}\geq\frac{\delta n^2}{2^{\left\lfloor\frac{k+1}{2}\right\rfloor+5}}\geq\frac{\delta n^2}{2^{k+5}}.
\end{equation*}
Thus, there exist sets $T\subseteq V(H)$ and $F\subseteq E\subseteq\partial H'$ satisfying
\begin{equation}\label{eq:large-shadow-F-lower-bound}
|F|\geq\frac{\delta n^2}{2^{k+5}}.
\end{equation}
Conditions~\ref{itm: lemma Large shadow odd i} and~\ref{itm: lemma Large shadow odd ii} follow from the definition of $F_X$, while~\eqref{eq:large-shadow-F-lower-bound} gives condition~\ref{itm: lemma Large shadow odd iii}. Finally, for every $xy\in F$, the fixed vertex $\alpha(xy)\in N_{H'}(xy)\cap T$ satisfies~\eqref{eq: defi alpha} and~\eqref{ineq:LowerCodegreesFT}. Hence condition~\ref{itm: lemma Large shadow odd iv} also holds.
\end{proof}

We next prepare the extension argument in this odd $3$-uniform
range.  In the proof of the extension lemma for this range, Lemma~\ref{lem:a copy of Ck from Ktt}, we first
construct a copy of $C_{k-1}^{(3)}$ containing a hyperedge
$xy\alpha$.  After interchanging $x$ and $y$ if necessary, we have
$d_H(x\alpha)\geq C_2$.  We replace $xy\alpha$ by hyperedges
$y\alpha\zeta$ and $x\alpha\zeta'$.  The graph
$G_{\{y\alpha\}}^{(1)}$ selects $\zeta$, while
$G_{\{x\alpha\}}^{(2)}$ provides $\lceil C_2/4\rceil$ initial
choices for $\zeta'$.  Conversely, a fixed vertex $\zeta$ has at most
two neighbors in the first graph, and a fixed vertex $\zeta'$ has at
most $C_2$ neighbors in the second.  These bounds control how many of
the resulting cycles can contain a prescribed replacement hyperedge.

We define both auxiliary graphs through the following general
construction.

\begin{definition}\label{defi: Auxiliary graph large shadow}
Let $S_1=\{a_0,\ldots,a_{s-1}\}$ and
$S_2=\{b_0,\ldots,b_{m-1}\}$ be non-empty disjoint sets satisfying
$s\leq m+1$.  For an integer $q$ with $1\leq q\leq m$, let
$G_q(S_1,S_2)$ be the bipartite graph with parts $S_1$ and $S_2$ in
which
\[
 a_i b_{(iq+j)\bmod m}\in E\bigl(G_q(S_1,S_2)\bigr)
\]
for every $0\leq i<s$ and $0\leq j<q$.  Define
\[
 G^{(1)}(S_1,S_2)\coloneqq G_1(S_1,S_2).
\]
If $m\geq\lceil C_2/4\rceil$, also define
\[
 G^{(2)}(S_1,S_2)
 \coloneqq G_{\lceil C_2/4\rceil}(S_1,S_2).
\]
\end{definition}

We record the degree bounds that will be used later.  Every vertex of
$S_1$ has degree $q$ in $G_q(S_1,S_2)$.  Moreover, as $(i,j)$ ranges
over the pairs satisfying $0\leq i<s$ and $0\leq j<q$, the integers
$iq+j$ range from $0$ to $sq-1$.  Consequently, every vertex of
$S_2$ has degree at most
\[
 \left\lceil\frac{sq}{m}\right\rceil
 \leq\left\lceil q+\frac qm\right\rceil
 \leq q+1.
\]
It follows that
\begin{equation}\label{degree S2 auxiliary}
 \max_{v\in S_2}d_{G^{(1)}(S_1,S_2)}(v)\leq2,
 \qquad
 \max_{v\in S_2}d_{G^{(2)}(S_1,S_2)}(v)
 \leq\left\lceil\frac{C_2}{4}\right\rceil+1\leq C_2,
\end{equation}
where the second bound applies whenever $G^{(2)}(S_1,S_2)$ is
defined.  Every vertex $v\in S_1$ satisfies
\begin{equation}\label{degree S1 auxiliary}
 d_{G^{(1)}(S_1,S_2)}(v)=1,
 \qquad
 d_{G^{(2)}(S_1,S_2)}(v)
 =\left\lceil\frac{C_2}{4}\right\rceil,
\end{equation}
where the second equality again applies whenever $G^{(2)}(S_1,S_2)$
is defined.

We next apply this construction to the sets provided by
Lemma~\ref{lem:Large Shadow odd}.  Let $r=3$ and $k$ be odd, let
$H,H',T$, and $F$ be as in that lemma, and fix $xy\in F$.
Fix a linear ordering of $V(H)$, and use the induced orderings of the
sets $S_1$ and $S_2$ whenever Definition~\ref{defi: Auxiliary graph large shadow}
is applied below.  Thus, once the pair $u\alpha$ is fixed, the
auxiliary graphs associated with that pair are fixed as well.
Condition~\ref{itm: lemma Large shadow odd iv} provides a vertex
$\alpha\in N_{H'}(xy)\cap T$ for which the three inequalities in that
condition hold.  For each $u\in\{x,y\}$, define
\[
 S_1(u,\alpha)\coloneqq N_H(u\alpha)\cap V(F),
 \qquad
 S_2(u,\alpha)\coloneqq N_H(u\alpha)\cap T.
\]
The sets $S_1(u,\alpha)$ and $S_2(u,\alpha)$ are disjoint by
condition~\ref{itm: lemma Large shadow odd i}.  Hence
\[
 d_H(u\alpha)
 \geq |S_1(u,\alpha)|+|S_2(u,\alpha)|.
\]
Condition~\ref{itm: lemma Large shadow odd iv} also gives
\[
 |S_2(u,\alpha)|\geq\frac{d_H(u\alpha)-1}{2}.
\]
Combining these inequalities yields
\begin{equation}\label{S1S2appliedrelation}
 |S_1(u,\alpha)|\leq |S_2(u,\alpha)|+1.
\end{equation}

Both sets are non-empty.  Indeed, if $v$ is the other vertex of the
pair $xy$, then $v\in V(F)$ and $uv\alpha=xy\alpha\in H'$, so
$v\in S_1(u,\alpha)$.  Moreover, $u\alpha\in\partial H'$, and the
$\lfloor(k+1)/2\rfloor$-fullness of $H'$ gives
\[
 d_H(u\alpha)\geq d_{H'}(u\alpha)
 \geq\left\lfloor\frac{k+1}{2}\right\rfloor\geq2.
\]
Thus condition~\ref{itm: lemma Large shadow odd iv} implies
$|S_2(u,\alpha)|\geq1/2$.  Since $|S_2(u,\alpha)|$ is an integer,
$S_2(u,\alpha)$ is non-empty.

Definition~\ref{defi: Auxiliary graph large shadow} therefore applies
to $S_1(u,\alpha)$ and $S_2(u,\alpha)$.  For each $u\in\{x,y\}$,
define
\begin{equation}\label{defi auxiliary G1}
 G_{\{u\alpha\}}^{(1)}
 \coloneqq
 G^{(1)}\bigl(S_1(u,\alpha),S_2(u,\alpha)\bigr).
\end{equation}
If $d_H(u\alpha)\geq C_2$, then
\[
 |S_2(u,\alpha)|
 \geq\frac{C_2-1}{2}
 \geq\left\lceil\frac{C_2}{4}\right\rceil,
\]
where the last inequality uses that $C_2$ is an integer and $C_2>3$.
In this case, also define
\begin{equation}\label{defi auxiliary G2}
 G_{\{u\alpha\}}^{(2)}
 \coloneqq
 G^{(2)}\bigl(S_1(u,\alpha),S_2(u,\alpha)\bigr).
\end{equation}

We now prove the extension statement needed in the remaining range
$r=3$ and odd $k\geq5$.

\begin{lemma}\label{lem:a copy of Ck from Ktt}
Let $k\geq5$ be odd.  Let $\delta>0$ and $C_0>0$, and let
$
C_2>\max\{6C_0/\delta,6k\}.
$
Let $H$ be a $3$-graph, let
$H'\subseteq H$ be a
$\frac{k+1}{2}$-full $3$-graph, and let
$T\subseteq V(H)$ and $F\subseteq\partial H'$ be as guaranteed by
Lemma~\ref{lem:Large Shadow odd} when applied to the pair $(H,H')$.
For every $f\in F$, let
$\mathcal{L}(f)\subseteq N_{H'}(f)\cap T$
be a set satisfying
$|\mathcal{L}(f)|=(k+1)/2$.
Assume additionally that, for every
$xy\in F$, the set $\mathcal{L}(xy)$ contains a vertex for which the
three inequalities in~\ref{itm: lemma Large shadow odd iv} hold.
Put $t\coloneqq(3\cdot2^k)^{12}$, and let $G$ be a copy of $K_{t,t}$.
If $F$ contains $G$, then one of the following holds:
\vspace{0.08in}
\begin{enumerate}
[label=\textnormal{(J\arabic*)},itemsep=4pt]
    \item\label{itm: cycle from shadow 1} There exists a copy of $C_{k}^{(3)}$ in $H$ such that every hyperedge of the cycle is of the form $f\cup\{z\}$, where $f\in G$ and $z\in\mathcal{L}(f)$.
    \item\label{itm: cycle from shadow 2} There are at least $\lceil C_2/12\rceil$ distinct copies of $C_k^{(3)}$ in $H$, indexed by $m\in [\lceil C_2/12\rceil]$.  The $m$-th copy has hyperedges $e_1,\ldots,e_k$, where $e_i\coloneqq w_iw_{i+1}z_i$ for every $i\in [k-2]$, $e_{k-1}\coloneqq w_1z_{k-1}z_k$, and $e_k\coloneqq w_{k-1}z_{k-1}z_{k+1}^{(m)}$; here $w_iw_{i+1}\in F$ and $z_i\in\mathcal{L}(w_iw_{i+1})$ for every $i\in [k-2]$, while $w_1w_{k-1}\in F$ and $z_{k-1}\in\mathcal{L}(w_1w_{k-1})$. Moreover, $z_{k}\in N_{G_{\{w_{1} z_{k-1}\}}^{(1)}}(w_{k-1})\subseteq T$ and $z_{k+1}^{(m)}\in N_{G_{\{w_{k-1}z_{k-1}\}}^{(2)}}(w_1)\subseteq T$ for every $m\in [\lceil C_2/12 \rceil ]$, where $G_{\{w_1z_{k-1}\}}^{(1)}$ and $G_{\{w_{k-1}z_{k-1}\}}^{(2)}$ are defined as in~\eqref{defi auxiliary G1} and~\eqref{defi auxiliary G2}, respectively, by taking $\alpha=z_{k-1}$ and $u=w_1$ for the first graph and $u=w_{k-1}$ for the second. (See Figure~\ref{fig:Odd Cycle} for an example with $k=7$.)
\end{enumerate}

\end{lemma}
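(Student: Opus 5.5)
The plan is to follow the dichotomy described in the overview, splitting according to whether some vertex of $T$ is a list vertex for many pairwise disjoint edges of $G$. Write the parts of $G$ as $A$ and $B$, each of size $t$.

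\emph{First case.} Suppose there is a vertex $\beta\in T$ lying in $\mathcal L(f)$ for at least $s$ pairwise disjoint edges $f\in G$, where $s$ is a suitably large constant depending on $k$. The goal is then alternative~\ref{itm: cycle from shadow 1}. Take two such disjoint edges $a_1b_1$ and $a_2b_2$, with $a_i\in A$ and $b_i\in B$. Inside $G$, which is complete bipartite, choose a long path of length $2^{k}$ from $b_1$ to $b_2$ avoiding $a_1,a_2$. Apply Lemma~\ref{lem: A P_k^r from a path in the shadow}, with the lists $\mathcal L(f)\setminus\{\beta\}$ of size at least $(k-1)/2=\lfloor (k-1)/2\rfloor+\ldots$, to obtain a copy of $P_{k-2}^{(3)}$ whose hyperedges avoid $\beta$ and whose endpoints lie at $b_1$ and $b_2$. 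Close it with $a_1b_1\beta$ and $a_2b_2\beta$.

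The list size drops by one when $\beta$ is removed, which is exactly what a path of $k-2$ edges needs: $\lfloor (k-1)/2\rfloor=\lfloor ((k-2)+1)/2\rfloor$. Lemma~\ref{lem: A P_k^r from a path in the shadow} also fixes the first hyperedge, so I would apply it from both ends, or choose the path so that its first and last edges are exactly the shadow edges meeting $b_1$ and $b_2$. The endpoints of the resulting linear path must be controlled. I would handle this by working in $G$ minus a few vertices and using many choices of the pair $(a_1b_1,a_2b_2)$ so that the private vertices avoid $a_1,a_2$. I expect this endpoint control to be the fiddliest part.

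\emph{Second case.} Otherwise, every $\beta\in T$ serves at most $s$ disjoint edges, so each $\beta$ lies in the lists of edges covering only $O(s)$ vertices. Take a shadow cycle $w_1\dots w_{k-1}$ in $G$; this is an even cycle, since $k-1$ is even. Extend it to $C_{k-1}^{(3)}$ using Lemma~\ref{lem:KMV-cycle-from-shadow}, or a direct greedy argument, which the sparsity of list vertices now makes possible because conflicts are rare. Arrange that the edge $w_1w_{k-1}$ is extended by its special vertex $\alpha=z_{k-1}$ from~\ref{itm: lemma Large shadow odd iv}, after relabelling so that $d_H(w_{k-1}\alpha)\geq C_2$.

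Next, replace $w_1w_{k-1}z_{k-1}$ by two hyperedges:
\begin{itemize}
\item $w_1z_{k-1}z_k$, where $z_k$ is the unique $G^{(1)}_{\{w_1z_{k-1}\}}$-neighbour of $w_{k-1}$ (note $w_{k-1}\in S_1$), which lies in $T$;
\item $w_{k-1}z_{k-1}z_{k+1}^{(m)}$, where $z_{k+1}^{(m)}$ ranges over the $\lceil C_2/4\rceil$ neighbours of $w_1$ in $G^{(2)}_{\{w_{k-1}z_{k-1}\}}$.
\end{itemize}
A valid cycle requires the new vertices to be distinct from $z_k$ and from $z_1,\dots,z_{k-2}$. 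This forbids at most $k$ values, and $\lceil C_2/4\rceil-k\geq\lceil C_2/12\rceil$ because $C_2>6k$. We also need $z_k\notin\{z_1,\dots,z_{k-2}\}$. Since $z_k$ is determined, I would achieve this by choosing the cycle in $G$ from many options: we have $t$ huge and list vertices sparse, so a vertex $z_k$ equal to some $z_i$ can be avoided by re-choosing the extensions. The vertices $z_k$ and $z^{(m)}_{k+1}$ lie in $T$, hence outside $V(F)$, so they avoid all $w_i$. This gives alternative~\ref{itm: cycle from shadow 2}.

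The main obstacle is the first case's closing step, guaranteeing that the two $\beta$-hyperedges meet the path only at its endpoints. I would try first to pick $s\gg 2^{2k}$ disjoint $\beta$-edges and use pigeonhole over them.
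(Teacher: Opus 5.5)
Your dichotomy and the final replacement step match the paper: $z_k$ is the unique neighbour of $w_{k-1}$ in $G^{(1)}_{\{w_1z_{k-1}\}}$, and $z^{(m)}_{k+1}$ ranges over the $\lceil C_2/4\rceil$ neighbours of $w_1$ in $G^{(2)}_{\{w_{k-1}z_{k-1}\}}$, losing at most $k$ of them. However, in both cases the step that actually produces the cycle is missing.

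\textbf{First case.} Lemma~\ref{lem: A P_k^r from a path in the shadow} only guarantees that the first hyperedge of the linear path contains $e_0$. The linear path has $k-2$ hyperedges while your shadow path has $2^k$ edges, and the lemma says nothing about where the last hyperedge sits. So nothing forces the path to end at $b_2$, and $a_2b_2\beta$ cannot be attached. Applying the lemma from both ends gives two unrelated paths, and the pigeonhole remark does not specify a mechanism. The missing idea is to make the endpoints irrelevant. Write $s$ of the disjoint $\beta$-edges as $u_1v_1,\dots,u_sv_s$, with the $u_i$ alternating between the parts of $G$. Then apply the lemma to the shadow path $u_1u_2\cdots u_s$, which runs through one endpoint of each $\beta$-edge (take $s-1\geq 2^{k-2}$). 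Every vertex of the resulting $P_{k-2}^{(3)}$ lies in $\{u_1,\dots,u_s\}\cup T$. Hence, wherever the path starts and ends, its first and last hyperedges have degree-one vertices $u_i$ and $u_j$. The hyperedges $u_iv_i\beta$ and $u_jv_j\beta$ then close the cycle, because $v_i,v_j\notin V(P)$ and $\beta$ was excluded from the lists.

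\textbf{Second case.} Lemma~\ref{lem:KMV-cycle-from-shadow} does not help here. It yields some $C_{k-1}^{(3)}$ but gives no control over which list vertex extends which edge. You need the specific hyperedge $w_1w_{k-1}\alpha$, with $\alpha$ as in~\ref{itm: lemma Large shadow odd iv}, and all other extensions must avoid $\alpha$ and the already determined $\zeta=z_k$. The unspecified greedy argument is where the real work lies, because the budget is tight. After removing $\alpha$ and $\zeta$, a list may keep only $(k-3)/2$ vertices, while the path $w_1\cdots w_{k-1}$ has $k-2$ edges. The hypothesis that each vertex of $T$ serves few disjoint edges does not make conflicts rare at the fixed vertices $w_1,w_{k-1}$. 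For example, take $k=5$ and let $\alpha,\zeta,\beta$ lie in the lists of all edges at $w_1$ and at $w_4$; this has matching number $2$, so it is allowed. Then $w_1w_2$ and $w_3w_4$ both have $\beta$ as their only usable vertex, whatever $w_2,w_3$ are, and re-choosing extensions cannot avoid the collision.

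The paper gets around this as follows:
\begin{itemize}
\item It takes $(3\cdot2^k)^6$ vertex-disjoint copies of $K_{s+1,s+1}$ in $G$. The Case~2 hypothesis plus pigeonhole gives two of them, $F_1$ and $F_2$, with $\mathcal L(F_1)\cap\mathcal L(F_2)=\emptyset$; after swapping, $\alpha\notin\mathcal L(F_2)$.
\item The $(k-1)$-cycle consists of two paths of length $p=(k-3)/2$, one inside each $F_i$. They are joined by the cross edge $a_0b_0$, extended by $\alpha$, and a cross edge $a_jb_j$ with $\alpha\notin\mathcal L(a_jb_j)$, which exists again by the Case~2 hypothesis.
\item Fixing $\zeta$ first and applying Hall to each half separately works exactly because $(k+1)/2-2=p$ and $(k+1)/2-(p+1)=1$.
\end{itemize}
Your second case needs some such device to go through.
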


\begin{proof}
Let $X_1$ and $X_2$ be the two parts of $G$, each of size $t$.  We
split the proof into two cases.
    
\vspace{0.08in}
\noindent \textbf{Case }\customlabel{lem: cycle from shadow Case 1}{\textbf{1}}\textbf{:} \textit{There exists a vertex $\alpha\in T$ and $t_1\coloneqq 2^{2\lfloor \frac{k-1}{2}\rfloor+1}+1$ pairwise disjoint edges $f_1,\ldots, f_{t_1}\in G$ such that $\alpha\in \mathcal{L}(f_i)$ for every $i\in [t_1]$.}
\vspace{0.08in}

Write $f_i=u_iv_i$, where the vertices $u_1,\ldots,u_{t_1}$
alternate between the two parts of $G$.  The pairs
\[
 u_1u_2,u_2u_3,\ldots,u_{t_1-1}u_{t_1}
\]
form a path $P^*$ of length
$2^{2\lfloor(k-1)/2\rfloor+1}$ in $G$.  Every edge $f$ of $P^*$ has
at least $\lfloor(k-1)/2\rfloor$ available extensions in
$\mathcal L(f)\setminus\{\alpha\}$.  Let $P'$ be the initial subpath
of $P^*$ with $2^{2\lfloor(k-3)/2\rfloor+1}$ edges.
Apply Lemma~\ref{lem: A P_k^r from a path in the shadow}, with $k-2$
in place of $k$, to the $3$-graph
\[
 \{f\cup\{z\}:f\in P',\ z\in\mathcal L(f)\setminus\{\alpha\}\}.
\]
Lemma~\ref{lem: A P_k^r from a path in the shadow} gives a copy $P$ of
$P_{k-2}^{(3)}$ whose hyperedges use only extensions different from
$\alpha$.

Let $u_i$ and $u_j$ be degree-one vertices of the first and last
hyperedges of $P$, respectively. Every vertex of $P$ belongs to
$\{u_1,\ldots,u_{t_1}\}\cup T$, whereas $v_i$ and $v_j$ belong to
$V(G)\setminus\{u_1,\ldots,u_{t_1}\}$. In particular,
$v_i,v_j\notin V(P)$. Hence the two hyperedges $u_iv_i\alpha$ and
$u_jv_j\alpha$, together with $P$, form a copy of $C_k^{(3)}$ in
$H$.  This copy satisfies condition~\ref{itm: cycle from shadow 1}.

\vspace{0.08in}
\noindent \textbf{Case }\customlabel{lem: cycle from shadow Case 2}{\textbf{2}}\textbf{:} \textit{For every vertex $\alpha\in T$, there are at most $t_1-1=2^{2\left\lfloor\frac{k-1}{2}\right\rfloor+1}$ pairwise disjoint edges $f\in G$ such that $\alpha\in\mathcal{L}(f)$.}
\vspace{0.08in}

Let $s\coloneqq(3\cdot2^k)^6$. We choose $s$ vertex-disjoint
subgraphs $F_1,\ldots,F_s$ of $G$, each of which is a copy of
$K_{t_1+1,t_1+1}$. This is possible since
\[
s(t_1+1)\leq(3\cdot2^k)^6\cdot3\cdot2^k
\leq(3\cdot2^k)^{12}=t.
\]
For every subgraph $Q\subseteq F$, let
\[
\mathcal{L}(Q)\coloneqq\bigcup_{f\in Q}\mathcal{L}(f).
\]
It follows that
\begin{equation}\label{eq: upper size LFi}
|\mathcal{L}(F_i)|\leq\frac{k+1}{2}\,|E(F_i)|
=\frac{k+1}{2}(t_1+1)^2<(3\cdot2^k)^3.
\end{equation}

In particular, $|\mathcal{L}(F_1)|<(3\cdot2^k)^3$.  By the
assumption of Case~\ref{lem: cycle from shadow Case 2}, each vertex
$\alpha\in\mathcal{L}(F_1)\subseteq T$ belongs to at most
$2^{2\lfloor(k-1)/2\rfloor+1}$ of the lists
$\mathcal{L}(F_i)$. It follows that at most
\[
 (3\cdot2^k)^3\cdot2^{2\lfloor(k-1)/2\rfloor+1}
\]
of the lists $\mathcal{L}(F_i)$ meet $\mathcal{L}(F_1)$.

Since
\[
 s=(3\cdot2^k)^6
 >(3\cdot2^k)^3\cdot2^{2\lfloor(k-1)/2\rfloor+1},
\]
there is an integer $i\in[s]$ such that
$\mathcal{L}(F_1)\cap\mathcal{L}(F_i)=\emptyset$.

Without loss of generality, we may assume
$\mathcal{L}(F_1)\cap \mathcal{L}(F_2)=\emptyset$.

For $i\in\{1,2\}$, we again label the vertices of each part of $F_i=\{u_{i,1},\ldots u_{i,t_1+1}\}\cup\{v_{i,1},\ldots,v_{i,t_1+1}\}$  so that the vertices $u_{i,j}$ and $v_{i',j'}$ are adjacent in $G$ (which is a copy of $K_{t,t}$) for every $i,i'\in \{1,2\}$ and $j,j'\in [t_1+1]$. Since $k$ is odd, in this subcase we have $|\mathcal{L}(u_{i,j}v_{i',j'})|=\lfloor \frac{k+1}{2}\rfloor=\frac{k+1}{2}$ for every $i,i'\in \{1,2\}$ and $j,j'\in [t_1+1]$.

Fix a vertex
$\alpha\in\mathcal L(u_{1,1}v_{2,1})$ for which the three
inequalities in~\ref{itm: lemma Large shadow odd iv} hold.  Since
$\mathcal L(F_1)\cap\mathcal L(F_2)=\emptyset$, after interchanging
$F_1$ and $F_2$ if necessary, we may assume that
$\alpha\notin\mathcal L(F_2)$.

Put $a_0\coloneqq u_{1,1}$ and $b_0\coloneqq v_{2,1}$.  Relabel the
two vertices $a_0,b_0$ as $x,y$ so that $d_H(x\alpha)\geq C_2$.
The graph $G_{\{y\alpha\}}^{(1)}$ is defined by
\eqref{defi auxiliary G1}, and $x$ belongs to its vertex class
$N_H(y\alpha)\cap V(F)$.  Let $\zeta$ be the unique neighbor of
$x$ in this auxiliary graph.  Thus
\begin{equation}\label{eq:fixed-replacement-edge}
 y\alpha\zeta\in H,
 \qquad
 \zeta\in N_{G_{\{y\alpha\}}^{(1)}}(x)\subseteq T.
\end{equation}

Set $p\coloneqq(k-3)/2$.  For $j\in[t_1+1]$, define
\[
 (a_j,b_j)\coloneqq
 \begin{cases}
  (u_{1,j},v_{2,j}),&\text{if $p$ is even},\\
  (v_{1,j},u_{2,j}),&\text{if $p$ is odd}.
 \end{cases}
\]
The edges $a_jb_j$ are pairwise disjoint.  By the assumption of
Case~\ref{lem: cycle from shadow Case 2}, at most $t_1-1$ of their
lists contain $\alpha$.  We may therefore choose
$j\in[t_1+1]\setminus\{1\}$ such that
$\alpha\notin\mathcal L(a_jb_j)$.

The complete bipartite graph $F_1$ contains a path $P_1$ of length
$p$ from $a_0$ to $a_j$, and $F_2$ contains a path $P_2$ of length
$p$ from $b_0$ to $b_j$.  Indeed, the prescribed endpoints lie in the
same part when $p$ is even and in different parts when $p$ is odd;
the required paths can be chosen with distinct internal vertices
because each part has $t_1+1>p$ vertices.  The two paths are
vertex-disjoint because $F_1$ and $F_2$ are vertex-disjoint.

Form a bipartite graph whose left vertex class is $E(P_1)$ and whose
right vertex class is $T\setminus\{\alpha,\zeta\}$, joining
$f\in E(P_1)$ to $z$ precisely when $z\in\mathcal L(f)$.  Every left
vertex has at least
\[
 |\mathcal L(f)\setminus\{\alpha,\zeta\}|
 \geq\frac{k+1}{2}-2=p
\]
neighbors, and the left class has $p$ vertices.  Hall's theorem gives
a matching covering $E(P_1)$.  Denote the distinct matched vertices by
$\beta_1,\ldots,\beta_p$.  Extending each edge of $P_1$ by its matched
vertex gives a copy $P_1^*$ of $P_p^{(3)}$ in $H'$.

Since $\alpha\notin\mathcal L(a_jb_j)$,
\[
 \left|\mathcal L(a_jb_j)\setminus
 \{\alpha,\zeta,\beta_1,\ldots,\beta_p\}\right|
 \geq\frac{k+1}{2}-(p+1)=1.
\]
Choose $\alpha'$ from this set.  Next form the analogous bipartite
graph for $P_2$, with right vertex class
\[
 T\setminus\{\alpha,\alpha',\zeta,
                 \beta_1,\ldots,\beta_p\}.
\]
The vertex $\alpha$ belongs to no list on $F_2$, and none of the
$\beta_i$ belongs to a list on $F_2$ because
$\mathcal L(F_1)\cap\mathcal L(F_2)=\emptyset$.  Thus every edge of
$P_2$ has at least $(k+1)/2-2=p$ available neighbors.  Hall's theorem
again gives a matching covering $E(P_2)$; denote its matched vertices
by $\gamma_1,\ldots,\gamma_p$.  Extending the edges of $P_2$ by these
vertices gives a copy $P_2^*$ of $P_p^{(3)}$ in $H'$.

The paths $P_1^*$ and $P_2^*$ together with the two hyperedges
$a_0b_0\alpha$ and $a_jb_j\alpha'$ form a copy $C^*$ of
$C_{k-1}^{(3)}$ in $H'$.  The hyperedge $a_0b_0\alpha=xy\alpha$ is
the only hyperedge of $C^*$ that we now replace.  By
\eqref{degree S1 auxiliary}, the vertex $y$ has
$\lceil C_2/4\rceil$ neighbors in
$G_{\{x\alpha\}}^{(2)}$.  At most $k$ of these neighbors belong to
\[
 \{\alpha,\alpha',\zeta,
   \beta_1,\ldots,\beta_p,\gamma_1,\ldots,\gamma_p\}.
\]
Since $C_2\geq6k$, there are at least
\[
 \left\lceil\frac{C_2}{4}\right\rceil-k
 \geq\left\lceil\frac{C_2}{12}\right\rceil
\]
choices for a vertex
\[
 \zeta'\in N_{G_{\{x\alpha\}}^{(2)}}(y)
\]
outside this set.  For each such $\zeta'$, replace
$xy\alpha$ in $C^*$ by the two hyperedges
$y\alpha\zeta$ and $x\alpha\zeta'$.  The resulting hyperedges form a
copy of $C_k^{(3)}$, and different choices of $\zeta'$ give distinct
copies.

To obtain the notation in condition~\ref{itm: cycle from shadow 2},
list the intersection vertices along the path in
$C^*\setminus\{xy\alpha\}$ from $y$ to $x$ as
$w_1,\ldots,w_{k-1}$, so that $w_1=y$ and $w_{k-1}=x$.  Let
$z_1,\ldots,z_{k-2}$ be the extension vertices on the corresponding
path hyperedges, and set
\[
 z_{k-1}\coloneqq\alpha,
 \qquad z_k\coloneqq\zeta,
 \qquad z_{k+1}^{(m)}\coloneqq\zeta'.
\]
Then~\eqref{eq:fixed-replacement-edge} and the choice of $\zeta'$ give
exactly the two auxiliary-graph conditions in
\ref{itm: cycle from shadow 2}.  This proves that alternative
\ref{itm: cycle from shadow 2} holds. This concludes Case~\ref{lem: cycle from shadow Case 2} and the proof
of Lemma~\ref{lem:a copy of Ck from Ktt}.

\begin{figure}[ht]
\centering
\begin{tikzpicture}[scale=0.8]
\node[shape=circle,draw=black, minimum size=1cm, minimum size=1cm] (W3) at (0,0) {$w_3$};
\node[shape=circle,draw=black, minimum size=1cm] (W2) at (2,0) {$w_2$};
\node[shape=circle,draw=black, minimum size=1cm] (W1) at (4,0) {$w_1$};

\node[shape=circle,draw=black, minimum size=1cm] (W4) at (0,-2) {$w_4$};
\node[shape=circle,draw=black, minimum size=1cm] (W5) at (2,-2) {$w_{5}$};
\node[shape=circle,draw=black, minimum size=1cm] (W6) at (4,-2) {$w_{6}$};

\node[shape=rectangle,draw=black, minimum size=1cm] (U2) at (1,2) {$z_2$};
\node[shape=rectangle,draw=black, minimum size=1cm] (U1) at (3,2) {$z_1$};
\node[shape=rectangle,draw=black, minimum size=1cm] (U3) at (-2,-1) {$z_3$};
\node[shape=rectangle,draw=black, minimum size=1cm] (U4) at (1,-4) {$z_{4}$};
\node[shape=rectangle,draw=black, minimum size=1cm] (U5) at (3,-4) {$z_{5}$};
\node[shape=rectangle,draw=black, minimum size=1cm] (U6) at (6,-1) {$z_6$};
\node[shape=rectangle,draw=black, minimum size=1cm] (U7) at (6,1.3) {$z_7$};
\node[shape=rectangle,draw=black, minimum size=1cm] (U8) at (6,-3.3) {$z_8^{(m)}$};

\graph{(W3)--(W2), (W2)--(W1), (W6)--(W5), (W5)--(W4), (W3)--(W4), (W3)--(U2),(W2)--(U1), (W2)--(U2), (W1)--(U1), (W3)--(U3), (W4)--(U3), (W4)--(U4),(W5)--(U4), (W5)--(U5), (W6)--(U5), (U6)--(U7), (U6)--(U8), (W1)--(U7), (W6)--(U8), (W1)--(U6), (W6)--(U6)};

\draw [dashed] (W1)--(W6);

\end{tikzpicture}
\caption{A copy of $C_{7}^{(3)}$ obtained in Case~\ref{lem: cycle from shadow Case 2}. The cycle contains $8$ vertices from $T$ (marked with a square) and $6$ vertices from $V(F)$ (marked with a circle). The hyperedge $w_1 w_6 z_6$ is replaced by $w_1 z_6 z_7$ and $w_6 z_6 z_{8}^{(m)}$ for $m \in [\lceil C_2/12\rceil ]$.}
\label{fig:Odd Cycle}
\end{figure}

\end{proof}

\section{Proofs of the Supersaturation Lemmas}\label{sec:Supersaturation}

Let $C_0 = C_0(r, k, \varepsilon)$ be the constant given by Proposition~\ref{prop:Dense regime}. Throughout this section we assume that $H$ is an $r$-graph satisfying \[
\left(\left\lfloor\frac{k-1}{2}\right\rfloor+\frac{\varepsilon}{2}\right)\binom{n}{r-1} \leq |H| \leq C_0 \binom{n}{r-1}.\]

Recall that we classify hypergraphs according to the cases \ref{itm: Odd Case linear}, \ref{itm: Odd Case Codegree} and \ref{itm: Odd Case Shadow}  given by Proposition~\ref{prop:kOdd}, and the cases~\ref{itm: Even Case Codegrees} and \ref{itm: Even Case Shadow} given by Proposition~\ref{prop:kEven}. In this section we prove balanced supersaturation lemmas for each of these cases.

In Subsection~\ref{subsec:linear subgraph} we treat condition
\ref{itm: Odd Case linear}: there is a $(2,C_2)$-sparse $3$-graph
$H_0\subseteq H$ with
$|H_0|\geq(\varepsilon/4)\binom n2$. The argument uses
the graph regularity lemma and the graph counting lemma stated in
Lemmas~\ref{lem:graph-regularity} and~\ref{lem:graph-counting} to
obtain the initial collection of copies of $C_k^{(3)}$.

In Subsection~\ref{subsec:large codegrees}, we obtain a balanced
supersaturation result for $r$-graphs satisfying
condition~\ref{itm: Odd Case Codegree} or
condition~\ref{itm: Even Case Codegrees}. In the first case there is a
$C_1$-full $3$-graph $H_2\subseteq H$ with
$|H_2|\geq(\varepsilon/8)\binom n2$; in the second there is a
$C_1$-full $r$-graph $H_2\subseteq H$ with
$|H_2|\geq(\varepsilon/4)\binom n{r-1}$. The construction applies to every
$r,k\geq3$.  It uses bounded-degree switches over $(r-1)$-sets to
construct the cycle and to bound the number of constructed cycles
containing any prescribed tuple of hyperedges.

In Subsection~\ref{subsec:large shadow even} we obtain a balanced
supersaturation result for hypergraphs satisfying
condition~\ref{itm: Even Case Shadow}.  We combine
Lemma~\ref{lem:Large Shadow} with
Proposition~\ref{prop:many-complete-partite} to find many complete
$(r-1)$-partite $(r-1)$-graphs in the shadow, and then apply
Lemma~\ref{lem:cycle-from-shadow-standard} to obtain cycles.  Adding these
cycles one at a time while avoiding saturated tuples produces the
required balanced collection for every $r\geq4$, $k\geq3$, and also
for $r=3$ and even $k\geq4$.

Finally, in Subsection~\ref{subsec:large shadow}, we prove a
supersaturation result for hypergraphs satisfying condition \ref{itm: Odd Case Shadow}, which consists of an ambient $3$-graph $H$ containing a
$\left\lfloor\frac{k+1}{2}\right\rfloor$-full subhypergraph
$H_1\subseteq H$ satisfying
$|H_1|\geq(\varepsilon/4)\binom n2$ and
$|\partial H_1|\geq(\varepsilon/(8C_1))\binom n2$, where every hyperedge of $H_1$
contains a pair having codegree at least $C_2$ in $H$. In this
setting, the initial collection of copies of $C_k^{(3)}$ may be
smaller than the collection obtained in
Subsection~\ref{subsec:large shadow even}. We use the additional
conclusions of Lemma~\ref{lem:Large Shadow odd} and
Lemma~\ref{lem:a copy of Ck from Ktt} to construct the required
balanced collection of copies of $C_k^{(3)}$.

\subsection{Supersaturation from a large linear subhypergraph}\label{subsec:linear subgraph}

The supersaturation result in this subsection applies to $3$-graphs obtained from condition \ref{itm: Odd Case linear} of Proposition~\ref{prop:kOdd}. Recall that a $3$-graph $H$ is \emph{$(t,C)$-sparse} if every set of $t$ vertices is contained in at most $C$ hyperedges. Also recall that 
for a collection $\KK$ of cycles $C_{k}^{(r)}$ in an $r$-graph $H$, $\Delta_{\ell}(\KK)$ denotes the maximum number of cycles $C_{k}^{(r)}$ containing a given set of $\ell$ hyperedges of $H$.

\begin{lemma}\label{lem: Supersat Large Linear subgraph}
Let $0<\varepsilon<1$, and let $k\geq5$ be an odd integer. Let $C_2$ be chosen according to~\eqref{defi:constant hierarchy}. Then there is an integer $n_0=n_0(\varepsilon,k,C_2)$ such that, for every $n>n_0$, the following holds. If $H_0$ is an $n$-vertex $(2,C_2)$-sparse $3$-graph with $|H_0|\geq \frac{\varepsilon}{4}\binom{n}{2}$, then there exists a collection $\KK$ of copies of $C_k^{(3)}$ contained in $H_0$ such that

        \vspace{4pt}
\begin{enumerate}[label=\textnormal{(L\arabic*)},itemsep=4pt]
    \item\label{itm: supersat linear 1} $|\KK|\geq \frac{D^{k-1}\cdot n^{2}}{K}$, and
    \item\label{itm: supersat linear 2} $\Delta_{\ell}(\KK)\leq D^{k-\ell}$ for $\ell\in[k]$, 
\end{enumerate}
where $D= n^{(k-2)/(k-1)}$ and $K=\log n$.
\end{lemma}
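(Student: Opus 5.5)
I would follow the route sketched in the proof overview. It has three stages: extract a linear subhypergraph, count graph cycles in its shadow via regularity, and then greedily select a balanced subfamily.

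\textbf{Step 1 (linearization).} Form the auxiliary graph on $E(H_0)$ in which two hyperedges are adjacent when they share two vertices. Since $H_0$ is $(2,C_2)$-sparse, every hyperedge is adjacent to at most $3(C_2-1)$ others. A greedy proper colouring with $3C_2$ colours then gives a colour class $H_0^*$ that is a linear $3$-graph with $|H_0^*|\geq \varepsilon n^2/(24C_2)$. Every pair of $\partial H_0^*$ now lies in exactly one hyperedge, and the shadow graph $\partial H_0^*$ is the edge-disjoint union of the associated triangles.

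\textbf{Step 2 (many $C_k$ in the shadow).} Apply Lemma~\ref{lem:graph-regularity} to $\partial H_0^*$ with $\eta\ll \varepsilon/C_2, 1/k$. We delete edges lying inside clusters, touching $V_0$, in irregular pairs, or in pairs of density below $d$. Together these number $o_\eta(1)\cdot n^2 + dn^2$, which is a small fraction of the $\Theta(\varepsilon n^2/C_2)$ edge-disjoint associated triangles. Hence some associated triangle survives intact, so the reduced graph contains a triangle on clusters $V_a,V_b,V_c$.

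Because $k$ is odd, $C_k$ has a homomorphism onto this triangle. We take $W_1=V_a$, $W_2=V_b$, then alternate $W_3,W_4,\dots$ between $V_a$ and $V_b$, and send the last vertex to $V_c$. To make the $W_i$ disjoint, we split each cluster into $k$ equal pieces; regularity passes to these pieces with slightly worse parameters. Lemma~\ref{lem:graph-counting} then gives $\Omega_{\varepsilon,k,C_2}(n^k)$ labelled copies of $C_k$ in $\partial H_0^*$.

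Each edge $v_iv_{i+1}$ of such a cycle has a unique extension vertex $u_i$. The extension fails to be a copy of $C_k^{(3)}$ only if some $u_i=u_j$ with $i\neq j$, or some $u_i$ equals a cycle vertex $v_j$ with $j\notin\{i,i+1\}$. I expect this to be the main obstacle. Both defects force an extra incidence: a vertex $w$ forms hyperedges with two different pairs of the cycle, or a chord-like hyperedge appears. The naive count of such configurations is still of order $n^k$, so counting alone does not rule them out. A bad cycle is therefore not a priori a lower-order phenomenon, and the fix must use either linearity or the flexibility in choosing the cycle.

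My first attempt is to exploit that flexibility. Fix all vertices except $v_i$, which ranges over a set of size $\Theta(n)$ inside its $W_i$. The constraint $u_{i-1}=u_{j}$ (respectively $u_{i-1}=v_j$) requires $v_{i-1}v_iu_{i-1}\in H_0^*$ with $u_{i-1}$ prescribed. Since the pair $v_{i-1}u_{i-1}$ lies in at most one hyperedge of the linear $H_0^*$, this pins $v_i$ to at most one value. Each defect type with a given index pair therefore occurs in $O(n^{k-1})$ cycles, and summing over $O(k^2)$ types leaves $\Omega(n^k)$ good copies. These form $\KK^*$, and this is the content of Claim~\ref{claim2}.

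\textbf{Step 3 (balanced selection).} We first show that each $\ell$-tuple $\sigma$ of hyperedges, $1\le\ell\le k-1$, lies in $O_k(n^{k-\ell-1})$ members of $\KK^*$. There are $O_k(1)$ choices of positions and of which vertices play the roles of $v_i,v_{i+1}$. Any proper set of $\ell$ cycle edges spans at least $\ell+1$ of the $v_i$, leaving at most $n^{k-\ell-1}$ choices for the rest. Linearity then makes the hyperedges unique.

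Next we build $\KK$ greedily, adding only cycles with no saturated tuple. While $|\KK|<D^{k-1}n^2/K=n^k/\log n$, there are at most $\binom{k}{\ell}|\KK|/\lfloor D^{k-\ell}\rfloor=O(D^{\ell-1}n^2/\log n)$ saturated $\ell$-tuples. These block at most $O(D^{\ell-1}n^{k-\ell+1}/\log n)=O(n^k/\log n)$ cycles, using $D\le n$. This is $o(|\KK^*|)$, so the process continues until $|\KK|\ge D^{k-1}n^2/K$.

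Finally, we verify the codegree conditions. By construction $\Delta_\ell(\KK)\le D^{k-\ell}$ for $\ell<k$, since saturated tuples are never enlarged beyond $\lfloor D^{k-\ell}\rfloor$. For $\ell=k$ we have $\Delta_k(\KK)\le 1$.
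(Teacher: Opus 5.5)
Your proposal is correct and follows essentially the same route as the paper: a greedy colouring gives a linear $H_0^*$, regularity and the counting lemma are applied on a triangle of the reduced graph, defective extensions are discarded using linearity, and a greedy selection avoids saturated tuples. Your ``fix $k-1$ vertices and linearity pins the last'' argument is just a repackaging of the paper's count via $3|H_0^*|\le\binom n2$, so counting does rule out the defects; the only other differences are cosmetic: splitting clusters is unnecessary, because Lemma~\ref{lem:graph-counting} only needs disjointness along edges of $J$ and already counts injective maps, and for a defect $u_a=u_b$ you should vary $v_i$ with $i-1\in\{a,b\}$ and the other index outside $\{i-1,i\}$, which is always possible since $k\geq3$.
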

\begin{proof}
Let $G_{H_0}$ be the auxiliary graph with vertex set $V(G_{H_0})\coloneqq E(H_0)$, in which two vertices $e_1,e_2\in E(H_0)$ are adjacent if and only if $|e_1\cap e_2|=2$. Since $H_0$ is $(2,C_2)$-sparse, we have $\Delta(G_{H_0})\leq 3C_2-3$. A greedy vertex coloring of $G_{H_0}$ uses at most $\Delta(G_{H_0})+1\leq3C_2-2$ colors, and hence $\chi(G_{H_0})<3C_2$.

Let $H_0^*\subseteq H_0$ be the $3$-graph whose hyperedge set is a largest color class in a proper vertex coloring of $G_{H_0}$ with $\chi(G_{H_0})$ colors. Since $\chi(G_{H_0})<3C_2$, it follows that
\begin{equation}
\label{eq:lowerboundH0*}
|H_0^*|\geq \frac{|H_0|}{3C_2}
\ge
\frac{\varepsilon}{12C_2}\binom{n}{2}.
\end{equation}
Moreover, since $E(H_0^*)$ is a color class in a proper coloring of $G_{H_0}$, no two hyperedges of $H_0^*$ intersect in two vertices. Thus, $H_0^*$ is linear.

Since $H_0^*$ is linear, we have $|\partial H_0^*|=3|H_0^*|\geq \frac{\varepsilon}{4C_2}\binom{n}{2}$. Set
\[
\varepsilon'\coloneqq \left(\frac{\varepsilon}{500\cdot k\cdot C_2}\right)^{k}.
\]
Applying Lemma~\ref{lem:graph-regularity} to the graph
$\partial H_0^*$ with $\eta=\varepsilon'$ and
$m_0=\lceil1/\varepsilon'\rceil$, we obtain integers
$M=M(\varepsilon')$ and $m$ satisfying
\[
\frac{1}{\varepsilon'}\leq m\leq M,
\]
and a partition
\[
V(\partial H_0^*)=V_0\cup V_1\cup\cdots\cup V_m
\]
such that $|V_0|\leq \varepsilon' n$, the sets $V_1,\ldots,V_m$ have equal size, and, for all but at most $\varepsilon' m^2$ pairs $1\leq i<j\leq m$, the bipartite subgraph of $\partial H_0^*$ induced by $V_i$ and $V_j$ is $\varepsilon'$-regular.

We delete from $\partial H_0^*$ all edges lying between $\varepsilon'$-irregular pairs $(V_i,V_j)$, all edges having both endpoints in the same cluster $V_i$, all edges incident to a vertex in $V_0$, and all edges lying between $\varepsilon'$-regular pairs $(V_i,V_j)$ of density at most 
\begin{equation}\label{eq:definition delta}
\delta\coloneqq(8k\varepsilon')^{1/k}=\left(8k\left(\frac{\varepsilon}{500\cdot k \cdot C_2}\right)^{k}\right)^{1/k} <\frac{2\cdot k\cdot \varepsilon}{500\cdot k\cdot C_2}=\frac{\varepsilon}{250 C_2}.
\end{equation}
The total number of removed edges is at most
\begin{equation}\label{eq: removed edges}
\varepsilon' m^2\cdot  |V_1|^2+m\cdot |V_1|^2+n|V_0|+ \delta \cdot \binom{n}{2} \leq  \varepsilon' n^2+ \frac{1}{m}n^2+\varepsilon' n^2+\frac{\delta}{2} n^2\leq 3 \varepsilon' n^2 + \frac{\delta}{2} n^2 < \frac{\varepsilon}{100 C_2} n^2,
\end{equation}
where we used that the sets $V_1, \ldots, V_m$ have the same size, $|V_1| \le n/m$, $|V_0|\leq \varepsilon' n$, $m\geq \frac{1}{\varepsilon'}$,  \eqref{eq:definition delta} and $\varepsilon'<\frac{\varepsilon}{500 \cdot C_2}$.

Note that $\partial H_0^*$ contains at least $|H_0^*|$ triangles, namely, the triangles corresponding to the hyperedges of $H_0^*$. Since $H_0^*$ is linear, each edge of $\partial H_0^*$ is contained in exactly one such triangle. Therefore, deleting an edge from $\partial H_0^*$ destroys at most one triangle corresponding to a hyperedge of $H_0^*$. Since, by \eqref{eq: removed edges}, the total number of deleted edges is at most $\frac{\varepsilon}{100 C_2} n^2$, the resulting graph still contains at least
\[
|H_0^*|-\frac{\varepsilon}{100 \cdot C_2}n^2\overset{\eqref{eq:lowerboundH0*}}{\geq} \frac{\varepsilon}{12C_2}\binom{n}{2}-\frac{\varepsilon}{100C_2}n^2> \frac{\varepsilon}{24C_2}\binom{n}{2}
\]
triangles. Define a graph $R$ with vertex set $[m]$ by joining $i$ and
$j$ if $(V_i,V_j)$ is $\varepsilon'$-regular and has density greater
than $\delta$ in $\partial H_0^*$. Every edge that remains after the
deletions lies between $V_i$ and $V_j$ for some $ij\in E(R)$.
Consequently, each triangle that remains gives a triangle in $R$.
Thus, $R$ contains a triangle $i_1i_2i_3$.

For $n$ sufficiently large with respect to $C_2$ and $\varepsilon$, each cluster $V_i$ satisfies
\[
|V_i|=\frac{n-|V_0|}{m}\geq \frac{(1-\varepsilon')n}{M}>\left(\frac{500\cdot k\cdot C_2}{\varepsilon}\right)^{k}=(\varepsilon')^{-1},
\]
where $M=M(\varepsilon')=M(\varepsilon,k,C_2)$ is supplied by
Lemma~\ref{lem:graph-regularity}.  Since $\varepsilon'<1/2$, the same
inequality also gives $|V_i|\geq n/(2M)$ for every $i\in[m]$.
Moreover, since
$\delta=(8k\varepsilon')^{1/k}$, we have
\begin{equation}\label{eq:counting lemma prerequisite}
\frac{\delta^k}{4k}=\frac{8k\varepsilon'}{4k}=2\varepsilon'>\varepsilon'.
\end{equation}
Fix a proper $3$-coloring
$c:[k]\to\{i_1,i_2,i_3\}$ of $C_k$, and for every $j\in[k]$ set
$W_j\coloneqq V_{c(j)}$. For each edge $j(j+1)$ of $C_k$, with indices taken
modulo $k$, the pair $(W_j,W_{j+1})$ is $\varepsilon'$-regular and
has density greater than $\delta$. Moreover,
$|W_j|=|V_1|>(\varepsilon')^{-1}$ for every $j\in[k]$, and
\eqref{eq:counting lemma prerequisite} gives
$\varepsilon'<\delta^k/(4k)$. Lemma~\ref{lem:graph-counting}, applied
with $J=C_k$, therefore gives at least
$2^{-k}\delta^k|V_1|^k$ labelled copies of $C_k$ in
$\partial H_0^*$. Every distinct copy of $C_k$ arises from at most
$2k$ such labelled copies. Hence there is a collection
$\mathcal{K}_0$ of distinct copies of $C_k$ contained in
$\partial H_0^*$ such that
\begin{equation}\label{eq:lower K0 linear subgraph}
|\KK_0|\geq \frac{2^{-k}\delta^k|V_1|^k}{2k}
\geq \frac{4\varepsilon'n^k}{4^kM^k}.
\end{equation}

Having obtained many copies of $C_k$ in $\partial H_0^*$, we now use
the fact that $H_0^*$ is a linear $3$-graph to show that many of these
copies can be extended to copies of $C_k^{(3)}$ in $H_0^{*}$.

Let $\mathcal{K}_1$ be the collection of multisets of $k$ hyperedges
of $H_0^*$ constructed as follows.  For each copy
$Q\in\mathcal{K}_0$ and each edge $e\in E(Q)$, the inclusion
$e\in\partial H_0^*$ gives a hyperedge of $H_0^*$ containing $e$, and
the linearity of $H_0^*$ makes this hyperedge unique; denote it by
$\widehat e$.  Associate with $Q$ the multiset
$\{\widehat e:e\in E(Q)\}$, in which a hyperedge is repeated if it is
the unique extension of more than one edge of $Q$, and let
$\mathcal{K}_1$ be the collection of all multisets obtained in this
way.  A member of $\mathcal{K}_1$ need not consist of distinct
hyperedges or form a copy of $C_k^{(3)}$ in $H_0^*$.  Moreover, each
member of $\mathcal{K}_1$ arises from at most $3^k$ members of
$\mathcal{K}_0$, because each occurrence of a hyperedge contains only
three possible graph edges. Therefore,
\begin{equation}\label{eq:Lower K1 linear subgraph}
|\mathcal{K}_1|\geq \frac{|\mathcal{K}_0|}{3^k}
\geq \frac{4\varepsilon'n^k}{12^kM^k}.
\end{equation}

Let $\KK_{2}$ denote the members of $\KK_{1}$ that consist of $k$
distinct hyperedges forming a copy of $C_{k}^{(3)}$.
\begin{claim} \label{claim2}
There is an integer $n_0=n_0(\varepsilon, C_2)$ such that for $n>n_0$ we have
\begin{equation}
\label{eq:claim2}
      |\KK_{2}|\geq \frac{2\varepsilon' n^k}{12^{k}\cdot M^k}.  
\end{equation}
\end{claim}
\begin{claimproof}
We distinguish two cases according to how a member of $\mathcal{K}_1$ can fail to belong to $\mathcal{K}_2$ during its construction from a copy $Q$ of $C_k$ in $\mathcal{K}_0$. For each of these two cases, we bound the number of such members of $\mathcal{K}_1$ separately.

\vspace{0.08in}
\noindent \textbf{Case} \customlabel{case 1 supersat linear}{\textbf{1}}\textbf{:} \textit{There exist distinct edges $e_1,e_2\in E(Q)$ such that the unique hyperedges $\widehat e_1,\widehat e_2\in H_0^*$ containing $e_1$ and $e_2$, respectively, satisfy $\widehat e_1\setminus e_1 = \widehat e_2\setminus e_2$.}

\vspace{0.08in}

In this case, our aim is to bound the number of members of $\mathcal{K}_1$ arising from copies $Q\in\mathcal{K}_0$ satisfying Case \ref{case 1 supersat linear}. For convenience, write $\widehat e_1\setminus e_1=\widehat e_2\setminus e_2=\{v\}$. 

There are $|H_0^*|$ choices for $\widehat e_1$ and at most $3$ choices for the vertex $v\in\widehat e_1$. Since $H_0^*$ is linear, for each fixed choice of $v$, there are at most $n$ choices for $\widehat e_2$ containing $v$. 

Consider the edges of $Q$ in the cyclic order induced by the cycle. Note that since $H_0^{*}$ is linear, $e_1$ and $e_2$ are not consecutive in this ordering. There are at most $k^2$ possible choices for the positions occupied by $e_1$ and $e_2$ in this ordering. Moreover, once $e_1$, $e_2$, and their positions in $Q$ have been fixed, there are at most $n^{k-4}$ ways to choose the remaining vertices of a copy $Q\in\mathcal{K}_0$ of $C_k$. Furthermore, each such copy $Q$ determines a unique member of $\mathcal{K}_1$, since $H_0^*$ is linear. Consequently, the total number of members of $\mathcal{K}_1$ arising from copies $Q\in\mathcal{K}_0$ satisfying Case \ref{case 1 supersat linear} is at most 
\[
3|H_0^*|\cdot n\cdot k^2\cdot n^{k-4}\leq k^2\cdot \binom{n}{2}\cdot n^{k-3}\leq \frac{k^2}{2}n^{k-1},
\]
where we used the fact that $H_0^*$ is linear, and hence $3|H_0^*|\leq \binom{n}{2}$.

\vspace{0.08in}
\noindent \textbf{Case} \customlabel{case 2 supersat linear}{\textbf{2}}\textbf{:} \textit{There exist distinct edges $e_1$, $e_2\in E(Q)$ such that for some vertex $w\in e_2$, we have $\widehat{e}_1=e_1\cup \{w\}\in H_0^*$.}
\vspace{0.08in}

In this case we bound the number of members of $\KK_1$ arising from copies of $\KK_0$ satisfying Case \ref{case 2 supersat linear}. There are $|H_0^{*}|$ choices for $\widehat{e}_1$ and $3$ choices for $w\in \widehat{e}_1$.

Consider the edges of $Q$ in their cyclic order.  There are at most
$k^2$ choices for the positions occupied by $e_1$ and $e_2$.  If
these positions are nonconsecutive, then $e_1\cup e_2$ has four
vertices.  There are at most $n$ choices for $e_2$ containing $w$ and,
after $e_1$ and $e_2$ have been fixed, at most $n^{k-4}$ choices for
the remaining vertices of $Q$.  If the positions are consecutive,
then $e_2$ consists of $w$ and one of the two vertices of $e_1$.
There are at most two choices for $e_2$ and at most $n^{k-3}$ choices
for the remaining vertices of $Q$.  Thus, for fixed
$\widehat e_1$, $w$, and the two positions, the two cases give at most
$3n^{k-3}$ choices in total.  Every resulting graph cycle determines
a unique member of $\mathcal K_1$.  Hence the number of members
arising in Case~\ref{case 2 supersat linear} is at most
\[
 9|H_0^*|k^2n^{k-3}
 \leq\frac{3k^2}{2}n^{k-1},
\]
where the inequality uses $3|H_0^*|\leq\binom n2$.

\vspace{0.08in}
From the bounds obtained in Case \ref{case 1 supersat linear} and  Case \ref{case 2 supersat linear},  it follows that 
\[
|\mathcal{K}_1\setminus \mathcal{K}_2|\leq 2k^2 \cdot n^{k-1},
\]
which together with \eqref{eq:Lower K1 linear subgraph}
implies that, for large enough $n$ with respect to $\varepsilon$ and $C_2$, we have 
\[
|\KK_2|\geq \frac{4\varepsilon' n^k}{12^{k}\cdot M^{k}}-2k^2 \cdot n^{k-1}> \frac{2\varepsilon' n^k}{12^{k}\cdot M^k}.
\]
This concludes the proof of Claim~\ref{claim2}.
\end{claimproof}

Lastly, we run a cleaning step on the collection $\KK_2$ to find a balanced subcollection $\KK \subseteq \KK_2$ of copies of $C_k^{(3)}$ in $H^*_0$.
We begin by setting $\KK\coloneqq\emptyset$ and note that the empty collection satisfies the condition \ref{itm: supersat linear 2} of Lemma~\ref{lem: Supersat Large Linear subgraph}. We fix an arbitrary ordering for the members of the collection $\KK_2$. We consider the cycles in $\KK_2$ one at a time, according to this ordering, and add a cycle to $\KK$ whenever doing so preserves condition~\ref{itm: supersat linear 2}. We continue this process until condition~\ref{itm: supersat linear 1} of Lemma~\ref{lem: Supersat Large Linear subgraph} is satisfied.

Let $\sigma$ be an unordered collection of $\ell$ distinct hyperedges of $H_0^*$, where $1\leq \ell\leq k-1$. We say that $\sigma$ is $D$-\emph{saturated} if $\sigma$ is contained in at least $\lfloor D^{k-\ell}\rfloor$ copies of $C_k^{(3)}$ from $\KK$. Since $D$ is fixed by the lemma, for convenience, we simply call such an $\ell$-tuple \emph{saturated}. 

Note that, since $\KK$ satisfies condition \ref{itm: supersat linear 2}, no $\ell$-tuple of hyperedges is contained in more than $D^{k-\ell}$ members of $\KK$, and thus, a saturated $\ell$-tuple is contained in exactly $\left\lfloor D^{k-\ell}\right\rfloor$ members of $\KK$. In particular, a single hyperedge is saturated if it belongs to exactly $\lfloor D^{k-1}\rfloor$ copies of $C_{k}^{(3)}$ from $\KK$ (as $\ell = 1$).

By double-counting the pairs $({e_1,\ldots,e_\ell},C^*)$ such that $C^*\in \KK$ and ${e_1,\ldots,e_\ell}\subseteq C^*$ is a saturated $\ell$-tuple, we deduce that the number of saturated $\ell$-tuples $\{e_1,\ldots, e_\ell\} \subset H_0^*$ is at most $\binom{k}{\ell}|\KK|/\left\lfloor D^{k-\ell}\right\rfloor$.
If condition \ref{itm: supersat linear 1} does not hold, then we have $|\KK|\leq \frac{D^{k-1}\cdot n^{2}}{K}$. Therefore, the number of saturated $\ell$-tuples $\{e_1,\ldots, e_\ell\}\subset H_0^{*}$ is at most
\begin{equation}
\label{eq:boundsatl-tuples}
  \binom{k}{\ell}\cdot \frac{|\KK|}{\left\lfloor D^{k-\ell}\right\rfloor}\leq \binom{k}{\ell} \frac{2\cdot D^{\ell-1}\cdot n^{2}}{K}.  
\end{equation}
Here we used that $D^{k-\ell}\geq2$ for all sufficiently large $n$,
and hence $\lfloor D^{k-\ell}\rfloor\geq D^{k-\ell}/2$.

We now bound the number of copies of $C_k^{(3)}$ in $\KK_2$
containing a fixed saturated $\ell$-tuple $\sigma$.  Write
$\sigma=\{e_1,\ldots,e_\ell\}\subseteq H_0^*$.  There are at most $k^\ell$
ways to choose the positions occupied by these hyperedges in the cyclic
order.  For every selected position, there are at most six ways to
identify the two degree-$2$ vertices and the private vertex of the
corresponding hyperedge.  The selected hyperedges contain at least
$\ell+1$ degree-$2$ vertices of the cycle.  After the positions and
identifications have been fixed, there are therefore at most
$n^{k-(\ell+1)}$ choices for the remaining degree-$2$ vertices.
Since $H_0^*$ is linear, every pair of consecutive degree-$2$ vertices
has at most one extension in $H_0^*$.  Consequently, the number of
copies in $\KK_2$ containing the fixed saturated tuple is at most
\[
 (6k)^\ell \cdot n^{k-(\ell+1)}.
\]

Let us now bound the number of copies of $C_k^{(3)}$ in $\KK_2$ that contain a saturated $\ell$-tuple for some $\ell \in [k-1]$. Recall that $D=n^{(k-2)/(k-1)}<n$ and $K=\log n$. By \eqref{eq:boundsatl-tuples}, there are at most $\binom{k}{\ell} \frac{2\cdot D^{\ell-1}\cdot n^{2}}{K}$ saturated $\ell$-tuples $\{e_1,\ldots,e_\ell\}\subseteq H_0^*$. Hence, for all sufficiently large $n$, the number of copies of $C_k^{(3)}$ in $\KK_2$ that contain at least one saturated $\ell$-tuple for some $\ell \in [k-1]$ is at most
\begin{equation}
  \label{eq:boundsaturatedltuple}
 \sum_{\ell=1}^{k-1} \left((6k)^\ell n^{k-(\ell+1)} \right)\cdot\binom{k}{\ell}\cdot \frac{2\cdot D^{\ell-1}\cdot n^2}{K}< \frac{2\cdot (6k)^{2k+1} \cdot n^{k}}{\log n}<\frac{\varepsilon' n^k}{2\cdot12^{k}\cdot M^k} \overset{\eqref{eq:claim2}}{\leq} \frac{|\KK_2|}{4}.
\end{equation}
 Note that if $\KK$ does not satisfy condition \ref{itm: supersat linear 1}, then we also have
\[
|\KK|< \frac{D^{k-1} n^{2}}{K}=\frac{n^k}{\log n}< \frac{\varepsilon' n^k}{2\cdot12^{k}\cdot M^k} \overset{\eqref{eq:claim2}}{\leq} \frac{|\mathcal{K}_2|}{4}.
\]
Thus, if condition~\ref{itm: supersat linear 1} fails, then $|\KK_2\setminus \KK|>3|\KK_2|/4$. Moreover, by \eqref{eq:boundsaturatedltuple}, at most $|\KK_2|/4$ members of $\KK_2$ contain a saturated $\ell$-tuple for some $\ell \in [k-1]$. Hence, at least $|\KK_2\setminus \KK|-|\KK_2|/4\geq |\KK_2|/2$ members of $\KK_2\setminus \KK$ contain no saturated $\ell$-tuple for any $\ell \in [k-1]$.  We may therefore add one such member of $\KK_2\setminus\KK$ to $\KK$ while preserving condition~\ref{itm: supersat linear 2}; here we use that the condition~\ref{itm: supersat linear 2} holds trivially when $\ell=k$, since any collection of $k$ distinct hyperedges is contained in at most one copy of $C_k^{(3)}$ in the resulting collection.

Repeating this process until condition~\ref{itm: supersat linear 1} holds, we obtain a collection $\KK$ satisfying both conditions~\ref{itm: supersat linear 1} and~\ref{itm: supersat linear 2}, as desired. This completes the proof of Lemma~\ref{lem: Supersat Large Linear subgraph}.
\end{proof}

\subsection{Supersaturation from a large subhypergraph with large codegrees}
\label{subsec:large codegrees}

The construction in this subsection builds on the skeleton construction
of Balogh, Narayanan and
Skokan~\cite{Balogh2019NumberCyclefree}.  For each $f\in\partial H_2$,
an auxiliary graph on $N_{H_2}(f)$ will specify which two extension
vertices of $f$ may replace one another.  We use these auxiliary
graphs first to choose the intersection vertices of the cycle and
then to replace the vertices that should not be shared by two cycle
hyperedges.

For an ordered copy of $C_k^{(r)}$ with hyperedges $e_1,\ldots,e_k$,
let $v_i$ be the unique vertex in $e_{i-1}\cap e_i$, with indices
modulo $k$, and choose a vertex
$u_i\in e_i\setminus\{v_i,v_{i+1}\}$.  The set
\[
 S\coloneqq\{v_1,\ldots,v_k,u_1,\ldots,u_k\}
\]
is the \emph{support}, and
$y_i\coloneqq\{v_i,v_{i+1},u_i\}$ is the $i$th \emph{boundary triple}.
The vertices in $e_i\setminus\{v_i,v_{i+1}\}$ belong to no other hyperedge of the cycle
and will be called the \emph{private vertices of $e_i$}.
Besides the boundary triples, for $2\leq i\leq k-1$ put
\[
 q_i\coloneqq\{v_1,v_i,v_{i+1}\}.
\]
The graph whose vertices are the $2k-2$ triples
$y_1,\ldots,y_k,q_2,\ldots,q_{k-1}$, with two triples adjacent when
they share two vertices, is a tree.  Indeed,
$q_2,q_3,\ldots,q_{k-1}$ form a path; $y_i$ is adjacent to $q_i$ for
$2\leq i\leq k-1$; and $y_1$ and $y_k$ are adjacent to $q_2$ and
$q_{k-1}$, respectively.  We call this graph the
\emph{triangulation tree}; Figure~\ref{fig:triangulation-tree} shows
the configuration and its triangulation tree when $k=6$.  Rooting the
triangulation tree at a boundary
triple means distinguishing that triple and, for every other triple,
calling its unique neighbor closer to the distinguished triple its
\emph{parent}.  A triple and its parent share two vertices.  Thus, if
the triples are processed in increasing order of their distance from
the root, each triple after the root is obtained from its parent by
replacing exactly one vertex.

\begin{lemma}\label{lem: Supersat Large codegrees}
Let $r,k\geq3$, and let $C_1$ be chosen according
to~\eqref{defi:constant hierarchy}.  If $H_2$ is an $n$-vertex
$C_1$-full $r$-graph, then there is a collection $\KK$ of copies of
$C_k^{(r)}$ in $H_2$ such that
\begin{enumerate}[label=\textnormal{(M\arabic*)},itemsep=4pt]
 \item\label{M1firstitem}
 $|\KK|\geq D^{k-1}|H_2|/K$;
 \item\label{M2seconditem}
 $\Delta_j(\KK)\leq D^{k-j}$ for every $j\in[k]$,
\end{enumerate}
where
\[
 D\coloneqq (C_1)^{r-1},\qquad
 K\coloneqq c_{r,k}C_1,
 \qquad
 c_{r,k}\coloneqq2^{(r-1)(k-1)-1}\,2k((r-2)!)^k.
\]
\end{lemma}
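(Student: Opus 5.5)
We follow the switching construction sketched in the proof overview. First, for every $f\in\partial H_2$ we use Proposition~\ref{prop:Graph with fixed degrees} to fix an auxiliary graph $\Gamma_f$ on $N_{H_2}(f)$ in which every degree lies in $\{C_1-1,C_1\}$. This is possible because $|N_{H_2}(f)|\geq C_1$ by $C_1$-fullness. We also fix an ordering of the vertices of each hyperedge.

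A \emph{construction record} consists of an initial hyperedge $e_1\in H_2$ together with a sequence of $M=(r-1)(k-1)-1$ switch choices. We write $e_1=y_1\cup R$ using the fixed ordering, with $|R|=r-3$. Rooting the triangulation tree at $y_1$, we process the triples in order of distance from the root. Each non-root triple $a$ with parent $b$ is obtained by one switch over the $(r-1)$-set $(a\cap b)\cup R$, and the new vertex must be a $\Gamma$-neighbour of the vertex being replaced. After the $2k-3$ tree switches, for each $i\in\{2,\dots,k\}$ we replace the $r-3$ vertices of $R$ in $y_i\cup R$ one at a time; each step is a switch over the current hyperedge minus the replaced vertex. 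Every new vertex is required to differ from all previously chosen vertices. Since fewer than $k(r-1)$ vertices are ever chosen, at least $C_1-1-k(r-1)\geq C_1/2$ choices remain at each step. So each $e_1$ yields at least $(C_1/2)^M$ records. Each record produces $k$ hyperedges $y_i\cup R_i$ (with $R_1=R$) that are pairwise disjoint except at the $v_i$, so they form a copy of $C_k^{(r)}$ in $H_2$.

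Next we bound multiplicities. A given cycle together with its initial hyperedge $e_1$ determines the record, up to the choice of which hyperedge is $e_1$, the cyclic orientation, and the labelling conventions. This contributes at most $2k((r-2)!)^k$ records per cycle, a crude bound that absorbs the choice of $u_i$ and the order of private vertices. Hence the number of distinct cycles is
\[
|\KK|\geq \frac{|H_2|(C_1/2)^M}{2k((r-2)!)^k}=\frac{|H_2|\,C_1^{(r-1)(k-1)-1}}{c_{r,k}}=\frac{D^{k-1}|H_2|}{K}.
\]

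The main step is bounding $\Delta_j(\KK)$ for $1\leq j\leq k-1$. Fix $j$ hyperedges. We first guess their positions in the cycle and the roles of their vertices (intersection vertices, $u_i$, ordered private vertices). The key point is that the records compatible with these roles can be counted by reversing switches: when the vertex produced by a switch over $f$ is known, the upper degree bound of $\Gamma_f$ leaves at most $C_1$ choices for the vertex it replaced. I would root the tree at a boundary triple belonging to a prescribed hyperedge rather than at $y_1$, and reroute the counting so that every vertex of the prescribed hyperedges is known. This should work because, in the same way that $e_1$ determines the root, each fully known hyperedge determines $r-1$ of the switch outcomes "for free" relative to the count of a generic record. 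The remaining $(r-1)(k-j)-1$ switches then each contribute at most $C_1$ options (forward degree at most $C_1$, or backward via the upper degree). This gives
\[
\Delta_j(\KK)\leq O_{r,k}(1)\cdot C_1^{(r-1)(k-j)-1}\leq D^{k-j}
\]
once $C_1$ is large. I expect the delicate part to be verifying that a prescribed hyperedge which is not $e_1$ really saves $r-1$ factors. For its private vertices this is immediate. For its two intersection vertices and $u_i$, one must check that the tree switches can be ordered so that these vertices are derived only from known vertices, using the reversibility of switches along the path from the root. Finally, $\Delta_k(\KK)\leq1$ holds trivially, since a set of $k$ hyperedges determines at most one copy.
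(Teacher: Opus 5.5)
Your proposal matches the paper's proof step for step: the same graphs $\Gamma_f$ with degrees in $\{C_1-1,C_1\}$, the same $2k-3$ triangulation-tree switches followed by $r-3$ replacement switches for each $i\geq 2$, the same record count divided by $2k((r-2)!)^k$, and the same reverse-switch count giving $C_1^{(r-1)(k-j)-1}$ records per prescribed $j$-set. The step you flag goes through as you expect: traversing the tree outward from a known boundary triple costs at most $C_1$ per unknown support vertex in either direction. The real subtlety, however, is $R$ rather than the support vertices of a prescribed hyperedge. Every tree switch is over a set containing $R$, so when $e_1\notin\sigma$ the paper first recovers $R$ by reversing the $r-3$ replacement switches of some prescribed $e_i$, at a cost of at most $C_1^{r-3}$ choices. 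This cost is offset because only $k-1-j$ unprescribed non-root hyperedges still need replacement switches.
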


\begin{proof}
For each $f\in\partial H_2$, apply
Proposition~\ref{prop:Graph with fixed degrees} to the vertex set
$N_{H_2}(f)$.  We thereby fix a graph $\Gamma_f$ on $N_{H_2}(f)$ in
which every degree is $C_1-1$ or $C_1$.  If $x$ and $x'$ are adjacent
in $\Gamma_f$, then both $f\cup\{x\}$ and $f\cup\{x'\}$ are
hyperedges of $H_2$.  We call the replacement of $x$ by $x'$ a
\emph{switch over $f$}.  In particular, for fixed $f$ and $x$, there
are at least $C_1-1$ choices for $x'$, while for fixed $f$ and $x'$
there are at most $C_1$ choices for $x$.

For every hyperedge $e\in H_2$, fix an ordering of its vertices.  A
\emph{construction record} consists of an initial hyperedge $e_1$ and
the sequence of new vertices chosen at the switches below, listed in
the fixed order in which the switches are performed.

To construct such a record, choose a hyperedge $e_1\in H_2$ and use
its fixed ordering to write
\[
 e_1=\{v_1,v_2,u_1\}\cup R,
 \qquad
 R\coloneqq(z_1,\ldots,z_{r-3}),
\]
where a union with $R$ means a union with the underlying set
$\{z_1,\ldots,z_{r-3}\}$.  Thus $e_1=y_1\cup R$.  We next define
$v_3,\ldots,v_k,u_2,\ldots,u_k$, and hence all the remaining triples
$y_i$ and $q_i$, by following the triangulation tree (see Figure~\ref{fig:triangulation-tree}).

Root the triangulation tree at $y_1$ and process its triples in a fixed
order of increasing distance from $y_1$.  Suppose that the parent triple
$\{a,b,x\}$ has already been processed and its child is
$\{a,b,x'\}$.  At this stage $\{a,b,x\}\cup R$ is a hyperedge of $H_2$.
Put $f\coloneqq\{a,b\}\cup R$.  Choose $x'$ adjacent to $x$ in $\Gamma_f$
and distinct from every vertex chosen earlier.  Then
$\{a,b,x'\}\cup R=f\cup\{x'\}\in H_2$.  This procedure performs
$2k-3$ switches, one for each non-root triple of the triangulation tree.

After all the non-root triples have been processed, let
\[
 S\coloneqq\{v_1,\ldots,v_k,u_1,\ldots,u_k\}
\]
be the resulting support.  At this point, $y_i\cup R$ is a hyperedge
of $H_2$ for every $i\in[k]$, but all these hyperedges contain $R$.
We next replace the vertices of $R$, separately for every
$i\in\{2,\ldots,k\}$, so that no two resulting cycle hyperedges share
a vertex of $R$.

For each $i\in\{2,\ldots,k\}$, start with
$E_{i,0}\coloneqq y_i\cup R$.  For $h=1,\ldots,r-3$, put
$f_{i,h}\coloneqq E_{i,h-1}\setminus\{z_h\}$.

Before choosing the replacement for $z_h$, let $B_{i,h}$ be the set
of vertices that are forbidden at this switch: the support vertices,
the original vertices $z_1,\ldots,z_{r-3}$, and the replacement
vertices chosen at earlier switches.
\[
 B_{i,h}\coloneqq S\cup\{z_1,\ldots,z_{r-3}\}
 \cup\{w_{i',h'}:(i',h')\text{ precedes }(i,h)
                     \text{ in lexicographic order}\}.
\]
Choose
\[
 w_{i,h}\in N_{\Gamma_{f_{i,h}}}(z_h)\setminus B_{i,h}
\]
and set
\[
 E_{i,h}\coloneqq f_{i,h}\cup\{w_{i,h}\}.
\]
Finally set
\[
 e_i\coloneqq E_{i,r-3}\quad(2\leq i\leq k).
\]
The resulting hyperedges $e_1,\ldots,e_k$ form a copy of $C_k^{(r)}$:
with indices taken modulo $k$, the intersection $e_i\cap e_{i+1}$ is
exactly $\{v_{i+1}\}$, and the avoidance condition makes every vertex
outside $\{v_1,\ldots,v_k\}$ private to one hyperedge.  Hence
nonconsecutive hyperedges are disjoint.

At each switch fewer than $k(r-1)+2k$ vertices are forbidden.  By the
choice of $C_1$, there are at least $C_1/2$ choices.  The total number
of switches is
\begin{equation}\label{eq:number-switches}
 M\coloneqq2k-3+(r-3)(k-1)=(r-1)(k-1)-1.
\end{equation}
By definition, a construction record is determined by $e_1$ and the
new vertex chosen at each of these $M$ switches.  Hence the construction
produces at least
$|H_2|\cdot (C_1/2)^M$ records.

A fixed cycle has at most $2k$ choices for its first hyperedge and
orientation.  Once these are fixed, each hyperedge has at most $r-2$
choices for $u_i$, and the remaining $r-3$ private vertices may be
ordered in at most $(r-3)!$ ways.  Thus a fixed cycle arises from at
most
\[
 B_{r,k}\coloneqq2k((r-2)!)^k
\]
records.  Let $\KK$ be the collection of all cycles produced.  Since
$M=(r-1)(k-1)-1$, we obtain
\[
 |\KK|\geq\frac{|H_2|\cdot (C_1/2)^M}{B_{r,k}}
 =\frac{D^{k-1}|H_2|}{c_{r,k}C_1}
 =\frac{D^{k-1}|H_2|}{K},
\]
which proves~\ref{M1firstitem}.

We next prove~\ref{M2seconditem}.  Fix $1\leq j\leq k-1$ and a set
$\sigma$ of $j$ hyperedges of $H_2$.  We count construction records
whose resulting cycle contains every hyperedge in $\sigma$.  There
are at most $k^j$ ways to place these hyperedges in the cyclic order and at most
$(r!)^j$ ways to identify their intersection vertices, their vertices
$u_i$, and their ordered remaining private vertices.  After these
choices are fixed, the corresponding $j$ boundary triples contain at
least $2j+1$ support vertices: the $j$ vertices $u_i$ are distinct,
and a proper set of $j$ hyperedges of a cycle has at least $j+1$ distinct
endpoints among $v_1,\ldots,v_k$.

First suppose that $e_1\in\sigma$.  Then $R$ is already known.  Starting
from any known boundary triple and traversing the triangulation tree,
each unknown support vertex has at most $C_1$ choices.  Hence the
remaining support has at most $(C_1)^{2k-2j-1}$ choices.  There are
$k-j$ unspecified non-root cycle hyperedges, and each requires $r-3$
switches.  The number of records extending the fixed identification is
therefore at most
\[
 (C_1)^{2k-2j-1+(r-3)(k-j)}
 =(C_1)^{(r-1)(k-j)-1}.
\]

Now suppose that $e_1\notin\sigma$.  Choose one specified cycle hyperedge
$e_i$.  Reversing its $r-3$ switches recovers the ordered set $R$, with
at most $(C_1)^{r-3}$ choices.  The remaining support again has at most
$(C_1)^{2k-2j-1}$ choices.  There are now $k-1-j$ unspecified non-root
hyperedges.  Thus the same upper bound results:
\[
 (C_1)^{r-3+2k-2j-1+(r-3)(k-1-j)}
 =(C_1)^{(r-1)(k-j)-1}.
\]
Consequently,
\[
 \Delta_j(\KK)
 \leq k^j(r!)^j (C_1)^{(r-1)(k-j)-1}
 \leq (C_1)^{(r-1)(k-j)}
 =D^{k-j},
\]
where the penultimate inequality follows from the choice of $C_1$ given by \eqref{defi:constant hierarchy}.
Finally, $\Delta_k(\KK)\leq1=D^0$, because a set of $k$ distinct
hyperedges determines at most one member of the collection.  This completes the
proof.
\end{proof}
 
\subsection{Supersaturation from a large shadow, general case}\label{subsec:large shadow even}

We now treat the hypergraphs satisfying
condition~\ref{itm: Even Case Shadow} of Proposition~\ref{prop:kEven}.
The proof applies to every $r\geq4$ and $k\geq3$, as well as to
$r=3$ and even $k\geq4$.

We first provide a brief sketch of the proof.
Lemma~\ref{lem:Large Shadow} gives a set $T$ and a large
$(r-1)$-graph $F\subseteq\partial H_1$ with $V(F)\cap T=\varnothing$.
It also ensures that every $f\in F$ has at least
$\lfloor(k+1)/2\rfloor$ vertices $z\in T$ for which
$f\cup\{z\}\in H_1$, and in the proof we fix exactly this many such
vertices for each $f$.  Proposition~\ref{prop:many-complete-partite}
then gives at least $\gamma n^{(r-1)t}$ copies of
$K_{t,\ldots,t}^{(r-1)}$ in $F$, where $\gamma>0$ is a constant.
From each such copy, Lemma~\ref{lem:cycle-from-shadow-standard} allows
us to choose a copy $C$ of $C_k^{(r)}$ in which every hyperedge is the
union of a hyperedge of $F$ and one vertex of $T$.

Put $s\coloneqq|V(C)\cap T|$.  Since each vertex of a linear cycle
belongs to at most two of its hyperedges, $\lceil k/2\rceil\leq s\leq
k$.  If $s=k$, deleting the unique vertex of $T$ from every hyperedge
of $C$ gives a copy of $C_k^{(r-1)}$ in $F$, and $C$ contains exactly
$(r-2)k$ vertices of $V(F)$.  If $s<k$, the resulting hyperedges of
$F$ form $k-s$ vertex-disjoint linear paths, and $C$ contains
$(r-2)k+(k-s)$ vertices of $V(F)$.  We choose the value of $s$ that
occurs most often.  The number of vertices of $V(F)$ in the chosen
cycle is $(r-2)k+(k-s)$, so the same counting argument, with $s$ left
as a parameter, applies whether or not $s=k$.  The choices of vertices
in $V(F)$ give the factor $n^{(r-2)k+(k-s)}$ in the lower bound for
the number of cycles, whereas choosing the vertices from the fixed
lists contributes only a constant factor.

\begin{lemma}\label{lem: Supersat Large shadow even}
Let $r,k\geq3$ be integers. Assume either that $r\geq4$, or that
$r=3$ and $k\geq4$ is even. Let $\varepsilon>0$, and let $C_1$ be chosen according to~\eqref{defi:constant hierarchy}. Then there exists an integer
$n_0=n_0(\varepsilon,k,r,C_1)$
such that, for every $n\geq n_0$, the following holds. If $H_1$ is an $n$-vertex $\left\lfloor\frac{k+1}{2}\right\rfloor$-full $r$-graph with
$|H_1|\geq\frac{\varepsilon}{2}\binom{n}{r-1}$
and
$|\partial H_1|\geq\frac{\varepsilon}{4C_1}\binom{n}{r-1}$,
then there exists a collection $\KK$ of copies of $C_k^{(r)}$ contained in $H_1$ satisfying:
    \vspace{4pt}
    \begin{enumerate}[label=\textnormal{(N\arabic*)},itemsep=4pt]
\item\label{itm: supersat shadow even 1} $|\KK|\geq \frac{D^{k-1} n^{r-1}}{K}$,
\item\label{itm: supersat shadow even 2} $\Delta_{\ell}(\KK)\leq D^{k-\ell}$ for $\ell\in [k]$,
    \end{enumerate}
    \vspace{4pt}
   where $D\coloneqq n^{1/(k-1)}$ and $K\coloneqq\log n$.
\end{lemma}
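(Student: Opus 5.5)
We follow the sketch preceding the statement: first build a large initial family $\KK^*$ of copies of $C_k^{(r)}$ whose hyperedges all have the form $f\cup\{z\}$ with $f\in F$ and $z\in T$, then extract a balanced subfamily by the saturated-tuple selection of Lemma~\ref{lem: Supersat Large Linear subgraph}.

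\textbf{Construction of $\KK^*$.} Since $|\partial H_1|\geq\frac{\varepsilon}{4C_1}\binom{n}{r-1}>\delta n^{r-1}$ with $\delta\coloneqq\varepsilon/(8(r-1)!C_1)$, Lemma~\ref{lem:Large Shadow} gives $T$ and $F\subseteq\partial H_1$ with $V(F)\cap T=\varnothing$ and $|F|\geq\delta n^{r-1}/2^{k+r+2}$. For each $f\in F$ we fix $\mathcal L(f)\subseteq N_{H_1}(f)\cap T$ of size $\lfloor(k+1)/2\rfloor$. With $t=(r2^k)^{4r}$, Proposition~\ref{prop:many-complete-partite} (with $q=r-1$) yields $\gamma n^{(r-1)t}$ copies $G$ of $K^{(r-1)}_{t,\dots,t}$ in $F$. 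For each such $G$, Lemma~\ref{lem:cycle-from-shadow-standard} provides a cycle $C(G)$ whose hyperedges are $f\cup\{z\}$ with $f\in E(G)$ and $z\in\mathcal L(f)$. The number $s(G)=|V(C(G))\cap T|$ lies in $\{\lceil k/2\rceil,\dots,k\}$, so some value $s$ occurs for at least $\gamma n^{(r-1)t}/k$ of the copies $G$. Such a cycle uses exactly $p\coloneqq(r-2)k+(k-s)$ vertices of $V(F)$, and it lies in at most $n^{(r-1)t-p}$ copies $G$, because $G$ is determined by its $(r-1)t$ vertices, of which $p$ are fixed. Hence the family $\KK^*$ of distinct cycles obtained this way satisfies $|\KK^*|\geq c\,n^{p}$ with $c=c(\varepsilon,r,k,C_1)>0$.

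\textbf{Codegree bound in $\KK^*$.} We fix a set $\sigma$ of $\ell\in[k-1]$ hyperedges and count the members of $\KK^*$ containing it. Up to $O_{r,k}(1)$ choices of positions and identifications, the cycle is determined by its $p$ vertices in $V(F)$ together with its vertices in $T$. The $T$-vertex of each hyperedge is one of the at most $\lfloor(k+1)/2\rfloor$ elements of the list of its $F$-part, so it contributes only a constant factor. It therefore suffices to show that $\sigma$ fixes at least $(r-2)\ell+1$ vertices of $V(F)$; then $\sigma$ lies in $O(n^{p-(r-2)\ell-1})$ members of $\KK^*$.

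\textbf{Main obstacle: proving the vertex count.} Each $F$-part of a hyperedge contributes $r-2$ vertices that are private or intersection vertices. We have to check carefully that a proper subset of the cycle always contributes at least one extra shared $V(F)$-vertex. This needs case analysis according to whether the intersection vertices lie in $T$ or in $V(F)$: when an intersection vertex lies in $T$ it is the extension vertex, and the $V(F)$-count changes. If a single extra vertex cannot always be guaranteed, I would weaken the target to $(r-2)\ell$ and check that the final exponent still works.

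\textbf{Selection step.} While $|\KK|<D^{k-1}n^{r-1}/K=n^{r+k-2}/\log n$, the number of saturated $\ell$-tuples is at most $2\binom{k}{\ell}D^{\ell-1}n^{r-1}/K$. Multiplying by the codegree bound and using $D^{\ell-1}=n^{(\ell-1)/(k-1)}\leq n^{\ell-1}$ and $(r-2)\ell+1\geq\ell$ for $r\geq3$, the total number of members of $\KK^*$ containing a saturated tuple is $O(n^{p}/\log n)\cdot n^{(r-1)-(r-2)\ell-1+(\ell-1)}$. This is $o(n^p)$ because $(r-1)-(r-2)\ell-1+(\ell-1)=(r-3)(1-\ell)-1<0$. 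Also $|\KK|\leq n^{r+k-2}/\log n=o(n^{p})$ when $p\geq r+k-2$. This inequality fails in general ($p$ may be as small as $(r-2)k$), so the comparison has to be made against the size of $\KK^*$ more carefully. This is where the exponent bookkeeping must be checked, and I expect it to be the delicate point of the proof. Once it is done, a cycle with no saturated tuple always remains, and adding such cycles one at a time gives \ref{itm: supersat shadow even 1} and \ref{itm: supersat shadow even 2}, with $\Delta_k\leq1$ trivially.
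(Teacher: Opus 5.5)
Your plan follows the paper's proof step for step: Lemma~\ref{lem:Large Shadow}, fixed lists, $\Omega(n^{(r-1)t})$ copies of $K^{(r-1)}_{t,\dots,t}$, one cycle per copy from Lemma~\ref{lem:cycle-from-shadow-standard}, pigeonholing the type $s$, $|\mathcal K^*|\geq c\,n^{p}$, a per-tuple bound, and the saturated-tuple selection. As written, however, it is not a complete proof. Both points you leave open are genuinely needed, and you misdiagnose one of them.

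\textbf{The per-tuple bound.} The bound $O(n^{p-(r-2)\ell-1})$ cannot be weakened as you suggest. With only $(r-2)\ell$ fixed vertices, the $\ell=1$ term already gives a bound of $O(n^{p+1}/\log n)$ on cycles through saturated hyperedges. That is larger than $|\mathcal K^*|=O(n^{p})$ itself, so the bound is useless. No case analysis is needed to get the extra vertex:
\begin{itemize}
\item Every hyperedge $e$ of a member of $\mathcal K^*$ has exactly one vertex in $T$, so its $V(F)$-part $e\setminus T$ has $r-1$ vertices.
\item Two such parts share at most one vertex, and only if the hyperedges are consecutive in the cycle.
\item Since $\ell<k$, the hyperedges of $\sigma$ form a union of paths in the cyclic order, with at most $\ell-1$ consecutive pairs.
\end{itemize}
Hence $\bigl|\bigcup_{e\in\sigma}(e\setminus T)\bigr|\geq\ell(r-1)-(\ell-1)=(r-2)\ell+1$. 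An intersection vertex lying in $T$ only makes two parts disjoint, which can only help.

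\textbf{The selection step.} The ``delicate point'' there is an arithmetic error. Since $D=n^{1/(k-1)}$, you have $D^{k-1}=n$, so the target in (N1) is $D^{k-1}n^{r-1}/K=n^{r}/\log n$, not $n^{r+k-2}/\log n$. For all $r,k\geq3$ we have $p\geq(r-2)k\geq r$. So while (N1) fails, $|\mathcal K|<n^r/\log n=o(|\mathcal K^*|)$, and the selection closes exactly as in the paper; this is the comparison the paper makes.

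\textbf{Two smaller slips.} First, the exponent $(r-1)-(r-2)\ell-1+(\ell-1)$ equals $(r-3)(1-\ell)$, not $(r-3)(1-\ell)-1$. It vanishes for $\ell=1$ and for $r=3$, so it is the factor $1/\log n$ that makes the bad cycles $o(n^p)$; your conclusion survives. Second, a copy of $K^{(r-1)}_{t,\dots,t}$ is not determined by its vertex set alone. The multiplicity bound therefore needs the constant factor $(r-1)^{(r-1)t}$ for the choice of parts, as in the paper.
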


\begin{proof}
Set $\delta\coloneqq\frac{\varepsilon}{4C_1\cdot r^r}$.  Since $n$ is sufficiently large, we have
$\binom{n}{r-1}>\frac{n^{r-1}}{r^r}$, and hence
\[
|\partial H_1|
\geq\frac{\varepsilon}{4C_1}\binom{n}{r-1} >
\frac{\varepsilon}{4C_1\cdot r^r}n^{r-1}
=\delta n^{r-1}.
\]
We may therefore apply Lemma~\ref{lem:Large Shadow} to $H_1$.  This
yields a set $T\subseteq V(H_1)$ and an $(r-1)$-graph
$F\subseteq\partial H_1$ such that
$V(F)\subseteq V(H_1)\setminus T$,
$|N_{H_1}(f)\cap T|\geq\left\lfloor\frac{k+1}{2}\right\rfloor$
for every $f\in F$, and
\begin{equation}\label{Sec3:lowerF}
|F|\geq
\frac{\varepsilon}{r^r\cdot 2^{k+r+4}\cdot C_1}n^{r-1}.
\end{equation}
For every $f\in F$, choose a set
$\mathcal{L}(f)\subseteq N_{H_1}(f)\cap T$
of size
$|\mathcal{L}(f)|=\left\lfloor\frac{k+1}{2}\right\rfloor$.
We use these sets as the lists associated with the members of $F$ throughout the remainder of the proof.

Let $t=(r2^k)^{4r}$ be the integer specified in
Lemma~\ref{lem:cycle-from-shadow-standard}. Set
\begin{equation}\label{defi: alpha Even case}
\alpha\coloneqq\frac{\varepsilon}{r^r\cdot 2^{k+r+4}\cdot C_1}.
\end{equation}

Let $\mathcal{T}_1$ be the collection of distinct copies of
$K_{t,\ldots,t}^{(r-1)}$ contained in $F$.  By
Proposition~\ref{prop:many-complete-partite}, there is a constant
$\gamma=\gamma(\varepsilon,r,k,C_1)>0$ such that
\begin{equation}\label{eq:number of Ktts}
|\mathcal{T}_1|\geq \gamma n^{(r-1)t}.
\end{equation}
Define $Q=Q(r,k)$ by
\begin{equation}\label{eq:multipartite-completion-factor}
 Q\coloneqq(r-1)^{(r-1)t}.
\end{equation}
For every $(r-1)$-graph $A\subseteq F$ with
$a\coloneqq|V(A)|\leq(r-1)t$, the number of members $G\in\mathcal T_1$
that contain $A$ is at most
\begin{equation}\label{eq:multipartite-completion-bound}
 Qn^{(r-1)t-a}.
\end{equation}
Indeed, there are at most $n^{(r-1)t-a}$ ways to choose the remaining
vertices of $G$.  Once these vertices have been chosen, assigning all
$(r-1)t$ vertices to the $(r-1)$ parts in their copy of
$K_{t,\ldots,t}^{(r-1)}$ can be done in at most
$(r-1)^{(r-1)t}=Q$ ways.

Let $\prec_2$ be an arbitrary linear ordering of the copies of $C_k^{(r)}$ contained in $H_1$. Applying Lemma~\ref{lem:cycle-from-shadow-standard} with
$H=H_1$ and with the list system satisfying
$\mathcal L(f)\subseteq N_{H_1}(f)\cap T$ and
$|\mathcal L(f)|=\lfloor(k+1)/2\rfloor$ for every $f\in F$, we obtain that for each $G\in\mathcal{T}_1$, there exists a copy of $C_k^{(r)}$ in $H_1$ such that every hyperedge consists of a member of $G$ together with exactly one vertex from $T$. For each $G\in\mathcal{T}_1$, we assign exactly one such copy of $C_k^{(r)}$; if there are multiple choices, we choose the first one under $\prec_2$. Let $\KK_1$ be the collection of copies of $C_k^{(r)}$ obtained from $\mathcal{T}_1$ in this way. Lemma~\ref{lem:cycle-from-shadow-standard} ensures that each hyperedge of every member of $\KK_1$ consists of exactly one vertex from $T$ and $r-1$ vertices from $V(F)\subseteq V(H_1)\setminus T$. Since any vertex of $T$ belongs to at most two hyperedges of a copy of $C_k^{(r)}$, it follows that every member of $\KK_1$ contains between $\lceil k/2\rceil$ and $k$ vertices from $T$.

For $\lceil k/2\rceil \le s \le k$, a copy of $C_k^{(r)}$ in $\KK_1$ is said to be of \emph{type}-$s$ if its vertex set contains exactly $s$ vertices from $T$. Since each copy of $C_{k}^{(r)}$ consists of $(r-1)k$ vertices, each type-$s$ cycle consists of $(r-1)k-s$ vertices from $V(F)$ and $s$ vertices from $T$. Note that a type-$k$ copy of $C_{k}^{(r)}$ consists of a cycle $C_{k}^{(r-1)}$ contained in $F$ together with $k$ vertices from $T$, where each vertex is added to a different member of the cycle. Observe that for type-$s$ members of $\KK_1$ with $s<k$, each copy of $C_{k}^{(r)}$ consists of $k-s$ disjoint $(r-1)$-paths contained in $F$, and each individual path contains at least two members of $F$. 

We say that a member of $\mathcal{T}_1$ is of \emph{type}-$s$ if the copy of $C_k^{(r)}$ in $\KK_1$ which is assigned to it is of type-$s$. Let $\mathcal{T}_2$ be the subcollection of copies in $\mathcal{T}_1$ whose type occurs most frequently among all possible types. By the pigeonhole principle, from~\eqref{eq:number of Ktts} it follows that 
\begin{equation}\label{eq: Lower bound T2}
|\mathcal{T}_2| \geq \frac{|\mathcal{T}_1|}{k}\geq \frac{\gamma}{k} n^{(r-1)t}.
\end{equation}

Let $\KK_2$ be the collection of distinct copies of $C_{k}^{(r)}$ which are assigned to members of $\mathcal{T}_2$. We now construct the desired balanced subcollection $\KK\subseteq \KK_2$ satisfying conditions~\ref{itm: supersat shadow even 1} and~\ref{itm: supersat shadow even 2}. We begin by setting $\KK\coloneqq\emptyset$ and then add cycles from $\KK_2$ to $\KK$ one at a time until condition~\ref{itm: supersat shadow even 1} is satisfied while ensuring that condition~\ref{itm: supersat shadow even 2} is preserved at every step. Note that the empty collection clearly satisfies condition~\ref{itm: supersat shadow even 2}. An unordered $\ell$-tuple of hyperedges $\{e_1,\ldots, e_\ell\}\subset H_1$ with $1\leq\ell\leq k-1$ is called \emph{saturated} if it is contained in exactly $\lfloor D^{k-\ell}\rfloor $ cycles from $\KK$. In particular, a single hyperedge is saturated if it belongs to $\lfloor D^{k-1}\rfloor$ cycles from  $\KK$. 

For any fixed $1 \leq \ell \le k-1$, we bound the number of saturated $\ell$-tuples $\sigma = \{e_1,\ldots, e_\ell\} \subseteq H_1$ as follows. We double count pairs $(\{e_1,\ldots, e_{\ell}\}, C^*)$ such that $C^*\in \KK$ and $\{e_1,\ldots, e_{\ell}\}\subset C^{*}$ is saturated. This yields that the number of saturated $\ell$-tuples $\sigma = \{e_1, \ldots, e_{\ell}\} \subseteq H_1$ is at most 
\[
 \frac{\binom{k}{\ell}|\KK|}{\lfloor D^{k-\ell}\rfloor}.
\]

Consequently, if $\KK$ fails to satisfy condition~\ref{itm: supersat shadow even 1}, then, for every $1\leq \ell\leq k-1$, the number of saturated $\ell$-tuples $\sigma=\{e_1,\ldots,e_\ell\}\subseteq H_1$ is bounded above by

\begin{equation}
\label{boundonsatltuplesinH1}
  \frac{\binom{k}{\ell}\cdot |\KK|}{\lfloor D^{k-\ell}\rfloor}\leq \frac{2\cdot \binom{k}{\ell}\cdot D^{\ell-1}\cdot n^{r-1}}{K}.   
\end{equation}

Let $s$ be the common type of the members of $\mathcal T_2$.  We
first derive a lower bound for $|\KK_2|$ by obtaining an upper bound
on how many members from $\mathcal{T}_2$ get assigned a fixed member
$\widehat{C}\in \KK_2$. Since every member of $\KK_2$ is of type-$s$,
each copy of $C_k^{(r)}$ in $\KK_2$ contains exactly $s$ vertices from
$T$ and hence $(r-2)k+(k-s)$ vertices from $V(F)$.  Let $A$ be the
$(r-1)$-graph obtained by deleting from each hyperedge of $\widehat C$
its unique vertex in $T$.  Every member of $\mathcal T_2$ assigned to
$\widehat C$ contains $A$, and $|V(A)|=(r-2)k+(k-s)$.  Therefore,
\eqref{eq:multipartite-completion-bound} shows that at most
$Qn^{(r-1)t-(r-2)k-(k-s)}$ members of $\mathcal{T}_2$ are assigned to
$\widehat C$.  This
upper bound and~\eqref{eq: Lower bound T2} give
\begin{equation}\label{ineq:lowerKstar}
\begin{split}|\KK_2|&\geq \frac{|\mathcal{T}_2|}{Qn^{(r-1)t-(r-2)k-(k-s)}}\geq\frac{\gamma}{kQ} n^{(r-2)k+(k-s)}. 
\end{split}
\end{equation}

We now bound the number of cycles in $\KK_2$ containing a fixed
saturated $\ell$-tuple
$\sigma=\{e_1,\ldots,e_\ell\}$ of hyperedges of $H_1$.  There are at
most $k^\ell$ ways to specify the positions occupied by these
hyperedges in a copy of $C_k^{(r)}$.  The hyperedges in $\sigma$
contain at least $\ell(r-2)+1$ distinct vertices of $V(F)$.  Indeed,
because $\ell<k$, the subgraph of the underlying graph cycle induced
by these $\ell$ hyperedges is a union of paths and therefore has at
most $\ell-1$ pairs of consecutive hyperedges.  The corresponding
$(r-1)$-sets in $F$ consequently have union of size at least
$\ell(r-1)-(\ell-1)=\ell(r-2)+1$.
There are at most $\binom{rk}{s}<(rk)^s$ ways to specify the positions of the vertices of $T$ within a copy of $C_k^{(r)}$ in $\KK_2$ containing $\sigma$. Since each copy of $C_k^{(r)}$ in $\KK_2$ contains exactly $(r-2)k+(k-s)$ vertices from $F$, and since $\sigma$ already spans at least $\ell(r-2)+1$ of these vertices, there are at most $n^{(r-2)k+(k-s)-(\ell(r-2)+1)}=n^{(r-2)(k-\ell)+(k-s)-1}$ ways to choose the remaining vertices from $F$ lying in a copy of $C_k^{(r)}$ in $\KK_2$ that contains $\sigma$. Moreover, once the vertices of the copy of $C_k^{(r)}$ belonging to $F$ are fixed, using the assumption that $|\mathcal{L}(f)|=\left\lfloor\frac{k+1}{2}\right\rfloor$ for every $f\in F$, there are at most $\left\lfloor\frac{k+1}{2}\right\rfloor^{s}<k^s$ choices for the vertices from $T$ that complete them to a copy of $C_k^{(r)}$ in $\KK_2$. Thus, for any $\ell \in [k-1]$, the number of members of $\KK_2$ containing any fixed saturated $\ell$-tuple $\sigma$ is at most

\begin{equation}\label{eq:shadow-even-fixed-tuple}
(rk)!k^{\ell}\cdot(rk)^{s}\cdot k^s\cdot n^{(r-2)(k-\ell)+(k-s)-1}
<
 (rk)!(rk)^{3k}\cdot n^{(r-2)(k-\ell)+(k-s)-1}.
\end{equation}
Here the factor $(rk)!$ is an upper bound for the number of ways to
assign the vertices already present in $\sigma$ to vertex positions
in a copy of $C_k^{(r)}$.  Combining~\eqref{eq:shadow-even-fixed-tuple} with
\eqref{boundonsatltuplesinH1}, we see that the total number of members
of $\KK_2$ containing a saturated $\ell$-tuple for some
$\ell\in[k-1]$ is at most
\begin{equation}\label{boundsatcopies}
\begin{aligned}
&\sum_{\ell=1}^{k-1}(rk)!(rk)^{3k}
 n^{(r-2)(k-\ell)+(k-s)-1} \cdot
 \frac{2\binom{k}{\ell}D^{\ell-1}n^{r-1}}{K}\\
&\quad\leq
 \frac{2(rk)!(rk)^{3k}n^{(r-2)k+(k-s)}}{K}
 \left(k+k^{k+1}D^{k-2}n^{-1}\right) <\frac{|\KK_2|}{2}.
\end{aligned}
\end{equation}
For the first inequality, factor out
$2(rk)!(rk)^{3k}n^{(r-2)k+(k-s)}/K$.  The term with $\ell=1$ then
equals $k$.  For $2\leq\ell\leq k-1$, we use
$n^{-(r-2)(\ell-1)}\leq n^{-1}$,
$D^{\ell-1}\leq D^{k-2}$, and
$\sum_{\ell=2}^{k-1}\binom{k}{\ell}\leq k^{k+1}$.
The last inequality follows from~\eqref{ineq:lowerKstar}, since $\gamma$
depends only on $\varepsilon$, $k$, $r$, and $C_1$, while
$D=n^{1/(k-1)}$, $K=\log n$, and
$D^{k-2}n^{-1}=n^{-1/(k-1)}=o(1)$.

If $\KK$ does not satisfy condition~\ref{itm: supersat shadow even 1},
then, since $(r-2)k\geq r$ in the ranges of the lemma, we also have
\[
|\KK|\leq \frac{D^{k-1}n^{r-1}}{K}
=\frac{n^r}{\log n}
\leq \frac{n^{(r-2)k}}{\log n}
\overset{\eqref{ineq:lowerKstar}}{<}\frac{|\KK_2|}{4},
\]
for sufficiently large $n$. The inequalities $|\KK|<|\KK_2|/4$
and~\eqref{boundsatcopies} imply that at least
$|\KK_2\setminus \KK|-|\KK_2|/2\geq |\KK_2|/4$ members of
$\KK_2\setminus \KK$ contain no saturated $\ell$-tuple for
$\ell\in[k-1]$. We may therefore add such a member of
$\KK_2\setminus\KK$ to $\KK$ while preserving
condition~\ref{itm: supersat shadow even 2}. This condition holds
trivially when $\ell=k$, since any collection of $k$ distinct
hyperedges is contained in at most one copy of $C_k^{(r)}$ in the
resulting collection. Repeating this process until
condition~\ref{itm: supersat shadow even 1} holds, we obtain a
collection $\KK$ satisfying conditions~\ref{itm: supersat shadow even 1}
and~\ref{itm: supersat shadow even 2}. This proves
Lemma~\ref{lem: Supersat Large shadow even}.
\end{proof}

\subsection{Supersaturation from large shadow, odd case} \label{subsec:large shadow}

To conclude this section, we obtain a supersaturation result for
cycles $C_k^{(3)}$ with odd length $k\geq5$ in an ambient
$3$-graph $H$ containing a full subhypergraph $H_1\subseteq H$ as in
condition~\ref{itm: Odd Case Shadow} of
Proposition~\ref{prop:kOdd}. Among the supersaturation results established in this paper, the one obtained in this subsection yields the smallest balanced collection with respect to the parameter $D$.

The two cases in the proof correspond to the two alternatives in
Lemma~\ref{lem:a copy of Ck from Ktt}.  Under
alternative~\ref{itm: cycle from shadow 1}, each copy of $K_{t,t}$
supplies a cycle whose hyperedges are obtained using the fixed lists.  Under
alternative~\ref{itm: cycle from shadow 2}, each copy of $K_{t,t}$
supplies $\Theta(C_2)$ cycles, each obtained by replacing one
hyperedge of a copy of $C_{k-1}^{(3)}$ by two hyperedges.  These two
collections have different multiplicities and require different
estimates for cycles containing a prescribed tuple of hyperedges, so
we treat them separately.

\begin{lemma}\label{lem: Supersat Large shadow odd}
Let $k\geq 5$ be an odd integer and let $0<\varepsilon<1$. Let $C_0$, $C_1$, and $C_2$ be chosen according to~\eqref{defi:constant hierarchy}. There exists an integer
$n_0=n_0(\varepsilon,k,C_0,C_1,C_2)$
such that the following holds for every $n\geq n_0$. Let $H$ be an $n$-vertex $3$-graph satisfying
$
|H|\leq C_0\binom{n}{2},
$
and let $H_1\subseteq H$ be a $\left\lfloor\frac{k+1}{2}\right\rfloor$-full $3$-graph such that
$
|H_1|\geq\frac{\varepsilon}{4}\binom{n}{2}
\quad\text{and}\quad
|\partial H_1|\geq\frac{\varepsilon}{8C_1}\binom{n}{2}.
$
Suppose further that, for every $xyz\in H_1$,
$
\max\{d_H(xy),d_H(yz),d_H(xz)\}\geq C_2.
$
Then there exists a collection $\KK$ of copies of $C_k^{(3)}$ contained in $H$ satisfying:

            \vspace{4pt}
    \begin{enumerate}[label=$\mathrm{(O\arabic*)}$,itemsep=4pt]    
        \item\label{itm: supersat shadow odd 1} $|\KK|\geq \frac{D^{k-1} n^{2}}{K}$,
        \item\label{itm: supersat shadow odd 2} $\Delta_{\ell}(\KK)\leq D^{k-\ell}$ for $\ell\in [k]$,
    \end{enumerate}
    \vspace{4pt}
    where $D \coloneqq C_2$ and $K \coloneqq (C_1)^{2^{100k}}$.
\end{lemma}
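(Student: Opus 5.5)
I will follow the template of Lemma~\ref{lem: Supersat Large shadow even}, now with the two alternatives of Lemma~\ref{lem:a copy of Ck from Ktt}. The steps are as follows.

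\textbf{Setup.} Put $\delta\coloneqq\varepsilon/(40C_1)$, so that $|\partial H_1|>\delta n^2$. The hierarchy~\eqref{defi:constant hierarchy} gives $C_2>\max\{6C_0/\delta,6k\}$. Apply Lemma~\ref{lem:Large Shadow odd} to $(H,H_1)$ to obtain $T$ and $F$ with $|F|\geq\delta n^2/2^{k+5}$. For each $xy\in F$, choose a list $\mathcal L(xy)\subseteq N_{H_1}(xy)\cap T$ of size $(k+1)/2$ that contains the vertex $\alpha$ supplied by~\ref{itm: lemma Large shadow odd iv}. With $t=(3\cdot2^k)^{12}$, Proposition~\ref{prop:many-complete-partite} gives $\gamma n^{2t}$ copies of $K_{t,t}$ in $F$, where $\gamma$ depends on $\varepsilon,k,C_1$ but not on $C_2$. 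This independence matters: $C_2$ is chosen after $C_1$, so $\gamma$ can be compared with powers of $C_2$ only through $K$, and $K=(C_1)^{2^{100k}}$ is large enough to absorb $1/\gamma$ together with all $k$-dependent factors.

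\textbf{Splitting by alternative.} To each copy of $K_{t,t}$ assign, via a fixed ordering, either one cycle as in~\ref{itm: cycle from shadow 1} or the family of $\lceil C_2/12\rceil$ cycles as in~\ref{itm: cycle from shadow 2}. One of the two alternatives holds for at least half of the copies.

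\emph{Alternative~\ref{itm: cycle from shadow 1}.} Group the cycles by type $s=|V(C)\cap T|$ and argue exactly as in~\eqref{ineq:lowerKstar}. Each cycle has $2k-s$ vertices in $V(F)$, so we obtain $\Omega_\gamma(n^{2k-s})\geq\Omega_\gamma(n^{k})$ distinct cycles. A saturated $\ell$-tuple lies in $O_k(n^{2k-s-\ell-1})$ of them. The greedy selection with target $D^{k-1}n^2/K=C_2^{k-1}n^2/K$, which is far below $n^k$, goes through easily with the bound~\eqref{boundonsatltuplesinH1}; here the slack is a power of $n$.

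\emph{Alternative~\ref{itm: cycle from shadow 2}.} This is the main case. First, count the underlying copies of $C_{k-1}^{(3)}$, which are formed from the two paths and the two hyperedges $w_1w_{k-1}z_{k-1}$ and $w_{j}\ldots$. Since they are built from graph cycles of length $k-1$ in $F$ plus list vertices, the multiplicity argument gives $\Omega_\gamma(n^{k-1})$ of them. Each underlying copy then spawns $\lceil C_2/12\rceil$ cycles $C_k^{(3)}$ by replacing $xy\alpha$ with $y\alpha\zeta$ and $x\alpha\zeta'$. The resulting total is about $C_2n^{k-1}$, which exceeds the target $C_2^{k-1}n^2/K$ since $k-1\geq4>2$. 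Again the slack is polynomial in $n$, though the balance at small $\ell$ is governed by $C_2$.

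Next, bound the number of constructed cycles through a fixed saturated $\ell$-tuple $\sigma$, according to how many of the two replacement hyperedges $\sigma$ contains.
\begin{itemize}
\item If $\sigma$ contains both replacement hyperedges, then $x,y,\alpha$ are determined, and the remaining shadow cycle vertices give $O_k(n^{k-1-\ell})$ choices.
\item If $\sigma$ contains only $x\alpha\zeta'$, then by~\eqref{degree S2 auxiliary} the vertex $y$ is one of at most $C_2$ neighbours of $\zeta'$ in $G^{(2)}_{\{x\alpha\}}$. If $\sigma$ contains only $y\alpha\zeta$, then $x$ is one of at most $2$ neighbours of $\zeta$, and $\zeta'$ has $O(C_2)$ choices. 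Either way this gives $O_k(C_2n^{\max\{k-2-\ell,0\}})$.
\item If $\sigma$ contains neither, then $\zeta$ is determined by $(x,y,\alpha)$, and $\zeta'$ has $O(C_2)$ choices, giving $O_k(C_2n^{k-2-\ell})$.
\end{itemize}

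Multiply these bounds by the number of saturated $\ell$-tuples, at most $2\binom{k}{\ell}C_2^{\ell-1}n^2/K$. For $\ell\leq k-2$ the result is at most $O_k(C_2^{\ell}n^{k-\ell}/K)\leq O_k(C_2^{k-2}n^{k-1}\cdot n^{-1}\ldots)$, which is far below $C_2n^{k-1}$. For $\ell=k-1$, the case with the $\max\{\cdot,0\}$ exponent, the bound is $O_k(C_2\cdot C_2^{k-2}n^2/K)$, which is also $o(n^{k-1})$. So the selection never stalls before $|\KK|\geq C_2^{k-1}n^2/K$, and $\Delta_k\leq1$ holds trivially.

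\textbf{Main obstacle.} I expect the hardest step to be the multiplicity and lower-bound count for alternative~\ref{itm: cycle from shadow 2}. One must show that distinct copies of $K_{t,t}$ contribute $\Omega(n^{k-1})$ genuinely distinct underlying configurations, each recoverable from $O_k(1)$ choices, despite the vertices $\zeta$ and $\zeta'$ being chosen through the auxiliary graphs. I would handle this by noting that the $k-1$ shadow vertices $w_1,\ldots,w_{k-1}$ lie in $V(F)$ and determine the $K_{t,t}$ up to $n^{2t-(k-1)}$ completions, as in~\eqref{eq:multipartite-completion-bound}. The list vertices and $\alpha$ have $O_k(1)$ choices, and $\zeta$ is a function of $(x,y,\alpha)$. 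This yields the needed $\Omega_\gamma(n^{k-1})\cdot\lceil C_2/12\rceil$ distinct cycles.
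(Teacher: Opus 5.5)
Your plan follows the paper's proof step for step. It uses the same application of Lemma~\ref{lem:Large Shadow odd} with lists containing $\alpha$, the same split between alternatives~\ref{itm: cycle from shadow 1} and~\ref{itm: cycle from shadow 2}, the same type-$s$ grouping, and the same three multiplicity bounds according to how many replacement hyperedges $\sigma$ contains.

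\textbf{Where the gap is.} The problem is in your final accounting, and it sits exactly where the lemma is delicate: there is no polynomial slack at small $\ell$.
\begin{itemize}
\item In alternative~\ref{itm: cycle from shadow 2}, a saturated single hyperedge of the ``one'' or ``neither'' type lies in $O_k(C_2n^{k-3})$ constructed cycles. There can be $2kn^2/K$ saturated hyperedges, so the loss is $O_k(C_2n^{k-1}/K)$.
\item For $\ell=2$ and tuples containing both replacement hyperedges, the loss is $O_k(n^{k-3})\cdot O_k(C_2n^2/K)=O_k(C_2n^{k-1}/K)$ as well. Your summary bound $O_k(C_2^{\ell}n^{k-\ell}/K)$ does not cover this type.
\item Meanwhile $|\KK_1|\geq c\,C_2n^{k-1}$, where $c=c(\varepsilon,k,C_1)$ comes from the $K_{t,t}$ count. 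Both the power of $n$ and the power of $C_2$ cancel, so your claim that these terms are far below $C_2n^{k-1}$ is false at $\ell=1$.
\item The same happens in alternative~\ref{itm: cycle from shadow 1}. There the $\ell=1$ loss is $O_k(n^{2k-s}/K)$ against $|\KK_1|\geq c\,n^{2k-s}$, so your ``slack is a power of $n$'' is also wrong.
\end{itemize}
The greedy selection therefore goes through only if $Kc$ exceeds a suitable constant depending on $k$. This is the reason $K$ is taken to be $(C_1)^{2^{100k}}$. By contrast, the step you flagged as the main obstacle is a routine double count. Each $K_{t,t}$ supplies $\lceil C_2/12\rceil$ cycles, and the $k-1$ vertices of a cycle in $V(F)$ lie in at most $2^{2t}n^{2t-(k-1)}$ copies of $K_{t,t}$.

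\textbf{What is needed to close it.} You do remark that $K$ must absorb $1/\gamma$, but with Proposition~\ref{prop:many-complete-partite} that assertion is unsupported. Its constant $\gamma(2,t,\alpha)$ comes from Erd\H{o}s's theorem with no explicit dependence on $\alpha$. Here $\alpha$ is of order $\varepsilon/(2^kC_1)$, while $K$ is a fixed function of $C_1$. To compare them you must know that $1/\gamma$ grows at most like $C_1^{m}$ for some explicit $m<2^{100k}$.

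The paper gets this from the quantitative Proposition~\ref{prop: Many Ktt's}, which gives at least $\frac12(2\alpha/e)^{t^2}n^{2t}/t^{2t}$ copies of $K_{t,t}$. Hence $1/c\leq 2^{O(kt^2)}(C_1/\varepsilon)^{t^2}$ with $t^2<2^{40k}$, and this is exactly the comparison verified in~\eqref{ineq: lower bound K Odd case}.

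With Proposition~\ref{prop: Many Ktt's} in place of Proposition~\ref{prop:many-complete-partite}, and with the small-$\ell$ terms compared to $|\KK_1|$ through the factor $Kc$ rather than through a power of $n$, your argument becomes the paper's.
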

\begin{proof}

Set $\delta\coloneqq\frac{\varepsilon}{2^{10}C_1}$. By~\eqref{defi:constant hierarchy}, we may assume that
$C_2>\max\{6C_0/\delta,6k\}$.
Moreover, since
\[
|\partial H_1|\geq\frac{\varepsilon}{8C_1}\binom{n}{2}>\delta n^2
\]
for $n$ sufficiently large, we may apply
Lemma~\ref{lem:Large Shadow odd} to the pair $(H,H_1)$.  We obtain a
set $T\subseteq V(H)$ and a graph $F\subseteq\partial H_1$
satisfying conditions~\ref{itm: lemma Large shadow odd i}--
\ref{itm: lemma Large shadow odd iv}. In particular,
\[
|F|\geq \frac{\varepsilon}{2^{k+15}\cdot C_1}n^{2}.
\]
For every $xy\in F$, choose a vertex
$\alpha_{xy}\in N_{H_1}(xy)\cap T$ for which the three inequalities
in~\ref{itm: lemma Large shadow odd iv} hold, and then choose a set
$\mathcal{L}(xy)\subseteq N_{H_1}(xy)\cap T$
of size
$|\mathcal{L}(xy)|=\left\lfloor\frac{k+1}{2}\right\rfloor$
that contains $\alpha_{xy}$. We use this fixed list system throughout the remainder of the proof.
Let 
\begin{equation}\label{eq: defi t odd case}
t\coloneqq3^{12}\cdot 2^{12k}<2^{20k},
\end{equation} be as defined in Lemma~\ref{lem:a copy of Ck from Ktt} and let \begin{equation}\label{defi alpha odd case}
\alpha\coloneqq \frac{\varepsilon}{C_1\cdot 2^{k+15}}.
\end{equation}
Let $\mathcal{T}_1$ be the collection of distinct copies of $K_{t,t}$ contained in $F$. Applying Proposition~\ref{prop: Many Ktt's} to $F$, we obtain  
\begin{equation}
\label{eq:number of Ktts2}
|\mathcal{T}_1|\geq  \frac{1}{2}\left(\frac{2\alpha}{e}\right)^{t^2} \frac{n^{2t}}{t^{2t}} = \frac{2^{t^2}\cdot \alpha^{t^{2}}}{2\cdot e^{t^2}\cdot t^{2t}}\cdot n^{2t}.
\end{equation}
Let $\prec_2$ be an arbitrary linear order on the copies of
$C_{k}^{(3)}$ contained in $H$.  Apply
Lemma~\ref{lem:a copy of Ck from Ktt} with $H$ and $H_1$ playing the
roles of $H$ and $H'$, respectively, and with the fixed list system
$\mathcal L$.  For every $G\in\mathcal T_1$, either alternative
\ref{itm: cycle from shadow 1} holds, or alternative
\ref{itm: cycle from shadow 2} holds and supplies at least
$\lceil C_2/12\rceil$ copies of $C_k^{(3)}$.

Let $\mathcal T^{(1)}$ be the members of $\mathcal T_1$ for which
alternative~\ref{itm: cycle from shadow 1} holds.  If
$|\mathcal T^{(1)}|\geq|\mathcal T_1|/2$, put
$\mathcal T_2\coloneqq\mathcal T^{(1)}$; this is
Case~\ref{lem: supersat odd case 1}.  Otherwise, put
$\mathcal T_2\coloneqq\mathcal T_1\setminus\mathcal T^{(1)}$; every member of
this collection satisfies alternative~\ref{itm: cycle from shadow 2},
and $|\mathcal T_2|>|\mathcal T_1|/2$.  This is
Case~\ref{lem: supersat odd case 2}.
If Case~\ref{lem: supersat odd case 1} holds, then, as in the proof of Lemma~\ref{lem: Supersat Large shadow even}, we say that a copy of $C_k^{(3)}$ contained in $H$ is of \emph{type}-$s$ if it contains exactly $s$ vertices from $T$. For each $G\in\mathcal{T}_2$, which is a copy of $K_{t,t}$ contained in $F$, we associate the first copy of $C_k^{(3)}$ satisfying condition~\ref{itm: cycle from shadow 1} with respect to the ordering $\prec_2$; note that the assumption of Case~\ref{lem: supersat odd case 1}, together with Lemma~\ref{lem:a copy of Ck from Ktt}, guarantees that at least one such cycle exists. We say that a member of $\mathcal{T}_2$ is of type-$s$ if the copy of $C_k^{(3)}$ associated with it is of type-$s$. Let $\mathcal{T}_3$ be the collection of copies of $K_{t,t}$ of the most common type in $\mathcal{T}_2$ and let $\KK_1$ be the collection of copies of $C_{k}^{(3)}$ contained in $H$ associated to members of $\mathcal{T}_3$.

If Case~\ref{lem: supersat odd case 2} holds, then for each member of
$\mathcal{T}_2$, we associate with it the first
$\lceil C_2/12\rceil$ copies of $C_k^{(3)}$ satisfying
condition~\ref{itm: cycle from shadow 2} with respect to the ordering
$\prec_2$. Let $\KK_1$ be the collection of copies of $C_{k}^{(3)}$
contained in $H$ associated to members of $\mathcal{T}_2$. Thus, each
member of $\mathcal{T}_2$ is associated with exactly
$\lceil C_2/12\rceil$ members of $\KK_1$.

We now construct the desired balanced subcollection $\KK\subseteq \KK_1$ satisfying conditions~\ref{itm: supersat shadow odd 1} and~\ref{itm: supersat shadow odd 2}. We begin by setting $\KK\coloneqq\emptyset$ and then add members of $\KK_1$ to $\KK$ one at a time until condition~\ref{itm: supersat shadow odd 1} is satisfied while ensuring that condition~\ref{itm: supersat shadow odd 2} is preserved at every step. Note that the empty collection clearly satisfies condition~\ref{itm: supersat shadow odd 2}.
An unordered $\ell$-tuple of distinct hyperedges $\{e_1,\ldots, e_\ell\}\subseteq H$ with $1\leq\ell\leq k-1$ is called \emph{saturated} if it is contained in exactly $\lfloor D^{k-\ell}\rfloor$ cycles from $\KK$. For any fixed $1\leq\ell\leq k-1$, we bound the number of saturated $\ell$-tuples $\sigma=\{e_1,\ldots,e_\ell\}\subseteq H$ by double counting pairs $(\{e_1,\ldots,e_\ell\},C^*)$ such that $C^*\in\KK$ and $\{e_1,\ldots,e_\ell\}\subseteq C^*$ is saturated. This yields that, for any $\ell\in[k-1]$, the number of saturated $\ell$-tuples $\sigma=\{e_1,\ldots,e_\ell\}\subseteq H$ is at most $\frac{\binom{k}{\ell}|\KK|}{\lfloor D^{k-\ell}\rfloor}$. 

Therefore, if $\KK$ does not satisfy condition \ref{itm: supersat shadow odd 1}, then we obtain that, for any $\ell \in [k-1]$, the number of saturated $\ell$-tuples $\sigma=\{e_1,\ldots,e_\ell\}\subseteq H$ is at most 
\begin{equation}
\label{boundonsatltuplesinH1Odd}
  \frac{\binom{k}{\ell}\cdot |\KK|}{\lfloor D^{k-\ell}\rfloor}\leq \frac{2\cdot \binom{k}{\ell}\cdot D^{\ell-1}\cdot n^{2}}{K}\leq \frac{2\cdot k^\ell\cdot D^{\ell-1}\cdot n^2}{K}.   
\end{equation}

We now treat Case~\ref{lem: supersat odd case 1}, in which
$|\mathcal T^{(1)}|\geq|\mathcal T_1|/2$, and
Case~\ref{lem: supersat odd case 2}, in which
$|\mathcal T^{(1)}|<|\mathcal T_1|/2$.

\vspace{0.08in}
\noindent \textbf{Case }\customlabel{lem: supersat odd case 1}{\textbf{1}}\textbf{:}
\textit{$|\mathcal T^{(1)}|\geq|\mathcal T_1|/2$.}\vspace{0.08in}

We use the same type parameter as in
Lemma~\ref{lem: Supersat Large shadow even}, and give the required
multiplicity bounds explicitly.  Recall that a copy of
$C_{k}^{(3)}$ from $\KK_1$ is of type-$s$ if it contains exactly $s$
vertices from $T$ and a copy of $K_{t,t}$ from $\mathcal{T}_3$ is of
type-$s$ if its associated cycle is type-$s$. Recall also that
$\mathcal{T}_3$ is the subcollection of the most common type in
$\mathcal{T}_2$ and $\KK_1$ is the collection of copies of
$C_{k}^{(3)}$ contained in $H$ associated to members of
$\mathcal{T}_3$. Put $Q_3\coloneqq2^{2t}$. Once the vertices of a completion
to $K_{t,t}$ have been chosen, there are at most $Q_3$ possible
bipartitions.  A type-$s$ cycle contains $k+(k-s)$ vertices of $F$,
so it is associated with at most
$Q_3n^{2t-k-(k-s)}$ members of $\mathcal T_3$.  Since
$|\mathcal T_3|\geq|\mathcal T_2|/k$ and
$|\mathcal T_2|\geq|\mathcal T_1|/2$, double counting gives
\begin{equation}\label{lower K2 odd case case 1}
\begin{split}
|\KK_1|&\geq \frac{|\mathcal{T}_3|}{Q_3n^{2t-k-(k-s)}}
\geq \frac{|\mathcal{T}_2|}{kQ_3 n^{2t-k-(k-s)}}
\geq \frac{|\mathcal{T}_1|}{2kQ_3 n^{2t-k-(k-s)}} \\
&\overset{\eqref{eq:number of Ktts2}}{\geq}
 \frac{2^{t^2}\cdot\alpha^{t^{2}}}
 {4kQ_3\cdot e^{t^{2}}\cdot t^{2t}}
  n^{k+(k-s)}.
\end{split}
\end{equation}
Recall that a saturated $\ell$-tuple is a tuple $\sigma$ of $\ell$
hyperedges of $H$ contained in exactly
$\left\lfloor D^{k-\ell}\right\rfloor$ members of $\KK$.  Fix such a
tuple.  There are at most $k^\ell$ choices for the positions of its
hyperedges and at most $6^\ell$ ways to assign the vertices of those hyperedges
to their roles in the cycle.  The two-vertex parts in $F$ of the
specified hyperedges contain at least $\ell+1$ vertices.  A type-$s$ cycle
contains $k+(k-s)$ vertices of $F$, so its remaining vertices in $F$
have at most $n^{(k-\ell)+(k-s)-1}$ choices.  After these vertices are
fixed, the list system supplies fewer than $k^s$ choices for the
distinct vertices in $T$.  Consequently, for every
$\ell\in[k-1]$, the number of members of $\KK_1$ containing $\sigma$
is at most
\begin{equation}\label{eq:shadow-odd-fixed-tuple-case1}
 (3k)^{3k}n^{(k-\ell)+(k-s)-1}.
\end{equation}
Combining~\eqref{eq:shadow-odd-fixed-tuple-case1} with
\eqref{boundonsatltuplesinH1Odd}, we obtain that the number of members
of $\KK_1$ containing a saturated $\ell$-tuple for some
$\ell\in[k-1]$ is at most
\begin{equation}\label{boundsatcopies3}
\begin{split}
\sum_{\ell=1}^{k-1}\left( (3k)^{3k} n^{(k-\ell)+(k-s)-1} \frac{2 \binom{k}{\ell}\cdot D^{\ell-1}\cdot n^{2}}{K}\right)&\leq \frac{2 (3k)^{3k} n^{k+(k-s)}}{K}\left(k+ k^{k+1} D^{k-2} n^{-1}\right)\\
&\overset{\eqref{lower K2 odd case case 1}}{<} \frac{\lvert\KK_1\rvert}{2},
\end{split}
\end{equation}
where the last inequality follows from~\eqref{defi:constant hierarchy}, since $n$ is sufficiently large and $K=(C_1)^{2^{100k}}$ is sufficiently large in terms of $\varepsilon$ and $k$.

If $\KK$ does not satisfy condition \ref{itm: supersat shadow odd 1}, then using that $k\geq 5$, we obtain
\[
|\KK|\leq \frac{D^{k-1}n^{2}}{K} = \frac{(C_2)^{k-1}\cdot n^{2}}{(C_1)^{2^{100k}}}\overset{\eqref{lower K2 odd case case 1} }{<}\frac{|\KK_1|}{4},
\]
for $n$ large enough. The inequalities $|\KK|<|\KK_1|/4$
and~\eqref{boundsatcopies3} show that, if $\KK$ does not satisfy
condition~\ref{itm: supersat shadow odd 1}, then at least
$|\KK_1\setminus \KK|-|\KK_1|/2\geq |\KK_1|/4$ members of
$\KK_1\setminus \KK$ contain no saturated $\ell$-tuple for any
$\ell\in[k-1]$. We may therefore add such a member of
$\KK_1\setminus\KK$ to $\KK$ while preserving
condition~\ref{itm: supersat shadow odd 2}. This condition holds
trivially when $\ell=k$ because any collection of $k$ distinct
hyperedges is contained in at most one copy of $C_k^{(3)}$ in the
resulting collection $\KK$. Repeating this process until
condition~\ref{itm: supersat shadow odd 1} holds gives a collection
$\KK$ satisfying conditions~\ref{itm: supersat shadow odd 1}
and~\ref{itm: supersat shadow odd 2}.

\vspace{0.08in}
\noindent \textbf{Case }\customlabel{lem: supersat odd case 2}{\textbf{2}}\textbf{:}
\textit{$|\mathcal T^{(1)}|<|\mathcal T_1|/2$.}\vspace{0.08in}

Recall that $\mathcal{T}_2$ is the collection of copies of $K_{t,t}$ from $\mathcal{T}_1$ covered by this case, and $\KK_1$ is the collection of copies of $C_{k}^{(3)}$ obtained by applying Lemma~\ref{lem:a copy of Ck from Ktt} to members of $\mathcal{T}_2$. First, we obtain a lower bound for $|\KK_1|$. 

For each member of $\KK_1$ with hyperedges $e_1,\ldots,e_k$ as described in \ref{itm: cycle from shadow 2}, note that the two hyperedges $e_{k-1}$ and $e_{k}$ each contain exactly two vertices from $T$ and one vertex from $F$. The other $k-2$ hyperedges $e_1,\ldots, e_{k-2}$ each contain exactly one vertex from $T$ and two vertices from $F$. Moreover, $\bigcup_{i=1}^{k}e_i$ consists of $k+1$ distinct vertices from $T$ and $k-1$ distinct vertices from $F$. 

Every copy of $C_k^{(3)}$ in $\KK_1$ contains exactly $k-1$ vertices
from $F$. Moreover, any fixed set of $k-1$ vertices from $F$ is
contained in at most $Q_3n^{2t-(k-1)}$ copies of $K_{t,t}$ in
$\mathcal T_2$, while each member of $\mathcal T_2$ is assigned at
least $C_2/12$ copies of $C_k^{(3)}$ from $\KK_1$. Consequently,
\begin{equation}\label{ineq:lowerKstarOdd}
\begin{split}
|\KK_1|&\geq \frac{C_2\cdot |\mathcal{T}_2|}{12Q_3n^{2t-(k-1)}}\geq \frac{C_2\cdot |\mathcal{T}_1|}{24Q_3n^{2t-(k-1)}} \overset{\eqref{eq:number of Ktts2}}{\geq} \frac{ C_2\cdot 2^{t^2}\cdot \alpha^{t^2} \cdot n^{2t}}{48Q_3\cdot e^{t^2}\cdot t^{2t}\cdot n^{2t-(k-1)}}\\
&\overset{\eqref{defi alpha odd case}}{\geq}\frac{ C_2\cdot 2^{t^2}\cdot \varepsilon^{t^2}}{48Q_3\cdot e^{t^2}\cdot t^{2t}\cdot  (C_1)^{t^2}\cdot 2^{(k+15)t^2}} n^{k-1}\overset{\eqref{eq: defi t odd case}}{\geq} \frac{C_2\cdot \varepsilon^{t^2}}{2^{(k+21)t^2}\cdot (C_1)^{t^2}}n^{k-1}.
\end{split}
\end{equation}

By~\eqref{defi:constant hierarchy}, $C_1$ is sufficiently large in terms of $\varepsilon$ and $k$. Since $t<2^{20k}$, it follows that
\begin{equation}\label{ineq: lower bound K Odd case}
K=(C_1)^{2^{100k}}> 12\cdot \frac{(2k)^{2k}\cdot 2^{(k+21)2^{40k}}\cdot (C_1)^{2^{40k}}}{\varepsilon^{2^{40k}}}>12\cdot \frac{(2k)^{2k}\cdot 2^{(k+21)t^{2}}\cdot (C_1)^{t^2}}{\varepsilon^{t^2}}.
\end{equation}

We now proceed to construct the desired balanced subcollection $\KK \subseteq \KK_1$ satisfying conditions \ref{itm: supersat shadow odd 1} and \ref{itm: supersat shadow odd 2} as in the proof of Lemma~\ref{lem: Supersat Large shadow even}. Let $\KK=\emptyset$ and iteratively add copies of $C_k^{(3)}$ from $\KK_1$ to $\KK$, one at a time, ensuring condition \ref{itm: supersat shadow odd 2} holds at each step, until $\KK$ satisfies condition \ref{itm: supersat shadow odd 1}. 

Recall that for any fixed $\ell \in [k-1]$, a saturated $\ell$-tuple $\sigma$ is an unordered $\ell$-tuple of distinct hyperedges $\{e_1,\ldots,e_\ell\}\subseteq H$ that is contained in exactly $\lfloor D^{k-\ell}\rfloor$ copies of $C_k^{(3)}$ from $\mathcal{K}$. We now bound the total number of copies of $C_k^{(3)}$ in $\mathcal{K}_1$ that contain a saturated $\ell$-tuple for some $\ell \in [k-1]$. To do this, we classify the saturated tuples $\sigma$ into three types based on the number of hyperedges in $\sigma$ that contain exactly two vertices from $T$, and bound the contribution of each type separately.

We first fix a saturated tuple $\sigma$ such that exactly two of its
hyperedges contain exactly two vertices from $T$, and bound the number
of members of $\KK_1$ containing $\sigma$.

For any copy of $C_k^{(3)}$ in $\mathcal K_1$, whose hyperedges are labeled as $e_1,\ldots,e_k$ according to \ref{itm: cycle from shadow 2}, the only hyperedges containing two vertices of $T$ are those corresponding to the labels $e_{k-1}=w_1z_{k-1}z_k$ and $e_k=w_{k-1}z_{k-1}z_{k+1}^{(m)}$ for some $m\in [\lceil C_2/12\rceil]$. Hence, if such a copy contains a saturated tuple $\sigma$, then the two hyperedges of $\sigma$ containing two vertices of $T$ must occupy the positions $e_{k-1}$ and $e_k$ in this labeling. Thus, there are at most two ways to assign these two hyperedges of $\sigma$ to the positions $e_{k-1}$ and $e_k$. This assignment determines the vertices of $\sigma$ corresponding to $w_1,w_{k-1},z_{k-1},z_k$ and $z_{k+1}^{(m)}$. Once these vertices are fixed, there are at most $(k-2)^{\ell-2}<k^\ell$ ways to choose the positions of the remaining members of $\sigma$ in the copy. After these $\ell-2$ positions are specified, there are at most $2^{\ell-2}$ ways to assign the vertices of the remaining hyperedges of $\sigma$ to the corresponding labels $w_i,w_{i+1}$ and $z_i$.

Now, at most $k-3-(\ell-2)=k-1-\ell$ vertices corresponding to the labels $w_2,\ldots, w_{k-2}$ have not been specified, and these can be chosen in at most $n^{k-1-\ell}$ ways. After choosing them, the vertices $w_1,\ldots,w_{k-1}$ together with $z_{k-1},z_k$, and $z_{k+1}^{(m)}$ have been specified. The remaining vertices corresponding to the labels $z_1,\ldots,z_{k-2}$ that do not belong to $\sigma$ can then be chosen in at most $\left\lfloor\frac{k+1}{2}\right\rfloor^{k-2-(\ell-2)}<k^{k-\ell}$ ways. This uniquely determines the vertices corresponding to the labels $w_1,\ldots,w_{k-1},z_1,\ldots,z_{k},z_{k+1}^{(m)}$, and thus, a copy of $C_{k}^{(3)}$ from $\KK_1$ that contains $\sigma$. It follows that the total number of copies of $C_{k}^{(3)}$ in $\KK_1$ containing $\sigma$ is at most 
\[
2\cdot  k^{\ell}\cdot 2^{\ell-2}\cdot n^{k-1-\ell}\cdot k^{k-\ell}\leq  (2k)^{k}\cdot n^{k-1-\ell}.
\]
Necessarily $\ell\geq2$.  From~\eqref{boundonsatltuplesinH1Odd}, it
follows that the number of copies of $C_k^{(3)}$ from $\KK_1$ that
contain a saturated tuple of this form is at most
\begin{equation}\label{subcase 1}
\begin{split}
\sum_{\ell=2}^{k-1} \left( (2k)^{k}\cdot n^{k-1-\ell}\right)\cdot \left(\frac{2\cdot k^\ell\cdot D^{\ell-1}\cdot n^2} {K}\right)
&\leq \frac{(2k)^{2k}\cdot D\cdot n^{k-1}}{K}+\frac{n^{k-1}}{\log n}\\
&\overset{\eqref{ineq:lowerKstarOdd}, \eqref{ineq: lower bound K Odd case}}{\leq} \frac{|\KK_1|}{6},
\end{split}
\end{equation}
where in the first inequality we use that $D=C_2$ and $K=(C_1)^{2^{100k}}$ are constants that depend only on $k$ and $\varepsilon$. In both inequalities we further used that $n$ is sufficiently large relative to $k$, $\varepsilon$, $C_1$, and $C_2$.

We next fix a saturated tuple $\sigma$ such that exactly one of its
hyperedges contains two vertices from $T$.  For any copy of
$C_k^{(3)}$ in $\mathcal K_1$, whose hyperedges are labelled as
$e_1,\ldots,e_k$ according to~\ref{itm: cycle from shadow 2}, the only
hyperedges containing two vertices of $T$ are
$e_{k-1}=w_1z_{k-1}z_k$ and
$e_k=w_{k-1}z_{k-1}z_{k+1}^{(m)}$ for some
$m\in[\lceil C_2/12\rceil]$.  Hence the unique hyperedge of $\sigma$
containing two vertices from $T$ must occupy one of these two
positions.  There are at most two choices for its position and, once
the position has been specified, at most two ways to assign the labels
to the vertices of this hyperedge.

Suppose first that the occupied position is $e_k=w_{k-1}z_{k-1}z_{k+1}^{(m)}$. By condition \ref{itm: cycle from shadow 2}, we have $w_1\in N_{G^{(2)}_{\{w_{k-1}z_{k-1}\}}}(z_{k+1}^{(m)})$, and \eqref{degree S2 auxiliary} implies that the degree of $z_{k+1}^{(m)}$ in the auxiliary graph $G^{(2)}_{\{w_{k-1}z_{k-1}\}}$ is at most $C_2$. Therefore, once the labels $w_{k-1}$, $z_{k-1}$ and $z_{k+1}^{(m)}$ have been specified, there are at most $C_2$ choices for the vertex $w_1$. By \eqref{degree S1 auxiliary} and  \ref{itm: cycle from shadow 2}, the vertex $z_k$ is then uniquely determined as the only neighbor of $w_{k-1}$ in $G^{(1)}_{\{w_{1}z_{k-1}\}}$.

Now suppose that the occupied position is $e_{k-1}=w_1 z_{k-1} z_{k}$. By condition \ref{itm: cycle from shadow 2}, we have $w_{k-1}\in N_{G^{(1)}_{\{w_{1}z_{k-1}\}}}(z_k)$, and \eqref{degree S2 auxiliary} implies that the degree of $z_k$ in the auxiliary graph $G^{(1)}_{\{w_1z_{k-1}\}}$ is at most $2$. Hence, once $w_1$, $z_{k-1}$ and $z_k$ have been specified, there are at most $2$ choices for $w_{k-1}$. Furthermore, since we have $z_{k+1}^{(m)}\in N_{G^{(2)}_{\{w_{k-1} z_{k-1}\}}}(w_1)$  by \ref{itm: cycle from shadow 2}
and $w_1$ has degree $\lceil C_2/4\rceil<C_2$ in $G^{(2)}_{\{w_{k-1}z_{k-1}\}}$ by \eqref{degree S1 auxiliary}, there are at most $C_2$ choices for $z_{k+1}^{(m)}$.

After the vertices corresponding to the labels
$w_1,w_{k-1},z_{k-1},z_k$ and $z_{k+1}^{(m)}$ have been specified,
there are at most $(k-2)^{\ell-1}<k^{\ell-1}$ ways to choose the
positions of the remaining $\ell-1$ hyperedges of $\sigma$ among
$e_1,\ldots,e_{k-2}$. Once these positions are fixed, there are at
most $2^{\ell-1}$ ways to assign their vertices to the corresponding
labels $w_i,w_{i+1}$ and $z_i$.  At most
$\max\{k-2-\ell,0\}$ vertices among $w_2,\ldots,w_{k-2}$ then remain
unspecified, so they can be chosen in at most
$n^{\max\{k-2-\ell,0\}}$ ways. The only remaining unspecified
vertices are the $k-1-\ell$ labels among $z_1,\ldots,z_{k-2}$ that do
not appear in $\sigma$. They have fewer than $k^{k-\ell}$ possible
values in total. These choices determine the copy of $C_k^{(3)}$ in
$\KK_1$ containing $\sigma$. Therefore, the total number of such
copies is at most
\[
2\cdot 2\cdot (2C_2)\cdot k^{\ell-1}\cdot 2^{\ell-1}\cdot
n^{\max\{k-2-\ell,0\}}\cdot k^{k-\ell}
\leq 8\cdot C_2\cdot (2k)^k
n^{\max\{k-2-\ell,0\}}.
\]

From~\eqref{boundonsatltuplesinH1Odd}, it follows that the number of
copies of $C_k^{(3)}$ from $\KK_1$ that contain at least one saturated
tuple of this form is at most
\begin{equation}\label{subcase 2}
\begin{split}
\sum_{\ell=1}^{k-1}\left(8\cdot C_2\cdot (2k)^{k}\cdot n^{\max\{k-2-\ell,0\}}\right)\left(\frac{2\cdot k^\ell\cdot D^{\ell-1}\cdot n^2} {K}\right)&\leq \frac{(2k)^{2k}\cdot C_2\cdot n^{k-1}}{K}+\frac{n^{k-1}}{\log n}\\
&\overset{\eqref{ineq:lowerKstarOdd},\eqref{ineq: lower bound K Odd case}}{\leq} \frac{|\KK_1|}{6}.
\end{split}
\end{equation}

Finally, fix a saturated tuple $\sigma$ such that none of its
hyperedges contains two vertices from $T$, and bound the number of
members of $\KK_1$ containing $\sigma$.

Necessarily $\ell\leq k-2$, because every member of $\KK_1$ has
only $k-2$ hyperedges containing exactly one vertex from $T$.

For any copy of $C_k^{(3)}$ in $\mathcal K_1$, whose hyperedges are labeled as $e_1,\ldots,e_k$ according to \ref{itm: cycle from shadow 2}, no hyperedge of $\sigma$ can occupy the positions corresponding to $e_{k-1}=w_1z_{k-1}z_k$ or $e_k=w_{k-1}z_{k-1}z_{k+1}^{(m)}$, since every hyperedge of $\sigma$ contains exactly one vertex from $T$. Hence, every hyperedge of $\sigma$ must occupy one of the positions corresponding to $e_1,\ldots,e_{k-2}$. There are therefore at most $(k-2)^\ell<k^\ell$ ways to choose these positions. Once they are specified, there are at most $2^\ell$ ways to assign the vertices of the hyperedges of $\sigma$ to the corresponding labels $w_i,w_{i+1}$ and $z_i$. This assignment specifies at least $\ell+1$ vertices corresponding to the labels $w_1,\ldots,w_{k-1}$. Thus, at most $k-1-(\ell+1)=k-2-\ell$ vertices corresponding to the labels $w_1,\ldots,w_{k-1}$ remain unspecified, and they can be chosen in at most $n^{k-2-\ell}$ ways. The vertices corresponding to the labels $z_1,\ldots,z_{k-1}$ that do not belong to $\sigma$ can then be chosen in at most $\left\lfloor\frac{k+1}{2}\right\rfloor^{k-1-\ell}<k^{k-1-\ell}$ ways.

Once the vertices corresponding to $w_1,w_{k-1}$ and $z_{k-1}$ have been specified, the vertex $z_{k+1}^{(m)}$ must be a neighbor of $w_1$ in the auxiliary graph $G^{(2)}_{\{w_{k-1}z_{k-1}\}}$. Hence, by \eqref{degree S1 auxiliary}, there are at most $\lceil C_2/4\rceil\le C_2$ choices for $z_{k+1}^{(m)}$. By \eqref{degree S1 auxiliary}, the vertex $z_k$ is then uniquely determined as the only neighbor of $w_{k-1}$ in $G^{(1)}_{\{w_1 z_{k-1}\}}$. These choices uniquely determine the vertices corresponding to the labels $w_1,\ldots,w_{k-1},z_1,\ldots,z_k,z_{k+1}^{(m)}$, and hence the copy of $C_k^{(3)}$ in $\mathcal K_1$ containing $\sigma$. Therefore, the total number of such copies is at most
\[
k^\ell\cdot 2^\ell\cdot n^{k-2-\ell}\cdot k^{k-1-\ell}\cdot C_2
\leq C_2\cdot (2k)^k\cdot n^{k-2-\ell}.
\]

From~\eqref{boundonsatltuplesinH1Odd}, it follows that the number of
copies of $C_k^{(3)}$ from $\KK_1$ that contain at least one saturated
tuple of this form is at most
\begin{equation}\label{subcase 3}
\begin{split}
\sum_{\ell=1}^{k-2}\left(C_2\cdot (2k)^{k}\cdot n^{k-2-\ell}\right)\left(\frac{2\cdot k^\ell\cdot D^{\ell-1}\cdot n^2} {K}\right)
&\leq \frac{(2k)^{2k}\cdot C_2\cdot n^{k-1}}{K}+\frac{n^{k-1}}{\log n}\\
&\overset{\eqref{ineq:lowerKstarOdd},\eqref{ineq: lower bound K Odd case}}{\leq} \frac{|\KK_1|}{6}.
\end{split}
\end{equation}
Every member of $\KK_1$ has exactly two hyperedges containing two
vertices from $T$.  Therefore, the three possibilities considered
above account for every member of $\KK_1$ containing a saturated
$\ell$-tuple for some $\ell\in[k-1]$.

From \eqref{subcase 1}, \eqref{subcase 2} and \eqref{subcase 3}, it follows that the number of members from $\KK_1$ containing at least one saturated $\ell$-tuple for some $\ell \in [k-1]$ is at most $|\KK_1|/2$. If $\KK$ does not satisfy condition \ref{itm: supersat shadow odd 1}, then, since $k\geq 5$, we also have
\[
|\KK|\leq \frac{D^{k-1}\cdot n^{2}}{K} \le \frac{n^{k-1}}{\log n}\overset{\eqref{ineq:lowerKstarOdd} }{<}\frac{|\KK_1|}{4},
\]
for sufficiently large $n$. It follows that at least $|\KK_1\setminus \KK|-|\KK_1|/2\geq |\KK_1|/4$ members of $\KK_1\setminus \KK$ contain no saturated $\ell$-tuple for any $\ell \in [k-1]$. We may therefore add such a member of $\KK_1\setminus \KK$ to $\KK$ while preserving the condition~\ref{itm: supersat shadow odd 2}, noting that this condition trivially holds when $\ell=k$. Repeating this process until condition~\ref{itm: supersat shadow odd 1} holds, we obtain a collection $\KK$ satisfying both conditions~\ref{itm: supersat shadow odd 1} and~\ref{itm: supersat shadow odd 2}. This completes Case~\ref{lem: supersat odd case 2} and the proof of Lemma~\ref{lem: Supersat Large shadow odd}.

\end{proof}

\section{Proof of the main container theorem}\label{sec:EnumerationProof}

To prove Theorem~\ref{thm:MainContainer}, we start with the collection
of containers supplied by Proposition~\ref{prop:Dense regime}.
Whenever a container has at least
$\bigl(\lfloor(k-1)/2\rfloor+\varepsilon/2\bigr)\binom{n}{r-1}$
hyperedges, we apply Proposition~\ref{prop:kOdd} or
Proposition~\ref{prop:kEven}, then we apply the corresponding
supersaturation lemma and Corollary~\ref{cor:containers-balanced} to
replace it by containers
with fewer hyperedges.  We continue until every container has fewer
hyperedges than
$\bigl(\lfloor(k-1)/2\rfloor+\varepsilon/2\bigr)\binom{n}{r-1}$.

\begin{proof}[Proof of Theorem~\ref{thm:MainContainer}]
It is enough to prove the theorem for $0<\varepsilon<1$: for a larger
value of $\varepsilon$, apply the result with
$\min\{\varepsilon,1/2\}$ in its place.  We therefore assume
$0<\varepsilon<1$.
Put
\[
 N\coloneqq\binom{n}{r-1}.
\]
Proposition~\ref{prop:Dense regime} gives a collection
$\mathcal C_0$ such that every $C_k^{(r)}$-free $r$-graph on $[n]$ is
contained in a member of $\mathcal C_0$,
\begin{equation}\label{eq:initial-general-containers}
 |\mathcal C_0|\leq2^{\varepsilon N/4}
 \quad \text{ and }\quad
 |H|\leq C_0N\quad\text{for every }H\in\mathcal C_0.
\end{equation}

Starting from $\mathcal C_0$, we define a refinement process.  At any
stage, a member of the collection under consideration is called a
\emph{current container}.

Suppose that a current container $H$ has at least
$\bigl(\lfloor(k-1)/2\rfloor+\varepsilon/2\bigr)N$ hyperedges.
If $r=3$ and $k\geq5$ is odd, apply
Proposition~\ref{prop:kOdd}.  According to which of
conditions~\ref{itm: Odd Case linear},
\ref{itm: Odd Case Codegree}, and~\ref{itm: Odd Case Shadow} holds,
use Lemma~\ref{lem: Supersat Large Linear subgraph},
Lemma~\ref{lem: Supersat Large codegrees}, or
Lemma~\ref{lem: Supersat Large shadow odd}, respectively.  If either
$r\geq4$, or $r=3$ and $k\geq4$ is even, apply
Proposition~\ref{prop:kEven} and then use
Lemma~\ref{lem: Supersat Large codegrees} or
Lemma~\ref{lem: Supersat Large shadow even}.  Fix an arbitrary priority
order when more than one condition holds.  In every case we obtain a
collection $\KK$ and parameters $D,K,s$ satisfying
\begin{equation}\label{eq:general-balanced-menu}
 |\KK|\geq\frac{D^{k-1}s}{K},
 \qquad
 \Delta_\ell(\KK)\leq D^{k-\ell}\quad(\ell\in[k]),
 \qquad
 s\geq\frac{\varepsilon}{8}N.
\end{equation}
More explicitly, the possible parameter pairs are
\begin{equation}\label{eq:parameter-menu}
\begin{array}{c|l|c|c}
\text{condition}&\text{source of }\KK&D&K\\ \hline
\ref{itm: Odd Case linear}&\text{large sparse subhypergraph}
 &n^{(k-2)/(k-1)}&\log n\\
\ref{itm: Odd Case Codegree},\ \ref{itm: Even Case Codegrees}
 &\text{large-codegree subhypergraph}&(C_1)^{r-1}&c_{r,k}C_1\\
\ref{itm: Even Case Shadow}&\text{general large shadow}
 &n^{1/(k-1)}&\log n\\
\ref{itm: Odd Case Shadow}&\text{odd $3$-uniform large shadow}
 &C_2&(C_1)^{2^{100k}}.
\end{array}
\end{equation}
Only the rows applicable to the fixed pair $(r,k)$ are used.  In the
large-codegree row, take $s=|H_2|$.  Condition~\ref{itm: Odd Case Codegree}
gives $|H_2|\geq\varepsilon N/8$, while
condition~\ref{itm: Even Case Codegrees} gives the stronger bound
$|H_2|\geq\varepsilon N/4$.  In each of the other rows, the
corresponding supersaturation lemma uses either $s=n^2$ or
$s=n^{r-1}$, and hence $s\geq N$.  Since $0<\varepsilon<1$, the last
bound also implies $s\geq\varepsilon N/8$.  This proves the final
inequality in~\eqref{eq:general-balanced-menu} in every case.

For sufficiently large $n$, the hypotheses of
Corollary~\ref{cor:containers-balanced} hold.  Replace $H$ by the
collection $\mathcal C(H)$ supplied by
Corollary~\ref{cor:containers-balanced}.  Thus every
$C_k^{(r)}$-free subhypergraph of $H$ remains covered, every child of $H$
has at most
\begin{equation}\label{eq:refinement-branching}
 2^{h(2(k-1)/D)|H|}
 \leq2^{C_0h(2(k-1)/D)N}
\end{equation}
possible choices, and every child has at most
\begin{equation}\label{eq:refinement-decrease}
 |H|-\frac{\delta s}{2K}
 \leq |H|-\frac{\delta\varepsilon N}{16K}
\end{equation}
hyperedges, where $\delta=2^{-k(k+1)}$.  Repeat this operation until every
container has fewer than
$\bigl(\lfloor(k-1)/2\rfloor+\varepsilon/2\bigr)N$ hyperedges.

It remains to bound the number of terminal containers.  Fix
$H_0\in\mathcal C_0$ and represent the refinements starting from
$H_0$ by a rooted tree.  Its root is $H_0$, its vertices are all
containers obtained from $H_0$, and the children of a container are
the containers produced by one application of
Corollary~\ref{cor:containers-balanced}.  Its leaves are precisely the
terminal containers obtained from $H_0$.  A root-to-leaf chain is the
sequence of containers obtained by starting with $H_0$ and choosing
one child at each refinement until a leaf is reached.
For the fixed pair $(r,k)$, let $\mathcal R$ consist of the three
conditions~\ref{itm: Odd Case linear},
\ref{itm: Odd Case Codegree}, and~\ref{itm: Odd Case Shadow} when
$r=3$ and $k\geq5$ is odd, and of the two
conditions~\ref{itm: Even Case Codegrees} and
\ref{itm: Even Case Shadow} otherwise.  For $\tau\in\mathcal R$, let
$D_\tau,K_\tau$ be the parameters in the row of the table
in~\eqref{eq:parameter-menu} containing $\tau$.

By~\eqref{eq:initial-general-containers}, the container at the root
has at most $C_0N$ hyperedges.  Each refinement using condition
$\tau$ removes at least $\delta\varepsilon N/(16K_\tau)$ hyperedges
by~\eqref{eq:refinement-decrease}.  Consequently, if a root-to-leaf
chain contains $m_\tau$ such refinements, then
\[
 m_\tau\frac{\delta\varepsilon N}{16K_\tau}\leq C_0N,
\]
and hence $m_\tau\leq16C_0K_\tau/(\delta\varepsilon)$.
Thus every root-to-leaf chain contains at most
\begin{equation}\label{eq:number-refinements-row}
 L_\tau\coloneqq\left\lceil
 \frac{16C_0K_\tau}{\delta\varepsilon}
 \right\rceil
\end{equation}
refinements using condition $\tau$, regardless of the order in which
the conditions occur.

Set $L\coloneqq\sum_{\tau\in\mathcal R}L_\tau$.  By recording the
condition used at each refinement along a root-to-leaf chain, we
obtain a sequence of members of $\mathcal R$ of length at most $L$.
The number of such sequences, including the empty sequence, is at
most
\[
 \sum_{j=0}^L|\mathcal R|^j\leq(|\mathcal R|+1)^L.
\]
For each fixed sequence of conditions,
\eqref{eq:refinement-branching} gives at most
\[
\prod_{\tau\in\mathcal R}
 2^{C_0L_\tau h(2(k-1)/D_\tau)N}
\]
possible sequences of chosen children. Hence, for each fixed $H_0\in\mathcal C_0$, the number of terminal
containers produced from $H_0$ is at most
\begin{equation}\label{eq:terminal-containers-one-initial}
 (|\mathcal R|+1)^L\cdot 
 2^{C_0N\sum_{\tau\in\mathcal R}
 L_\tau h(2(k-1)/D_\tau)}.
\end{equation}

We next bound the base-two logarithm of the expression in
\eqref{eq:terminal-containers-one-initial}.
These contributions are
\[
 L\log_2(|\mathcal R|+1)
 \quad\text{and}\quad
 C_0N\sum_{\tau\in\mathcal R}
 L_\tau h\left(\frac{2(k-1)}{D_\tau}\right).
\]
Since $h(x)=O(x\log(1/x))$ as $x\to0$, the first and third rows
of~\eqref{eq:parameter-menu} satisfy, respectively,
\[
 K_\tau h\left(\frac{2(k-1)}{D_\tau}\right)
 =O_k\left(\frac{(\log n)^2}{n^{(k-2)/(k-1)}}\right)
\]
and
\[
 K_\tau h\left(\frac{2(k-1)}{D_\tau}\right)
 =O_k\left(\frac{(\log n)^2}{n^{1/(k-1)}}\right).
\]
Both quantities tend to zero as $n\to\infty$.  For the second row,
\[
 K_\tau h\left(\frac{2(k-1)}{D_\tau}\right)
 =O_{r,k}\left(\frac{\log C_1}{(C_1)^{r-2}}\right),
\]
which can be made arbitrarily small by choosing $C_1$.  With $C_1$
fixed, the final row satisfies
\[
 K_\tau h\left(\frac{2(k-1)}{D_\tau}\right)
 =O_k\left(\frac{(C_1)^{2^{100k}}\log C_2}{C_2}\right),
\]
which can be made arbitrarily small by choosing $C_2$.

Since $K_\tau\geq1$,~\eqref{eq:number-refinements-row} gives
$L_\tau\leq(16C_0/(\delta\varepsilon)+1)K_\tau$.  The set
$\mathcal R$ is finite, and the coefficient
$16C_0/(\delta\varepsilon)+1$ is independent of $C_1,C_2,n$.
We may therefore choose
$C_1$, then $C_2$, and finally $n$ so that
\begin{equation}\label{eq:all-container-costs}
 C_0\left(\frac{16C_0}{\delta\varepsilon}+1\right)
 \sum_{\tau\in\mathcal R}K_\tau
 h\left(\frac{2(k-1)}{D_\tau}\right)
 \leq\frac\varepsilon8.
\end{equation}
Moreover, $L=O_{\varepsilon,r,k,C_1,C_2}(\log n)$, so, after increasing
$n$ if necessary,
$L\log_2(|\mathcal R|+1)\leq\varepsilon N/8$.
The two terms in the base-two logarithm of
\eqref{eq:terminal-containers-one-initial} are therefore at most
$\varepsilon N/8$ each, so that this logarithm is at most
$\varepsilon N/4$.

Together with~\eqref{eq:initial-general-containers}, this shows that
the collection $\mathcal C$ of terminal containers satisfies
$|\mathcal C|\leq2^{\varepsilon N/2}$.  The covering property in
condition~\ref{itm: Main container 1} is
preserved at every refinement, while the stopping rule gives
$|H|<\bigl(\lfloor(k-1)/2\rfloor+\varepsilon/2\bigr)N$ for every
$H\in\mathcal C$, which is condition~\ref{itm: Main container 2}.
The bound on $|\mathcal C|$ is condition~\ref{itm: Main container 3}.
\end{proof}

\section{Enumeration of 3-graphs without a linear triangle}
\label{sec:C33-enumeration}

Recall that $T=C_3^{(3)}$.  Thus $T$ has six distinct vertices
$a,b,c,x,y,z$ and hyperedges
\[
 \{a,b,x\},\qquad \{b,c,y\},\qquad \{c,a,z\}.
\]
Equivalently, $T$ consists of three hyperedges whose pairwise
intersections are three distinct singletons.
Subsection~\ref{subsec:C33-tools} contains all external results used
in this section.  We now prove the remaining case of
the enumeration problem.

\begin{theorem}\label{thm:C33-enumeration}
As $n\to\infty$,
\[
 \mathrm{forb}_3(n,T)
 =2^{(1+o(1))\mathrm{ex}_3(n,T)}.
\]
\end{theorem}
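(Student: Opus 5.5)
The plan follows the outline in the proof overview. The lower bound is immediate: by Theorem~\ref{thm:C33-FF-extremal} the full star is $T$-free with $\binom{n-1}{2}=\mathrm{ex}_3(n,T)$ hyperedges, and every subhypergraph of it is $T$-free. It therefore remains to show $\mathrm{forb}_3(n,T)\le 2^{\binom n2+o(n^2)}$.

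\emph{Step 1: the low-codegree part.} Let $\mathcal L_n$ be the family of linear $T$-free $3$-graphs on $[n]$. By Theorem~\ref{thm:C33-RS}, each member has at most $\eta(n)n^2$ hyperedges, so Proposition~\ref{prop:Entropy} gives $|\mathcal L_n|=2^{o(n^2)}$. Choose $q=q(n)\to\infty$ with $q\log_2|\mathcal L_n|=o(n^2)$. Fix a $T$-free $H$ and put $P=\{xy:d_H(xy)\ge q\}$. Let $H_0$ consist of the hyperedges of $H$ containing no pair of $P$. The graph on $H_0$ joining two hyperedges that share a pair has maximum degree below $3q$, so a greedy colouring splits $H_0$ into at most $3q$ linear $T$-free $3$-graphs. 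This gives $2^{o(n^2)}$ choices for $H_0$. Double counting yields $q|P|\le 3|H|\le 3\binom{n-1}2$, so $|P|=o(n^2)$, and there are $2^{o(n^2)}$ choices for $P$.

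\emph{Step 2: structure of the high-codegree part.} For each $z$, let $F_z=\{xy\in P: xyz\in H\}$ and let $V_z$ be the set of vertices it covers. I would prove two structural facts directly from $T$-freeness, using that every pair of $P$ has at least $q\gg 1$ extensions, so many fresh third vertices are available.
\begin{enumerate}
\item $P$ is triangle-free and $F_z=P[V_z]$. If $xy,x'y'\in F_z$ and $xx'\in P$, then $xx'w$, $xyz$, $x'y'z$ with a fresh $w$ should give a linear triangle unless the pairs coincide appropriately.
\item Define the digraph $D$ by $x\to z$ iff $x\in V_z$. For $xy\in P$, we then have $xyz\in H$ iff $x\to z$ and $y\to z$. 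So $H_0$, $P$ and $D$ determine $H$.
\end{enumerate}
Next, delete from $D$ all arcs on pairs in $P$ to obtain $D_0$. A second lemma states that every directed triangle of $D_0$ has all three pairs as digons. The idea is that a directed triangle $x\to y\to z\to x$ with one single arc should produce, via witnesses $x'$ with $xx'\in P$, three hyperedges forming $T$. I expect this case analysis to be the main obstacle: one must choose the $P$-neighbours carefully, using the large codegree to avoid collisions, and handle the possibility that some witness coincides with a triangle vertex.

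\emph{Step 3: counting $D_0$.} Let $\Gamma_n$ be the family of digraphs on $[n]$ in which every directed triangle consists of digons. Encode each pair's state (none, $x\to y$, $y\to x$, digon) as four colours. This family is order-hereditary, so Theorem~\ref{thm:C33-multicolour-counting} reduces the count to bounding the entropy of a valid template $\Phi$. Build $K$ as in the overview:
\begin{enumerate}
\item include all loose arcs;
\item on pairs where $J(\Phi)$ has both arcs, include the non-loose arc when exactly one arc is loose, and the arc from the smaller to the larger endpoint when neither is.
\end{enumerate}
We then check that $\sum\log_2|\Phi_{xy}|\le |A(K)|$ by inspecting the possible palettes (a palette of size $\ge 3$ forces both arcs to be loose, or one loose arc plus the digon). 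We also check that $K$ has no directed triangle: otherwise we could pick a realization in which the triangle's pairs are single arcs, or mix in an ordering argument that contradicts validity. Theorem~\ref{thm:C33-Brown-Harary} then gives $|A(K)|\le n^2/2$, so $\mathrm{Ent}_4(\Phi)\le \frac12\cdot\frac{n^2}{2}\cdot$ (in base $4$), i.e.\ $\pi\le 1/2$, and hence $|\Gamma_n|\le 2^{\binom n2+o(n^2)}$.

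Finally, $D$ is determined by $D_0$ together with the arcs on the pairs of $P$, which gives at most $4^{|P|}=2^{o(n^2)}$ choices. Multiplying the counts for $H_0$, $P$, $D_0$ and these arcs gives the required upper bound.
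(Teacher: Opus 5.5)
Your outline follows the paper's proof step for step: the threshold $q$, the split into $H_0$ and $P$, the digraph $D$ with $x\to z$ iff $x\in V_z$, the cleaned digraph $D_0$, the auxiliary digraph $K$ built from loose arcs, Brown--Harary, and the multicolour counting theorem. However, your justification of $|\mathcal L_n|=2^{o(n^2)}$ fails, and all of Step~1 depends on it. Bounding $|\mathcal L_n|$ by the number of $3$-graphs on $[n]$ with at most $\eta(n)n^2$ hyperedges means applying Proposition~\ref{prop:Entropy} on the ground set $\binom{[n]}3$. That only gives
\[
 \sum_{i\leq\eta(n)n^2}\binom{\binom n3}{i}=2^{\Theta(\eta(n)n^2\log(n/\eta(n)))},
\]
which is $2^{o(n^2)}$ only if $\eta(n)\log n\to0$. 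Theorem~\ref{thm:C33-RS} provides only $\eta(n)\to0$, with no rate. The missing observation is that a linear $T$-free $J$ is determined by its $2$-shadow. Suppose $xy,yz,zx\in\partial J$ but $xyz\notin J$. Then the three hyperedges supplying these pairs are distinct, and by linearity they meet pairwise exactly in $y$, $z$, $x$; that is, they form $T$. Hence the hyperedges of $J$ are exactly the triangles of $\partial J$. Since $|\partial J|=3|J|\leq3\eta(n)n^2$, Proposition~\ref{prop:Entropy} on the ground set $\binom{[n]}2$ gives at most $2^{h(O(\eta(n)))\binom n2}=2^{o(n^2)}$ possibilities.

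The lemma you single out as the main obstacle closes quickly, and no careful choice of witnesses is needed. Suppose $x\to y\to z\to x$ in $D$, none of $xy,yz,zx$ lies in $P$, and $y\notin V_x$. Take any $a,b,c$ with $xa\in F_y$, $yb\in F_z$, $zc\in F_x$, so that $xay,ybz,zcx\in H$. Because the triangle's pairs are not in $P$, we have $a\neq z$, $b\neq x$, $c\neq y$. Also $a\neq b$, since $a=b$ would give $ya\in P$ with $xya\in H$, i.e.\ $y\in V_x$. If $b\neq c$ and $c\neq a$, the three hyperedges form $T$. If $b=c$, every $w\in N_H(yb)\setminus\{x,a,z\}$ makes $ybw,xay,zbx$ a copy of $T$, so $d_H(yb)\leq3<q$. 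If $c=a$, the same holds for $za$, using $zaw,xay,ybz$.

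Similarly, your sketch that $K$ has no directed triangle is cleanest when built on one fact: $J(\Phi)$ has no directed triangle through a loose arc, since you can realise the single-arc state on that pair. This excludes $K$-triangles that use a loose arc, and those that use a pair of $B$ with one loose arc (rotate to a $J$-triangle through that loose arc). On the remaining pairs $K$ is oriented from smaller to larger endpoint, so no directed triangle can occur.
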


Let us first outline the structure of the proof.
The construction in Section~\ref{sec:concluding-remarks} shows that the
parameters needed to apply Corollary~\ref{cor:containers-balanced}
cannot satisfy $K\log D/D=o(1)$ in this case.  We therefore count the
$T$-free $3$-graphs directly.

Fix a $T$-free $3$-graph $H$ on $[n]$.  In
Subsection~\ref{subsec:C33-low-codegree}, we choose a codegree
threshold $q=q(n)$ tending sufficiently slowly to infinity.  Let $P$
be the graph consisting of the pairs having codegree at least $q$ in
$H$, and let $H_0$ consist of the hyperedges of $H$ containing no pair
from $P$.  Lemmas~\ref{lem:C33-linear-count}
and~\ref{lem:C33-low-part-count} show that there are $2^{o(n^2)}$
possibilities for each of $H_0$ and $P$, and that $|P|=o(n^2)$.

In Subsection~\ref{subsec:C33-high-codegree}, for each $z\in[n]$ we
define the link graph of $z$ in $P$, $F_z$, consisting of the pairs $xy\in P$ for which
$xyz$ is a hyperedge of $H$.  Let $V_z$ denote the set of vertices
incident to an edge of $F_z$.  Then
Lemma~\ref{lem:C33-induced-link-solution} shows that $P$ is
triangle-free and $F_z=P[V_z]$.  We encode the sets $V_z$ by a digraph
$D$ on $[n]$, putting $x\to z$ precisely when $x\in V_z$.  The
$3$-graph $H$ is then determined by $H_0$, $P$, and $D$.

Let $D_0$ be obtained from $D$ by deleting every arc whose underlying
pair belongs to $P$.  By
Lemma~\ref{lem:C33-mixed-triangle-high-pair}, every directed triangle
in $D_0$ has both arcs present on each of its three underlying pairs.
Lemma~\ref{lem:C33-mixed-digraph-count}, proved in
Subsection~\ref{subsec:C33-digraph-count}, shows that there are at most
$2^{\binom n2+o(n^2)}$ possibilities for $D_0$.  The deleted arcs have
at most $4^{|P|}=2^{o(n^2)}$ possible configurations.  Combining these
bounds gives
\[
 \mathrm{forb}_3(n,T)\leq 2^{\binom n2+o(n^2)}.
\]
Finally, Subsection~\ref{subsec:C33-final-proof} combines this upper
bound with the subhypergraphs of a full star and
Theorem~\ref{thm:C33-FF-extremal} to prove
Theorem~\ref{thm:C33-enumeration}.  In
Section~\ref{sec:proof-main-theorem}, we then combine this result with
Theorem~\ref{thm:MainContainer} to prove
Theorem~\ref{thm:numberOfCycleFree}.

\subsection{The subhypergraph consisting of hyperedges whose pairs have small codegree}
\label{subsec:C33-low-codegree}

For a $3$-graph $H$ and a pair $xy\in\binom{V(H)}2$, define
\[
 N_H(xy)\coloneqq\{z\in V(H)\setminus\{x,y\}:xyz\in H\},
 \qquad d_H(xy)\coloneqq|N_H(xy)|.
\]
The \emph{$2$-shadow} of a $3$-graph $J$ is the graph
\[
 \partial J\coloneqq\{xy\in\tbinom{V(J)}2:xy\subseteq e
                 \text{ for some }e\in J\}.
\]
The following counting result is a well-known consequence of the Ruzsa-Szemer\'edi $(6,3)$-theorem (Theorem~\ref{thm:C33-RS}).  We include its proof for completeness.
\begin{lemma}\label{lem:C33-linear-count}
The number of linear $T$-free $3$-graphs on $[n]$ is $2^{o(n^2)}$.
\end{lemma}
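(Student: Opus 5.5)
The plan is to combine Theorem~\ref{thm:C33-RS} with the standard graph-removal/encoding idea. A linear $T$-free $3$-graph $J$ on $[n]$ has at most $\eta(n)n^2$ hyperedges, where $\eta(n)\to0$. The crude bound for an arbitrary family of triples with this many members is
\[
\sum_{i\le \eta(n)n^2}\binom{\binom n3}{i}\le \Bigl(e n^3/(\eta(n)n^2)\Bigr)^{\eta(n)n^2}=2^{O(\eta(n)n^2\log n)}.
\]
This is not $2^{o(n^2)}$ unless $\eta(n)\log n\to0$, which Ruzsa--Szemer\'edi does not guarantee. So a better encoding is needed.

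Since $J$ is linear, each hyperedge is determined by any one of its pairs together with its third vertex. Equivalently, $J$ is determined by its $2$-shadow $\partial J$ together with the partition of $\partial J$ into the triangles coming from hyperedges. I would encode $J$ as follows.

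First, $|\partial J|=3|J|\le 3\eta(n)n^2$. By Proposition~\ref{prop:Entropy}, the number of choices of the graph $\partial J\subseteq\binom{[n]}2$ is at most $2^{h(3\eta(n))\binom n2}=2^{o(n^2)}$, because $h(x)\to0$ as $x\to0$.

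Second, given $\partial J$, I would recover $J$ cheaply. Every hyperedge $xyz\in J$ forms a triangle in $\partial J$, and every edge of $\partial J$ lies in exactly one such hyperedge-triangle. So $J$ is specified by choosing, for each edge $xy\in\partial J$, the vertex $z$ completing it. This naive encoding costs $n^{|\partial J|}$, which is again too much.

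Instead, for each vertex $x$ I would record the map sending each neighbour $y$ of $x$ in $\partial J$ to its partner $y'$, where $xyy'\in J$. This map is a perfect matching on $N_{\partial J}(x)$. The number of such matchings is at most $d(x)^{d(x)/2}$, so the total cost is
\[
\prod_x d(x)^{d(x)}\le 2^{\sum_x d(x)\log_2 n}=2^{O(\eta(n) n^2\log n)},
\]
which is still not good enough. I expect this $\log n$ loss to be the main obstacle.

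The fix I would try is to choose a threshold. Let $\eta'(n)\coloneqq\max\{\eta(n),1/\log\log n\}$, and split into two cases. If $\eta(n)\le 1/\log^2 n$, the crude count already gives $2^{O(n^2/\log n)}=2^{o(n^2)}$. Otherwise the count is genuinely an issue, and I would avoid it by a supersaturation/container-free argument: use the graph removal form of Ruzsa--Szemer\'edi, via Lemma~\ref{lem:graph-regularity}, to show that $\partial J$ is $\epsilon n^2$-close to a graph $R$ determined by a regularity partition with $O_\epsilon(1)$ parts and having no triangles formed by hyperedges.

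Given the partition and the reduced graph, which admit $2^{O_\epsilon(n)}$ choices, every hyperedge of $J$ except at most $\epsilon n^2$ is absent. Indeed, the triangles of hyperedges are edge-disjoint, so by the counting lemma only few can lie in regular dense triples; more precisely, after deleting the $O(\epsilon n^2)$ edges incident to irregular, sparse, or internal pairs, no hyperedge-triangle survives. Otherwise Lemma~\ref{lem:graph-counting} gives $\Omega(n^3)$ triangles in $\partial J$, contradicting the $O(n^2)$ bound on triangles in the shadow of a linear $T$-free $3$-graph. That bound holds because each triangle in $\partial J$ not forming a hyperedge would create a copy of $T$.

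Hence every hyperedge of $J$ contains one of $O(\epsilon n^2)$ deleted pairs, and each such pair lies in at most one hyperedge. So $J$ is determined by a set of at most $\epsilon n^2$ pairs (costing $2^{h(\epsilon)n^2}$ choices) together with the third vertex for each pair (costing $n^{\epsilon n^2}$ choices). The latter again loses a $\log n$, so instead I would record the graph $\partial J$ restricted to the deleted edges, which is determined by the set of at most $3\epsilon n^2$ pairs in those hyperedges, with at most $2^{h(3\epsilon)n^2}$ choices. From this graph the hyperedges are read off as the triangles it contains, using linearity and $T$-freeness to rule out spurious triangles. Letting $\epsilon\to0$ slowly gives $2^{o(n^2)}$.
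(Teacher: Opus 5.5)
Your argument does end in a correct proof, and its final encoding is the paper's encoding. However, the ``main obstacle'' you identify does not exist, so most of the proposal is an unnecessary detour.

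You yourself observe, when bounding triangles in the shadow, that a triangle $xy,yz,zx$ of $\partial J$ with $xyz\notin J$ yields a copy of $T$. The three pairs lie in three distinct hyperedges, and by linearity these meet pairwise exactly in $y$, $z$, $x$. Hence the triangles of $\partial J$ are precisely the hyperedges of $J$. So $\partial J$ alone determines $J$, and no partition, matchings or third vertices need to be recorded.

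Your first step then already finishes the proof: there are $2^{o(n^2)}$ choices for $\partial J$, because $|\partial J|=3|J|\le 3\eta(n)n^2$. This is exactly what the paper does.

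The rest adds nothing. The case split via $\eta'$ and the regularity/counting-lemma argument only re-derive $|J|=o(n^2)$ by triangle removal, which Theorem~\ref{thm:C33-RS} already gives you. The regularity partition you pay $2^{O_\epsilon(n)}$ to record is never used. Indeed, since every hyperedge ends up containing a deleted pair, the ``graph restricted to the deleted edges'' that you finally record is all of $\partial J$, i.e.\ the same encoding as your first step.

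A minor slip: $3\eta(n)n^2$ pairs form roughly a $6\eta(n)$ fraction of $\binom n2$. The entropy bound should therefore use $h(6\eta(n)+o(1))$ rather than $h(3\eta(n))$; this does not affect the conclusion.
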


\begin{proof}
Let $J$ be a linear $T$-free $3$-graph on $[n]$.  By
Theorem~\ref{thm:C33-RS},
\begin{equation}\label{eq:C33-linear-size}
 |J|\leq\eta(n)n^2,
\end{equation}
where $\eta(n)\to0$.  Moreover, $J$ is determined by its $2$-shadow.
Indeed, every hyperedge of $J$ gives a triangle in $\partial J$.  Conversely,
suppose that $xy,yz,zx\in\partial J$ but $xyz\notin J$.  The three
pairs are then contained in three distinct hyperedges of $J$: if one hyperedge
contained two of these pairs, it would be $xyz$.  By linearity, the
three supplying hyperedges intersect pairwise in the distinct vertices
$x,y,z$ and hence form a copy of $T$, a contradiction.  Thus the
triangles in $\partial J$ are precisely the shadows of the hyperedges of
$J$, so $\partial J$ determines $J$.

Every hyperedge of $J$ contributes three distinct pairs to $\partial J$;
linearity gives
\[
 |\partial J|=3|J|\leq3\eta(n)n^2.
\]
Consequently, the number of possible shadows, and hence the number of
possible $J$, is at most
\[
 \sum_{j\leq3\eta(n)n^2}
   \binom{\binom n2}{j}
 =2^{o(n^2)}.
\]
To justify the last equality, put
$N\coloneqq\binom n2$ and $\alpha_n\coloneqq3\eta(n)n^2/N$.  Then
$\alpha_n\to0$, so $\alpha_n\leq1/2$ for all sufficiently large $n$.
The entropy bound
$\sum_{j\leq \alpha N}\binom Nj\leq2^{h(\alpha)N}$ for
$0\leq\alpha\leq1/2$, where
$h(\alpha)=-\alpha\log_2\alpha-(1-\alpha)\log_2(1-\alpha)$ and
$h(\alpha)\to0$ as $\alpha\to0$, therefore gives the claimed
$2^{o(n^2)}$ bound.
\end{proof}

Let $\mathcal L_n$ be the family counted in
Lemma~\ref{lem:C33-linear-count}, and put
\[
 \lambda_n\coloneqq\frac{\log_2|\mathcal L_n|}{n^2}.
\]
Lemma~\ref{lem:C33-linear-count} gives $\lambda_n\to0$.  Since
$\mathcal L_n$ contains both the empty $3$-graph and every one-hyperedge
$3$-graph,
\begin{equation}\label{eq:C33-lambda-lower}
 \lambda_n
 \geq \frac{\log_2(1+\binom n3)}{n^2}
 =\Omega\left(\frac{\log n}{n^2}\right).
\end{equation}
In particular, $\lambda_n>0$ for $n\geq3$.  Define $q(n)$ by
\begin{equation}\label{eq:C33-q-choice}
 q(n)\coloneqq\max\left\{4,
          \left\lfloor\lambda_n^{-1/2}\right\rfloor\right\}.
\end{equation}
For the remainder of the proof, we abbreviate $q(n)$ by $q$.
Since $\lambda_n\to0$, equation~\eqref{eq:C33-q-choice} gives, for
all sufficiently large $n$,
$q=\lfloor\lambda_n^{-1/2}\rfloor$.  In particular, $q\to\infty$ and
$q\leq\lambda_n^{-1/2}$.
Moreover,
\[
 q\log_2|\mathcal L_n|
 =q\lambda_n n^2
 \leq\lambda_n^{1/2}n^2
 =o(n^2).
\]
Therefore
\begin{equation}\label{eq:C33-q-properties}
 q\to\infty
 \qquad\text{and}\qquad
 q\log_2|\mathcal L_n|=o(n^2).
\end{equation}
Equations~\eqref{eq:C33-q-choice} and~\eqref{eq:C33-lambda-lower}
also give
\begin{equation}\label{eq:C33-q-upper}
 q=O\left(\frac{n}{\sqrt{\log n}}\right).
\end{equation}

Fix a $T$-free $3$-graph $H$ on $[n]$.  Define
\begin{equation}\label{eq:C33-high-pairs}
 P(H)\coloneqq\{xy\in\tbinom{[n]}2:d_H(xy)\geq q\}.
\end{equation}
We abbreviate $P(H)$ by $P$ and regard it as a graph on $[n]$.  Let
\begin{equation}\label{eq:C33-low-part}
 H_0\coloneqq\{e\in H:\tbinom e2\cap P=\varnothing\};
\end{equation}
thus $H_0$ consists of the hyperedges of $H$ containing no pair from $P$.

\begin{lemma}\label{lem:C33-low-part-count}
As $H$ ranges over the $T$-free $3$-graphs on $[n]$, there are
$2^{o(n^2)}$ possibilities for $H_0$ and $2^{o(n^2)}$ possibilities
for $P$.
\end{lemma}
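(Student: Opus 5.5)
We treat $H_0$ and $P$ separately, following the outline in Section~\ref{proofoverview}.

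\emph{Counting $H_0$.} Every pair contained in a hyperedge of $H_0$ lies outside $P$, so it has codegree less than $q$ in $H$, and hence also in $H_0$. Form the auxiliary graph whose vertices are the hyperedges of $H_0$, with two hyperedges adjacent when they share two vertices. A hyperedge $e\in H_0$ has three pairs, and each pair lies in at most $q-2$ other hyperedges of $H_0$. So the maximum degree is less than $3q$, and a greedy proper colouring uses at most $3q$ colours. Each colour class is a linear $3$-graph, and it is $T$-free because it is a subhypergraph of $H$. Therefore $H_0$ is determined by an ordered list of at most $3q$ members of $\mathcal L_n$ (padding with empty $3$-graphs, which belong to $\mathcal L_n$). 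This gives at most $|\mathcal L_n|^{3q}=2^{3q\log_2|\mathcal L_n|}$ possibilities, and by \eqref{eq:C33-q-properties} this is $2^{o(n^2)}$. The encoding need not be injective; we only need an upper bound on the number of distinct $H_0$.

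\emph{Counting $P$.} For $n$ sufficiently large, Theorem~\ref{thm:C33-FF-extremal} gives $|H|\le\binom{n-1}2$. Double counting pairs $(xy,e)$ with $xy\in P$ and $xy\subseteq e\in H$ gives
\[
 q|P|\le\sum_{xy\in P}d_H(xy)\le 3|H|\le 3\binom{n-1}{2}.
\]
Hence $|P|\le 3n^2/(2q)$. Since $q\to\infty$, we get $|P|\le\alpha_n\binom n2$ with $\alpha_n=O(1/q)\to0$. Proposition~\ref{prop:Entropy} then bounds the number of graphs on $[n]$ with at most $\alpha_n\binom n2$ edges by $2^{h(\alpha_n)\binom n2}=2^{o(n^2)}$, once $\alpha_n\le1/2$.

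The main step needing care is the choice of $q$. It must grow fast enough that $|P|=o(n^2)$, yet slowly enough that $|\mathcal L_n|^{3q}$ stays subexponential in $n^2$. Both requirements are already secured by \eqref{eq:C33-q-properties}, so the lemma follows. We also record $|P|=o(n^2)$ for later use.
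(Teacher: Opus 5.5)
Your proposal is correct and follows the paper's proof. The $H_0$ part is the same: an auxiliary graph of maximum degree at most $3(q-2)$, greedily coloured into at most $3q$ linear $T$-free classes, giving $|\mathcal L_n|^{3q}=2^{o(n^2)}$ choices. For $P$ you use the same Frankl--F\"uredi double count $q|P|\le 3|H|$. The only difference is cosmetic: you finish with the entropy bound of Proposition~\ref{prop:Entropy} at density $\alpha_n=O(1/q)$, whereas the paper uses $\sum_{j\le m}\binom Nj\le(eN/m)^m$ with $m=\lceil 3n^2/q\rceil$, which is why it also needs $q=O(n/\sqrt{\log n})$; your route does not need that upper bound on $q$.
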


\begin{proof}
Define an auxiliary graph $R$ whose vertex set is $H_0$, joining two
distinct hyperedges of $H_0$ when they contain a common pair.  If $e\in H_0$
and $xy\in\binom e2$, then $xy\notin P$, so
$d_H(xy)\leq q-1$.  At most $q-2$ other hyperedges of $H_0$ contain $xy$.
Since $e$ contains three pairs,
\[
 \Delta(R)\leq3(q-2).
\]
Here $\Delta(R)$ denotes the maximum degree of $R$.
A greedy proper colouring of $R$ therefore uses at most $3q$ colours.
Each colour class is a linear subhypergraph of $H_0$, and it is $T$-free
because it is a subhypergraph of $H$.  Thus every possible $H_0$ is the union
of an ordered collection of at most $3q$ members of $\mathcal L_n$.
By~\eqref{eq:C33-q-properties}, the number of possibilities for $H_0$
is at most
\[
 |\mathcal L_n|^{3q}=2^{o(n^2)}.
\]

For all sufficiently large $n$, Theorem~\ref{thm:C33-FF-extremal}
gives $|H|\leq\binom{n-1}{2}$.  Double-counting the incidences between
pairs and hyperedges gives
\[
 q|P|
 \leq\sum_{xy\in\binom{[n]}2}d_H(xy)
 =3|H|<3n^2,
\]
and hence $|P|<3n^2/q=o(n^2)$.  Set
$N\coloneqq\binom n2$ and $m\coloneqq\lceil3n^2/q\rceil$.  By
\eqref{eq:C33-q-upper}, we have $n^2/q\to\infty$, and hence
$m=\Theta(n^2/q)$.  For all sufficiently large $n$, we also have
$m\leq N/2$, and the inequality
\[
 \sum_{j=0}^{m}\binom Nj
 \leq\left(\frac{eN}{m}\right)^m
 =2^{O(n^2\log q/q)}
 =2^{o(n^2)}.
\]
This is an upper bound for the number of possibilities for $P$.
\end{proof}

\subsection{The subhypergraph consisting of hyperedges containing a pair of large codegree}
\label{subsec:C33-high-codegree}

For each $z\in[n]$, define a graph $F_z$ as follows.  Its edge set and
vertex set are
\begin{equation}\label{eq:C33-high-link}
 E(F_z)\coloneqq\{xy\in P:xyz\in H\},
 \qquad
 V_z\coloneqq\bigcup_{xy\in E(F_z)}\{x,y\}.
\end{equation}
Thus $F_z=(V_z,E(F_z))$ has no isolated vertices.

\begin{lemma}\label{lem:C33-induced-link-solution}
The graph $P$ is triangle-free and
\begin{equation}\label{eq:C33-induced-link}
 F_z=P[V_z]
 \qquad\text{for every }z\in[n].
\end{equation}
\end{lemma}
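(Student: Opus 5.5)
The plan is to derive both assertions from the $T$-freeness of $H$, using only that every pair of $P$ has codegree at least $q\geq4$. The whole argument rests on one observation: a pair $uv\in P$ has at least $q\geq4$ extension vertices. Hence, whenever at most three vertices are forbidden, we can pick an extension $w\in N_H(uv)$ avoiding all of them. We use this repeatedly to assemble three hyperedges whose pairwise intersections are three distinct singletons, that is, a copy of $T$.

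\emph{Triangle-freeness of $P$.} Suppose $xy,yz,xz\in P$. We choose the extension vertices one at a time:
\begin{itemize}
\item $a\in N_H(xy)\setminus\{z\}$;
\item $b\in N_H(yz)\setminus\{x,a\}$;
\item $c\in N_H(xz)\setminus\{y,a,b\}$.
\end{itemize}
These choices are possible since each codegree is at least $4$. Then $xya$, $yzb$, $xzc$ are hyperedges of $H$ whose pairwise intersections are exactly $\{y\}$, $\{x\}$ and $\{z\}$. Indeed, $a,b,c$ are distinct and none of them equals the vertex of $\{x,y,z\}$ outside its own pair. This gives a copy of $T$, a contradiction.

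\emph{The identity $F_z=P[V_z]$.} The inclusion $E(F_z)\subseteq E(P[V_z])$ is immediate from the definitions. For the converse, take $x,y\in V_z$ with $xy\in P$, and suppose for contradiction that $xyz\notin H$; note that $z\notin\{x,y\}$. Since $x,y\in V_z$, there are $x',y'$ with $xx',yy'\in P$ and $xx'z,yy'z\in H$. If $x'=y$ or $y'=x$, then $xyz\in H$ directly, so we may assume $x',y'\notin\{x,y\}$. We split into two cases.
\begin{itemize}
\item If $x'=y'$, then $xy$, $xx'$, $yx'$ all lie in $P$. This is a triangle in $P$, contradicting the first part.
\item If $x'\neq y'$, choose $w\in N_H(xy)\setminus\{z,x',y'\}$, again possible since $q\geq4$. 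The hyperedges $xyw$, $xx'z$, $yy'z$ meet pairwise in $\{x\}$, $\{y\}$, $\{z\}$. Here we use $w\notin\{z,x',y'\}$, $x'\neq y'$, and $x'\neq y$, $y'\neq x$. Since $x,y,z$ are distinct, these three hyperedges form a copy of $T$, a contradiction.
\end{itemize}

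The only real care needed is in checking the intersection patterns: in each configuration we must verify that the three hyperedges share no vertices beyond the intended singletons. This bookkeeping is what forces the preliminary reductions, namely $z\notin\{x,y\}$, handling $x'=y$ and $y'=x$ separately, and the split according to whether $x'=y'$. The codegree bound $q\geq4$ is used only to avoid at most three forbidden vertices when choosing an extension.
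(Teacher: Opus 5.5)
Your proof is correct and follows the paper's argument, using the same two copies of $T$ (from $xya,yzb,xzc$ and from $xyw,xx'z,yy'z$); your greedy choice of $a,b,c$ simply replaces the paper's appeal to Hall's theorem. In the case $x'=y'$ you exhibit the triangle $xy,xx',yx'$ in $P$ directly, which is slightly cleaner than the paper's unique-neighbour detour, and you make explicit the reductions $x'\neq y$ and $y'\neq x$ that the paper leaves implicit.
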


\begin{proof}
Suppose first that $xy,xz,yz\in P$.  Each of the three sets
\[
 N_H(xy)\setminus\{z\},\qquad
 N_H(xz)\setminus\{y\},\qquad
 N_H(yz)\setminus\{x\}
\]
has size at least $q-1\geq3$.  Hall's condition therefore holds for
these three sets: the union of any nonempty subcollection has size at
least three, which is at least the number of sets in the subcollection.
Choose distinct representatives $a,b,c$ from the three sets,
respectively.  The hyperedges $xya,xzb,yzc$ then have pairwise intersections
$x,y,z$ and form a copy of $T$, a contradiction.  Hence $P$ is
triangle-free.

By~\eqref{eq:C33-high-link}, every edge of $F_z$ belongs to $P$ and has
both endpoints in $V_z$, so $F_z\subseteq P[V_z]$.  Suppose that
$xy\in P[V_z]\setminus F_z$.  Since $x,y\in V_z$, choose vertices
$a,b$ such that $xa,yb\in E(F_z)$.  If $a\ne b$, then
$d_H(xy)\geq q\geq4$ supplies
\[
 w\in N_H(xy)\setminus\{z,a,b\}.
\]
The hyperedges $zxa,zyb,xyw$ have pairwise intersections $z,x,y$ and form a
copy of $T$, a contradiction.  Therefore the two chosen neighbours
satisfy $a=b$ for every choice of an $F_z$-neighbour of $x$ and an
$F_z$-neighbour of $y$.  Fixing one choice on each side shows that $x$
and $y$ have the same unique neighbour $u$ in $F_z$.  In particular,
$xu,yu\in P$.  Together with $xy\in P$, these three pairs form a
triangle in $P$, contradicting the first conclusion of
Lemma~\ref{lem:C33-induced-link-solution}, which states that $P$ is
triangle-free.  Thus
$P[V_z]\subseteq F_z$, proving
\eqref{eq:C33-induced-link}.
\end{proof}

Define a digraph $D(H)$ on $[n]$, abbreviated by $D$, by
\begin{equation}\label{eq:C33-support-digraph}
 x\to z\text{ in }D
 \quad\Longleftrightarrow\quad
 x\in V_z.
\end{equation}

For distinct $x,y,z$ with $xy\in P$, we claim that
\begin{equation}\label{eq:C33-reconstruction}
 xyz\in H
 \quad\Longleftrightarrow\quad
 x\to z\text{ and }y\to z\text{ in }D.
\end{equation}
Indeed, if $xyz\in H$, then $xy\in E(F_z)$ by
\eqref{eq:C33-high-link}, so $x,y\in V_z$ and hence both arcs are
present in $D$.  Conversely, if $x\to z$ and $y\to z$ are arcs of
$D$, then $x,y\in V_z$ by~\eqref{eq:C33-support-digraph}; since
$xy\in P$, equation~\eqref{eq:C33-induced-link} gives
$xy\in E(F_z)$, and~\eqref{eq:C33-high-link} therefore gives
$xyz\in H$.

Consequently, $P$, $D$, and $H_0$ determine $H$: equation
\eqref{eq:C33-low-part} determines the hyperedges containing no pair of $P$,
and~\eqref{eq:C33-reconstruction} determines every remaining hyperedge.

An unordered pair $xy$ is a \emph{digon} of a digraph if both
$x\to y$ and $y\to x$ are arcs.  We call a directed triangle
$x\to y\to z\to x$ \emph{mixed} if at least one of its three
underlying pairs is not a digon.
In particular, a directed triangle having exactly the three arcs
$x\to y$, $y\to z$, and $z\to x$ is mixed.

\begin{lemma}\label{lem:C33-mixed-triangle-high-pair}
Every mixed directed triangle in $D$ contains an underlying pair from
$P$.
\end{lemma}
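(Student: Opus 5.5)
The plan is to argue by contradiction. Let $x\to y\to z\to x$ be a mixed directed triangle in $D$, and suppose that none of $xy,yz,zx$ belongs to $P$. Unwinding \eqref{eq:C33-support-digraph} and \eqref{eq:C33-high-link}:
\begin{itemize}
\item the arc $x\to y$ means $x\in V_y$, so there is a vertex $a$ with $xa\in P$ and $xay\in H$;
\item similarly, $y\to z$ gives $b$ with $yb\in P$ and $ybz\in H$;
\item and $z\to x$ gives $c$ with $zc\in P$ and $zcx\in H$.
\end{itemize}
Since $xz,xy,yz\notin P$, we have $a\ne z$, $b\ne x$ and $c\ne y$.

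If $a,b,c$ are pairwise distinct, then the hyperedges $xay$, $ybz$, $zcx$ pairwise intersect exactly in $y$, $z$, $x$. They therefore form a copy of $T$, which is a contradiction. Hence some coincidence among $a,b,c$ must occur, and the rest of the argument is a case analysis on the coincidences, using the cyclic symmetry of the triangle.

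\textbf{Exactly one coincidence.} Say $a=b\ne c$ (the cases $b=c\ne a$ and $c=a\ne b$ are the same after rotating $x\to y\to z$). The pair $xa$ lies in $P$, so $d_H(xa)\ge q\ge4$, and I pick $w\in N_H(xa)\setminus\{y,z,c\}$. I then look at the three hyperedges
\[
xaw,\qquad yaz,\qquad zcx.
\]
Their pairwise intersections are $\{a\}$, $\{z\}$ and $\{x\}$. To check this, use $a\notin\{x,y,z\}$ together with $c\ne a$ and $c\ne y$; the choice of $w$ avoids the remaining collisions. Since $a,z,x$ are distinct, this is a copy of $T$, which is a contradiction.

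\textbf{Total coincidence.} The only case left is $a=b=c$, where $a$ is joined in $P$ to each of $x,y,z$ and $xay,yaz,zax\in H$. Here I would use the mixedness, not look for a copy of $T$. Since $ya\in P$ and $yax\in H$, we have $y\in V_x$, so the arc $y\to x$ is present. In the same way, $yaz\in H$ gives $z\in V_y$, and $zax\in H$ gives $x\in V_z$. So all three underlying pairs are digons, which contradicts the assumption that the triangle is mixed.

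I expect the main obstacle to be the bookkeeping in the one-coincidence case. There I have to check that the fresh vertex $w$, supplied by the large codegree of a $P$-pair, removes every unwanted intersection, and that the symmetric subcases really do reduce to this one. The constraint $q\ge4$ in \eqref{eq:C33-q-choice} is exactly what leaves room to choose $w$.
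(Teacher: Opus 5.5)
Your proof is correct and is essentially the paper's argument: the same witnesses $a,b,c$, the same linear triangle when they are distinct, and the same fresh-vertex construction using $q\geq4$ (your rotated one-coincidence subcases are exactly the paper's cases $b=c$ and $c=a$). The only difference is bookkeeping. The paper fixes the missing reverse arc $y\to x$ at the outset, which rules out $a=b$ immediately; you instead dispose of $a=b\neq c$ by the rotated construction and invoke mixedness only when $a=b=c$.
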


\begin{proof}
Suppose that
\begin{equation}\label{eq:C33-directed-cycle}
 x\to y,\qquad y\to z,\qquad z\to x
\end{equation}
are arcs of $D$, and suppose that
\begin{equation}\label{eq:C33-cycle-pairs-low}
 xy,yz,zx\notin P.
\end{equation}
We prove that all three reverse arcs are present, so the directed
triangle in~\eqref{eq:C33-directed-cycle} is not mixed.

By cyclic symmetry, it is enough to prove that $y\to x$ is present.
Suppose otherwise.  Then $y\notin V_x$.  From
\eqref{eq:C33-directed-cycle} and~\eqref{eq:C33-support-digraph}, choose
vertices $a,b,c$ such that
\begin{equation}\label{eq:C33-three-witnesses}
 xa\in E(F_y),\qquad
 yb\in E(F_z),\qquad
 zc\in E(F_x).
\end{equation}
It follows from~\eqref{eq:C33-high-link} that
\begin{equation}\label{eq:C33-three-supported-edges}
 xay,ybz,zcx\in H
 \qquad\text{and}\qquad
 xa,yb,zc\in P.
\end{equation}
Equations~\eqref{eq:C33-cycle-pairs-low} and
\eqref{eq:C33-three-supported-edges}, together with $y\notin V_x$,
imply
\begin{equation}\label{eq:C33-witness-inequalities}
 a\ne z,\qquad b\ne x,\qquad c\ne y.
\end{equation}
We also have $a\ne b$.  Indeed, if $a=b$, then
$ya=yb\in P$ and $xya\in H$.  Hence $ya\in E(F_x)$ by
\eqref{eq:C33-high-link}, so $y\in V_x$, contrary to the assumption
$y\notin V_x$.

If $b\ne c$ and $c\ne a$, then the three hyperedges in
\eqref{eq:C33-three-supported-edges} have pairwise intersections
$y,z,x$, respectively, and form a copy of $T$.  Consequently,
\begin{equation}\label{eq:C33-two-collisions}
 b=c\quad\text{or}\quad c=a.
\end{equation}

Suppose first that $b=c$.  For every
$w\in N_H(yb)\setminus\{x,a,z\}$, the hyperedges
\[
 ybw,\qquad xay,\qquad zbx
\]
have pairwise intersections $y,b,x$ and form a copy of $T$.  Since $H$
is $T$-free,
\[
 N_H(yb)\subseteq\{x,a,z\}.
\]
This gives $d_H(yb)\leq3$, contradicting $yb\in P$ and $q\geq4$.

Suppose next that $c=a$.  For every
$w\in N_H(za)\setminus\{x,y,b\}$, the hyperedges
\[
 zaw,\qquad xay,\qquad ybz
\]
have pairwise intersections $a,y,z$ and form a copy of $T$.  Therefore
$N_H(za)\subseteq\{x,y,b\}$ and $d_H(za)\leq3$, contradicting
$za=zc\in P$.  Both alternatives in~\eqref{eq:C33-two-collisions} are
impossible, so $y\to x$ is present.  Repeating the proof of
$y\to x$ with $(x,y,z)$ replaced first by $(y,z,x)$ and then by
$(z,x,y)$ gives $z\to y$ and $x\to z$.
\end{proof}

Let $D_0$ be obtained from $D$ by deleting both possible arcs on every
pair belonging to $P$.  Lemma~\ref{lem:C33-mixed-triangle-high-pair}
gives
\begin{equation}\label{eq:C33-cleaned-property}
 \text{Every directed triangle in $D_0$ has all three underlying pairs as digons.}
\end{equation}
Indeed, a directed triangle in $D_0$ contains no pair from $P$; if one
of its pairs were not a digon, it would contradict
Lemma~\ref{lem:C33-mixed-triangle-high-pair}.

\subsection{Counting the cleaned support digraphs}
\label{subsec:C33-digraph-count}

Let $\Gamma_n$ be the family of digraphs on $[n]$ satisfying
\eqref{eq:C33-cleaned-property}, and put
$\Gamma\coloneqq(\Gamma_n)_{n\geq1}$.  Every $\Gamma_n$ is nonempty,
because it contains the empty digraph on $[n]$.  For a digraph $G$ and
$U\subseteq V(G)$, define its induced subdigraph by
\[
 G[U]\coloneqq\bigl(U,A(G)\cap(U\times U)\bigr).
\]
If $G$ satisfies~\eqref{eq:C33-cleaned-property}, then $G[U]$ also
satisfies~\eqref{eq:C33-cleaned-property}.  Indeed, every directed
triangle in $G[U]$ is a directed triangle in $G$.  Since $G$ satisfies
\eqref{eq:C33-cleaned-property}, the three reverse arcs of this
directed triangle belong to $G$.  Their endpoints lie in $U$, so the
reverse arcs also belong to $G[U]$.  Thus all three underlying pairs
of the directed triangle are digons in $G[U]$.
If $\phi:[m]\to[n]$ is an increasing injection, meaning that
$\phi(i)<\phi(j)$ whenever $i<j$, then relabelling $G[\phi([m])]$ by
$\phi^{-1}$ gives a member of $\Gamma_m$.  Hence $\Gamma$ is
order-hereditary.

For a pair $xy$ with $x<y$, its four possible states are
\begin{equation}\label{eq:C33-four-states}
 0,\qquad x\to y,\qquad y\to x,
 \qquad x\leftrightarrow y,
\end{equation}
where $0$ means that neither arc is present and
$x\leftrightarrow y$ means that both arcs are present.  Thus digraphs
on $[n]$ are in bijection with four-colourings of
$\binom{[n]}2$, as required by
Theorem~\ref{thm:C33-multicolour-counting}.

\begin{lemma}\label{lem:C33-mixed-digraph-count}
We have
\[
 |\Gamma_n|\leq2^{\binom n2+o(n^2)}.
\]
\end{lemma}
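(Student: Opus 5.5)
We will apply Theorem~\ref{thm:C33-multicolour-counting} to the order-hereditary sequence $\Gamma$; as already noted, $\Gamma$ is order-hereditary and every $\Gamma_n$ is nonempty. It then suffices to show that every $\Gamma_n$-valid four-colouring template $\Phi$ satisfies
\[
 \sum_{xy\in\binom{[n]}2}\log_2|\Phi_{xy}|\leq\left\lfloor\frac{n^2}{2}\right\rfloor.
\]
This inequality gives $\operatorname{Ent}_4(\Phi)\leq n^2/4$, hence $\pi(\Gamma)\leq1/2$, and therefore $|\Gamma_n|\leq4^{(1/2+o(1))\binom n2}=2^{\binom n2+o(n^2)}$.

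To bound the entropy we define, as in the overview, the loose orientations, the digraph $J=J(\Phi)$, the numbers $\ell_{xy}$, the set $B$, and the digraph $K$. The arcs of $K$ are all loose arcs, the unique non-loose arc on each pair of $B$ with $\ell_{xy}=1$, and the arc from the smaller to the larger endpoint on each pair of $B$ with $\ell_{xy}=0$.

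The first step is the pairwise inequality $\log_2|\Phi_{xy}|\leq\ell_{xy}+\mathbf 1[xy\in B]$, which we check by cases. If $\ell_{xy}=2$, then $|\Phi_{xy}|\leq4$. If $\ell_{xy}=1$ and $xy\notin B$, then $J$ misses the reverse arc, so the palette is contained in $\{0,\text{the loose arc}\}$ and has size at most $2$. If $\ell_{xy}=1$ and $xy\in B$, the palette has size at most $4$. If $\ell_{xy}=0$, the palette lies in $\{0,\leftrightarrow\}$, so its size is at most $2$, and size $2$ forces $\leftrightarrow$, which puts $xy\in B$. Summing, the entropy is at most $|A(K)|$. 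Brown--Harary (Theorem~\ref{thm:C33-Brown-Harary}) gives $|A(K)|\leq\lfloor n^2/2\rfloor$ once $K$ has no directed triangle.

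The main step is to show $K$ has no directed triangle, using $\Gamma_n$-validity. Suppose $x\to y\to z\to x$ in $K$. For each of the three arcs we choose a state in the corresponding palette containing that arc, namely the single arc if it is loose and a state containing it otherwise, which exists since the arc lies in $J$. We aim to produce a realization in which this triangle is directed and at least one pair is not a digon; extending arbitrarily to the other pairs then contradicts validity.

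\begin{enumerate}
\item If some arc is loose, choosing its single-arc state makes that pair a non-digon, and we are done.
\item Otherwise every arc is non-loose and each pair lies in $B$ with $\ell\leq1$. A non-loose arc of $K$ comes either from $\ell_{xy}=1$, where the reverse arc is loose, or from $\ell_{xy}=0$.
\begin{enumerate}
\item If some pair has $\ell=1$, we choose on that pair the state consisting of the loose reverse arc. This keeps the pair a non-digon but reverses it, so it does not directly help.
\item Instead, we handle pairs with $\ell=0$: there the arc is oriented from smaller to larger, so a directed triangle cannot consist only of such pairs.
\end{enumerate}
\end{enumerate}

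The delicate case is therefore a triangle mixing $\ell=0$ pairs with non-loose arcs from $\ell=1$ pairs. For this case I would choose, on each $\ell=1$ pair, a state containing the $K$-arc; by non-looseness that state must be the digon. On the $\ell=0$ pairs I would choose the digon as well. This makes all three pairs digons, which yields no contradiction. So instead I would use the loose reverse arcs on the $\ell=1$ pairs: they give a directed triangle of reversed orientation together with digons on the remaining pairs. This triangle is mixed and hence contradicts validity, provided the reversed triangle is directed. That holds exactly when all three pairs have $\ell=1$, or when the $\ell=0$ pairs are digons (a digon lies on directed triangles in both orientations). Checking that this combination always closes up, and in particular that a triangle with all three pairs $\ell=0$ is impossible by the ordering rule, is the step I expect to require the most care.
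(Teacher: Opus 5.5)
Your proposal is correct and follows the paper's proof. The ingredients match: the template digraph $J$, the loose arcs, the set $B$, the auxiliary digraph $K$, the pairwise bound $\log_2|\Phi_{xy}|\le\ell_{xy}+\mathbf{1}[xy\in B]$, and Brown--Harary. Your reversed-triangle argument in the no-loose-arc case is exactly the paper's step of exhibiting $u\to v\to w\to u$ in $J$ through a loose arc.

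The check you flag as delicate is immediate. A pair of $B$ with $\ell_{xy}=0$ has both arcs in $J$ but no single-arc state, so its palette must contain $x\leftrightarrow y$. Hence choosing the loose reverse arcs on the $\ell=1$ pairs and digons on the $\ell=0$ pairs always yields a mixed directed triangle, so the reversal you dismissed in 2(a) is precisely what closes the argument.
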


\begin{proof}
Let $\Phi$ be a $\Gamma_n$-valid template.  Thus, for every pair
$xy$, the palette $\Phi_{xy}$ is a nonempty subset of the four states
in~\eqref{eq:C33-four-states}, and every choice of one state from each
palette gives a member of $\Gamma_n$.  It will be convenient to
measure the entropy of $\Phi$ using base-$2$ logarithms.  Define
\begin{equation}\label{eq:C33-binary-entropy}
 \operatorname{Ent}_2(\Phi)
 \coloneqq\sum_{xy\in\binom{[n]}2}\log_2|\Phi_{xy}|.
\end{equation}

Form a digraph $J(\Phi)$, abbreviated by $J$, as follows.  For every pair $xy$, take the
union of the arcs appearing in the states belonging to $\Phi_{xy}$,
and use this union as the set of arcs of $J$ on the pair $xy$.
Equivalently, $x\to y$ belongs to $J$ if and only if at least one state
in $\Phi_{xy}$ contains $x\to y$.  In particular, if the digon state
$x\leftrightarrow y$ belongs to $\Phi_{xy}$, then both $x\to y$ and
$y\to x$ belong to $J$, so $xy$ is a digon of $J$.

An arc $x\to y$ of $J$ is called \emph{loose} if $\Phi_{xy}$ contains
the single-arc state $x\to y$, regardless of whether it also contains
any of the other three states $0$, $y\to x$, or
$x\leftrightarrow y$.  Thus $x\to y$ and $y\to x$ may both be loose.
Let $L(\Phi)$ be the set of all loose arcs, and abbreviate it by $L$.

\begin{claim}\label{claim:C33-no-loose-triangle}
The digraph $J$ contains no directed triangle using a loose arc.
\end{claim}

\begin{claimproof}
Suppose that $x\to y\to z\to x$ is a directed triangle in $J$ and
that $x\to y$ is loose.  At $xy$, choose the state in which $x\to y$
is present and $y\to x$ is absent.  At the other two pairs, choose
states containing $y\to z$ and $z\to x$, respectively.  At each
remaining pair, choose any member of its nonempty palette.  The
resulting realization of $\Phi$ contains a mixed directed triangle,
contrary to the validity of $\Phi$.
\end{claimproof}

For every pair $xy\in\binom{[n]}2$, define
\[
 \ell_{xy}\coloneqq\bigl|\{x\to y,y\to x\}\cap L\bigr|.
\]
Thus $\ell_{xy}\in\{0,1,2\}$.  Let $B$ be the set of pairs $xy$ for
which $J$ contains both arcs $x\to y$ and $y\to x$ and
$\ell_{xy}\leq1$, and define
\[
 b_{xy}\coloneqq
 \begin{cases}
 1,&xy\in B,\\
 0,&xy\notin B.
 \end{cases}
\]
\begin{claim}\label{claim:C33-local-palette-entropy}
For every $xy\in\binom{[n]}2$,
\begin{equation}\label{eq:C33-local-palette-entropy}
 \log_2|\Phi_{xy}|\leq\ell_{xy}+b_{xy}.
\end{equation}
\end{claim}

\begin{claimproof}
Choose an arbitrary $xy\in\binom{[n]}2$.  If $\ell_{xy}=0$, then
$\Phi_{xy}\subseteq\{0,x\leftrightarrow y\}$.  If both of these
states belong to $\Phi_{xy}$, then $xy\in B$ and $b_{xy}=1$; otherwise
$|\Phi_{xy}|=1$.  In either case,
$\log_2|\Phi_{xy}|\leq b_{xy}$.

Suppose next that $\ell_{xy}=1$.  If $xy\notin B$, then $J$ does not
contain both arcs on $xy$, and hence $|\Phi_{xy}|\leq2$.  If
$xy\in B$, then $|\Phi_{xy}|\leq3$.  Thus
$\log_2|\Phi_{xy}|\leq\ell_{xy}+b_{xy}$ in both cases.  Finally, if
$\ell_{xy}=2$, then $b_{xy}=0$ and $|\Phi_{xy}|\leq4$, which again
gives~\eqref{eq:C33-local-palette-entropy}.
\end{claimproof}

Fix the natural order on $[n]$ and construct a digraph $K$ as follows.
First, include every arc in $L$.  If $xy\in B$ and $\ell_{xy}=1$, also
include the other arc on the pair $xy$.  If $xy\in B$ and
$\ell_{xy}=0$, include only the arc from the smaller endpoint to the
larger endpoint.  Hence
\begin{equation}\label{eq:C33-K-size}
 a(K)=\sum_{xy\in\binom{[n]}2}\ell_{xy}+|B|.
\end{equation}

\begin{claim}\label{claim:C33-K-triangle-free}
The digraph $K$ contains no directed triangle.
\end{claim}

\begin{claimproof}
Suppose, for a contradiction, that $K$ contains a directed triangle
on the vertices $x,y,z$.  If this triangle uses a loose arc, then it
is also a directed triangle in $J$ using a loose arc, contradicting
Claim~\ref{claim:C33-no-loose-triangle}.

It remains to consider the case in which the directed triangle uses
no loose arc.  Each of its three arcs was then added on a pair in
$B$.  Consequently, $J$ contains both arcs on each of the pairs
$xy$, $yz$, and $zx$.  Suppose that one of these pairs has one loose
arc; denote that arc by $u\to v$, and let $w$ be the third vertex.
Since the pairs $vw$ and $wu$ are bidirected in $J$, the arcs
$v\to w$ and $w\to u$ belong to $J$.  Hence
$u\to v\to w\to u$ is a directed triangle in $J$ using a loose arc,
again contradicting Claim~\ref{claim:C33-no-loose-triangle}.

We conclude that $\ell_{xy}=\ell_{yz}=\ell_{zx}=0$.  On each of
these three pairs, the construction of $K$ includes only the arc from
the smaller endpoint to the larger endpoint.  These three arcs cannot
form a directed triangle: the largest of $x,y,z$ has no outgoing arc
among the three.  This final contradiction proves the claim.
\end{claimproof}

Applying Theorem~\ref{thm:C33-Brown-Harary} to $K$, and then summing
\eqref{eq:C33-local-palette-entropy}, gives
\begin{equation}\label{eq:C33-template-entropy-bound}
 \operatorname{Ent}_2(\Phi)
 \leq\sum_{xy}\ell_{xy}+|B|
 =a(K)
 \leq\left\lfloor\frac{n^2}{2}\right\rfloor.
\end{equation}

The base-$4$ entropy in
Theorem~\ref{thm:C33-multicolour-counting} is
$\operatorname{Ent}_4(\Phi)=\operatorname{Ent}_2(\Phi)/2$.  Therefore
\eqref{eq:C33-template-entropy-bound} implies
\[
 \pi(\Gamma)
 \leq
 \lim_{n\to\infty}
 \frac{\lfloor n^2/2\rfloor}{2\binom n2}
 =\frac12.
\]
Theorem~\ref{thm:C33-multicolour-counting} now gives
\[
 |\Gamma_n|
 \leq4^{(1/2+o(1))\binom n2}
 =2^{\binom n2+o(n^2)},
\]
as required.
\end{proof}

\subsection{Proof of Theorem~\ref{thm:C33-enumeration}}
\label{subsec:C33-final-proof}

\begin{proof}[Proof of Theorem~\ref{thm:C33-enumeration}]
A full star is $T$-free and has $\binom{n-1}{2}$ hyperedges.  Taking
all of its subhypergraphs gives
\begin{equation}\label{eq:C33-enumeration-lower}
 \mathrm{forb}_3(n,T)\geq2^{\binom{n-1}{2}}.
\end{equation}

For the upper bound, let $q=q(n)$ be chosen as
in~\eqref{eq:C33-q-choice}, and fix a $T$-free $3$-graph $H$ on
$[n]$.  Let $P=P(H)$ and $H_0$ be defined
by~\eqref{eq:C33-high-pairs} and~\eqref{eq:C33-low-part},
respectively.  For each $z\in[n]$, let $V_z$ be defined
by~\eqref{eq:C33-high-link}, and recall that the digraph $D=D(H)$ is
defined by putting $x\to z$ in $D$ exactly when $x\in V_z$.  Let
$D_0$ be obtained from $D$ by deleting every arc whose underlying
unordered pair belongs to $P$.

Lemma~\ref{lem:C33-low-part-count} gives $2^{o(n^2)}$ possibilities
for each of $H_0$ and $P$.  Moreover, for all sufficiently large $n$,
$q|P|\leq\sum_{xy\in\binom{[n]}2}d_H(xy)=3|H|
\leq3\binom{n-1}{2}$ by
Theorem~\ref{thm:C33-FF-extremal}.  Since $q\to\infty$
by~\eqref{eq:C33-q-properties}, we have $|P|=o(n^2)$.  Once $P$ has
been fixed, there are consequently at most
$4^{|P|}=2^{o(n^2)}$ possibilities for the states of $D$ on the pairs
belonging to $P$.

By~\eqref{eq:C33-cleaned-property}, $D_0\in\Gamma_n$.  Hence
Lemma~\ref{lem:C33-mixed-digraph-count} gives at most
$2^{\binom n2+o(n^2)}$ possibilities for $D_0$.  The digraph $D_0$,
together with the states of $D$ on the pairs in $P$, determines $D$.
Finally, $H_0$, $P$, and $D$ determine $H$ by
\eqref{eq:C33-low-part} and~\eqref{eq:C33-reconstruction}.
Multiplying the four bounds gives
\begin{equation}\label{eq:C33-enumeration-upper}
 \mathrm{forb}_3(n,T)
 \leq2^{\binom n2+o(n^2)}
 =2^{\binom{n-1}{2}+o(n^2)}.
\end{equation}

Theorem~\ref{thm:C33-FF-extremal} gives
$\mathrm{ex}_3(n,T)=\binom{n-1}{2}$ for all sufficiently large $n$.
Together with~\eqref{eq:C33-enumeration-lower}
and~\eqref{eq:C33-enumeration-upper}, this proves the theorem.
\end{proof}

\section{Proof of the main theorem}\label{sec:proof-main-theorem}

\begin{proof}[Proof of Theorem~\ref{thm:numberOfCycleFree}]
If $(r,k)=(3,3)$, the result follows from
Theorem~\ref{thm:C33-enumeration}.  Assume henceforth that
$(r,k)\neq(3,3)$.

Fix $\varepsilon>0$ and apply Theorem~\ref{thm:MainContainer}.  Writing
$N=\binom{n}{r-1}$, every $C_k^{(r)}$-free $r$-graph is a subhypergraph of
one of at most $2^{\varepsilon N/2}$ containers, each having at most
$(\lfloor(k-1)/2\rfloor+\varepsilon/2)N$ hyperedges.  Therefore
\[
 \operatorname{forb}_r(n,C_k^{(r)})
 \leq2^{(\lfloor(k-1)/2\rfloor+\varepsilon)N}.
\]
Since $\varepsilon>0$ is arbitrary and
\[
 \operatorname{ex}_r(n,C_k^{(r)})
 =\left(\left\lfloor\frac{k-1}{2}\right\rfloor+o(1)\right)N,
\]
this is the required upper bound.  The lower bound follows by taking
all subhypergraphs of an extremal $C_k^{(r)}$-free $r$-graph.
\end{proof}
 
\section{Concluding remarks}\label{sec:concluding-remarks}

Theorem~\ref{thm:numberOfCycleFree} completely answers
Question~\ref{conj:enumeration result} in the affirmative.  The proof of the final case
$(r,k)=(3,3)$ does not proceed through balanced supersaturation.
Instead, it shows that a structural encoding followed by an entropy
argument can recover the correct exponent.

The analysis of pair-codegrees in the proof of Frankl and
F\"uredi~\cite{FranklFuredi1987} motivates separating pairs according
to their codegrees, as we do in Section~\ref{sec:C33-enumeration}.
The following construction shows, however, that the existence of many
linear triangles does not by itself provide a balanced collection satisfying
the degree bounds required by
Corollary~\ref{cor:containers-balanced}.

We next describe a construction that explains an obstruction to using
Corollary~\ref{cor:containers-balanced} for linear triangles.  Fix
$0<\varepsilon<1$ and an integer
$\alpha>4(1+\varepsilon)$.  Let $n$ be divisible by $4\alpha$, and
partition the vertex set into sets $X$ and $Y$ of size $n/2$.
Partition $X$ into $n/(4\alpha)$ sets of size $2\alpha$, and on each
such set, place a copy of $K_{\alpha,\alpha}$.  Let $B$ be the union of
these pairwise vertex-disjoint copies.  Thus $B$ is an
$\alpha$-regular bipartite graph on $X$ and
\[
 |B|=\frac{\alpha n}{4}.
\]
Define a $3$-graph $H$ by declaring $xx'y$ to be a hyperedge precisely
when $xx'\in B$ and $y\in Y$.  Its shadow is the union of $B$ and the
complete bipartite graph between $X$ and $Y$.
We have
\[
 |H|=|B||Y|=\frac{\alpha n^2}{8}
 >(1+\varepsilon)\binom n2.
\]

For a linear triangle with hyperedges $abu$, $bcv$, and $caw$, call the
triple $abc$ its \emph{core}.  The three pairs contained in the core
belong to the shadow of $H$.  Every triangle in this shadow has two
vertices in $X$ joined by an edge of $B$ and one vertex in $Y$, and is
therefore a hyperedge of $H$.  Consequently, the core of every linear
triangle in $H$ is itself a hyperedge of $H$.

Fix a core $xx'y$, where $xx'\in B$ and $y\in Y$.  The three
hyperedges of a linear triangle with this core must have the form
\[
 xx'y_0,\qquad xay,\qquad x'by,
\]
where $N_B(v)$ denotes the set of neighbors of a vertex $v$ in $B$,
and
\[
 y_0\in Y\setminus\{y\},\qquad
 a\in N_B(x)\setminus\{x'\},\qquad
 b\in N_B(x')\setminus\{x\}.
\]
Conversely, every such choice gives a linear triangle.  Indeed, $a$
and $b$ lie in different vertex classes of the copy of
$K_{\alpha,\alpha}$ containing $xx'$, and hence $a\ne b$.  Thus each
core belongs to exactly $(n/2-1)(\alpha-1)^2$ linear triangles, and
the total number of linear triangles in $H$ is
\[
 |H|\left(\frac n2-1\right)(\alpha-1)^2
 =\frac{\alpha(\alpha-1)^2n^2(n-2)}{16}.
\]

For each core $xx'y$ and each choice of $a$ and $b$ above, place the
pair of hyperedges $\{xay,x'by\}$ in $\mathcal Q$.  Then
\[
 |\mathcal Q|\leq(\alpha-1)^2|H|=O_\alpha(n^2),
\]
and every linear triangle in $H$ contains a member of $\mathcal Q$.

Let $\mathcal K$ be any collection of linear triangles in $H$.
Recalling that $\Delta_2(\mathcal K)$ is the maximum number of members
of $\mathcal K$ containing a fixed pair of hyperedges, we obtain
\begin{equation}\label{eq:C33-balanced-obstruction}
 |\mathcal K|
 \leq |\mathcal Q|\Delta_2(\mathcal K)
 =O_{\alpha,\varepsilon}\bigl(n^2\Delta_2(\mathcal K)\bigr).
\end{equation}
In the natural application of
Corollary~\ref{cor:containers-balanced}, one would take
$s=\Theta(n^2)$ and seek parameters $D,K$ satisfying
\[
 |\mathcal K|\geq\frac{D^2s}{K}
 \qquad\text{and}\qquad
 \Delta_2(\mathcal K)\leq D.
\]
Equation~\eqref{eq:C33-balanced-obstruction} would then imply
$D/K=O_{\alpha,\varepsilon}(1)$.  Hence $K\log D/D$ cannot tend to
zero as $D\to\infty$, as required for the total container cost to be
$2^{o(n^2)}$.  This construction does not rule out every possible
form of balanced supersaturation, but it shows why the form needed for
Corollary~\ref{cor:containers-balanced} cannot be applied uniformly in
the case $(r,k)=(3,3)$ and motivates the different encoding argument
used in Section~\ref{sec:C33-enumeration}.  It would be interesting to
determine whether similar encodings could be applied for other enumeration
problems for which balanced supersaturation together with
Corollary~\ref{cor:containers-balanced} does not give the correct
exponent.

\subsection*{Acknowledgments}

The authors used ChatGPT 5.6 Sol to assist with literature searches and to proofread and polish portions of this manuscript. It also helped prepare an initial draft of the proof of Theorem~\ref{thm:C33-enumeration}, following a sketch of the proof suggested by the authors; the proof was subsequently rewritten by the authors. All mathematical ideas are due to the authors, who take full responsibility for the manuscript. 

We are grateful to Abhishek Dhawan, Oliver Janzer, Tao Jiang, Yongjin Lee and Sam Spiro for several helpful discussions on topics related to this paper.

{\setlength{\emergencystretch}{1em}%
\printbibliography}

\end{document}